\pgfplotsset{compat=1.15}
\definecolor{gen0}{rgb}{1,0,0}
\definecolor{gen1}{rgb}{0,0,1}
\definecolor{gen2}{rgb}{0,0.7,0}
\tikzstyle{virtual}=[postaction={draw=#1,dash pattern= on 3pt off 7pt,dash phase=5pt,thick},dash pattern= on 3pt off 7pt]
\tikzstyle{walkarrow}=[very thick,decoration={markings, mark=at position 0.5 with {\arrow[xshift=5pt]{latex}}}] 
\newtheorem{thm}{Theorem}[section]
\newtheorem{pro}[thm]{Proposition}
\newtheorem{lem}[thm]{Lemma}
\newtheorem{cor}[thm]{Corollary}
\theoremstyle{definition}
\newtheorem{exa}[thm]{Example}
\newtheorem{rmk}[thm]{Remark}
\newcommand{\abs}[1]{\left\lvert#1\right\rvert}
\newcommand{\norm}[1]{\left\lVert#1\right\rVert}
\newcommand\Gra{\mathbf{G}}
\newcommand\Si{\mathbf{\Sigma}}
\newcommand\Non{\mathbf{N}}
\newcommand\Pro{\mathbf{P}}
\newcommand\Ccal{\mathcal{C}}
\newcommand\Tcal{\mathcal{T}}
\newcommand\Gcal{\mathcal{G}}
\newcommand\Vcal{\mathcal{V}}
\newcommand\Ecal{\mathcal{E}}
\newcommand\Pcal{\mathcal{P}}
\newcommand\N{\mathbb{N}}
\newcommand\Z{\mathbb{Z}}
\newcommand\da{{\scalerel*{\downarrow}{X}}}
\newcommand\ua{{\scalerel*{\uparrow}{X}}}
\newcommand\AUT{\mathrm{AUT}}
\newcommand\Cay{\mathrm{Cay}}
\newcommand\SAW{\mathrm{SAW}}
\title{Self-avoiding walks and multiple context-free languages}
\author{Florian Lehner\thanks{F.\ Lehner was supported by FWF (Austrian Science Fund) projects P31889-N35 and J3850-N32}\; and Christian Lindorfer\thanks{C.\ Lindorfer was partially supported by FWF (Austrian Science Fund) projects P31889-N35 and DK~W1230.}}
\affil{\normalsize Institut f\"ur Diskrete Mathematik, 
Technische Universit\"at Graz,
Austria}
\date{\today} 
\begin{document}
\maketitle

\begin{abstract}
Let $G$ be a quasi-transitive, locally finite, connected graph rooted at a vertex $o$, and let $c_n(o)$ be the number of self-avoiding walks of length $n$ on $G$ starting at $o$. We show that if $G$ has only thin ends, then the generating function $F_{\SAW,o}(z)=\sum_{n \geq 1} c_n(o) z^n$ is an algebraic function. In particular, the connective constant of such a graph is an algebraic number.

If $G$ is deterministically edge-labelled, that is, every (directed) edge carries a label such that no two edges starting at the same vertex have the same label, then the set of all words which can be read along the edges of self-avoiding walks starting at $o$ forms a language denoted by $L_{\SAW,o}$. Assume that the group of label-preserving graph automorphisms acts quasi-transitively. We show that $L_{\SAW,o}$ is a $k$-multiple context-free language if and only if the size of all ends of $G$ is at most $2k$. Applied to Cayley graphs of finitely generated groups this says that $L_{\SAW,o}$ is multiple context-free if and only if the group is virtually free.
\end{abstract}

\section{Introduction}

A \emph{walk} in a graph is an alternating sequence $(v_0,e_1,v_1, \dots, e_n,v_n)$ of vertices $v_i$ and edges $e_i$ such that $e_i$ starts at $v_{i-1}$ and ends at $v_i$ for every $i$. It is called \emph{self-avoiding} (or SAW), if its vertices are pairwise different. This notion was introduced in 1953 by the chemist Flory~\cite{CR1000000} as a model for long-chain polymer molecules and has since attracted considerable interest. A lot of research has focused on lattices, see for instance the monograph by Madras and Slade~\cite{MR2986656} and also the lecture notes by Bauerschmidt et al.~\cite{MR3025395}, but other graphs have also been investigated, see for example the survey of Grimmett and Li \cite{zbMATH07217533}. 

One of the most fundamental problems related to this model is determining (exactly or asymptotically) the number of SAWs of a given length. Denote by $c_n(o)$ the number of SAWs with $n$ edges starting at a fixed root vertex $o$. Hammersley~\cite{MR0091568} showed that the limit
\[
\mu(G)=\lim_{n \rightarrow \infty} c_n(o)^{1/n}
\]
exists for quasi-transitive graphs, that is, graphs which allow a group action by graph automorphisms with finitely many orbits on the vertex set.  Moreover, the value of $\mu(G)$ is independent of the choice of $o$. This number $\mu(G)$ is called the \emph{connective constant} of the graph $G$. Note that by the Cauchy--Hadamard theorem, $\mu(G)$ is the reciprocal of the radius of convergence of the \emph{SAW-generating function}
\[
    F_{\SAW,o}(z)= \sum_{n=1}^\infty c_n(o)z^n.
\]

Explicit computation of $\mu(G)$ can be a very challenging task, even in seemingly harmless instances such as two-dimensional lattices. For example, despite very precise numerical estimates, the precise value of $\mu(\Z^2)$ remains elusive; in fact, it is not even known whether $\mu(\Z^2)$ is an algebraic number. In light of this, it is not surprising that the celebrated paper \cite{MR2912714} by Duminil-Copin and Smirnov containing the first rigorous calculation of the connective constant of the hexagonal lattice is considered a milestone in the theory.

The special case of SAWs on one-dimensional lattices turns out to be a lot more manageable. The reason for this is that the large scale structure of one-dimensional lattices resembles a line: they can be decomposed into infinitely many pairwise isomorphic finite parts such that each part only intersects with two others (its predecessor and its successor). By considering restrictions of SAWs to single parts and analysing how these restrictions fit together, Alm and Janson~\cite{MR1047969} showed that SAW-generating functions on these lattices are always rational. While this approach fails for higher-dimensional lattices, analogous techniques have been successfully applied to other graph classes, in particular for graphs exhibiting some kind of large scale tree structure, see for instance \cite{MR3584819} and \cite{Lindorfer2020}.

In the present paper, we will follow a similar approach to study SAWs on graphs all of whose ends are thin. Recall that an \emph{end} of a graph $G$ is an equivalence class of rays (one-way infinite paths) in $G$, where two rays are equivalent if and only if $G$ contains a third ray intersecting both of them infinitely often. The \emph{size} of an end is the maximum number of pairwise disjoint rays it contains; ends are \emph{thin} if they have finite size, otherwise they are \emph{thick}. 

Our first main result concerns the SAW-generating function of graphs all of whose ends are thin.

\begin{thm} \label{thm:main-1}
Let $G$ be a simple, locally finite, connected, quasi-transitive graph having only thin ends and let $o \in V(G)$. Then $F_{\SAW,o}$ is algebraic over $\mathbb{Q}$. In particular the connective constant $\mu(G)$ is an algebraic number.
\end{thm}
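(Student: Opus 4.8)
The plan is to exploit the large-scale tree structure that thin-ended quasi-transitive graphs are known to possess, in the spirit of the tree-decomposition results of Dunwoody, Thomassen–Woess, and Krön–Möller. First I would invoke the structure theory to obtain a tree-decomposition $(T,(V_t)_{t\in V(T)})$ of $G$ that is invariant under a quasi-transitive automorphism group, has finite adhesion (the separators $V_s\cap V_t$ along edges $st$ of $T$ are uniformly bounded in size, which is where thinness of all ends enters), and has finitely many types of parts and separators up to the action. The key combinatorial point is that any self-avoiding walk in $G$ projects to a walk in $T$, and because the separators are finite, a SAW can cross each separator only a bounded number of times; hence its trace on the tree-decomposition is controlled.

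The core of the argument is then a context-free-grammar (or, equivalently, an algebraic system of equations) description of $F_{\SAW,o}$. I would set up generating-function variables indexed by the finitely many isomorphism types of "pieces of a SAW confined to one branch of $T$ beyond a given separator, with prescribed pattern of entry/exit points on that separator." Because the adhesion is bounded, there are only finitely many such boundary patterns, so only finitely many variables. A SAW in $G$ is reconstructed by choosing how it enters a part, how it is distributed among the finitely many neighbouring branches (each contributing recursively a piece of one of these types), and how the pieces are concatenated inside the bounded-size part; this yields a finite system of polynomial equations relating the generating functions, with coefficients polynomials in $z$ — exactly an algebraic system. The generating function $F_{\SAW,o}$ is one of the resulting components, hence algebraic over $\mathbb{Q}(z)$, and therefore over $\mathbb{Q}$ as a function; the statement about $\mu(G)$ follows since the reciprocal of the radius of convergence of an algebraic power series with rational coefficients is an algebraic number.

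I expect the main obstacle to be the bookkeeping in the recursive decomposition: one must track not just which separator vertices a partial SAW touches, but the induced matching/linear order in which the strands of the SAW pass through the separator, so that pieces from different branches can be legitimately concatenated into a single self-avoiding walk without creating repeated vertices or disconnections. Making this finite-state description precise — and checking that gluing two compatible pieces indeed produces a SAW and that every SAW arises exactly once — is the delicate step; the bounded adhesion guarantees finiteness of the state space, but the correctness of the concatenation rules requires care. A secondary technical point is handling the automorphism group properly: quasi-transitivity gives finitely many orbits of parts and separators, but one must ensure the chosen tree-decomposition can be taken invariant and with the needed finiteness properties, which is precisely the content of the structure theorems I would cite. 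Once the finite algebraic system is in place, extracting algebraicity of $F_{\SAW,o}$ and of $\mu(G)$ is routine.
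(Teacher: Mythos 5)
Your proposal follows essentially the same route as the paper: an invariant tree decomposition with bounded adhesion obtained from accessibility of thin-ended quasi-transitive graphs, finitely many types of parts and boundary patterns, a recursive encoding of SAW pieces leading to an unambiguous context-free grammar (equivalently a finite algebraic system), and then algebraicity of $F_{\SAW,o}$ via a Chomsky--Sch\"utzenberger-type argument, with $\mu(G)$ algebraic as the reciprocal of the radius of convergence. The ``delicate bookkeeping'' you single out---recording how the strands of a SAW meet each adhesion set and verifying that gluing is bijective---is exactly what the paper formalises with its configurations, the compatibility conditions, and the bijection between SAWs and bounded consistent configurations, so your outline matches the paper's proof in both structure and substance.
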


Our second main result connects SAWs to formal languages. For this purpose consider the following setup. Let $G$ be deterministically edge-labelled, that is, every (directed) edge $e$ of $G$ is assigned a label $\ell(e)$ from some given alphabet $\Si$ such that different edges with the same initial vertex have different labels. As before, edge-labelled graphs are assumed to be quasi-transitive, that is, the group $\AUT(G,\ell)$ of automorphisms of $G$ which preserve $\ell$ acts with finitely many orbits on $G$. Important examples of transitive deterministically edge-labelled graphs are the Cayley graphs of finitely generated groups.

The edge-labelling is extended to walks $p=(v_0,e_1,v_1, \dots, e_n,v_n)$ by setting
\[
\ell(p)= \ell(e_1) \dots \ell(e_n).
\]
In this way, any set $\Pcal$ of walks gives rise to a language $L(\Pcal) = \{\ell(p) \colon p \in \Pcal\}$ thus allowing us to study properties of $\Pcal$ via properties of the corresponding language. 

This is particularly fruitful when $L(\Pcal)$ belongs to a well understood family of languages. Besides the well-known classes of regular and context-free languages, the class of multiple context free languages (MCFLs) plays an important role. These were introduced by Seki et al.~\cite{MR1131066} as a generalisation of context free languages capable of modelling cross-serial dependencies which occur in some natural languages such as Swiss German. A concise definition of MCFLs will be given in Section~\ref{sec:basics}; for now we only mention that they share many useful traits with context-free languages, including polynomial time parsability, semi-linearity and the closure properties of being a full AFL. 

While MCFLs may seem artificial at first, they appear in some natural problems. One of them is the well known word problem, which is equivalent to recognising closed walks in a given Cayley graph. Anisimov~\cite{MR301981} showed that the arising language is regular if and only if the underlying group is finite, and Muller and Schupp~\cite{MR710250} showed that it is context-free if and only if the group is virtually free. The word problem on $\Z^d$ thus is not context free, but it was shown to be multiple context-free in seminal work by Salvati~\cite{MR3354791} for $d=2$, and this result has since been extended by Ho~\cite{MR3794915} to all positive integers $d$.

We are interested in the \emph{language of self-avoiding walks} defined by 
\[
L_{\SAW,o}=L(\Pcal_{\SAW,o})=\{\ell(p) \colon p \in \Pcal_{\SAW,o}\},
\]
where $\Pcal_{\SAW,o}$ is the set of all SAWs of length at least 1 on $G$ starting at $o$.
In his computation of $F_{\SAW,o}$ for the infinite ladder graph Zeilberger \cite{MR1406737} implicitly used that $L_{\SAW,o}$ for this graph is context free. More generally, Lindorfer and Woess~\cite{Lindorfer2020} showed that $L_{\SAW,o}$ on a locally finite, connected, quasi-transitive deterministically edge-labelled graph $G$ is regular if and only if all ends of $G$ have size 1, and that it is context-free if and only if all ends have size at most 2. In both of these cases $F_{\SAW,o}$ can be computed using an appropriate grammar generating $L_{\SAW,o}$. Our second main result generalises these results.

\begin{thm}\label{thm:main-2x}
Let $G$ be a simple, locally finite, connected, quasi-transitive deterministically edge-labelled graph and let $o \in V(G)$. Then $L_{\SAW,o}$ is an MCFL if and only if all ends of $G$ are thin.
\end{thm}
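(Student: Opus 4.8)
The plan is to prove the two directions separately. For the ``only if'' direction I would argue by contradiction: suppose $G$ has a thick end. Since MCFLs are semi-linear and closed under intersection with regular languages, I would try to intersect $L_{\SAW,o}$ with a suitable regular set to extract a sublanguage whose counting function grows too fast to be MCFL, or which directly violates the semi-linearity / pumping-type properties of MCFLs. Concretely, a thick end contains arbitrarily many pairwise disjoint rays, and one can build long self-avoiding walks that traverse a thin ``spine'' and repeatedly branch out along these rays; the combinatorics of such walks should force a failure of the (generalised) pumping lemma for MCFLs (e.g.\ the Parikh image is not semi-linear, or the growth is not eventually polynomial along the relevant directions). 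This mirrors the Lindorfer--Woess dichotomy for context-freeness, so I expect the argument to be a higher-dimensional analogue of theirs.

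For the ``if'' direction — the substantive part — I would use the structure theory of graphs with only thin ends. By Theorem~\ref{thm:main-1} (or rather the machinery behind it) such a graph admits a tree-decomposition into finite pieces that fit together in a tree-like fashion, with bounded adhesion; this is exactly the ``large scale tree structure'' alluded to in the introduction. The key idea is that a self-avoiding walk, when cut along the separators of this decomposition, decomposes into finitely many ``excursions'' in the finite pieces, and the pattern of how these excursions are glued is governed by a walk in the underlying tree. I would set up a grammar whose nonterminals encode (i) which piece of the decomposition we are in, (ii) the finitely many ways a SAW can enter and leave that piece through its bounded-size separators, together with the induced pairing/matching data on the separator vertices, and (iii) the relative order in which the separator vertices are visited. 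Because a separator has size at most some constant $c$, a SAW meets it in at most $c$ vertices, so it crosses any separator at most $\lceil c/2\rceil$ times; this bounds the number of ``interface components'' and gives the tuple-width $k$ of the MCFG. The productions would glue a piece's internal excursions to the recursively-generated excursions in the neighbouring pieces, concatenating the corresponding label-strings in the correct order — exactly the kind of operation an MCFG (which manipulates tuples of strings and may permute/concatenate their components) can perform, but a context-free grammar in general cannot.

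The main obstacle I anticipate is bookkeeping the \emph{order} in which a self-avoiding walk visits the vertices of a separator, and ensuring this order is compatible across adjacent pieces so that the reassembled walk is globally self-avoiding and the label word is assembled in the right sequence. Unlike the end-size-$\le 2$ case, where each separator is met at most twice and the gluing is essentially concatenation of two strings, here a walk may weave in and out of a piece several times, so the nonterminal must carry a matching on the separator's ``half-edges'' plus a linear order refining it, and the MCFG productions must check consistency of these orders on both sides of a shared separator. Handling this finite but intricate interface data, and verifying that the resulting grammar is genuinely $k$-multiple context-free (tuples of bounded width, finitely many rules), is where the real work lies; once the grammar is built, that $L(\mathrm{grammar}) = L_{\SAW,o}$ follows by the usual induction on the tree-decomposition. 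Quasi-transitivity enters to guarantee the decomposition has only finitely many isomorphism types of pieces and separators, so that the grammar is finite.
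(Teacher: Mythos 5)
Your ``if'' direction is, in outline, the paper's own argument: take an invariant tree decomposition with finite parts and bounded adhesion (Corollary~\ref{cor:graphdecomp}), let nonterminals record how a SAW meets a part and its adhesion sets (the configurations of Section~\ref{sec:configs}), bound the tuple width by $\lceil c/2\rceil$ because each returning excursion uses two separator vertices, and get finiteness of the grammar from quasi-transitivity (finitely many cone types). Apart from the expected missing bookkeeping (consistency of configurations, the bijection of Theorem~\ref{thm:bijsawsconfigs}, unambiguity), this matches Theorem~\ref{thm:sawlangmcf}.

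The genuine gap is in the ``only if'' direction, and it is not just missing detail: the tools you name cannot do the job. Every MCFL is semilinear, but so are the natural witness sublanguages one extracts from SAWs near a thick end; the relevant obstruction language $\{a_1^{n_1}\cdots a_m^{n_m} \mid n_1\geq\cdots\geq n_m\geq 0\}$ has a semilinear Parikh image and is itself an MCFL of higher rank, so no semilinearity or growth argument can place $L_{\SAW,o}$ outside the \emph{entire} MCFL hierarchy, which is what this theorem requires (failure of $k$-multiple context-freeness for every $k$, not merely non-context-freeness). Likewise the pumping lemma for $k$-MCFLs (Lemma~\ref{lem:pumplem}) is only existential --- it guarantees that \emph{some} word is pumpable, not that all long words are --- so it can never refute membership for $L_{\SAW,o}$, which always contains trivially pumpable families such as labels of walks running along a periodic ray; the paper points out this weakness explicitly right after Lemma~\ref{lem:pumplem}. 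What the actual proof needs is a genuine lower bound imported from outside the pumping machinery: Theorem~\ref{thm:mcfgpump} (that the staircase language above is not a $(\lceil m/2\rceil-1)$-MCFL), together with a reduction of $L_{\SAW,o}$ to it by spiral-shaped SAWs on a $\tau$-strip of size $2k+1$ (Lemma~\ref{lem:snailwalks}), using closure under intersection with regular languages and (inverse) homomorphisms, and relying on the deterministic labelling. Producing such strips in the thick-end case is also nontrivial: one first shows that multiple context-freeness forces a non-elliptic label-preserving automorphism (Lemma~\ref{lem:existnonell} --- the one place the pumping lemma is legitimately used; this step is needed because infinite graphs can be quasi-transitive under purely elliptic automorphisms), then passes to an infinite, one-ended, quasi-transitive part of a tree decomposition (Lemmas~\ref{lem:partsqtrans} and~\ref{lem:subgraphmcf}) and uses a parabolic element to obtain $\tau$-strips of every size (Lemma~\ref{lem:existtaustrips}). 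Your sketch of ``long SAWs branching along disjoint rays of the thick end, contradicting pumping/semilinearity'' would not yield a contradiction at any point of this chain.
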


In fact, what we prove is slightly stronger. Every MCFL can be assigned a rank (see Section~\ref{sec:languages} for details); an MCFL is called $k$-multiple context free if its rank is at most $k$. It is worth noting that the families of $k$-MCFLs form a strictly increasing hierarchy, and that $1$-MCFLs are exactly the context free languages. We show that the maximal size of an end of $G$ tells us exactly where $L_{\SAW,o}$ lies in this hierarchy.

\begin{thm}\label{thm:main-2}
Let $G$ be a simple, locally finite, connected, deterministically edge-labelled quasi-transitive graph and let $o \in V(G)$. Then $L_{\SAW,o}$ is $k$-multiple context-free if and only if every end of $G$ has size at most $2k$.
\end{thm}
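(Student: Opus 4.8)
The plan is to prove the two directions of Theorem~\ref{thm:main-2} separately, exploiting the known structure theory of quasi-transitive graphs with thin ends.

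\textbf{Structural preliminaries.}
First I would recall the relevant structure theory. A quasi-transitive, locally finite, connected graph $G$ all of whose ends are thin admits a tree-decomposition into finite parts that is invariant under $\AUT(G,\ell)$; this is essentially Thomassen--Woess / Dunwoody theory of vertex cuts, combined with Stallings-type arguments. More precisely, one can find an $\AUT(G,\ell)$-invariant family of finite vertex separators of bounded size such that $G$ decomposes along them into a tree of finite ``pieces'' (a structure tree). The key quantitative point is that if every end has size at most $2k$, then one can take all the separators to have size at most $2k$ (roughly: a separator of size $m$ between two infinite pieces yields $\lfloor m \rfloor$ disjoint rays in some end, and with more care one gets the bound in terms of end sizes). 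Conversely, an end of size $s$ forces arbitrarily many vertex-disjoint rays, hence in the tree-decomposition one cannot avoid adhesion sets of size about $s$.

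\textbf{The ``only if'' direction (from $k$-MCFL to end size $\le 2k$).}
Here I would argue by contraposition: assume $G$ has an end of size $s > 2k$ and show $L_{\SAW,o}$ is not $k$-MCFL. The idea is to use a pumping-type obstruction for $k$-MCFLs together with the geometry of $G$. An end of size $s$ gives a subgraph looking like $s$ parallel rays joined by ``rungs'', and by quasi-transitivity this pattern repeats. A SAW can traverse a long initial segment of this region, and the word it reads is constrained in a way that mimics an $s$-fold counting/matching: reading along the parallel rays forces $s$ coordinates to increase in lock-step, and because a SAW must eventually turn back, these must later be ``consumed'' in a matched fashion. Concretely I would find, inside $L_{\SAW,o}$, words whose structure forces any $k$-MCFG deriving them to have a component tuple of arity exceeding $k$, using the fact (cited from Section~\ref{sec:languages}) that $k$-MCFLs satisfy a pumping lemma producing at most $2k$ simultaneously pumpable factors with a bounded-distance constraint. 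Since $s > 2k$ parallel rays cannot be handled with only $2k$ pumpable positions, we get a contradiction. This is the more delicate direction and I expect it to be \emph{the main obstacle}, because one must carefully extract an explicit ``hard'' sublanguage of $L_{\SAW,o}$ from the abstract hypothesis of an end of size $s$, control it up to the (finite) ambiguity coming from the automorphism group acting only quasi-transitively, and then match it against the precise form of the MCFL pumping lemma.

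\textbf{The ``if'' direction (from end size $\le 2k$ to $k$-MCFL).}
Assuming every end has size at most $2k$, I would build a $k$-MCFG generating $L_{\SAW,o}$ directly from the invariant tree-decomposition. Since all ends are thin, Theorem~\ref{thm:main-1}'s machinery applies; the refinement is to track arity. A SAW in $G$ projects onto a connected subtree of the structure tree, and its intersection with each adhesion set (of size $\le 2k$) is a set of at most $2k$ vertices through which the walk passes; the portion of the SAW inside one piece, relative to which ``entry/exit'' vertices on its boundary adhesion sets it uses, is a finite object, and the walk restricted to a piece naturally breaks into at most $k$ maximal subwalks (since $2k$ boundary vertices are visited, pairing consecutive ones along the walk gives $\le k$ arcs). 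Thus the nonterminals of the grammar record a piece together with a matching of boundary vertices into at most $k$ pairs, and the tuple of $\le k$ strings is exactly the labels of these arcs. The productions encode how arcs in a piece concatenate with arcs in neighbouring pieces across a shared adhesion set, which is a finite gluing rule; finiteness of the rule set follows from quasi-transitivity (finitely many piece-isomorphism-types and finitely many boundary patterns). One then checks the grammar is well-nested/non-deleting as required and that it generates precisely $L_{\SAW,o}$: a parse tree corresponds to a SAW because the self-avoidance constraint is local to pieces and is enforced by only allowing arc-systems that are vertex-disjoint within each piece, while globally the tree structure prevents a walk from returning to a piece it has left. The arity bound $k$ is exactly the ``$2k$ boundary vertices, paired up'' count, which is where the factor of $2$ in the theorem comes from.
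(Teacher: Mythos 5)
Your ``if'' direction is essentially the approach the paper takes: an $\AUT(G,\ell)$-invariant tree decomposition with adhesion sets of size at most $2k$, nonterminals recording the trace of the walk on a part (the paper's ``configurations''), tuples given by the labels of the arcs into which the walk decomposes inside a cone, and the arity bound coming from the count ``$r$ arcs use at least $2r-1$ of the at most $2k$ adhesion vertices, hence $r\le k$''. Two small cautions: the relevant boundary is the single adhesion set towards the parent of a cone, not all boundary adhesion sets of a piece, and your claim that ``the tree structure prevents a walk from returning to a piece it has left'' is false -- a SAW can leave a part and re-enter it repeatedly, which is precisely why the paper needs virtual edges and the compatibility conditions \ref{itm:compatible-intersection}--\ref{itm:compatible-endvertex}, and why the correctness proof (the bijection of Theorem~\ref{thm:bijsawsconfigs} plus Lemma~\ref{lem:pathlabelequalsword}) is the bulk of the work rather than a routine check.

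The ``only if'' direction has a genuine gap: the tool you propose cannot close the argument. The pumping lemma for $k$-MCFLs (Lemma~\ref{lem:pumplem}) is a \emph{weak} pumping lemma -- it only asserts that \emph{some} word of the language is pumpable in at most $2k$ factors, not that every sufficiently long word is, and there is no bounded-distance strong version available for general MCFLs. So exhibiting words in $L_{\SAW,o}$ that ``need more than $2k$ pumpable positions'' yields no contradiction; the paper explicitly notes after Lemma~\ref{lem:pumplem} that the pumping lemma is too weak for this purpose. What the paper does instead is: (i) reduce, via quasi-transitivity, to the existence of a $\tau$-strip of size $2k+1$ -- for thin ends by Halin's theory, and for thick ends by an accessibility/tree-decomposition argument that first needs a non-elliptic automorphism (Lemma~\ref{lem:existnonell}, which is where the pumping lemma is actually used, and which relies on the deterministic labelling) and a parabolic element producing arbitrarily large strips; your sketch ignores thick ends and tacitly assumes the periodic ``repeating pattern'' along the end, which requires a translating automorphism, not just quasi-transitivity; (ii) build spiral-shaped SAWs in the strip, intersect $L_{\SAW,o}$ with a regular language and apply (inverse) homomorphisms to extract $\{c_1^{n_1}\cdots c_{2k+1}^{n_{2k+1}} : n_1\ge\cdots\ge n_{2k+1}\ge 0\}$; and (iii) invoke the exact-rank result Theorem~\ref{thm:mcfgpump} from \cite{lehner2020comparing}, which states this language is not $k$-multiple context-free. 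Step (iii) is an external, non-pumping ingredient that your plan does not replace; without it (or an equivalent lower-bound technique) the contraposition does not go through.
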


Applied to Cayley graphs of groups, Theorem~\ref{thm:main-2x} states that the language of self-avoiding walks on a Cayley graph of a group is multiple context-free if and only if the group is virtually free. In particular, the property of having a multiple context-free language of self-avoiding walks is a group invariant.

As mentioned above, we are following a similar approach as Alm and Janson in~\cite{MR1047969}. There are two key ingredients to this approach: firstly, decomposing the graph into finite parts, and secondly, analysing the restrictions of self-avoiding walks to these parts.

The decomposition into finite parts is formalised by the notion of tree decompositions which will be the subject of Section~\ref{sec:treedecomp}. Roughly speaking these are decompositions of a graph into parts which intersect in a tree-like manner. This notion was introduced by Halin~\cite{MR444522} in 1976; later it was rediscovered by Robertson and Seymour~\cite{MR742386} and plays a central role in the proof of the celebrated Graph Minor Theorem. For our applications it is crucial that the tree decompositions are invariant under some quasi-transitive group of automorphisms. Such tree decompositions have been constructed by Dunwoody and Krön~\cite{MR3385727}, inspired by a similar construction based on edge cuts introduced by Dunwoody in~\cite{MR671142}. 

The restriction of self avoiding walks to the parts of a tree decomposition is captured by the notion of configurations introduced and studied in Section~\ref{sec:configs}. Among other things we show that there is a bijection between SAWs and a specific class of configurations called \emph{bounded consistent configurations}, in other words, any SAW can be obtained by piecing together appropriate configurations. 

This bijection is central to the rest of the paper because it allows us to work with configurations rather than self avoiding walks; this turns out to be beneficial since configurations (unlike SAWs) carry a recursive structure.
In Section~\ref{sec:configgrammar} we use this recursive structure to show that the set of bounded consistent configurations is in bijection with a context free language; Theorem~\ref{thm:main-1} follows from this fact. In Section~\ref{sec:mcflofsaws}, again using the recursive structure, we show that $L_{\SAW,o}$ is an MCFL, thus proving the first half of Theorem~\ref{thm:main-2}. Finally, we combine techniques from \cite{Lindorfer2020} with a result from \cite{lehner2020comparing} to complete the proof of Theorem~\ref{thm:main-2}.

\section{Basic background}\label{sec:basics}

Throughout this paper, we denote by $\N$ the set of natural numbers starting at $1$ and by $[n]$ the set $\{k \in \N : k \leq n\}$.

\subsection{Graph theory}

A \emph{graph} $G$ consists of a set $V(G)$ of vertices and a set $E(G)$ of edges. Every edge $e \in E(G)$ starts at its \emph{initial vertex} $e^- \in V(G)$ and ends at its \emph{terminal vertex} $e^+ \in V(G)$. We do not allow loops, so the two endpoints $e^-$ and $e^+$ of every edge $e \in E(G)$ are different. Furthermore all graphs considered are \emph{undirected}, so all edges appear in pairs $e, \bar{e}$ having the same endpoints but different direction. In other words, for $e \in E(G)$ the edge $\bar{e} \in E$ satisfies $\bar{e}^-=e^+$ and $\bar{e}^+=e^-$ and $\bar{\bar{e}}=e$. When drawing a graph, every pair $(e,\bar{e})$ will usually be represented by and thought of as one undirected edge. A graph is called \emph{simple}, if it contains no \emph{multiple edges}, or in other words, if every edge $e$ is uniquely defined by the pair $(e^-,e^+)$ of its initial and terminal vertex. We sometimes abuse notation and write $e=e^-e^+$; if $G$ is not simple we still use this notation, but will include further information needed to identify $e$ among the edges with the same initial and terminal vertices if necessary. The \emph{degree} $\deg(v)$ of a vertex $v$ is the number of outgoing edges of $v$. The graph $G$ is called \emph{locally finite}, if all vertices have finite degrees.

A \emph{walk} in a graph is an alternating sequence $p=(v_0,e_1,v_1, \dots, e_n,v_n)$ of vertices $v_i \in V(G)$ and edges $e_i \in E(G)$ such that $e_{i}^-=v_{i-1}$ and $e_i^+=v_i$ for every $i \in [n]$. Its \emph{length} is the number $n$ of edges and its \emph{initial} and \emph{terminal vertices} are $p^-=v_0$ and $p^+=v_n$, respectively. This comprises the \emph{trivial walk} $(v)$ of length 0, starting (and ending) at a vertex $v$ and also the \emph{empty walk} $\emptyset$ consisting of no vertices and no edges. A walk $p$ is called \emph{self-avoiding} or a \emph{SAW}, if the vertices in $p$ are pairwise different.
For two vertices $v$ and $w$ of $p$ we write $v p w$ for the maximal sub-walk of $p$ starting at $v$ and ending at $w$. If $v=v_0$ or $w=v_n$ we omit the corresponding vertex and denote the sub-walk by $p w$ or $v p$, respectively. We extend this notation even further and denote for walks $p_1, \dots, p_n$ and vertices $v_0, \dots, v_n$ in the respective walks the concatenation $(v_0 p_1 v_1)(v_1 p_2 v_2) \dots (v_{n-1} p_n v_n)$ of the sub-walks $v_{i-1} p_i v_i$ by $v_0 p_1 v_1 p_2 \dots p_n v_n$. If the terminal vertex $v$ of $p_1$ coincides with the initial vertex of $p_2$, we write $p_1p_2$ instead of $p_1vp_2$, and similarly for concatenations of multiple walks. If $e$ is an edge connecting the terminal vertex $v_1$ of $p_1$ to the initial vertex $v_2$ of $p_2$, then we write $p_1ep_2$ instead of $p_1v_1(v_1,e,v_2)v_2p_2$, and similarly for concatenations with more parts. A walk is \emph{closed} if its initial and terminal vertex coincide. A closed walk $p$ of length at least 3 such that $v_1 p$ is self-avoiding is called \emph{cycle}.

A \emph{multi-walk} $p$ is a sequence of vertices and edges obtained by stringing together the sequences of vertices and edges corresponding to walks $p_1, \dots, p_k$; the $p_i$ are called the \emph{walk components} of $p$. In other words, a multi-walk is a sequence of vertices and edges, such that every edge in the sequence is preceded by its initial vertex and succeeded by its terminal vertex. Note that each of the walks $p_i$ is a sequence starting and ending with a vertex, so that the final vertex of $v_i$ and the initial vertex of $v_{i+1}$ will appear next to each other in the sequence $p$. In fact every appearance of two consecutive vertices in a multi-walk marks the start of a new walk component.

For a (multi-)walk $p$ on $G$ and $A \subset V(G) \cup E(G)$ we denote by $p \cap A$ the sequence obtained from $p$ by deleting all elements not in $A$ and by $p-A$ the sequence obtained by deleting all elements of $A$. For a sub-graph $H$ of $G$ we write $p \cap H$ for the sequence $p \cap (V(H) \cup E(H))$. In general the sequences $p \cap A$ and $p - A$ need not be multi-walks, but we note that $p-E$ is a multi-walk for $E \subseteq E(G)$, as is $p \cap H$ for any sub-graph $H$ of $G$. 

A graph $G$ is \emph{connected} if any two vertices $v,w$ are the endpoints of some walk $p$ in $G$. \emph{Components} of $G$ are maximal connected sub-graphs. 

A \emph{tree} is a connected and cycle-free graph. A \emph{rooted tree} is a tree in which one vertex has been designated the \emph{root}. For vertices $u$ and $v$ of a rooted tree we say that $u$ is an \emph{ancestor} of $v$ and $v$ is a \emph{descendant} of $u$ if any walk from $r$ to $v$ contains $u$. The unique ancestor of $v$ which is a neighbour of $v$ is called its \emph{parent} and denoted by $v^\ua$, descendants of $v$ in the neighbourhood of $v$ are called its \emph{children}. The \emph{forefather} of a set $A \subseteq V(T)$ is the unique common ancestor of all vertices in $A$ none of whose children are ancestors of all vertices in $A$. The \emph{cone} at a vertex $v$ in a rooted tree, denoted by $K_v$, is the subtree induced by $v$ and its descendants. An \emph{ordered tree} is a rooted tree in which an ordering is specified for the children of each vertex; in this case we will denote the $i$-th child of a vertex $v$ with respect to this order by $v^{\da}_i$. 

A tree consisting only of vertices of degree at most 2 is called \emph{path}. We point out that unlike walks, paths are graphs and have no direction; a finite path can be seen as the support of a SAW. Given two disjoint subsets $A$ and $B$ of vertices of a graph $G$, an \emph{$A$--$B$-path} on $G$ is a sub-graph of $G$ which is a finite path intersecting $A$ and $B$ only in its two endpoints. A \emph{ray} is a one-way infinite path and a \emph{double ray} is a two-way infinite path. 

For any set $K \subseteq V(G)$ we denote by $G-K$ the sub-graph obtained from $G$ by removing $K$ and all edges incident to $K$. If removing $K$ disconnects $G$, then $K$ is called a \emph{separating set}. Furthermore, we denote by $G[K]$ the sub-graph of $G$ \emph{induced} by $K$, that is the graph $G-(V(G)\setminus K)$. 

Two rays in a graph $G$ are called \emph{equivalent}, if for every finite set $K \subseteq V(G)$ they end up in the same component of $G-K$, that is, all but finitely many of their vertices are contained in that component. An \emph{end} of $G$ is an equivalence class of rays with respect to this equivalence relation. We say that two ends $\omega_1$ and $\omega_2$ of a graph $G$ are \emph{separated} by $K$ if any two rays $R_1 \in \omega_1$ and $R_2 \in \omega_2$ end up in different components of $G-K$. Halin~\cite{MR190031} showed that an end containing arbitrarily many disjoint rays must contain an infinite family of disjoint rays, hence the maximum number of disjoint rays contained in an end $\omega$ is well defined and lies in $\N \cup \{\infty\}$. This number is called the \emph{size} of the end $\omega$. An end of finite size is called \emph{thin}, an end of infinite size is called \emph{thick}.

An \emph{automorphism} $\gamma$ of a graph $G$ is a permutation of $V(G)$ preserving the neighbourhood relation in $G$. The set of all automorphisms of $G$ forms a group which is called the \emph{automorphism group} of $G$ and denoted by $\AUT(G)$. For a subgroup $\Gamma \subseteq \AUT(G)$ we can define an equivalence relation on $V(G)$ by $u \sim v \iff \exists \gamma \in \Gamma\colon u = \gamma v$. The equivalence classes with respect to this relation are called \emph{orbits} and denoted by $\Gamma v$. We say that $\Gamma$ acts \emph{transitively}, if there is exactly one orbit, and that it acts \emph{quasi-transitively}, if there are only finitely many orbits. In this case the graph $G$ is also called \emph{(quasi-)transitive}. A sub-graph $H$ of $G$ is called $\gamma$-invariant for $\gamma \in \AUT(G)$ if $\gamma(H)=H$.

It is well known, that any infinite, locally finite, connected graph which is quasi-transitive has either one, two, or infinitely many ends. If it has one end, this end is thick. If it has two ends, both are thin and must have the same size. Finally, if it has infinitely many ends, then it must have thin ends. These and many more results were given by Halin in~\cite{MR335368}.

The action of the automorphism group $\AUT(G)$ of a locally finite, connected graph extends in the obvious way to its ends. An automorphism $\gamma \in \AUT(G)$ is called
\begin{enumerate}[label=(\roman*)]
    \item \emph{elliptic}, if its fixes a finite subset of $V(G)$,
    \item \emph{parabolic}, if it is not elliptic and fixes a unique end of $G$, and
    \item \emph{hyperbolic}, if it is not elliptic and fixes each of a unique pair of ends of $G$.
\end{enumerate}

Halin~\cite{MR335368} showed that any graph automorphism is either elliptic, or parabolic, or hyperbolic, and additionally, that these different types of automorphisms have the following properties. Firstly, $\gamma$ is elliptic if and only if for some ($\iff$ every) vertex $v$ of $G$ the sequence $v, \gamma v, \gamma^2 v, \dots$ contains only finitely many different vertices (and is periodic). Secondly, if $\gamma$ is hyperbolic then the two ends fixed by $\gamma$ have the same finite size $k$ and $G$ contains $k$ disjoint double rays, which are invariant under $\gamma^n$ for some $n \in \N$. Finally, if $\gamma$ is parabolic then the unique end fixed by $\gamma$ is thick and $G$ contains infinitely many double rays invariant under $\gamma^n$ for some $n \in \N$.

Recall that a \emph{quasi-isometry} between two metric spaces $(X, d_X)$ and $(Y, d_Y)$ is a mapping $\varphi: X \to Y$ such that there are constants $A > 0$, and $B, B' \ge 0$ such that for all
$x_1\,,x_2 \in X$ and $y \in Y$,
\[
A^{-1}d_X(x_1,x_2) - B \le d_Y(\varphi x_1,\varphi x_2) \le  A d_X(x_1,x_2) + B \quad \text{and} \quad  d_Y(y, \varphi X) \le B'.
\]
Two connected graphs $G$ and $H$ are called \emph{quasi-isometric}, if the corresponding metric spaces $(G,d_G)$ and $(H, d_H)$ are quasi-isometric, where $d_G$ and $d_H$ denote the standard graph distance in $G$ and $H$, respectively. Every quasi-isometry $\varphi$ has a quasi-inverse $\psi: Y \rightarrow X$, which is a quasi-isometry such that $\psi \varphi$ and $\varphi \psi$ are at bounded distance from the respective identity mappings. In particular being quasi-isometric is an equivalence relation. It is well known that any quasi-isometry between graphs $G$ and $H$ can be extended to the ends of $G$ and that this extension maps thick ends to thick ends and thin ends to thin ends, see for example \cite[Lemma 21.4]{MR1743100}.

An \emph{edge-labelled} graph is a graph $G$ together with a \emph{label function} $\ell$ assigning to every edge $e \in E(G)$ an element of some finite set $\Si$, called \emph{label alphabet}. The labelling is called \emph{deterministic}, if any two edges $e$ and $f$ starting at the same vertex $e^-=f^-$ have different labels. For quasi-transitive graphs, we would like the edge-labelling to be compatible with the action of a quasi-transitive subgroup of $\AUT(G)$. To this end, we denote by $\AUT(G,\ell)$ the group of \emph{label-preserving} graph automorphisms; when speaking of a quasi-transitive edge-labelled graph it will be implicitly assumed that $\AUT(G,\ell)$ acts quasi-transitively. Note that in the case of a deterministic labelling $\ell$, the group $\AUT(G,\ell)$ acts freely on $G$, that is, the identity in $\AUT(G,\ell)$ is the only element fixing a vertex of $G$.

One well-known class of simple, connected, locally finite, transitive graphs that come with a natural deterministic edge-labelling are Cayley graphs of finitely generated groups. Starting with a symmetric generating set $S$ of a group $\Gamma$, the \emph{Cayley graph} $G=\Cay(\Gamma,S)$ has vertex set $V(G)=\Gamma$. We choose $\Si=S$, and for each $\gamma \in \Gamma$ and $s \in S$ there is a directed edge from $\gamma$ to $\gamma s$ with label $s$. The left regular action of $\Gamma$ on itself extends to an action on $G$ by label preserving automorphisms; in fact, it is not hard to see that $\Gamma = \AUT(G,\ell)$.

We finish this section by giving a simple example of a Cayley graph which will be used as a running example to demonstrate various constructions throughout this paper. Let $C_n$ denote the cyclic group of order $n$ and consider the group $\Gamma=(C_2 * C_2 * C_2) \times C_3$, that is, the direct product of $C_3$ and a free product of three copies of $C_2$. Let $a$, $b$ and $c$ be the generators of the copies of $C_2$ and let $r$ be the generator of $C_3$. Then $\Gamma$ can be represented as $\langle a,b,c,r \mid a^2=b^2=c^2=r^3=arar^{-1} = brbr^{-1} = crcr^{-1} = 1_{\Gamma} \rangle$. Figure~\ref{fig:caleygraph} shows the Cayley graph $G$ of $\Gamma$ with respect to the symmetric generating set $S=\{a,b,c,r,r^{-1}\}$.

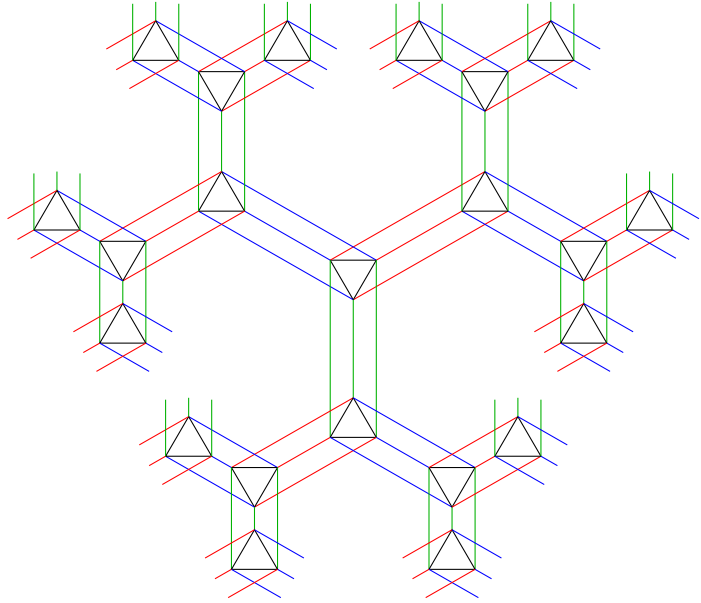
\begin{figure}
	\centering
	\begin{tikzpicture}
	\begin{scope}[scale=0.5]
	\def\dista{4}
	\def\distb{3}
	\def\distc{2}
	\def\diamr{0.7}
	\def\diama{0.7}
	\def\diamb{0.7}
	\def\diamc{0.7}
	\foreach \i in {0,1,2} {
		\draw (120*\i+30:\diamr)--(120*\i+150:\diamr);
		\coordinate (A\i) at (120*\i+30:\dista);
		\draw[gen\i] (A\i)+(120*\i+210:\diama) --(120*\i+30:\diamr);
		\draw[gen\i] (A\i)+(120*\i+90:\diama) --(120*\i+150:\diamr);
		\draw[gen\i] (A\i)+(120*\i+330:\diama) --(120*\i+270:\diamr);
		\draw (A\i)+(120*\i+90:\diama) -- +(120*\i+330:\diama);
		\foreach \j in {0,1} {
		    \draw (A\i)+(120*\i+120*\j+210:\diama) -- +(120*\i+120*\j+90:\diama);
		    \coordinate(B\i\j) at ($(A\i)+(120*\i+120*\j+330:\distb)$);
		    \pgfmathparse{int(mod(\i+\j+1,3))}
		    \draw[gen\pgfmathresult] (B\i\j)+(120*\i+120*\j+150:\diamb) -- ($(A\i)+(120*\i+120*\j+330:\diama)$);
		    \draw[gen\pgfmathresult] (B\i\j)+(120*\i+120*\j+30:\diamb) -- ($(A\i)+(120*\i+120*\j+90:\diama)$);
		    \draw[gen\pgfmathresult] (B\i\j)+(120*\i+120*\j+270:\diamb) -- ($(A\i)+(120*\i+120*\j+210:\diama)$);
		    \draw (B\i\j)+(120*\i+120*\j+30:\diamb) -- +(120*\i+120*\j+270:\diamb);
		    \foreach \k in {0,1} {
    		    \draw (B\i\j)+(120*\i+120*\j+120*\k+150:\diamb) -- +(120*\i+120*\j+120*\k+30:\diamb);
    		    \coordinate(C\i\j\k) at ($(B\i\j)+(120*\i+120*\j+120*\k+270:\distc)$);
    		    \pgfmathparse{int(mod(\i+\j+\k+2,3))}
    		    \draw[gen\pgfmathresult] (C\i\j\k)+(120*\i+120*\j+120*\k+210:\diamc) -- ($(B\i\j)+(120*\i+120*\j+120*\k+150:\diamb)$);
    		    \draw[gen\pgfmathresult] (C\i\j\k)+(120*\i+120*\j+120*\k+330:\diamc) -- ($(B\i\j)+(120*\i+120*\j+120*\k+30:\diamb)$);
    		    \draw[gen\pgfmathresult] (C\i\j\k)+(120*\i+120*\j+120*\k+90:\diamc) -- ($(B\i\j)+(120*\i+120*\j+120*\k+270:\diamb)$);
    		    \draw (C\i\j\k)+(120*\i+120*\j+120*\k+210:\diamc) -- +(120*\i+120*\j+120*\k+330:\diamc) -- +(120*\i+120*\j+120*\k+90:\diamc) -- cycle;
    		    \foreach \l in {0,1} {
    		        \pgfmathparse{int(mod(\pgfmathresult-\l+2,3))}
        		    \draw[gen\pgfmathresult] (C\i\j\k)++(120*\i+120*\j+120*\k-120*\l+330:\diamc) -- +(120*\i+120*\j+120*\k-120*\l+330:0.5);
        		    \draw[gen\pgfmathresult] (C\i\j\k)++(120*\i+120*\j+120*\k-120*\l+210:\diamc) -- +(120*\i+120*\j+120*\k-120*\l+330:1.5);
        		    \draw[gen\pgfmathresult] (C\i\j\k)++(120*\i+120*\j+120*\k-120*\l+90:\diamc) -- +(120*\i+120*\j+120*\k-120*\l+330:1.5);
        		}
    		}
		}
	}	
	\end{scope}
  	\end{tikzpicture}
  	\caption{Cayley graph of the group $(C_2*C_2*C_2) \times C_3$. Different edge colours correspond to different generators.}
  	\label{fig:caleygraph}
\end{figure}

\subsection{Formal languages}\label{sec:languages}

For an \emph{alphabet} (finite set of letters) $\Si$ we denote by 
\[
\Si^*=\{w=a_1 a_2 \dots a_n \mid n \geq 0, a_i \in \Si\}
\]
the set of all words over $\Si$. The number $n$ of letters in a word $w$ is called the \emph{length} of $w$ and denoted by $\abs{w}$; we write $\epsilon$ for the unique word of length 0. A \emph{language} over $\Si$ is a subset of $\Si^*$.

Any set of walks on an edge-labelled graph $G$ defines a language in the following way. Extend the label function $\ell$ to walks $p=(v_0,e_1,\dots, e_n, v_n)$ by setting
\[
\ell(p)=\ell(e_1) \ell(e_2) \dots \ell(e_n) \in \Si^*.
\]
Then for any set of walks $\Pcal$ on $G$, the associated language is 
\[
L(\Pcal)=\{\ell(p) : p \in \Pcal).
\]
For a given vertex $o$ of $G$ we denote by $\Pcal_{\SAW,o}$ the set of SAWs of length at least 1 on $G$ starting at $o$ and by $L_{\SAW,o}=L(\Pcal_{\SAW,o})$ the associated \emph{language of self-avoiding walks}. Note that if the edge-labelling is deterministic, then $\ell$ is a bijection between $\Pcal_{\SAW,o}$ and $L_{\SAW,o}$.

A \emph{context-free grammar} is a quadruple $\Gra=(\Non,\Si,\Pro,S)$, where $\Non$ is a finite set of \emph{non-terminals} with $\Non \cap \Si = \emptyset$, $S \in \Non$ is the \emph{start symbol} and $\Pro \subseteq \Non \times (\Non \cup \Si)^*$ is a finite set of \emph{production rules}. We write $A \vdash \alpha$ for $(A,\alpha) \in \Pro$. 

For two strings $\alpha,\beta \in (\Non \cup \Si)^*$, we say $\beta$ is obtained from $\alpha$ in a \emph{single step of leftmost derivation}, and write $\alpha \Rightarrow \beta$, if $\alpha=\alpha_1 A \alpha_2$ and $\beta= \alpha_1 \gamma \alpha_2$ for some $\alpha_1 \in \Si^*$, $\alpha_2 \in (\Non \cup \Si)^*$ and additionally $A \vdash \gamma \in \Pro$. Thus, $\beta$ is a result of using the rule $A \vdash \gamma$ to replace the leftmost non-terminal in $\alpha$. A \emph{leftmost derivation} of $\beta$ from $\alpha$ is a sequence \[(\alpha=\alpha_0,\alpha_1,\dots,\alpha_k=\beta)\] 
such that $\alpha_{i-1} \Rightarrow \alpha_i$ for every $i$. We say that $\beta$ is derived from $\alpha$ and write $\alpha \xRightarrow{*} \beta$. Each non-terminal $A \in \Non$ generates a language $L_A=\{\alpha \mid A \xRightarrow{*} \alpha\}$ and the \emph{language generated by} the grammar $\Gra$ is $L(\Gra)=L_S$. The grammar is called unambiguous if for every $\alpha \in L(\Gra)$ there is a unique leftmost derivation generating $\alpha$.

For a given context-free grammar $\Gra$ a \emph{derivation tree} is an ordered tree $D$ together with a labelling $\lambda:V(D) \rightarrow \Non \cup \Si^*$ such that
\begin{enumerate}[label=(\roman*)]
    \item internal vertices have labels in $\Non$,
    \item leaves have labels in $\Si^*$ and
    \item whenever $v_1, \dots, v_k$ are the ordered children of $u$ in $D$, 
    \[\lambda(u) \vdash \lambda(v_1) \dots \lambda(v_k) \in \Pro.\]
\end{enumerate}

Any ordered tree induces a total order $u_1, \dots, u_k$ on its leaves and we call $D$ a derivation tree of $w \in \Si^*$ if $w=\lambda(u_1) \dots \lambda(u_k)$. It is a standard result in formal language theory that there is a bijection between leftmost derivations of $w \in \Si^*$ from $A \in \Non$ and derivation trees of $w$ whose roots are labelled $A$.

The \emph{(commutative) language generating function} of a given language $L$ over the alphabet $\Si=\{a_1, \dots, a_{m}\}$ is a formal power series in the commuting variables $a_1, \dots, a_{m}$ over $\mathbb{Z}$ 
\[
F_L(a_1, \dots, a_{m})= \sum_{w \in L} c(w), 
\]
where $c(w)=\prod_{i=1}^{m} a_i^{l_i}$ for any word $w$ containing exactly $l_i$ copies of the letter $a_i$.

A famous result of Chomsky and Sch\"utzenberger~\cite{MR0152391} states that the commutative generating function of the language $L(\Gra)$ generated by an unambiguous context-free grammar $\Gra$ is algebraic over $\mathbb{Q}$, meaning that there is an irreducible polynomial $P$ in $m+1$ variables with coefficients in $\mathbb{Q}$ such that
\[
P\left(F_L(a_1, \dots, a_m),a_1, \dots, a_{m}\right)=0.
\]
A proof of this statement can for example be found in the book~\cite{MR817983} of Kuich and Salomaa.

\medskip

To motivate the upcoming introduction of multiple context-free grammars as a generalisation of context-free grammars, let us briefly discuss a different notation for context-free grammars $\Gra=(\Non,\Si,\Pro,S)$. When producing words, one usually starts with the start symbol $S$ and iteratively replaces non-terminals according to the rules given by $\Pro$. In terms of derivation trees, we build the trees starting from the top (root). 

There is also an alternative way to build words using production rules. A production rule $A \vdash x_0 A_1 x_1 \dots A_n x_n$ tells us that we can obtain an element of the language $L_A$ generated by $A$ by sticking together the strings $x_0, \dots, x_n \in \Si^*$ with strings $y_i \in L_{A_i}$, $i \in [n]$, according to the rule. This way of thinking is closely related to predicate logic. We might say that a word $w \in \Si^*$ has property $A \in \Non$ and write $\vdash_{\Gra} A(w)$ if $w \in L_A$. Then the rule $A \vdash x_0 A_1 x_1 \dots A_n x_n$ is equivalent to the statement 
\begin{center}
    ``If $\vdash_{\Gra} A_1(z_1), \dots, \vdash_{\Gra} A_n(z_n)$, then also $\vdash_{\Gra}A(x_0 z_1 x_1 \dots z_n x_n)$''.
\end{center} 
In this statement, the $z_i$ play the role of variables and it is natural to write the production as
\[
    A(x_0 z_1 x_1 \dots z_n x_n) \leftarrow A_1(z_1), \dots, A_n(z_n).
\]
With this in mind, the natural way to generate words is by starting with terminal rules and constructing the derivation from bottom to top, starting at its leaves.

Keeping this in mind, we introduce multiple context-free languages. Just like in the definition of context-free languages, the first two ingredients are an alphabet $\Si$ and a set of non-terminals $\Non$. As noted above we can intuitively view every non-terminal in a context-free language as a property applying to all strings it generates. Similarly, a non-terminal of a multiple context-free language should be viewed as a property applying to tuples of strings. To this end, every non-terminal is assigned an integer $r \geq 1$ counting the size of the tuples, called \emph{rank}. In other words, $\Non$ is a finite disjoint union $\Non=\bigcup_{r \in \mathbb{N}} \Non^{(r)}$ of finite sets $\Non^{(r)}$, whose elements are called \emph{non-terminals of rank $r$}. \emph{Production rules} $\rho$ of a multiple context-free grammar with non-terminals $\Non$ and alphabet $\Si$ are expressions of the form 
\[
\rho = A(\alpha_1, \dots, \alpha_r) \leftarrow A_1(z_{1,1}, \dots, z_{1,r_1}), \dots, A_n(z_{n,1}, \dots, z_{n,r_n}),
\]
where
\begin{enumerate}[label=(\roman*)]
    \item $n \geq 0$,
    \item $A \in \Non^{(r)}$ and $A_i \in N^{(r_i)}$ for all $i \in [n]$,
    \item $z_{i,j}$ are variables,
    \item $\alpha_1, \dots, \alpha_r$ are strings over $\Si \cup \{z_{i,j} \mid i \in [n], j \in [r_i]\}$, such that each $z_{i,j}$ occurs at most once in $\alpha_1 \dots \alpha_r$.
\end{enumerate}
Production rules with $n=0$ are called \emph{terminating rules}.
For convenience we sometimes use the shortened notation
\[
A(\alpha_1, \dots, \alpha_r) \leftarrow \left( A_i(z_{i,1}, \dots, z_{i,r_i}) \right)_{i \in [n]}
\]
for the production rule $\rho$.
For a non-terminal $A \in \Non^{(r)}$ and words $w_1, \dots, w_r \in \Si^*$ an expression of the form $A(w_1, \dots, w_r)$ is called a \emph{term}. The \emph{application} of the production rule $\rho$ to a sequence $(A_i(w_{i,1}, \dots, w_{i,r_i}))_{i \in [n]}$ of $n$ terms yields the term $A(w_1, \dots, w_r)$, where $w_l$ is obtained from $\alpha_l$ by replacing every variable $z_{i,j}$ by the word $w_{i,j}$.
The non-terminal $A$ is called the \emph{head} of the production and the sequence of non-terminals $A_1, \dots, A_n$ are its \emph{tail}. 

A \emph{multiple context-free grammar} is a quadruple $\Gra=(\Non,\Si,\Pro,S)$, where $\Non$ is a finite ranked set of non-terminals, $\Si$ is a finite alphabet, $\Pro$ is a finite set of production rules over $(\Non,\Si)$ and $S \in \Non^{(1)}$ is the \emph{start symbol}. We call $\Gra$ \emph{$k$-multiple context-free} or a \emph{$k$-MCFG}, if the rank of all non-terminals is at most $k$.

A term $\tau$ is called \emph{derivable} in $\Gra$ and we write $\vdash_\Gra \tau$ if there is a sequence $\mathcal{A}$ of derivable terms such that the application of a rule $\rho \in \Pro$ to $\mathcal{A}$ yields $\tau$. It is implicit in this recursive definition that if $A(w_1, \dots, w_r) \leftarrow$ is a terminal rule, then the term $A(w_1, \dots, w_r)$ is derivable by letting $\mathcal{A}$ be the empty sequence.
The language \emph{generated} by $\Gra$ is the set $L(\Gra)=\{w \in \Si^* \mid \; \vdash S(w)\}$. We call a language \emph{$k$-multiple context-free} or an $k$-MCFL if it is generated by an $k$-MCFG.

The following simple example of a 2-MCFG should be beneficial for a better understanding of the concepts above.

\begin{exa}
    Consider the MCFG $\Gra=(\Non,\Si,\Pro,S)$, where $\Non=\{S, A\}$, $\Si=\{a,b,c\}$ and the set $\Pro$ consists of the rules $\rho_1, \dots, \rho_5$ given as follows:
    \begin{alignat*}{2}
        &\rho_1: \quad & S(z_1 z_2) &\leftarrow A(z_1,z_2), \\
        &\rho_2: \quad & A(az_1b,z_2c) &\leftarrow A(z_1,z_2),\\
        &\rho_3: \quad & A(az_1b,z_2) &\leftarrow A(z_1,z_2),\\
        &\rho_4: \quad & A(az_1,z_2) &\leftarrow A(z_1,z_2),\\
        &\rho_5: \quad & A(\epsilon,\epsilon) &\leftarrow.
    \end{alignat*}

    From the production rules it is immediately clear that the rank of $A$ is 2 as $A$ works with pairs of strings. The rank of the start symbol $S$ is 1 by definition, so that $\Gra$ is 2-multiple context-free. We use the recursive definition above to find all terms derivable in $\Gra$.

    By the terminal rule $\rho_5$, the term $A(\epsilon,\epsilon)$ is derivable, we write $\vdash_G A(\epsilon,\epsilon)$. Applying $A(az_1,z_2) \leftarrow A(z_1,z_2)$ to the term $A(\epsilon,\epsilon)$, we replace $z_1$ and $z_2$ on the left side of the rule by the empty word $\epsilon$ and obtain the term $A(a,\epsilon)$. In a similar way, iterative application of $\rho_4$ yields that all terms of the form $A(a^k,\epsilon), k \geq 0$ are derivable. Making use of rule $\rho_3$, we obtain $\vdash_\Gra A(a^{k+l}b^l,\epsilon)$ for every $k,l \geq 0$. Analogously, the rule $\rho_2$ provides $\vdash_\Gra A(a^{k+l+m}b^{l+m},c^m)$ for $k,l,m \geq 0$. In a final step, we use the rule $S(z_1 z_2) \leftarrow A(z_1,z_2)$ containing the start symbol $S$ to create words of the language $L(\Gra)$: this rule $\rho_1$ is used to concatenate the pairs $(w_1,w_2)$ of strings appearing in derivable terms $A(w_1,w_2)$ and yields $\vdash_{\Gra} S(w_1 w_2)$. As a conclusion, all terms of the form $S(a^{k+l+m}b^{l+m}c^m)$ are derivable. 
    
    For the converse direction, note that any derivable term $S(w)$ must arise from an application of $\rho_1$, so that $w=w_1 w_2$ for some derivable term $A(w_1,w_2)$. It is not hard to see that in any such term, $w_1=a^k b^l$ and $w_2=c^m$ holds for some $k \geq l \geq m \geq 0$: as the only term arising from a terminal rule, $A(\epsilon,\epsilon)$ satisfies this condition and the rules $\rho_2$, $\rho_3$ and $\rho_4$ preserve it.
    We conclude that the language generated by $\Gra$ is
    \[
        L(\Gra)=\{a^{k+l+m}b^{l+m}c^m \mid k,l,m \geq 0\} = \{a^k b^l c^m \mid k \geq l \geq m \geq 0\}. 
    \]
\end{exa}

\begin{rmk} \label{rmk:cflanguages}
As one might already guess from the discussion right before the introduction of MCFGs, the class of context-free languages coincides with the class of 1-multiple context-free languages. A given context-free grammar $\Gra=(\Non,\Si,\Pro,S)$ can be easily translated into a 1-MCFG by replacing every production rule 
\[
A \vdash w_0 A_1 w_1 A_2 \dots A_n w_n \in \Pro,\] 
where $A, A_1, \dots, A_n \in \Non$ and $w_1, \dots, w_n \in \Si^*$ with the multiple-context-free production rule 
\[A(w_0 z_1 w_1 z_2 \dots z_n w_n) \leftarrow A_1(z_1), \dots, A_n(z_n)\] 
over $(\Non,\Si)$. The resulting 1-MCFG $\Gra'=(\Non,\Si,\Pro',S)$ then generates the same language $L(\Gra)$.
\end{rmk}

\begin{rmk}
Sometimes it will be convenient to work with a slightly different definition of multiple context-free grammars allowing non-terminals $A$ to have rank $r=0$. For such a non-terminal $A$ of rank $0$, the only valid term is $A(\emptyset)$, where $\emptyset$ denotes the empty tuple. We point out that $\emptyset$ is different from the 1-tuple $(\epsilon)$ containing the empty string. Note that the generative ability of $k$-multiple context-free languages does not change and that all properties discussed here remain valid under this variation. 
\end{rmk}

In the Chomsky-hierarchy of formal languages, multiple context-free languages lie strictly between context-free languages and the bigger class of context-sensitive languages. MCFLs share some important properties with context-free languages. They are closed under under homomorphisms, inverse homomorphisms, union, intersection with regular languages and Kleene closure. Furthermore they are parsable in polynomial time and semilinear.

Derivation trees for multiple context-free languages were first defined by Seki et al~\cite{MR1131066}; we use a slight variation of their definition. Let $\Gra=(\Non,\Si,\Pro,S)$ be an MCFG. A \emph{derivation tree} of a term $\tau$ with respect to the grammar $\Gra$ is an ordered tree $D$ whose vertices are labelled with elements of $\Pro$ satisfying the following conditions:
\begin{enumerate}[label=(\roman*)]
    \item The root of $D$ has $n \geq 0$ children and is labelled with a rule $\rho \in \Pro$. 
    \item For $i \in [n]$ the subtree $D_i$ rooted at the $i$-th child of the root of $D$ is a derivation tree of a term $\tau_i$.
    \item The rule $\rho$ applied to the sequence $(\tau_i)_{i \in [n]}$ yields $\tau$. 
\end{enumerate}
It is not hard to see that $\vdash A(w_1, \dots, w_r)$ if and only if there is a derivation tree $D$ of $A(w_1, \dots, w_r)$. However, in general such a derivation tree need not be unique. 
An MCFG $\Gra$ is called \emph{unambiguous}, if for every term $S(w)$ there is at most one derivation tree of $S(w)$ with respect to $\Gra$. An MCFL is called \emph{unambiguous} if it is generated by an unambiguous MCFG.
We denote by $w(D)$ the tuple of strings $w_1, \dots, w_r$ generated by $D$.

In some sense derivations trees of MCFGs are more natural than derivation trees of CFGs. The tree corresponding to the derivation process of a term $\tau$ in an MCFG consists of a single vertex labelled $\rho$ for every rule $\rho$ occurring in the process.

The pumping lemma for $k$-MCFLs, similarly to the well known pumping lemma for CFLs, provides a convenient way to show that certain languages are not $k$-multiple context free.
\begin{lem}[{\cite[Lemma 3.2]{MR1131066}}] \label{lem:pumplem}
For every infinite $k$-MCFL $L$ there is some $w \in L$, which can be written in the form $w=x_1 y_1 x_2 y_2 \dots x_{2k} y_{2k} x_{2k+1}$ for some $x_i, y_i, \in \Si^*$ such that 
\begin{itemize}
    \item $y_1 y_2 \dots y_{2k} \neq \epsilon$ and
    \item $x_1 y_1^n x_2 y_2^n \dots x_{2k} y_{2k}^n x_{2k+1} \in L$ for every $n \in \N_0$.
\end{itemize}
\end{lem}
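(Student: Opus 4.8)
The plan is to run the classical pumping argument for context-free grammars, the only new feature being that the pumpable ``context'' now acts on tuples of strings rather than on single strings. First I would pass to an equivalent $k$-MCFG that is \emph{non-deleting}, i.e.\ one in which every variable occurring in the tail of a rule also occurs in the head tuple $\alpha_1\dots\alpha_r$; this is a standard normalisation (split each non-terminal $A$ of rank $r$ into copies indexed by the subsets of $[r]$ recording which coordinates survive) that changes neither the generated language nor the maximal rank $k$. In such a grammar, every coordinate of every non-terminal occurring in a derivation tree contributes to the final word, and plugging one rule into another preserves the multiset of argument strings up to interleaving with the fixed terminal words of the rule. Since each rule contributes boundedly many terminals and has a bounded tail, a derivation tree generating a long word must have large height; as $L$ is infinite we may pick $w \in L$ so long that every derivation tree of $w$ contains a path from the root to a leaf on which some head non-terminal repeats. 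Among all derivation trees of words in $L$ of at least this length, let $D$ be one with the fewest vertices, and on such a path of $D$ choose two distinct vertices $u$ (an ancestor) and $v$ (a descendant) whose rules share a head non-terminal $A$, of some rank $r \le k$.

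Next I would read off the decomposition of the word $w$ derived by $D$ that is induced by $u$ and $v$. The subtree rooted at $v$ derives a term $A(t_1,\dots,t_r)$; the fragment of $D$ strictly between $u$ and $v$ defines an \emph{inner context} $C$ sending any derivable term $A(\tau_1,\dots,\tau_r)$ to a derivable term $A(\sigma_1,\dots,\sigma_r)$ in which each $\sigma_j$ interleaves some of the $\tau_i$ with fixed terminal words; by non-deletingness each $\tau_i$ occurs exactly once across $\sigma_1\dots\sigma_r$, so $C$ carries exactly $2r$ fixed ``context words''. The part of $D$ above $u$ defines an \emph{outer context} sending a derivable term $A(\tau_1,\dots,\tau_r)$ to $S\big(\psi_0\,\tau_{\kappa(1)}\,\psi_1\cdots\tau_{\kappa(r)}\,\psi_r\big)$ for a bijection $\kappa$ of $[r]$ and fixed terminal words $\psi_0,\dots,\psi_r$. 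For every $n \ge 1$, iterating $C$ exactly $n$ times on $A(t_1,\dots,t_r)$ and then applying the outer context produces a derivable term $S(w_n)$, so $w_n \in L$; in particular $w = w_1$.

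It remains to bring $w_n$ into the required form. Unwinding the $n$-fold iterate of $C$, one finds that $w_n$ is a concatenation of boundedly many blocks, alternately \emph{fixed} (assembled from the $\psi_i$, the $t_i$, and pieces of the context words of $C$) and \emph{pumpable} (each a fixed concatenation of context words of $C$, repeated either $n$ or $n-1$ times), with at most $2r$ pumpable blocks occurring in an order independent of $n$. Reparametrising $n \mapsto n+1$ to absorb the $n$-versus-$(n-1)$ discrepancy, and padding with empty blocks until there are exactly $2k$ pumpable ones (possible because $2r \le 2k$), yields a factorisation $w = x_1 y_1 x_2 y_2 \cdots x_{2k} y_{2k} x_{2k+1}$ with $x_1 y_1^n x_2 y_2^n \cdots x_{2k} y_{2k}^n x_{2k+1} \in L$ for all $n \in \N_0$. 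For the non-triviality $y_1 y_2 \cdots y_{2k} \ne \epsilon$: if all $2r$ context words of $C$ were empty, $C$ would merely permute coordinates, and replacing the subtree of $D$ at $u$ by the strictly smaller subtree at $v$ would give a derivation tree of a word of the same (large) length with fewer vertices than $D$, contradicting the choice of $D$; hence some context word of $C$ is non-empty, and every context word of $C$ occurs inside a pumpable block of $w_n$.

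The main obstacle is the bookkeeping in the last paragraph: proving that the $n$-fold iterate of the tuple-valued context $C$ is, uniformly in $n$, a clean linear alternation of at most $2r$ pumpable and at most $2r+1$ fixed blocks. This is exactly the point where the rank bound $r \le k$ is converted into the bound $2k$ on the number of pumped segments, and it depends crucially on the non-deleting normal form: without it, coordinates could be discarded partway up the tree, and the number and nesting of the repeated blocks would no longer be controlled.
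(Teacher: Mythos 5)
The paper itself does not prove this lemma; it quotes it from Seki et al.\ (the cited Lemma 3.2), and your outline follows essentially the same standard route as that source: non-deleting normal form, a repeated head non-terminal along a long root-to-leaf path, iteration of the intervening tuple context, and a minimal-tree argument for non-triviality. However, there is a genuine gap at precisely the step you flag as the main obstacle. The claim that the $n$-fold iterate of the inner context $C$ is, uniformly in $n$, an alternation of fixed blocks with at most $2r$ pumpable blocks repeated $n$ or $n-1$ times, \emph{in an order independent of $n$}, is not a consequence of non-deletingness and is false in general. Take the rank-$2$ context coming from a rule $A(u z_2 v,\, w z_1 x) \leftarrow A(z_1,z_2)$, i.e.\ $C(\tau_1,\tau_2)=(u\tau_2 v,\, w\tau_1 x)$: then $C^{2m}(\tau_1,\tau_2)=\bigl((uw)^m\tau_1(xv)^m,\ (wu)^m\tau_2(vx)^m\bigr)$, while $C^{2m+1}$ places $\tau_1$ and $\tau_2$ in the opposite coordinates. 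Hence in $w_n$ the positions of $t_1$ and $t_2$ depend on the parity of $n$ and the repeated factors occur with exponent roughly $n/2$, so neither the block order nor the exponents behave as you assert. Non-deletingness guarantees that no material is erased, but it does not control where coordinates are sent under iteration.

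The standard repair is to replace $C$ by a suitable power before unwinding: the map $f\colon[r]\to[r]$ recording in which head coordinate the $i$-th argument lands is a self-map of a finite set, so some power $f^e$ is idempotent; set $C'=C^e$ (this is legitimate because one only needs that applying $C$ to a derivable $A$-term yields a derivable $A$-term, so $C'$ need not occur literally in the tree). For such a $C'$ one can check that $C'^{\,n}$ does have the uniform shape you describe, and then your reparametrisation $n\mapsto n+1$, the padding to exactly $2k$ pumped segments using $2r\le 2k$, and your non-triviality argument all go through, with the base word $w$ taken to be the member of the pumped family with exponent $1$. (The minimality argument itself is fine: if all context words are empty, deleting the context preserves the length of the derived word while strictly shrinking the tree — note that $C$ may merge coordinates rather than merely permute them, but only the length matters there. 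Also, ``exactly $2r$ context words'' should be ``at most $2r$''.) As written, though, the key uniformity claim is unjustified and wrong for contexts that permute coordinates, so the proof is incomplete without the idempotent-power step.
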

Note that this lemma is weaker than the pumping lemma for CFLs: it only provides the existence of ``pumpable'' strings whereas the pumping lemma for CFLs states that all words exceeding a certain length are pumpable. In particular it is not strong enough to imply the second part of the following result from \cite{lehner2020comparing}, which is a main pillar of Theorem~\ref{thm:main-2}.
\begin{thm}[{\cite[Theorem 1.1]{lehner2020comparing}}] \label{thm:mcfgpump}
The language $\{a_1^{n_1} a_2^{n_2} \dots a_{k}^{n_k} \mid n_1 \geq n_2 \geq \dots \geq n_k \geq 0\}$ is a $\lceil k/2\rceil$-MCFL, but not a $(\lceil k/2\rceil-1)$-MCFL.
\end{thm}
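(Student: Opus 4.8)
\emph{Setup and upper bound.} Write $L_k=\{a_1^{n_1}a_2^{n_2}\cdots a_k^{n_k}\mid n_1\ge n_2\ge\cdots\ge n_k\ge 0\}$ and $m=\lceil k/2\rceil$. For the first assertion I would exhibit an explicit $m$-MCFG $\Gra$ with $L(\Gra)=L_k$. Pair the letters into blocks $(a_1,a_2),(a_3,a_4),\dots$ and use one non-terminal $A$ of rank $m$ besides the start symbol $S$; the $i$-th component of an $A$-tuple is meant to carry $a_{2i-1}^{n_{2i-1}}a_{2i}^{n_{2i}}$ (dropping the last factor when $2i>k$), and the chain $n_1\ge\cdots\ge n_k$ is preserved by only ever incrementing a \emph{prefix} $n_1,\dots,n_J$ of the exponents at once. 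Concretely, take the terminating rule $A(\epsilon,\dots,\epsilon)\leftarrow$, the rule $S(z_1z_2\cdots z_m)\leftarrow A(z_1,\dots,z_m)$, and for each $J\in[k]$ a rule $\rho_J\colon A(\alpha_1,\dots,\alpha_m)\leftarrow A(z_1,\dots,z_m)$ with $\alpha_i=a_{2i-1}z_ia_{2i}$ if $2i\le J$, $\alpha_i=a_{2i-1}z_i$ if $2i-1=J$, and $\alpha_i=z_i$ otherwise. A routine induction shows that the derivable $A$-terms are exactly the tuples described above with $n_1\ge\cdots\ge n_k\ge 0$: applying $\rho_J$ adds one to each of $n_1,\dots,n_J$, which preserves the inequalities because $n_J+1>n_{J+1}$; conversely every admissible exponent vector arises by applying $\rho_k,\rho_{k-1},\dots,\rho_1$ with multiplicities $n_J-n_{J+1}$ (with $n_{k+1}:=0$). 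Hence $L(\Gra)=L_k$, so $L_k$ is $\lceil k/2\rceil$-multiple context-free.

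\emph{Lower bound.} Set $p=\lceil k/2\rceil-1$; one checks $k\ge 2p+1$ in all cases. The homomorphism fixing $a_1,\dots,a_{2p+1}$ and erasing $a_{2p+2},\dots,a_k$ maps $L_k$ onto $L_{2p+1}$, and $p$-MCFLs are closed under homomorphisms, so it suffices to prove that $L_{2p+1}$ is not a $p$-MCFL. Suppose for contradiction that a $p$-MCFG $\Gra$ generates $L_{2p+1}$, fix $N$ large, let $w=a_1^Na_2^N\cdots a_{2p+1}^N$, and let $D$ be a derivation tree of $w$ of minimal size. The key structural step — this is exactly what goes beyond Lemma~\ref{lem:pumplem} and constitutes the substance of \cite{lehner2020comparing} — is to locate in $D$ a non-terminal $A$ of some rank $r\le p$ occurring at nested nodes $u\succ v$ whose connecting context is \emph{productive}, i.e.\ iterating it on the tuple produced at $v$ yields tuples of strictly increasing total length. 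Standard manipulation of MCFG derivation trees then produces, for every $n\ge 0$, a word $w^{(n)}\in L_{2p+1}$ with $w^{(1)}=w$, with $w^{(0)}$ obtained by grafting the subtree at $v$ in place of the one at $u$, and of the pumped form $w^{(n)}=x_1y_1^nx_2\cdots x_{2r}y_{2r}^nx_{2r+1}$ for fixed strings with $y_1y_2\cdots y_{2r}\ne\epsilon$; the $2r$ sites appear because a rank-$r$ non-terminal occupies $r$ blocks of $w$ and the pump acts at their (at most $2r$) ends. Since each $w^{(n)}$ lies in $a_1^*a_2^*\cdots a_{2p+1}^*$, each $y_i$ must be a power $a_{j_i}^{m_i}$ of a single letter, as otherwise already $y_i^2$ would leave this set.

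\emph{The counting finish.} Let $M_j=\sum_{i:j_i=j}m_i$, so the number of $a_j$'s in $w^{(n)}$ equals $N+(n-1)M_j$. Requiring $w^{(n)}\in L_{2p+1}$ forces $\bigl(N+(n-1)M_j\bigr)_{j=1}^{2p+1}$ to be non-increasing for every $n\ge 0$: taking $n=2$ gives $M_1\ge M_2\ge\cdots\ge M_{2p+1}$, while $n=0$ gives $N-M_1\ge N-M_2\ge\cdots\ge N-M_{2p+1}$, i.e.\ $M_1\le M_2\le\cdots\le M_{2p+1}$. Hence all $M_j$ equal a common value $c$, and $c\ge 1$ because $\sum_j M_j=\sum_i m_i=\sum_i\abs{y_i}\ge 1$. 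Thus $M_j\ge 1$ for every $j\in[2p+1]$, so each of the $2p+1$ letters occurs among $a_{j_1},\dots,a_{j_{2r}}$; this is possible only if $2r\ge 2p+1$, contradicting $r\le p$. Therefore $L_{2p+1}$, and with it $L_k$, is not a $p$-MCFL.

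\emph{Main obstacle.} Everything hinges on the structural step. Lemma~\ref{lem:pumplem} only yields \emph{some} pumpable word, and for a word that is not ``tight'' the growth vector $(M_j)$ need only be non-increasing with support an initial segment — which is entirely consistent with $L_{2p+1}$ (for instance $a^2b\in L_3$ is pumpable). It is essential to be able to \emph{prescribe} the tight word $w=a_1^N\cdots a_{2p+1}^N$ and still extract a productive pump inside it, since tightness is exactly what upgrades ``non-increasing'' to ``constant''. Carrying this out — equivalently, the refinement of the MCFL pumping analysis that tracks the rank of the pumped non-terminal and that locates a productive pump in a minimal derivation tree of a prescribed word — is where essentially all of the work of \cite{lehner2020comparing} lies; once that input is available, the counting above is short.
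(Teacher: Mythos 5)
The paper does not actually prove this statement: Theorem~\ref{thm:mcfgpump} is imported verbatim from \cite{lehner2020comparing}, so there is no internal proof to compare against. Judged on its own, your proposal is only half a proof. The upper bound is fine: your grammar with one rank-$\lceil k/2\rceil$ non-terminal whose rules increment a prefix $n_1,\dots,n_J$ of the exponents is correct (each variable occurs once, the prefix-increment preserves the chain of inequalities, and every admissible exponent vector decomposes into prefix vectors with multiplicities $n_J-n_{J+1}$), and the reduction of the lower bound to $L_{2p+1}$ via an erasing homomorphism plus the final counting argument (constancy of the growth vector forced by $n=0$ and $n=2$, hence $2r\ge 2p+1$) are also sound \emph{given} the pumped family $w^{(n)}=x_1y_1^nx_2\cdots x_{2r}y_{2r}^nx_{2r+1}$ anchored at the tight word $w=a_1^N\cdots a_{2p+1}^N$.

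But that anchored pump is exactly the point, and it is a genuine gap, not a routine step you can wave through as ``standard manipulation of MCFG derivation trees.'' Lemma~\ref{lem:pumplem} only guarantees that \emph{some} word of the language is pumpable, and, as you note yourself, a non-tight witness gives no contradiction. Worse, the natural strengthening you invoke --- that a prescribed long word admits a repeated non-terminal with a productive context yielding pumps of the stated shape --- is not available in general: the strong (Ogden/Bar-Hillel-type) pumping property is known to fail for multiple context-free languages, and even when a repeated non-terminal is found, iterating its context may permute or erase tuple components, so extracting the clean form $x_1y_1^n\cdots x_{2r}y_{2r}^nx_{2r+1}$ with a productive pump requires substantial extra work that must exploit the specific structure of $L_{2p+1}$ and of a chosen (e.g.\ minimal) derivation. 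That refinement is precisely the content of \cite{lehner2020comparing}, which your argument presupposes rather than proves; so as it stands the proposal establishes the $\lceil k/2\rceil$-MCFL upper bound but reduces the non-membership claim back to the very result being proved.
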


\section{Tree decompositions}\label{sec:treedecomp}

A \emph{tree decomposition} of a graph $G$ is a pair $\Tcal=(T,\Vcal)$, consisting of a tree $T$ and a function $\Vcal: V(T) \rightarrow 2^{V(G)}$ assigning a subset of $V(G)$ to every vertex of $T$, such that the following three conditions are satisfied:
\begin{enumerate}[label=(T\arabic*)]
\item \label{itm:td-coververtices}
$V(G)= \bigcup_{t \in V(T)} \Vcal(t)$.
\item \label{itm:td-coveredges}
For every $e \in E(G)$ there is a $t \in V(T)$ such that $\Vcal(t)$ contains both vertices that are incident with $e$.
\item \label{itm:td-nocrosssedge}
$\Vcal(s) \cap \Vcal(t) \subseteq \Vcal(r)$ for every vertex $r$ on the unique $s$--$t$-path in $T$.
\end{enumerate}
The set $\Vcal(t)$ is called the \emph{part} of $t$. For an edge $e=st$ of $T$, the intersection $\Vcal(e)=\Vcal(s,t)= \Vcal(s) \cap \Vcal(t)$ $(=\Vcal(t,s))$ is called the \emph{adhesion set} of $e$. 
A tree decomposition $(T,\Vcal)$ of $G$ is called \emph{$\Gamma$-invariant} for a group $\Gamma \leq \AUT(G)$, if every $\gamma \in \Gamma$ maps parts onto parts and thereby induces an automorphism of $T$. More precisely there is an action of $\Gamma$ on $T$ by automorphisms such that for every $\gamma \in \Gamma$ and $t \in V(T)$, it holds that $\gamma(\Vcal(t))=\Vcal(\gamma t)$.

The tree decomposition $\Tcal$ is said to \emph{distinguish} two given ends $\omega_1$ and $\omega_2$ of $G$ if there is some edge $e$ of $T$ such that the adhesion set $\Vcal(e)$ separates $\omega_1$ and $\omega_2$. Moreover $\Tcal$ distinguishes the two ends \emph{efficiently}, if one of its adhesion sets has the smallest size of all subsets of vertices of $G$ separating $\omega_1$ and $\omega_2$. We call $\Tcal$ \emph{reduced} if every adhesion set efficiently distinguishes some pair of ends of $G$ and no two parts corresponding to adjacent vertices of $T$ coincide.

A graph $G$ is called \emph{accessible} if there is a natural number $k$ such that any two ends can be separated by a set of vertices of size at most $k$. Originally the notion of accessibility comes from group theory. Stalling's theorem about ends of groups states that some (every) Cayley graph a finitely generated group $\Gamma$ has more than one end if and only if $\Gamma$ admits a nontrivial decomposition as an amalgamated free product or an HNN-extension over a finite subgroup. $\Gamma$ is called \emph{accessible} if the process of iterated nontrivial splitting of $\Gamma$ always terminates in a finite number of steps. Thomassen and Woess~\cite{MR1223698} showed that accessibility of a group is equivalent to accessibility of some (and thus all) of its Cayley graphs. The following theorem is closely related to the work of Dunwoody~\cite{MR671142}  and is a direct consequence of \cite[Theorem 6.4]{hamann2018stallings}.

\begin{thm}\label{thm:graphdecomp}
Let $G$ be a simple, locally finite, connected, accessible graph and let $\Gamma$ be a group acting quasi-transitively on $G$. Then there is a $\Gamma$-invariant tree decomposition $(T,\Vcal)$ of $G$ efficiently distinguishing all ends of $G$ and an action of $\Gamma$ on $T$ witnessing the $\Gamma$-invariance of $(T, \Vcal)$ with only finitely many $\Gamma$-orbits on $E(T)$. 
\end{thm}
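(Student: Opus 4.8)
The plan is to realise $(T,\Vcal)$ as the structure tree of a suitable $\Gamma$-invariant nested family of vertex separations of $G$, in the spirit of Dunwoody's structure tree theory. First I would invoke accessibility to fix $k\in\N$ such that any two ends of $G$ are separated by at most $k$ vertices; then for every pair of ends $\omega_1,\omega_2$ the minimum cardinality of a set separating them --- call it the \emph{order} of the pair --- lies in $[k]$. Call a separation $(A,B)$ of $G$ (meaning $A\cup B=V(G)$ and no edge joins $A\setminus B$ to $B\setminus A$) \emph{efficient} if $\abs{A\cap B}$ equals the order of some pair of ends that $A\cap B$ separates, and let $\mathcal{S}$ be the collection of all efficient separations. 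Since efficiency is an intrinsic property of $G$, the family $\mathcal{S}$ is invariant under all of $\AUT(G)$, hence under $\Gamma$, and by accessibility all separators occurring in it have size at most $k$.

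The crux is to pass from $\mathcal{S}$ to a \emph{nested} subfamily $\mathcal{N}$ that is still $\Gamma$-invariant, still separates every pair of ends of $G$ (necessarily efficiently, since its members are efficient), and meets only finitely many $\Gamma$-orbits of separations. This is exactly where the uncrossing machinery enters: two efficient separations that cross are replaced by separations read off their corners, and by restricting to \emph{tight} separations and making the corner choices equivariantly one obtains a nested, $\AUT(G)$-canonical family with the same distinguishing power and, using accessibility once more, finitely many orbits. This is precisely the content of \cite[Theorem 6.4]{hamann2018stallings}, built on Dunwoody's original edge-cut construction \cite{MR671142}, and I would import it rather than reprove it.

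Given such an $\mathcal{N}$, the standard correspondence between nested separation systems and tree decompositions yields $(T,\Vcal)$: the vertices of $T$ are the classes cut out by $\mathcal{N}$ (its ``corners''), the edges of $T$ correspond to the separations in $\mathcal{N}$, and the adhesion set of an edge is the separator $A\cap B$ of the corresponding separation; axioms \ref{itm:td-coververtices}--\ref{itm:td-nocrosssedge} follow from nestedness in the usual way. Because $\mathcal{N}$ is $\Gamma$-invariant, $\Gamma$ acts on $T$ with $\gamma(\Vcal(t))=\Vcal(\gamma t)$, so $(T,\Vcal)$ is $\Gamma$-invariant; because every member of $\mathcal{N}$ is efficient, every adhesion set efficiently distinguishes some pair of ends; because $\mathcal{N}$ distinguishes all ends, so does $(T,\Vcal)$; and because $\mathcal{N}$ meets finitely many $\Gamma$-orbits, $\Gamma$ has finitely many orbits on $E(T)$. (Contracting every edge of $T$ whose two parts coincide yields, if wanted, a reduced tree decomposition.)

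The main obstacle is the middle step: simultaneously achieving nestedness, canonicity, full end-distinguishing power, and orbit-finiteness. None of these is automatic --- the naive family $\mathcal{S}$ is typically not nested, and orbit-finiteness genuinely fails without accessibility, since a quasi-transitive group can have infinitely many orbits of $k$-element separators in general --- and reconciling all four is the technical heart of \cite{hamann2018stallings} and \cite{MR671142}. The only remaining work is then bookkeeping: checking that the cited theorem, applied to the quasi-transitively acting $\Gamma$, produces exactly a tree decomposition of the stated form (in particular that the $\AUT(G)$-canonical construction is $\Gamma$-invariant and that orbit-finiteness on $E(T)$ passes to $\Gamma$).
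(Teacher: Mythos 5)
Your proposal is correct and follows essentially the same route as the paper, which states the theorem as a direct consequence of \cite[Theorem 6.4]{hamann2018stallings} (building on Dunwoody's construction in \cite{MR671142}) rather than proving the uncrossing/canonicity step itself. Your additional scaffolding --- passing from the canonical nested family of efficient separations to the tree decomposition and checking invariance, efficiency, and orbit-finiteness --- is exactly the bookkeeping implicit in the paper's citation.
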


For our purpose, it is necessary for tree decompositions to additionally be reduced. However, this is not really a restriction, as the following construction shows.

Let $\Tcal=(T,\Vcal)$ be a tree decomposition of $G$ and $F$ be a subset of edges of $T$. The contraction  of $F$ in $\Tcal$ is the pair $\Tcal/F=(T/F,\Vcal/F)$ defined in the following way. The tree $T/F$ consists of a single vertex for every component of the graph $(V(T),F)$ obtained from $T$ by only keeping the edges in $F$; for a vertex $t$ of $T/F$ let $[t]_F$ denote the vertex set of the corresponding component. Two different vertices $[s]_F$ and $[t]_F$ of $T/F$ are connected by an edge if and only if there are $s' \in [s]_F$ and $t' \in [t]_F$, which are adjacent in $T$. The part corresponding to $[t]_F \in V(T/F)$ is $(\Vcal/F)([t]_F) = \bigcup_{s \in [t]_F} \Vcal(s)$. It is not hard to see that $\Tcal/F$ is a tree decomposition of $G$.

Starting from a tree decomposition $\Tcal=(T,\Vcal)$ provided by the previous theorem, we can construct a reduced tree decomposition as follows. Let the set $F$ consist of all edges $e$ of $T$ such that the adhesion set $\Vcal(e)$ does not minimally separate any pair of ends of $G$.
It is easy to check that the contraction $\Tcal/F$ is a tree decomposition retaining all properties mentioned in Theorem~\ref{thm:graphdecomp} and additionally every adhesion set minimally separates two ends of $G$. In a second step we contract all edges of $T/F$ connecting two vertices whose parts coincide to obtain a reduced tree decomposition as in the following corollary.

\begin{cor}\label{cor:graphdecomp}
Let $G$ be a simple, locally finite, connected, accessible graph and let $\Gamma$ be a group acting quasi-transitively on $G$. Then there is a reduced $\Gamma$-invariant tree decomposition $(T,\Vcal)$ of $G$ efficiently distinguishing all ends of $G$ such that there are only finitely many $\Gamma$-orbits on $E(T)$.
\end{cor}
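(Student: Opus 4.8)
The plan is to take the tree decomposition supplied by Theorem~\ref{thm:graphdecomp} and clean it up by two successive contractions, verifying at each step that every desired property survives. Since $G$ is accessible, Theorem~\ref{thm:graphdecomp} yields a $\Gamma$-invariant tree decomposition $\Tcal_0 = (T_0, \Vcal_0)$ efficiently distinguishing all ends of $G$, together with a compatible action of $\Gamma$ on $T_0$ having only finitely many orbits on $E(T_0)$. I would then carry out the following.

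\emph{First contraction.} Let $F \subseteq E(T_0)$ be the set of edges $e$ such that $\Vcal_0(e)$ is not a minimum-size separator of any pair of ends it separates; equivalently, $e$ does not efficiently distinguish any pair of ends of $G$. I would first observe that $F$ is $\Gamma$-invariant, because $\Gamma$ acts on the ends of $G$, maps adhesion sets to adhesion sets preserving their size, and respects the relation ``$K$ separates $\omega_1$ from $\omega_2$''. Hence, by the contraction construction recalled before the corollary, $\Tcal_0/F$ is again a $\Gamma$-invariant tree decomposition, and the natural $\Gamma$-equivariant surjection $E(T_0) \setminus F \to E(T_0/F)$ shows that $\Gamma$ still has only finitely many orbits on edges. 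The next point to check is that the adhesion set of any surviving edge is unchanged: if $e = st \notin F$, then, $T_0$ being a tree, $[s]_F \neq [t]_F$ and $e$ is the unique $T_0$-edge joining these two components, so for all $s' \in [s]_F$ and $t' \in [t]_F$ the $s'$--$t'$-path in $T_0$ runs through $s$ and $t$, whence (T3) gives $\Vcal_0(s') \cap \Vcal_0(t') \subseteq \Vcal_0(s) \cap \Vcal_0(t)$; taking unions, the new adhesion set of the contracted edge is exactly $\Vcal_0(e)$. Consequently $\Tcal_0/F$ still efficiently distinguishes all ends — for any two ends, the minimum-size adhesion-set separator provided by $\Tcal_0$ sits on an edge outside $F$, which survives with unchanged adhesion set — and by the very choice of $F$ every adhesion set of $\Tcal_0/F$ now efficiently distinguishes some pair of ends.

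\emph{Second contraction.} Writing $\Tcal_1 = (T_1, \Vcal_1) = \Tcal_0/F$, I would let $F' \subseteq E(T_1)$ consist of all edges $st$ with $\Vcal_1(s) = \Vcal_1(t)$; this is again $\Gamma$-invariant, so $\Tcal_1/F'$ is a $\Gamma$-invariant tree decomposition with finitely many edge orbits. Consecutive vertices along any path inside a component $C$ of $(V(T_1), F')$ have equal parts, so all vertices of $C$ share one common part, which becomes the part of the corresponding vertex of $T_1/F'$. If two distinct components $C, C'$ were adjacent in $T_1/F'$, the unique $T_1$-edge between them would have equal parts at its ends and hence lie in $F'$, a contradiction; so no two adjacent parts of $\Tcal_1/F'$ coincide. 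That same unique edge carries adhesion set equal to its old one, so adhesion sets are preserved bijectively, and therefore $\Tcal_1/F'$ still efficiently distinguishes all ends and every one of its adhesion sets efficiently distinguishes a pair of ends. Thus $\Tcal_1/F'$ is reduced and satisfies all requirements of the corollary.

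\emph{Main obstacle.} None of the steps is deep; the content is purely bookkeeping. The one genuinely delicate point is checking that a single application of each contraction already suffices — in particular that the second contraction does not create new adjacent equal parts — which is exactly what the ``common part of a component'' observation secures; a close second is verifying that efficiency is not destroyed by the first contraction, for which one uses that efficiency is witnessed edge-by-edge and that the witnessing edges are never contracted away.
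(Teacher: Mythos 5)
You follow exactly the paper's route: take the decomposition of Theorem~\ref{thm:graphdecomp}, first contract the edges whose adhesion sets do not efficiently distinguish any pair of ends, then contract the edges joining vertices with equal parts. Almost all of your verifications are sound and in fact more detailed than the paper's own ``easy to check''. However, one inference in the second contraction does not follow as written: you conclude that $\Tcal_1/F'$ ``still efficiently distinguishes all ends'' from the fact that adhesion sets are ``preserved bijectively''. That bijection only concerns edges outside $F'$. In the first contraction the witnessing edge for a given pair of ends is automatically outside $F$ by the very definition of $F$, and you used exactly this; for $F'$ there is no such automatic reason. The witnessing edge may be an edge $e=st\in F'$ with $\Vcal_1(s)=\Vcal_1(t)=\Vcal_1(e)=:K$, and after contracting, $K$ need no longer occur as an adhesion set, so a replacement witness has to be produced rather than just asserted.

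The claim is true, and the missing argument is short. Let $W$ be the $F'$-component containing $s$ and $t$ (all its parts equal $K$), and let $C_1,C_2$ be the components of $G-K$ containing tails of the two ends. Each $C_i$ is connected and disjoint from $K$, so it is covered by the parts of a single component $D_i$ of $T_1-W$. The unique edge $f$ joining $D_1$ to $W$ survives the contraction (its endpoint in $D_1$ has part different from $K$, else it would belong to $W$), and it satisfies $\Vcal_1(f)\subseteq K$ as well as $K\cap\bigcup_{x\in D_1}\Vcal_1(x)\subseteq\Vcal_1(f)$ by property~\ref{itm:td-nocrosssedge}. Now any $C_1$--$C_2$ path in $G$ either leaves the side $\bigcup_{x\in D_1}\Vcal_1(x)$ of the separation induced by $f$, in which case it meets the separator $\Vcal_1(f)$, or it stays inside that side; but it must meet $K$, and $K$ intersected with that side is contained in $\Vcal_1(f)$, so it meets $\Vcal_1(f)$ in this case too. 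Hence $\Vcal_1(f)$ separates the two ends and has size at most $\abs{K}$, so it is again a minimum-size separator, carried by a surviving edge. With this inserted (and with the minor rephrasing ``if $C$ and $C'$ are adjacent \emph{and their parts coincide}, the unique $T_1$-edge between them would lie in $F'$''), your proof is complete and coincides with the paper's construction.
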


Let us again look at the Cayley graph $G$ of the group $\Gamma=(C_2*C_2*C_2)\times C_3$ given in Figure~\ref{fig:caleygraph}. We already mentioned that $\Gamma$ acts freely on $G$ by left multiplication. A reduced $\Gamma$-invariant tree decomposition $(T,\Vcal)$ of $G$ as provided by the previous corollary is shown in Figure~\ref{fig:treedecomp}. 

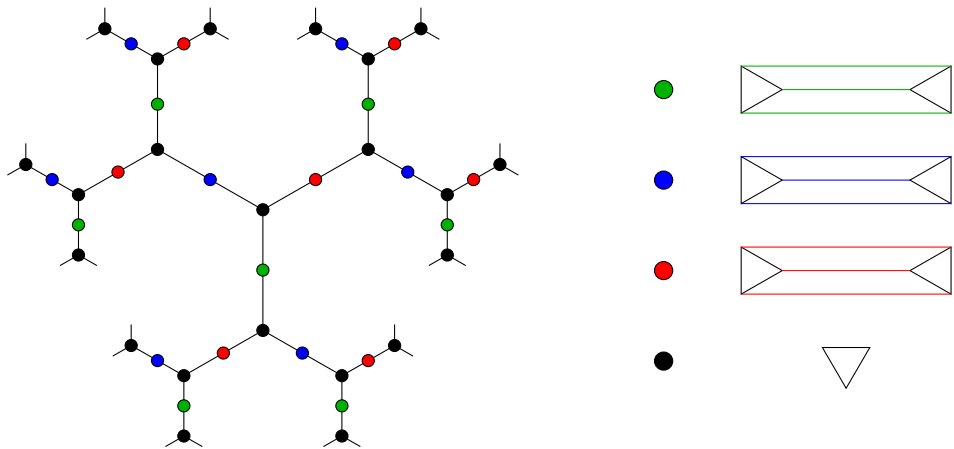
\begin{figure}
	\centering
	\begin{subfigure}{0.6\textwidth}
	\centering
	\begin{tikzpicture}
	\begin{scope}[scale=0.4]
	\def\dista{4}
	\def\distb{3}
	\def\distc{2}
	\def\diamr{0.2}
	\def\diama{0.2}
	\def\diamb{0.2}
	\def\diamc{0.2}
    \draw[fill=black] (0,0) circle (\diamr);
	\foreach \i in {0,1,2} {
		\coordinate (A\i) at (120*\i+30:\dista);
		\draw[fill=black] (A\i) circle (\diama);
		\draw (A\i)+(120*\i+210:\diama) --coordinate[midway](m) (120*\i+30:\diamr);
	    \draw[fill=gen\i] (m) circle (\diama);
		\foreach \j in {0,1} {
		    \coordinate(B\i\j) at ($(A\i)+(120*\i+120*\j+330:\distb)$);
		    \draw[fill=black] (B\i\j) circle (\diamb);
		    \pgfmathparse{int(mod(\i+\j+1,3))}
		    \draw (B\i\j)+(120*\i+120*\j+150:\diamb) -- coordinate[midway](m) ($(A\i)+(120*\i+120*\j+330:\diama)$);
		    \draw[fill=gen\pgfmathresult] (m) circle (\diamb);
		    \foreach \k in {0,1} {
    		    \coordinate(C\i\j\k) at ($(B\i\j)+(120*\i+120*\j+120*\k+270:\distc)$);
                \draw[fill=black] (C\i\j\k) circle (\diamc);
    		    \pgfmathparse{int(mod(\i+\j+\k+2,3))}
    		    \draw (C\i\j\k)+(120*\i+120*\j+120*\k+90:\diamc) -- coordinate[midway](m) ($(B\i\j)+(120*\i+120*\j+120*\k+270:\diamb)$);
    		    \draw[fill=gen\pgfmathresult] (m) circle (\diamc);
    		    \foreach \l in {0,1} {
        		    \draw (C\i\j\k)++(120*\i+120*\j+120*\k-120*\l+330:\diamc) -- +(120*\i+120*\j+120*\k-120*\l+330:0.5);
        		}
    		}
		}
	}	
	\end{scope}
  	\end{tikzpicture}
  	\end{subfigure}
  	\hfill
  	\begin{subfigure}{0.35\textwidth}
  	\begin{tikzpicture}
  	\def\diamr{0.2}
  	\def\diamt{0.6}
  	\begin{scope}[scale=0.6]
      	\draw[fill=black] (0,0) circle (\diamr);
      	\draw[fill=gen0] (0,2) circle (\diamr);
      	\draw[fill=gen1] (0,4) circle (\diamr);
      	\draw[fill=gen2] (0,6) circle (\diamr);
      	\foreach \i in {0,1,2} {
      	    \draw (2,2*\i+2)+(0:\diamt)--+(120:\diamt)--+(240:\diamt)--cycle;
      	    \draw (6,2*\i+2)+(180:\diamt)--+(300:\diamt)--+(60:\diamt)--cycle;
      	    \draw[color=gen\i] (2,2*\i+2)+(0:\diamt) -- ($(6,2*\i+2)+(180:\diamt)$);
      	    \draw[color=gen\i] (2,2*\i+2)+(120:\diamt) -- ($(6,2*\i+2)+(60:\diamt)$);
      	    \draw[color=gen\i] (2,2*\i+2)+(240:\diamt) -- ($(6,2*\i+2)+(300:\diamt)$);
      	}
      	\draw (4,0)+(-90:\diamt)--+(-210:\diamt)--+(-330:\diamt)--cycle;
  	\end{scope}
  	\end{tikzpicture}
  	\end{subfigure}
  	\caption{Decomposition tree $T$ of the Cayley graph $G$. The vertex-colouring indicates the four orbits of the group action on the vertices of $T$. The subgraphs of $G$ induced by the different parts are shown at the right side.}
  	\label{fig:treedecomp}
\end{figure}

We require the following four important properties of tree decompositions $\Tcal=(T,\Vcal)$ obtained from this corollary.
\begin{enumerate}[label=(P\arabic*)]
    \item \label{itm:boundedadhesion}
    The size of all adhesion sets is bounded from above by some $k \in \N$.
    \item \label{itm:separationfinitelyused}
    For every $K\subseteq V(G)$ there are only finitely many edges $e$ of $T$ with $K=\Vcal(e)$. 
    \item  \label{itm:finiteparts-thinends}
    All parts are finite if and only if all ends of $G$ have finite size.
    \item \label{itm:thinends-locfintree}
    If all ends of $G$ have finite size, then $T$ is locally finite.
\end{enumerate}

Firstly, $\Tcal$ is reduced, so every adhesion set minimally separates some pair of ends of $G$. The graph $G$ is accessible, so \ref{itm:boundedadhesion} holds.

For the proof of the other properties, we need some notation. A \emph{separation} of $G$ is a pair $(A,B)$ of vertex sets such that $G[A] \cup G[B]=G$, which means that there are no edges between $A \setminus B$ and $B \setminus A$. The intersection $A \cap B$ is called the \emph{separator} of $(A,B)$. Note that every edge $e$ of the tree decomposition $\Tcal$ corresponds to a separation of $G$ with separator $\Vcal(e)$. Removal of $e$ splits $T$ into two components $T_1$ and $T_2$ and $\left(\bigcup_{t \in V(T_1)} \Vcal(t),\bigcup_{t \in V(T_2)} \Vcal(t)\right)$ is the separation of $G$ induced by $e$.

Clearly any given finite set $K$ of vertices of a locally finite graph $G$ can occur only finitely many times as the separator of a separation $(A,B)$ of $G$. Indeed, $G-K$ has only finitely many components and each of these components has to be fully contained in either $A$ or $B$. This observation together with the following lemma yields property~\ref{itm:separationfinitelyused}.

\begin{lem}
Let $\Tcal=(T,\Vcal)$ be a tree decomposition as in Corollary~\ref{cor:graphdecomp}. Then every separation $(A,B)$ of $G$ corresponds to at most two edges of $T$.
\end{lem}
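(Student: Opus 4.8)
The plan is to show that a separation $(A,B)$ of $G$ can correspond to at most two edges of $T$ by exploiting the tree structure together with the fact that $\Tcal$ is reduced. Suppose $e=st$ is an edge of $T$ inducing the separation $(A,B)$, meaning that if $T_1,T_2$ are the two components of $T-e$ containing $s,t$ respectively, then $A=\bigcup_{r\in V(T_1)}\Vcal(r)$ and $B=\bigcup_{r\in V(T_2)}\Vcal(r)$ (up to swapping $A$ and $B$). The separator of $(A,B)$ is then $\Vcal(e)$, so in particular any two edges inducing the separation $(A,B)$ have the same adhesion set $K:=A\cap B=\Vcal(e)$. The first step is to observe that the set of edges of $T$ with a fixed adhesion set $K$, if it contains two edges, forces some rigidity: along the $s$--$t$-path in $T$ between any two such edges, property \ref{itm:td-nocrosssedge} forces $\Vcal(r)\cap K$ to be weakly "nested", and in fact one checks that $K\subseteq\Vcal(r)$ for every vertex $r$ on the path between two edges both having adhesion set $\supseteq$ (containing) $K$... more carefully, the key point is that the edges of $T$ inducing the \emph{same} separation $(A,B)$ must all lie on a single path in $T$, and consecutive such edges cannot have non-coinciding parts in between.

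Concretely, I would argue as follows. Let $e=s_1t_1$ and $f=s_2t_2$ be two edges of $T$, each inducing the separation $(A,B)$, with the labelling chosen so that $T-e$ has components $T_1\ni s_1$ with $A=\bigcup_{T_1}\Vcal$, and similarly $T-f$ has components $T_1'\ni s_2$ with $A=\bigcup_{T_1'}\Vcal$. Since both edges "cut off" exactly the vertex set $A$ on one side and $B$ on the other, a short combinatorial argument on the tree shows $e$ and $f$ lie on a common path and every part $\Vcal(r)$ for $r$ strictly between $e$ and $f$ satisfies $\Vcal(r)\subseteq A\cap B=K$ — indeed such an $r$ lies in $T_1\cap T_2'$ (or the symmetric intersection), hence $\Vcal(r)\subseteq A$ and $\Vcal(r)\subseteq B$ by the two decompositions. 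But also $K=\Vcal(e)\subseteq\Vcal(r)$ by \ref{itm:td-nocrosssedge} applied to the endpoints of $e$ with $r$ on the path between them (both endpoints' parts contain $K$). Therefore $\Vcal(r)=K$ for every $r$ strictly between $e$ and $f$, and moreover the endpoints of $e$ and $f$ adjacent to this stretch also have part equal to... at least we get two adjacent vertices of $T$ whose parts coincide (namely $r$ and its neighbour toward $e$, if the stretch is nonempty, or the two endpoints in between if $e,f$ share a vertex), contradicting the hypothesis that $\Tcal$ is reduced — \emph{unless} $e$ and $f$ are equal or share an endpoint. Pushing this through, the only possibility is that all edges inducing $(A,B)$ share a common vertex and there are at most two of them (if there were three edges at a common vertex $v$ all inducing $(A,B)$, two of the corresponding branches would be forced to have part $\subseteq K$, again producing coinciding adjacent parts or an immediate contradiction with which side is $A$).

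The main obstacle I anticipate is the bookkeeping around which side of each edge is $A$ versus $B$: an edge $e$ together with an \emph{orientation} determines a separation, and I must be careful that the \emph{unoriented} separation $(A,B)=(B,A)$ could a priori be induced by an edge in either orientation, so "inducing $(A,B)$" should be read symmetrically. I expect the clean statement to be: the edges inducing a given separation, together with their natural orientations, form a directed path (or a single vertex with edges pointing in), and reducedness collapses it to length at most one, i.e. at most two edges. The verification that three coincident edges is impossible, and that a stretch of coinciding parts genuinely contradicts reducedness as defined (every adhesion set efficiently distinguishes a pair of ends, and no two adjacent parts coincide), is routine once the nesting $\Vcal(r)=K$ is established, so I would not belabour it.
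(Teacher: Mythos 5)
Your overall strategy is the paper's: show that all parts on the path between two edges inducing $(A,B)$ equal $A\cap B$, use reducedness to force the edges to share a vertex, and then rule out a third edge at that vertex. However, two steps are genuinely missing. First, nowhere do you establish that neither $A\subseteq B$ nor $B\subseteq A$. This fact is needed twice: to know that a vertex $r$ between the two edges lies on the $B$-side of one edge and the $A$-side of the other (a priori the two ``near'' components could both correspond to $A$, in which case you only get $\Vcal(r)\subseteq A$, not $\Vcal(r)\subseteq A\cap B$), and again at the end, where your ``immediate contradiction with which side is $A$'' for three edges at a common vertex is precisely the conclusion $A\subseteq B$ or $B\subseteq A$ and is not a contradiction until you rule it out. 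The paper supplies this from reducedness: the adhesion set $\Vcal(e)=A\cap B$ separates two ends of $G$, so both $A$ and $B$ are infinite while $A\cap B$ is finite.

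Second, restricting to vertices \emph{strictly} between $e$ and $f$ leaves a hole: if $e$ and $f$ are vertex-disjoint but their nearer endpoints are adjacent, there is no strictly-between vertex at all, and your listed cases produce no pair of adjacent vertices with coinciding parts, so reducedness gives nothing. The repair is exactly the point at which you trail off: the near endpoint of $e$ (and of $f$) lies in the near component of the other edge, so its part is contained in both $A$ and $B$, and since it contains the adhesion set it equals $A\cap B$; the paper runs the argument over the whole path connecting $e$ and $f$, endpoints included, so that vertex-disjointness always yields two adjacent vertices with part $A\cap B$. Relatedly, your appeal to \ref{itm:td-nocrosssedge} ``applied to the endpoints of $e$'' is off, since $r$ does not lie on the path between the two endpoints of $e$; you need one endpoint of $e$ and one endpoint of $f$ (both of whose parts contain $A\cap B$) with $r$ on the path between them. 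With these repairs your argument becomes the paper's proof.
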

\begin{proof}
Let $e_1,e_2$ be two edges of $T$ inducing the separation $(A,B)$. We first show that $e_1$ and  $e_2$ share a vertex. For $i=1,2$ let $T_i^A$ and $T_i^B$ be the components of $T-e_i$ corresponding to $A$ and $B$, respectively. The separation $(A,B)$ is induced by an edge of $T$ and every adhesion set separates two ends of $G$, so $A$ and $B$ are both infinite sets and $\Vcal(e_1)=\Vcal(e_2)=A \cap B$ is finite. We may assume without loss of generality that $T_1^A \subseteq T_2^A$ and $T_2^B \subseteq T_1^B$. Let $s$ be a vertex on the unique shortest path $P$ connecting $e_1$ and $e_2$ in $T$. Property~\ref{itm:td-nocrosssedge} of tree decompositions yields
\[A\cap B=\Vcal(e_1) \cap \Vcal(e_2) \subseteq \Vcal(s).\]
Moreover $s$ is a vertex of $T_1^B$ and $T_2^A$, so in particular
\[\Vcal(s) \subseteq \bigcup_{t \in V(T_1^B)} \Vcal(t) \cap \bigcup_{t \in V(T_2^A)} \Vcal(t)=A \cap B,\]
implying that the part $\Vcal(s)$ is equal to $A \cap B$ for every vertex $s$ of $P$. Reducedness of $\Tcal$ implies that $P$ consists of a single vertex and thus both $e_1$ and $e_2$ contain $s$.\par
Let $e_3$ be any edge of $T$ inducing the separation $(A,B)$ and let $T_3^A$ and $T_3^B$ be the components of $T-e_3$ corresponding to $A$ and $B$, respectively. Then $e_3$ intersects $e_1$ and $e_2$ and thus contains their common vertex $r$. Finally, if $e_3$ is different from $e_1$ and $e_2$, then either $T_2^B \subseteq T_3^A$ or $T_1^A \subseteq T_3^B$, leading to a contradiction.
\end{proof}

The basis for the proof of \ref{itm:finiteparts-thinends} is the following lemma, which is closely related to \cite[Proposition 4.5]{hamann2018stallings}.

\begin{lem}\label{lem:partsqtrans}
Let $G$, $\Gamma$ and $(T,\Vcal)$ be as in Theorem~\ref{thm:graphdecomp}. Then for every vertex $t$ of $T$, the induced sub-graph $G[\Vcal(t)]$ is a quasi-transitive graph.
\end{lem}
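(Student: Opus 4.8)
The plan is to show that the stabiliser $\Gamma_t=\{\gamma\in\Gamma:\gamma t=t\}$ of $t$ under the action of $\Gamma$ on $T$ acts on $G[\Vcal(t)]$ with only finitely many orbits. That $\Gamma_t$ acts on $G[\Vcal(t)]$ by graph automorphisms is immediate: for $\gamma\in\Gamma_t$ we have $\gamma(\Vcal(t))=\Vcal(\gamma t)=\Vcal(t)$ by $\Gamma$-invariance of the decomposition, and $\gamma$ preserves the edges of $G$, so $\gamma$ restricts to an automorphism of $G[\Vcal(t)]$. The main bookkeeping tool is, for $v\in V(G)$, the set $T_v=\{s\in V(T):v\in\Vcal(s)\}$, which is a non-empty subtree of $T$ by axioms~\ref{itm:td-coververtices} and~\ref{itm:td-nocrosssedge}, and which satisfies $\gamma T_v=T_{\gamma v}$ for every $\gamma\in\Gamma$.

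I would then split $\Vcal(t)$ into the \emph{private} vertices of $t$, meaning those $v$ with $T_v=\{t\}$, and the rest. Since $T_v$ is a subtree containing $t$, a vertex $v\in\Vcal(t)$ is non-private precisely when $T_v$ contains some neighbour $s$ of $t$, i.e.\ when $v\in\Vcal(ts)$; hence the non-private vertices form the set $\bigcup_{e\ni t}\Vcal(e)$, the union of the adhesion sets of the edges of $T$ incident to $t$. For the private vertices the key point is that each $\Gamma$-orbit on $V(G)$ meets them in at most one $\Gamma_t$-orbit: if $v,w$ are both private and $w=\gamma v$, then $\{t\}=T_w=\gamma T_v=\{\gamma t\}$, forcing $\gamma\in\Gamma_t$. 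As $\Gamma$ acts quasi-transitively on $G$ there are finitely many orbits on $V(G)$, so the private vertices form a finite union of $\Gamma_t$-orbits.

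It remains to treat $\bigcup_{e\ni t}\Vcal(e)$. The subtlety is that $T$ need not be locally finite, so one cannot just say there are finitely many edges at $t$, each with a finite adhesion set (the latter holds by accessibility of $G$, cf.\ property~\ref{itm:boundedadhesion}). Instead I would observe that the edges of $T$ incident to $t$ fall into only finitely many $\Gamma_t$-orbits: each $\Gamma$-orbit of edges meets the star at $t$ in at most two $\Gamma_t$-orbits -- an element carrying a fixed reference edge to some edge $e$ at $t$ either fixes $t$ or maps $t$ to the other endpoint of $e$, and in each of the two cases a short computation identifies the $\Gamma_t$-orbit of $e$ with that of one of at most two reference edges -- and there are finitely many $\Gamma$-orbits on $E(T)$ by Theorem~\ref{thm:graphdecomp}. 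Choosing representatives $e_1,\dots,e_m$ for the $\Gamma_t$-orbits of edges at $t$ then gives $\bigcup_{e\ni t}\Vcal(e)=\bigcup_{j=1}^{m}\Gamma_t\cdot\Vcal(e_j)$, a finite union of $\Gamma_t$-orbits since each $\Vcal(e_j)$ is finite. Combining the two parts, $\Vcal(t)$ is a finite union of $\Gamma_t$-orbits, so $G[\Vcal(t)]$ is quasi-transitive. I expect the only genuine obstacle to be the orbit count for the edges at $t$, which is precisely what is needed to circumvent the possible failure of local finiteness of $T$; the rest is routine manipulation of the subtrees $T_v$.
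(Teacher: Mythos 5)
Your proposal is correct and follows essentially the same route as the paper's proof: split $\Vcal(t)$ into vertices lying in no adhesion set (your ``private'' vertices, handled by the observation that any $\gamma\in\Gamma$ mapping one such vertex to another must stabilise the part) and vertices in adhesion sets of edges at $t$, the latter handled by the bound of at most two stabiliser-orbits of edges at $t$ per $\Gamma$-orbit on $E(T)$ together with finiteness of the adhesion sets. The only cosmetic difference is that you work with the vertex stabiliser $\Gamma_t$, whereas the paper uses the set-wise stabiliser $\Gamma_{\Vcal(t)}\supseteq\Gamma_t$; both yield quasi-transitivity of $G[\Vcal(t)]$.
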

\begin{proof}
We show that the set-wise stabiliser $\Gamma_{\Vcal(t)}$ of $\Vcal(t)$ in $\Gamma$ acts quasi-transitively on $G[\Vcal(t)]$.
If $u \in \Vcal(t)$ does not lie in any adhesion set, then neither does any image of $u$ under a graph automorphism. In particular, any $\gamma \in \Gamma$ mapping $u$ to some vertex $v \in \Vcal(t)$ fixes $\Vcal(t)$ and thus, under the action of the stabiliser of $\Vcal(t)$ there are only finitely many orbits of vertices in $\Vcal(t)$ not contained in any adhesion set. 

Let $m$ be the finite number of $\Gamma$-orbits on $E(T)$. Whenever $\gamma \in \Gamma$ fixes $t$ and maps a neighbour $s$ of $t$ onto some other neighbour $s'$, $\gamma$ lies in  $\Gamma_{\Vcal(t)}$ and maps the adhesion set $\Vcal(s,t)$ onto $\Vcal(s',t)$. Therefore the number of orbits of adhesion sets under the action of $\Gamma_{\Vcal(t)}$ is at most $2m$. As every adhesion set contains at most $k$ elements, $\Gamma_{\Vcal(t)}$ acts with at most $2mk$ orbits on vertices of $\Vcal(t)$, which lie in adhesion sets of the tree decomposition. We conclude that $\Gamma_{\Vcal(t)}$ acts with finitely many orbits on $\Vcal(t)$.
\end{proof}

Our goal is to apply the following proposition due to Thomassen~\cite{MR1194738} to a part $\Vcal(t)$, but in general $G[\Vcal(t)]$ need not be connected, so some additional work is necessary.

\begin{pro}[{\cite[Proposition 5.6]{MR1194738}}] \label{pro:qtransendthick}
If $G$ is a locally finite, connected, quasi-transitive graph with only one end, then this end is thick.
\end{pro}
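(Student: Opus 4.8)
\emph{The approach.} Suppose, for contradiction, that $G$ has a single end $\omega$ (so $G$ is infinite) and that $\omega$ is thin, of size $k\ge 1$; I aim for a contradiction. One natural first move uses Halin's trichotomy: no automorphism of $G$ can be hyperbolic, since a hyperbolic automorphism fixes two distinct ends; and a parabolic automorphism fixes a thick end, which would have to be $\omega$; so the only escape would be that every automorphism of $G$ is elliptic, and I do not see how to finish directly from there. Instead, the plan is to attach to $\omega$ a canonical non-empty \emph{finite} set of vertices invariant under $\AUT(G)$ — which a quasi-transitive graph cannot possess.

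\emph{The construction.} Since $G$ is one-ended, connected and locally finite, for every finite $W\subseteq V(G)$ the graph $G-W$ has exactly one infinite component $C(W)$, and $V(G)\setminus C(W)$ is finite. By Halin's theory of thin ends, $k$ is the least size of a finite set separating $\omega$, and we may fix a \emph{minimal separator} of $\omega$: a set $T$ of size $k$ such that $G-T$ is disconnected and $\omega$ lies in the infinite component $C(T)$. The essential structural input I would invoke is that the minimal separators of a thin end are pairwise nested; since all of them point towards $\omega$, this means the $\omega$-components $\{C(T)\}$ are linearly ordered by inclusion. Passing to complements, $\{W(T):=V(G)\setminus C(T)\}$ is a non-empty chain of finite sets, and therefore has a \emph{unique} $\subseteq$-smallest element $W_0=W(T_0)$, with $\emptyset\ne T_0\subseteq W_0$.

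\emph{Invariance and the orbit argument.} Every $\gamma\in\AUT(G)$ fixes $\omega$, because $\omega$ is the only end; hence $\gamma$ maps minimal separators of $\omega$ to minimal separators of $\omega$, with $\gamma(C(T))=C(\gamma T)$, so it permutes the chain $\{W(T)\}$ preserving inclusion and must fix its smallest member: $\gamma(W_0)=W_0$. Thus $W_0$ is a non-empty finite set invariant under the quasi-transitive group $\Gamma\le\AUT(G)$. But in an infinite, connected, locally finite, quasi-transitive graph every vertex-orbit of $\Gamma$ is infinite: if some orbit $O$ were finite, then for $v,v'$ in a common orbit and $\gamma\in\Gamma$ with $\gamma v=v'$ we get $d(v',O)=d(\gamma v,\gamma O)=d(v,O)$, so that orbit lies inside a single finite sphere around $O$ and is itself finite, forcing $V(G)$ finite. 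Since $W_0$ is a union of $\Gamma$-orbits, it contains a finite orbit — a contradiction. Hence $\omega$ is thick.

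\emph{The main obstacle.} The only step that is not routine is the structural fact that the minimal (size-$k$) separators of a thin end are pairwise nested, equivalently that their $\omega$-sides form a chain. This is classical — it underlies the tree-like behaviour of thin ends — but to use it cleanly I would need to pin down the precise notion of ``minimal separator'' and then invoke (or reprove, via an uncrossing/submodularity argument for the associated separations) the nestedness. Everything else — the reduction to the one-ended thin case, the finiteness of the complements $W(T)$ coming from one-endedness, and the orbit argument — is elementary.
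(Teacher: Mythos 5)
The paper does not actually prove this proposition itself -- it is quoted from Thomassen -- so your attempt has to stand on its own, and it has a genuine gap at precisely the step you single out as the ``essential structural input'': it is simply not true that the minimum-size separators of a thin end are pairwise nested, nor that their end-sides $C(T)$ form a chain. Concretely, take the half-ladder $H$ with vertex set $\{0,1,2,\dots\}\times\{0,1\}$ and edges $(i,j)(i+1,j)$ and $(i,0)(i,1)$. It is locally finite, connected and one-ended, and its unique end is thin of size $2$; note that your nestedness claim has to hold for such graphs, since it is invoked as a general fact about thin ends (quasi-transitivity enters your argument only at the very last step, and it must, because graphs satisfying \emph{all} hypotheses of the contradiction do not exist). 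In $H$ the sets $T_1=\{(0,0),(1,1)\}$ and $T_2=\{(0,1),(1,0)\}$ are both separators of minimum size $2$ whose removal leaves the end in the infinite component (the finite components are $\{(0,1)\}$ and $\{(0,0)\}$, respectively), and every vertex of each $T_i$ has neighbours on both sides, so no reasonable tightening of ``minimal separator'' excludes them; yet $W(T_1)=\{(0,0),(0,1),(1,1)\}$ and $W(T_2)=\{(0,0),(0,1),(1,0)\}$ are incomparable. Hence $\{W(T)\}$ is not a chain and your canonical smallest member $W_0$ need not exist. (A smaller side issue: with only one end, ``least size of a set separating $\omega$'' can be strictly smaller than the end size -- attach a pendant vertex to $H$ and you get a cut vertex although the end has size $2$.) The classical facts in this area are weaker than what you need: crossing minimum separators can be uncrossed by submodularity, which yields \emph{some} nested subfamilies, but these involve choices and are not $\AUT(G)$-invariant; making a nested separation system canonical is exactly the content of the Dunwoody--Kr\"on/Hamann-type theorems the paper imports elsewhere, and those systems are built to distinguish pairs of ends, so they give nothing when there is a single end (moreover, the paper's property (P3) is itself proved using this very proposition, so that machinery cannot be used here without circularity).

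So the central step cannot be repaired by ``pinning down the right notion of minimal separator''. One could try to salvage the idea by showing that the intersection $\bigcap_T W(T)$ is a non-empty finite invariant set, but its non-emptiness is precisely the non-trivial point and no argument is offered. The proofs in the literature go a different way: from a thin end of a quasi-transitive graph one extracts an automorphism $\tau$ translating a defining sequence of $k$-separators towards the end (in the language of this paper, a $\tau$-strip as in Lemma~\ref{lem:existtaustrips}); such a $\tau$ is non-elliptic, hence by Halin's trichotomy it is hyperbolic, giving a second end, or parabolic, fixing a thick end -- either way a contradiction. Your opening observation that every automorphism of a one-ended graph with a thin end is elliptic is correct, but it cannot by itself contradict quasi-transitivity: as the paper notes in Section~\ref{sec:endsmcfls}, infinite torsion groups act transitively on their Cayley graphs by elliptic automorphisms only, so the additional work really is unavoidable. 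Your final orbit argument (a finite non-empty invariant set contradicts quasi-transitivity of an infinite locally finite connected graph) is fine; the problem is that the invariant set is never produced.
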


In order to prove the first implication of \ref{itm:finiteparts-thinends}, assume that there is some vertex $t$ of $T$ such that the part $\Vcal(t)$ is infinite. Let $H$ be the sub-graph of $G$ obtained from the induced sub-graph $G[\Vcal(t)]$ in the following way. For every edge $e$ of $T$ incident to $t$ add all shortest paths between any pair of vertices in the adhesion set $\Vcal(e)$. Note that since the stabiliser of $\Vcal(t)$ acts quasi-transitively, the length of these paths is bounded by some constant $m \in \N$. 

Any walk on $G$ connecting two vertices of $\Vcal(t)$ consists of sub-walks on $G[\Vcal(t)]$ and detours leaving $\Vcal(t)$ via some adhesion set $\Vcal(e)$ and re-entering via the same set. These detours can be replaced by a shortest detour, which is by definition a walk on $H$, so $H$ is connected. Furthermore, $\Gamma_{\Vcal(t)}$ acts quasi-transitively on $H$ because it acts with finitely many orbits on the edges of $T$ and thus on the adhesion sets contained in $\Vcal(t)$.

Assume for a contradiction that $H$ has more than one end. Then there must be a separation $(A,B)$ of $H$ with finite separator $A \cap B$ such that both $A$ and $B$ are infinite. Let $K$ be the union of $A \cap B$ and all adhesion sets $\Vcal(s,t)$ containing both a vertex $a$ of $A \setminus B$ and a vertex $b$ of $B \setminus A$. Due to construction of $H$ it contains an $a$--$b$-path of length at most $m$ and any such path must intersect the separator $A \cap B$. Thus any vertex of $K$ lies at distance at most $m$ from $A \cap B$, implying that $K$ is finite. 

It is not hard to see that $G-K$ contains no $(A \setminus B)$--$(B \setminus A)$-path. Indeed, assume that $P$ is a $(A \setminus B)$--$(B \setminus A)$-path in $G$ which does not intersect $A \cap B$. Then $P$ is a detour leaving and re-entering $G[\Vcal(t)]$ via some adhesion set $\Vcal(s,t)$ intersecting $A \setminus B$ and $B \setminus A$ and thus contains at least one vertex in $K$.

Finally, let $R_A$ and $R_B$ be rays in $H[A]$ and $H[B]$, respectively. Then the ends $\omega_A$ and $\omega_B$ of $G$ containing $R_A$ and $R_B$ are different, as $R_A$ and $R_B$ are separated by $K$. On the other hand, each of $R_A$ and $R_B$ contains infinitely many vertices of $\Vcal(t)$, so they are not separated by any adhesion set of the tree decomposition $\Tcal$. This contradicts the fact that $\Tcal$ distinguishes all ends of $G$. We conclude that the infinite connected graph $H$ has precisely one end.
By Proposition~\ref{pro:qtransendthick} the end of the one-ended quasi-transitive graph $H$ is thick. The graph $G$ contains $H$ as a sub-graph and thus inherits the thick end of $H$.

On the other hand, it is not hard to see that all ends of $G$ are thin, if all parts of $\Tcal$ are finite. For any set of disjoint rays in the same end of $G$ there must be some adhesion set intersecting each of the rays. The size of adhesion sets is at most $k$, so every end of $G$ has size at most $k$.

Finally \ref{itm:thinends-locfintree} is a consequence of \ref{itm:separationfinitelyused} and \ref{itm:finiteparts-thinends}. Every edge $e$ incident to a vertex $s$ of $T$ corresponds to some adhesion set $\Vcal(e)$ which is a subset of the part $\Vcal(t)$. But the finite part $\Vcal(t)$ has only finitely many different subsets and each of them occurs only finitely often as an adhesion set in $(T,\Vcal)$.

\subsection{Rooted tree decompositions}

Let $(G,o)$ be a simple, locally finite, connected graph rooted at $o \in V(G)$. A \emph{rooted tree decomposition} $\Tcal=(T,\Vcal,r)$ of $(G,o)$ consists of a tree decomposition $(T, \Vcal)$ of $G$ and a fixed vertex $r$ of $T$ such that $o$ is contained in $\Vcal(r)$; note that there can be multiple valid choices for $r$ since $o$ can be contained in more than one part. We call $r$ the \emph{root} of $T$ and $\Vcal(r)$ the \emph{root part} of the decomposition. 

For every $t \in V(T)$ we introduce a graph $\Gcal(t)$ on the vertex set $\Vcal(t)$. Let us start by defining a map $\Ecal: V(T) \rightarrow 2^E(G)$ by $\Ecal(r)=E(G[\Vcal(r)])$ and
\[
\Ecal(t)= E(G[\Vcal(t)])\setminus E(G[\Vcal({t^\ua})]) \quad \text{for } t \neq r,
\]
where $t^\ua$ denotes the parent of $t$ in the rooted tree $T$.
Edges in $\Ecal(t)$ are called \emph{(non-virtual) $t$-edges}.
Property~\ref{itm:td-coveredges} of tree decompositions implies that for every edge $e$ of $G$ there is some $t \in V(T)$ such that $e \in \Ecal(t)$. Fix some edge $e$ of $G$ and let $S$ be the set of all vertices $s$ of $T$ such that $\Vcal(s)$ contains both endpoints of $e$. By property~\ref{itm:td-nocrosssedge} the induced sub-graph $T[S]$ is connected and thus the forefather $t$ of $S$ is contained in $S$. It is easy to see that $t$ is the unique vertex of $T$ with $e \in \Ecal(t)$, so the edge set of $G$ is the disjoint union $E(G)=\biguplus_{t \in V(T)} \Ecal(t)$. 

Additionally we introduce for every edge $e=st$ of $T$ a new set of \emph{virtual $e$-edges} $\Ecal(e)=\Ecal(st)$, such that every pair of vertices of $\Vcal(e)$ is connected by an edge in $\Ecal(e)$. In other words, the \emph{$e$-graph} $\Gcal(e)=(\Vcal(e),\Ecal(e))$ is a complete graph. In order to enhance readability, we usually write $\Ecal(s,t)$ instead of $\Ecal(st)$ and $\Gcal(s,t)$ instead of $\Gcal(st)$.

Finally, we assign to every vertex $t$ of $T$ the \emph{$t$-graph} 
\[
\Gcal(t)=\left(\Vcal(t)\;,\; \Ecal(t) \uplus \biguplus_{e \colon e^-=t} \Ecal(e)\right).
\]
Note that $\Gcal(t)$ generally is not a simple graph since $\Ecal(t)$ and the various sets $\Ecal(s,t)$ potentially contain edges with the same endpoints.

For convenience we extend the definition of $\Gcal$ to subsets $S$ of the vertex set $V(T)$ by taking the union of all graphs $\Gcal(t)$ for $t \in S$ and removing all virtual edges corresponding to edges of $T[S]$, so that only virtual edges corresponding to edges of $T$ with exactly one endpoint in $S$ remain. In terms of sets,
\[
\Gcal(S)=\left(\bigcup_{t \in S} \Vcal(t)\;, \; \bigcup_{t \in S} \Ecal(t) \uplus \biguplus_{e \colon e^- \in S,\, e^+ \notin S} \Ecal({e})\right).
\]

Again, we visualise these concepts using the Cayley graph $G$ from Figure~\ref{fig:caleygraph} and its tree decomposition $(T,\Vcal)$ shown in Figure~\ref{fig:treedecomp}. For a given root $o$ of $G$, denote by $r$ the unique vertex of $T$ such that the part $\Vcal(r)$ has cardinality 3 and contains $o$. Then $(T, \Vcal, r)$ is a rooted tree decomposition of the rooted graph $(G,o)$. Figure~\ref{fig:partgraph} shows a portion of the decomposition tree $T$ and the $t$-graphs for vertices $t$ contained in it. Compare this to Figure~\ref{fig:treedecomp} and note that the $t$-graphs on the parts are generally neither sub-graphs (due to virtual edges) nor super-graphs (due to some missing non-virtual edges) of the induced graphs on the parts.

\begin{figure}[tb]
	\centering
	\begin{tikzpicture}
	\begin{scope}[scale=0.45]
	\def\distt{2.9}
	\def\dista{4.2}
	\def\diamr{1.4}
	\def\diama{1.4}
	\foreach \i [evaluate=\i as \ieva using {int(mod(\i+1,3))},evaluate=\i as \ievb using {int(mod(\i+2,3))}] in {0,1,2} {
		\draw (120*\i+30:\diamr)--(120*\i+150:\diamr);
		\coordinate (A\i) at (120*\i+30:\distt);
		\coordinate (B\i) at (120*\i+30:{\distt+\dista});
		\coordinate (C\i) at (120*\i+30:{2*\distt+\dista});
		\draw[gen\i] (B\i)+(120*\i+210:\diama) -- ($(A\i)+(120*\i+30:\diamr)$);
		\draw[gen\i] (B\i)+(120*\i+90:\diama) -- ($(A\i)+(120*\i+150:\diamr)$);
		\draw[gen\i] (B\i)+(120*\i+330:\diama) -- ($(A\i)+(120*\i+270:\diamr)$);
		\draw[virtual=gen\i]  (120*\i+30:\diamr) to[in={120*\i+30}, out={120*\i+60},looseness=1.5] (120*\i+150:\diamr);
		\draw[virtual=gen\i]  (120*\i+270:\diamr) to[in={120*\i}, out={120*\i+30},looseness=1.5] (120*\i+30:\diamr);
		\draw[virtual=gen\i]  (120*\i+150:\diamr) to[in={120*\i+90}, out={120*\i-30}] (120*\i+270:\diamr);
		\draw[virtual=gen\i]  (A\i)+(120*\i+30:\diamr) to[in={120*\i+30}, out={120*\i+60},looseness=1.5] +(120*\i+150:\diamr);
		\draw[virtual=gen\i]  (A\i)+(120*\i+270:\diamr) to[in={120*\i}, out={120*\i+30},looseness=1.5] +(120*\i+30:\diamr);
		\draw[virtual=gen\i]  (A\i)+(120*\i+150:\diamr) to[in={120*\i+90}, out={120*\i-30}] +(120*\i+270:\diamr);
		\draw[virtual=gen\i]  (B\i)+(120*\i+210:\diamr) to[in={120*\i+210}, out={120*\i+240},looseness=1.5] +(120*\i+330:\diamr);
		\draw[virtual=gen\i]  (B\i)+(120*\i+90:\diamr) to[in={120*\i+180}, out={120*\i+210},looseness=1.5] +(120*\i+210:\diamr);
		\draw[virtual=gen\i]  (B\i)+(120*\i+330:\diamr) to[in={120*\i+270}, out={120*\i+150}] +(120*\i+90:\diamr);
	}	
	\foreach \j in {0,1,2} {
	    \foreach \i in {0,1,2}{
		    \draw (B\i)+(120*\i+120*\j+210:\diama) -- +(120*\i+120*\j+90:\diama);
	    	\draw[virtual=gen\i]  (C\j)+(120*\i+210:\diamr) to[in={120*\i+210}, out={120*\i+240},looseness=1.5] +(120*\i+330:\diamr);
		    \draw[virtual=gen\i]  (C\j)+(120*\i+90:\diamr) to[in={120*\i+180}, out={120*\i+210},looseness=1.5] +(120*\i+210:\diamr);
		    \draw[virtual=gen\i]  (C\j)+(120*\i+330:\diamr) to[in={120*\i+270}, out={120*\i+150}] +(120*\i+90:\diamr);
		}
	}
	\end{scope}
	\begin{scope}[scale=0.5, xshift=-8cm,yshift=-4cm]
	\def\dista{4}
	\def\diamr{0.2}
	\def\diama{0.2}
	\def\diamb{0.2}
	\def\diamc{0.2}
	\node at (0,0.6) {$r$};
    \draw[fill=black] (0,0) circle (\diamr);
	\foreach \i in {0,1,2} {
		\coordinate (A\i) at (120*\i+30:\dista);
		\draw[fill=black] (A\i) circle (\diama);
		\draw (A\i)+(120*\i+210:\diama) --coordinate[midway](m) (120*\i+30:\diamr);
	    \draw[fill=gen\i] (m) circle (\diama);
	    \draw (A\i)++(120*\i+90:\diama) -- +(120*\i+90:\dista/6);
	    \draw (A\i)++(120*\i+330:\diama) -- +(120*\i+330:\dista/6);
	}
	\end{scope}
  	\end{tikzpicture}
  	\caption{Decomposition tree $T$ of the Cayley graph $G$ and its corresponding $t$-graphs. Dashed edges are virtual $e$-edges; if they are shared by different $t$-graphs, they have the same shape in these graphs.}
  	\label{fig:partgraph}
\end{figure}
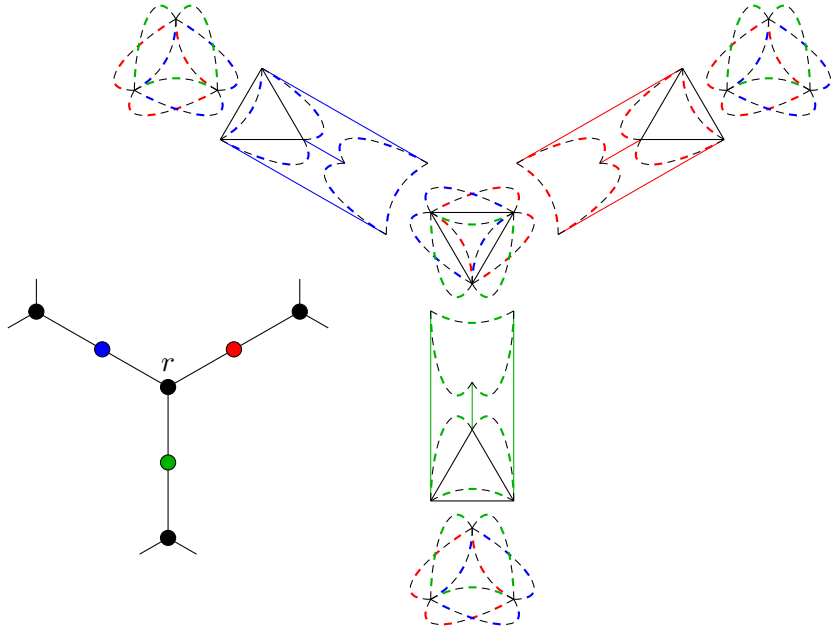

\subsection{Cones and cone types}

Recall that for a rooted tree decomposition $(T,\Vcal,r)$ the cone at a vertex $s \in V(T)$ of $T$ as the set $K_s$ containing all descendants of $s$, that is, all vertices $t$ such that $s$ lies on the $t$--$r$-path in $T$.

Let $\Gamma \subseteq \AUT(G)$ be a group acting on $G$. We say that two vertices $s$ and $t$ of $T$ different from $r$ are \emph{cone-equivalent} and write $s \sim_K t$, if there is a $\gamma \in \Gamma$ mapping $s$ to $t$ and the parent $s^\ua$ of $s$ to the parent $t^\ua$ of $t$. The root $r$ is only cone equivalent to itself. Clearly $\sim_K$ is an equivalence relation and we call the equivalence classes of vertices \emph{cone types} of the rooted tree decomposition $\Tcal$. 

Note that if $\gamma \in \Gamma$ witnesses the cone equivalence of $s$ and $t$, then $\gamma$ maps the cone $K_s$ onto the cone $K_t$; in this case we also call the cones $K_s$ and $K_t$ \emph{equivalent}. The following lemma tells us that the graphs $\Gcal(s)$ and $\Gcal(t)$ are isomorphic whenever $s$ and $t$ are cone equivalent.

\begin{lem}
\label{lem:coneequivalence}
Any $\gamma \in \Gamma$ witnessing the cone equivalence of two vertices $s$ and $t$ of $T$ can be extended to a graph isomorphism between the graphs $\Gcal(s)$ and $\Gcal(t)$. 
\end{lem}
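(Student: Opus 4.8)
The plan is to observe that $\gamma$, viewed as an automorphism of $T$ via the fixed action witnessing the $\Gamma$-invariance of $(T,\Vcal)$, already carries every ingredient in the definition of $\Gcal(s)$ onto the corresponding ingredient of $\Gcal(t)$. First I would record the elementary consequence of $\Gamma$-invariance that $\gamma(\Vcal(u))=\Vcal(\gamma u)$ for every $u\in V(T)$; in particular $\gamma$ restricts to a bijection $\Vcal(s)\to\Vcal(\gamma s)=\Vcal(t)$, which will be the vertex part of the claimed isomorphism, and for every tree edge $e$ incident with $u$ it maps the adhesion set $\Vcal(e)$ bijectively onto $\Vcal(\gamma e)$.

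Next I would handle the non-virtual edges. Since $\gamma\in\AUT(G)$, it maps the induced subgraph $G[\Vcal(u)]$ isomorphically onto $G[\Vcal(\gamma u)]$, so $\gamma\big(E(G[\Vcal(u)])\big)=E(G[\Vcal(\gamma u)])$. If $s=t=r$ this gives $\gamma(\Ecal(r))=\Ecal(r)$ directly (a witnessing $\gamma$ fixes $r$). If $s\neq r$, the defining condition of cone equivalence says $\gamma(s^{\ua})=t^{\ua}$, hence
\[
\gamma(\Ecal(s))=\gamma\big(E(G[\Vcal(s)])\setminus E(G[\Vcal(s^{\ua})])\big)=E(G[\Vcal(t)])\setminus E(G[\Vcal(t^{\ua})])=\Ecal(t).
\]
This is precisely where the extra requirement in the definition of cone equivalence (beyond mere $\Gamma$-equivalence of $s$ and $t$) is used: without $\gamma(s^{\ua})=t^{\ua}$ one could not subtract the right ``parent'' edge set.

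Then I would treat the virtual edges. The assignment $e\mapsto\gamma e$ is a bijection from the directed tree edges with tail $s$ onto those with tail $t$. For each such $e$ from $s$ to a neighbour $s'$ of $s$, the $e$-graph $\Gcal(e)$ is the complete graph on $\Vcal(e)$ and $\gamma(\Vcal(e))=\Vcal(\gamma e)$, so $\gamma$ maps $\Gcal(e)$ isomorphically onto $\Gcal(\gamma e)$; declaring that $\gamma$ sends the virtual $e$-edge joining $u,v\in\Vcal(e)$ to the virtual $(\gamma e)$-edge joining $\gamma u,\gamma v$ yields a bijection $\biguplus_{e\colon e^-=s}\Ecal(e)\to\biguplus_{e\colon e^-=t}\Ecal(e)$. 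Combining this with the bijection $\Ecal(s)\to\Ecal(t)$ from the previous paragraph and the vertex bijection $\Vcal(s)\to\Vcal(t)$, and noting that incidences are preserved (for non-virtual edges because $\gamma$ is an automorphism of $G$, for virtual edges by the definition of the extension), gives the desired isomorphism $\Gcal(s)\to\Gcal(t)$.

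\textbf{Main obstacle.} There is no genuine difficulty here; the only points requiring care are that $\Gcal(s)$ and $\Gcal(t)$ are multigraphs, so the extension of $\gamma$ to edges must be specified on each summand of the disjoint union separately and checked to be a well-defined, endpoint-respecting bijection, and the bookkeeping verifying that the cone-equivalence condition $\gamma(s^{\ua})=t^{\ua}$ is exactly what matches $\Ecal(s)$ with $\Ecal(t)$ and the virtual edges at $s$ with those at $t$.
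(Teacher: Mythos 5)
Your argument is correct and follows essentially the same route as the paper's proof: the key step in both is that the defining condition $\gamma(s^{\ua})=t^{\ua}$ of cone equivalence guarantees $\gamma(\Ecal(s))=\Ecal(t)$, after which $\gamma$ is extended in the natural way to the virtual edges $\Ecal(s,u)\to\Ecal(t,\gamma u)$. Your write-up merely spells out the multigraph bookkeeping that the paper leaves implicit.
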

\begin{proof}
Recall that by definition 
\[
\Gcal(s)=\left(\Vcal(s), \Ecal(s) \cup \bigcup_{e \colon e^-=s} \Ecal(e)\right),
\] 
where $\Ecal(s)=E(G[\Vcal(s)])\setminus E(G[\Vcal({s^\ua})])$. Note that $e \in \Ecal(s)$ if and only if $\gamma(e) \in \Ecal(t)$, because $\gamma$ maps $s^\ua$ onto $t^\ua$. We extend $\gamma$ in the natural way to virtual edges in $\Ecal(s,u)$ by mapping $e$ onto the edge $\gamma(e) \in \Ecal({t,\gamma(u)})$ connecting $\gamma(e^-)$ and $\gamma(e^+)$. The result is a graph isomorphism between $\Gcal(s)$ and $\Gcal(t)$.
\end{proof}

\begin{lem}\label{lem:finiteconetypes}
Let $(T, \Vcal, r)$ be a reduced, rooted tree decomposition of a locally finite graph $G$. If there is a subgroup $\Gamma \leq \AUT(G)$  such that $(T, \Vcal)$ is $\Gamma$-invariant and the action of $\Gamma$ on $E(T)$ has finitely many orbits, then the number of cone types of $(T,\Vcal,r)$ is finite.
\end{lem}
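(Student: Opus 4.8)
The plan is to show that the cone type of a non-root vertex $s$ of $T$ is already determined by the $\Gamma$-orbit of the edge of $T$ running from the parent $s^\ua$ to $s$, and then simply to count these orbits.

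So, for each vertex $s \in V(T) \setminus \{r\}$ let $e_s$ denote the edge of $T$ with $e_s^- = s^\ua$ and $e_s^+ = s$; this is well defined since $T$ is a tree. The key observation is that for non-root vertices $s$ and $t$ one has $s \sim_K t$ if and only if $e_s$ and $e_t$ lie in the same orbit under the action of $\Gamma$ on $E(T)$. Indeed, any $\gamma \in \Gamma$ (acting on $T$ via the action witnessing the $\Gamma$-invariance of $(T,\Vcal)$) maps $e_s$ to the edge of $T$ with initial vertex $\gamma(s^\ua)$ and terminal vertex $\gamma(s)$; this edge equals $e_t$ precisely when $\gamma(s^\ua) = t^\ua$ and $\gamma(s) = t$, which is exactly the condition that $\gamma$ witnesses the cone equivalence of $s$ and $t$. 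Hence the assignment $s \mapsto \Gamma e_s$ is constant on $\sim_K$-classes of non-root vertices and separates distinct such classes.

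It follows that $s \mapsto \Gamma e_s$ descends to an injection from the set of cone types contained in $V(T) \setminus \{r\}$ into the set of $\Gamma$-orbits on $E(T)$. By hypothesis the latter set is finite, and since the root $r$ is cone-equivalent only to itself, $\{r\}$ is one further cone type. Therefore the number of cone types of $(T,\Vcal,r)$ is at most one more than the number of $\Gamma$-orbits on $E(T)$, in particular finite.

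The argument is short, and the only point requiring a little care is the biconditional in the key observation, namely that an automorphism carrying $e_s$ to $e_t$ is precisely one realising the cone equivalence of $s$ and $t$ — here one must keep track of the fact that automorphisms respect the direction of edges, so that $e_s$ cannot be mapped to the reverse of $e_t$. It is worth noting that neither reducedness nor local finiteness of $(T,\Vcal)$ actually enters this proof; these hypotheses are included because the lemma will be applied to the tree decomposition furnished by Corollary~\ref{cor:graphdecomp}.
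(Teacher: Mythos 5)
Your proof is correct and takes essentially the same route as the paper: cone equivalence of non-root vertices corresponds exactly to the directed parent-to-child edges $s^\ua s$ and $t^\ua t$ lying in the same $\Gamma$-orbit, so the number of cone types is bounded by the number of orbits on $E(T)$ plus one for the root. The paper phrases the bound as twice the number of edge orbits plus one, but the underlying argument is identical.
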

\begin{proof}
Let $s \sim_K t$ be two vertices in $V(T)\setminus \{r\}$. Then there is a $\gamma \in \Gamma$ mapping the edge $s^\ua s$ onto $t^\ua t$ while preserving direction. This implies that the number of cone types can be at most two times the number of edge orbits of $\Gamma$ acting on $T$ plus one, where the additional type is the type of the root $r$, the only vertex without a parent. In particular the number of cone types is finite.
\end{proof}

\section{Configurations} \label{sec:configs}

Let $\Tcal=(T,\Vcal,r)$ be a rooted tree decomposition of a simple, locally finite, connected, rooted graph $(G,o)$. A \emph{configuration} on $S \subseteq V(T)$ with respect to $\Tcal$ is a map $C=(P,X)$ assigning to each vertex $s \in S$ a pair $C(s)=(P(s),X(s))$, such that for every $s \in S$ one of the following alternatives holds.
\begin{enumerate}[label=(\alph*)]
    \item $X(s) \in V(T)$ is either $s$ or a neighbour of $s$ in $T$, and $P(s)$ is a self-avoiding walk on $\Gcal(s)$ starting in $\Vcal({s^\ua,s})$, or at $o$ if $s=r$. If $X(s) \neq s$, then $P(s)$ ends in $\Vcal({s,X(s)})$. Moreover, if $X(s)=s$ then the last edge of $P(s)$ is a non-virtual edge.
    \item $X(s)=s^\ua$ and $P(s)=\emptyset$ is the empty walk; this is called the \emph{empty configuration} and can only occur for $s \neq r$.
\end{enumerate}
We call $X(s)$ the \emph{exit direction} of $s$.
A \emph{configuration} on a vertex $s$ of $T$ is an image pair $C(s)=(P(s),X(s))$ of a configuration $C=(P,X)$ on the set $S=\{s\}$. Note that by Lemma~\ref{lem:coneequivalence}, the sets of configurations on two cone equivalent vertices are the same up to isomorphism.

Intuitively, configurations model the behaviour of SAWs on single parts of the tree decomposition in the following way. Let $p$ be a self-avoiding walk on $G$ starting at the root $o$. For any $t \in V(T)$ we define a projection $p_t$ of $p$ onto the graph $\Gcal(t)$. First take all vertices and edges of $p$ contained in $\Gcal(t)$ to obtain the multi-walk $p \cap \Gcal(s,t)$. Every detour of $p$ in some other part $\Vcal(s)$ with $s$ adjacent to $t$ in $T$ corresponds to a virtual edge of $\Ecal(s,t)$ connecting the same endpoints as the detour. By replacing these detours by their ``shortcuts'', we end up with a walk $p_t$ on $\Gcal(t)$. Note that $p_t$ might be the empty walk for many vertices $t$. Let $u$ be the vertex of $T$ such that the final edge of $p$ is contained in $\Gcal(u)$. Let $x_u=u$ and for $t \neq u$ let $x_t$ be the neighbour of $t$ on the unique $t$--$u$-path in $T$. Then the function $C$ defined by $C(t)=(p_t,x_t)$ defines a configuration on $V(T)$ with respect to $\Tcal$. This shows that starting from a SAW, we can give a configuration describing the behaviour of the walk when restricted to single parts.

In order to be able to reverse the above construction, we would like to combine configurations on the single parts into SAWs on $G$. To this end, two more properties are needed. Firstly, since SAWs are finite, only finitely many parts can make non-trivial contributions. Secondly, configurations on the parts that contribute non-trivially must fit together in a certain way. These two properties are implied by the notions of boundedness and consistency of configurations defined below. In what follows, let $C=(P,X)$ be a configuration on $S \subseteq V(T)$.

The \emph{weight} $\norm{C}$ of $C$ is the total number of non-virtual edges contained in all the walks $P(s)$ for $s \in S$, so $\norm{C}=\sum_{s \in S} \norm{C(s)}$, where $\norm{C(s)}$ denotes the number of non-virtual edges in $P(s)$. 
The configuration $C$ is called \emph{boring on} $s \in S \setminus\{r\}$ (we also say that $C(s)$ is \emph{boring}) if $X(s)=s^\ua$ and $P(s)$ contains only edges in $\Ecal({s^\ua,s})$. In particular, the empty configuration is boring and all boring configurations have weight 0. Call a configuration $C$ \emph{bounded}, if $C(s)$ is boring for all but finitely many $s \in S$.

Let $s,t \in S$ be adjacent vertices; without loss of generality assume $s=t^\ua$. The configurations $C(s)$ and $C(t)$ are called \emph{compatible}, if either $P(s) \cap \Vcal(s,t)=\emptyset$ and $C(t)$ is the empty configuration, or if they satisfy the following four conditions.
\begin{enumerate}[label=(C\arabic*)]
    \item \label{itm:compatible-intersection}
    The ordered sequences of vertices obtained by intersecting the walks $P(s)$ and $P(t)$ with $\Vcal(s,t)$ coincide,
    \[
    P(s) \cap \Vcal(s,t)=(v_1,\dots,v_l)= P(t) \cap \Vcal(s,t).
    \]
    \item \label{itm:compatible-alternate}
    For every $i \in \{1,\dots,l-1\}$
    \[
    v_i P(s) v_{i+1} \cap \Ecal(s,t) = \emptyset \iff v_i P(t) v_{i+1} \cap \Ecal(s,t) \neq \emptyset.
    \]
    \item \label{itm:compatible-exitdirection}
    $X(s)=t \iff X(t) \neq s$.
    \item \label{itm:compatible-endvertex}
    If $X(s)=t$, then $P(s)$ ends in $v_l$, otherwise $P(t)$ ends in $v_l$.
\end{enumerate}
The configuration $C$ is called \emph{consistent}, if the configurations $C(t^\ua)$ and $C(t)$ are compatible whenever both $t$ and $t^\ua$ are in $S$.

Configurations on the complete vertex set $V(T)$ of the tree decomposition $\Tcal$ are called configurations on $\Tcal$ and the set of all bounded consistent configurations on $\Tcal$ will be denoted by $\Ccal_{\Tcal}$.

\begin{rmk} \label{rmk:sink}
By \ref{itm:compatible-exitdirection}, a consistent configuration $C=(P,X) \in \Ccal_\Tcal$  induces an orientation of the edges of $T$. Clearly any vertex $s \in V(T)$ can be incident to at most one vertex $t$ with $X(t) \neq s$, namely the vertex $X(s)$ in the case $X(s) \neq s$. It is not hard to see that if there is a vertex $s$ with $X(s)=s$, then for every other vertex $t$ of $T$, $X(t)$ lies on the unique $t$--$s$-path in $T$; in other words, $X(t)$ points towards $s$. In particular there can be at most one such vertex $s$. Also note that in the case where $C$ is bounded there is exactly one vertex $s$ with $X(s)=s$. This vertex $s$ can be found by starting at any vertex of $T$ and following exit directions.
\end{rmk}

Let us go back to the Cayley graph $G$ from Figure~\ref{fig:caleygraph}. Using the decomposition tree $T$ and the $t$-graphs from Figure~\ref{fig:partgraph}, an example of a bounded consistent configuration $(W,X)$ on $T$ is shown in Figure~\ref{fig:configurations}. Note that there are only 3 vertices carrying non-boring configurations and that all exit directions point towards the unique vertex $s$ of $T$ with $X(s)=s$.

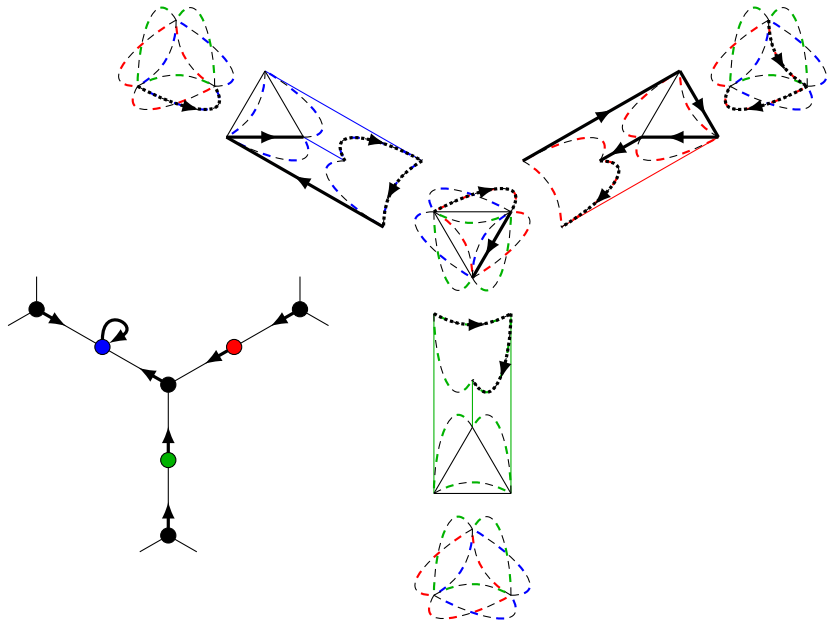
\begin{figure}[bt]
	\centering
	\begin{tikzpicture}
	\begin{scope}[scale=0.45]
	\def\distt{3}
	\def\dista{4}
	\def\diamr{1.3}
	\def\diama{1.3}
	\foreach \i [evaluate=\i as \ieva using {int(mod(\i+1,3))},evaluate=\i as \ievb using {int(mod(\i+2,3))}] in {0,1,2} {
		\draw (120*\i+30:\diamr)--(120*\i+150:\diamr);
		\coordinate (A\i) at (120*\i+30:\distt);
		\coordinate (B\i) at (120*\i+30:{\distt+\dista});
		\coordinate (C\i) at (120*\i+30:{2*\distt+\dista});
		\draw[gen\i] (B\i)+(120*\i+210:\diama) -- ($(A\i)+(120*\i+30:\diamr)$);
		\draw[gen\i] (B\i)+(120*\i+90:\diama) -- ($(A\i)+(120*\i+150:\diamr)$);
		\draw[gen\i] (B\i)+(120*\i+330:\diama) -- ($(A\i)+(120*\i+270:\diamr)$);
		\draw[virtual=gen\i]  (120*\i+30:\diamr) to[in={120*\i+30}, out={120*\i+60},looseness=1.5] (120*\i+150:\diamr);
		\draw[virtual=gen\i]  (120*\i+270:\diamr) to[in={120*\i}, out={120*\i+30},looseness=1.5] (120*\i+30:\diamr);
		\draw[virtual=gen\i]  (120*\i+150:\diamr) to[in={120*\i+90}, out={120*\i-30}] (120*\i+270:\diamr);
		\draw[virtual=gen\i]  (A\i)+(120*\i+30:\diamr) to[in={120*\i+30}, out={120*\i+60},looseness=1.5] +(120*\i+150:\diamr);
		\draw[virtual=gen\i]  (A\i)+(120*\i+270:\diamr) to[in={120*\i}, out={120*\i+30},looseness=1.5] +(120*\i+30:\diamr);
		\draw[virtual=gen\i]  (A\i)+(120*\i+150:\diamr) to[in={120*\i+90}, out={120*\i-30}] +(120*\i+270:\diamr);
		\draw[virtual=gen\i]  (B\i)+(120*\i+210:\diamr) to[in={120*\i+210}, out={120*\i+240},looseness=1.5] +(120*\i+330:\diamr);
		\draw[virtual=gen\i]  (B\i)+(120*\i+90:\diamr) to[in={120*\i+180}, out={120*\i+210},looseness=1.5] +(120*\i+210:\diamr);
		\draw[virtual=gen\i]  (B\i)+(120*\i+330:\diamr) to[in={120*\i+270}, out={120*\i+150}] +(120*\i+90:\diamr);
	}	
	\foreach \j in {0,1,2} {
	    \foreach \i in {0,1,2}{
		    \draw (B\i)+(120*\i+120*\j+210:\diama) -- +(120*\i+120*\j+90:\diama);
	    	\draw[virtual=gen\i]  (C\j)+(120*\i+210:\diamr) to[in={120*\i+210}, out={120*\i+240},looseness=1.5] +(120*\i+330:\diamr);
		    \draw[virtual=gen\i]  (C\j)+(120*\i+90:\diamr) to[in={120*\i+180}, out={120*\i+210},looseness=1.5] +(120*\i+210:\diamr);
		    \draw[virtual=gen\i]  (C\j)+(120*\i+330:\diamr) to[in={120*\i+270}, out={120*\i+150}] +(120*\i+90:\diamr);
		}
	}
    \begin{scope}[walkarrow] 
        \draw[postaction={decorate},densely dotted] (150:\diamr) to[out=30, in=60,looseness=1.5] (30:\diamr);
        \draw[postaction={decorate}] (30:\diamr) -- (270:\diamr);
        \draw[postaction={decorate}] (A0)+(150:\diama) -- ($(B0)+(90:\diama)$);
        \draw[postaction={decorate}] ($(B0)+(90:\diama)$) -- ($(B0)+(-30:\diama)$);
        \draw[postaction={decorate}] ($(B0)+(-30:\diama)$) -- ($(B0)+(210:\diama)$);
        \draw[postaction={decorate}] ($(B0)+(210:\diama)$) -- ($(A0)+(30:\diama)$);
        \draw[postaction={decorate},densely dotted] ($(A0)+(30:\diama)$) to[out=0, in=30,looseness=1.5] ($(A0)+(-90:\diama)$);
        \draw[postaction={decorate},densely dotted] ($(A1)+(150:\diama)$) to[out=120, in=150,looseness=1.5] ($(A1)+(30:\diama)$);
        \draw[postaction={decorate},densely dotted] ($(A1)+(30:\diama)$) to[out=210, in=90] ($(A1)+(-90:\diama)$);
        \draw[postaction={decorate}] ($(A1)+(-90:\diama)$) -- ($(B1)+(210:\diama)$);
        \draw[postaction={decorate}] ($(B1)+(210:\diama)$) -- ($(B1)+(-30:\diama)$);
        \draw[postaction={decorate},densely dotted] ($(A2)+(150:\diama)$) to[out=-30, in=210] ($(A2)+(30:\diama)$);
        \draw[postaction={decorate},densely dotted] ($(A2)+(30:\diama)$) to[out=270, in=-60,looseness=1.5] ($(A2)+(-90:\diama)$);
        \draw[postaction={decorate},densely dotted] ($(C0)+(90:\diama)$) to[out=-90, in=150] ($(C0)+(-30:\diama)$);
        \draw[postaction={decorate},densely dotted] ($(C0)+(-30:\diama)$) to[out=210, in=240,looseness=1.5] ($(C0)+(210:\diama)$);
        \draw[postaction={decorate},densely dotted] ($(C1)+(210:\diama)$) to[out=-30, in=-60,looseness=1.5] ($(C1)+(-30:\diama)$);
    \end{scope}
	\end{scope}
	\begin{scope}[scale=0.5, xshift=-8cm,yshift=-4cm]
	\def\dista{4}
	\def\diamr{0.2}
	\def\diama{0.2}
	\def\diamb{0.2}
	\def\diamc{0.2}
    \draw[fill=black] (0,0) circle (\diamr);
	\foreach \i in {0,1,2} {
		\coordinate (A\i) at (120*\i+30:\dista);
		\draw[fill=black] (A\i) circle (\diama);
		\draw (A\i)+(120*\i+210:\diama) --coordinate[midway](m) (120*\i+30:\diamr);
	    \draw[fill=gen\i] (m) circle (\diama);
		\draw (A\i)++(120*\i+90:\diama) -- +(120*\i+90:\dista/6);
	    \draw (A\i)++(120*\i+330:\diama) -- +(120*\i+330:\dista/6);
	}
	\draw[very thick,-latex] (0,0)++(150:\diama) -- +(150:0.7);
    \draw[very thick,-latex] (30:{\dista/2})++(210:\diama) -- +(210:0.7);
    \draw[very thick,-latex] (-90:{\dista/2})++(90:\diama) -- +(90:0.7);
    \draw[very thick,-latex] (30:{\dista})++(210:\diama) -- +(210:0.7);
    \draw[very thick,-latex] (150:{\dista})++(-30:\diama) -- +(-30:0.7);
    \draw[very thick,-latex] (-90:{\dista})++(90:\diama) -- +(90:0.7);
    \draw[very thick,-latex] (150:{\dista/2})+(90:\diama) to[out=90,in=30,looseness=12] +(30:\diama);
	\end{scope}
  	\end{tikzpicture}
  	\caption{A bounded consistent configuration $(W,X)$ on the tree decomposition $\Tcal$ of the Cayley graph $G$. Edges of the walk $W(t)$ on the $t$-graph $\Gcal(t)$ are drawn bold and decorated with arrows according to their direction. Exit directions of vertices of $T$ are also denoted by arrows pointing from a vertex $t$ to a vertex $s$ if $X(t)=s$.}
  	\label{fig:configurations}
\end{figure}

The following extension lemma can be seen as the reason why boring configurations are indeed not interesting to us. More precisely, it shows that a bounded consistent configuration on $\Tcal$, is uniquely determined by the (finitely many) non-boring configurations. Moreover, it tells us that under certain conditions a consistent configuration on a finite set $S \subseteq V(T)$ can be extended to a bounded consistent configuration on $\Tcal$.

\begin{lem}\label{lem:boringextension}
Let $s,t \in V(T)$ such that $s=t^\ua$ and let $c_s=(p_s,x_s)$ be a configuration on $s$ such that $p_s \cap \Ecal(s,t)= \emptyset$ and $x_s \neq t$. Then there is a unique configuration $c_t$ on $t$ which is compatible with $c_s$, and this configuration $c_t$ is boring.
\end{lem}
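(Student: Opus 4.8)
The plan is to read off what $c_t$ must look like directly from the definition of compatibility, splitting into two cases according to whether the walk $p_s$ meets the adhesion set $\Vcal(s,t)$. Observe first that $t \neq r$ (it has the parent $s$), so the empty configuration on $t$ is available.

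\emph{Case 1: $p_s \cap \Vcal(s,t) = \emptyset$.} Suppose $c_t = (p_t, x_t)$ is compatible with $c_s$. Then either $c_t$ is the empty configuration, or conditions \ref{itm:compatible-intersection}--\ref{itm:compatible-endvertex} hold; in the latter case \ref{itm:compatible-intersection} forces $p_t \cap \Vcal(s,t) = \emptyset$, but a non-empty walk $p_t$ starts in $\Vcal(t^\ua,t) = \Vcal(s,t)$ and thus meets it, so $p_t = \emptyset$ and $c_t$ is the empty configuration anyway. Conversely, the empty configuration on $t$ is compatible with $c_s$ via the first alternative of compatibility, and it is boring; this settles the case.

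\emph{Case 2: $p_s \cap \Vcal(s,t) = (v_1,\dots,v_l)$ with $l \geq 1$.} Now the first alternative of compatibility is unavailable, so any compatible $c_t = (p_t,x_t)$ must satisfy \ref{itm:compatible-intersection}--\ref{itm:compatible-endvertex}. From \ref{itm:compatible-exitdirection} and $x_s \neq t$ one gets $x_t = s = t^\ua$; by \ref{itm:compatible-intersection}, $p_t$ visits exactly $v_1, \dots, v_l$ among the vertices of $\Vcal(s,t)$, so (its first vertex lying in $\Vcal(t^\ua,t)=\Vcal(s,t)$) $p_t$ starts at $v_1$, and (since $x_t = s \neq t$, using the last-vertex constraint in the definition of a configuration) $p_t$ ends at $v_l$. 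Hence $p_t$ is the concatenation of the sub-walks $v_i\,p_t\,v_{i+1}$, $i \in [l-1]$. The interior of $v_i\,p_t\,v_{i+1}$ misses $\Vcal(s,t)$, so any edge of $\Ecal(s,t)$ lying on it — having both endpoints in $\Vcal(s,t)$ and not being a loop — joins $v_i$ to $v_{i+1}$, and since $p_t$ is self-avoiding it then is the whole sub-walk. On the other hand $p_s \cap \Ecal(s,t) = \emptyset$ makes the left-hand side of \ref{itm:compatible-alternate} true for every $i$, which forces $v_i\,p_t\,v_{i+1} \cap \Ecal(s,t) \neq \emptyset$; combining the two observations and using that $\Gcal(s,t)$ is a complete graph, $v_i\,p_t\,v_{i+1}$ is precisely the unique virtual edge between $v_i$ and $v_{i+1}$. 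This determines $p_t$ uniquely (it is the trivial walk $(v_1)$ when $l=1$), shows $p_t$ consists only of edges of $\Ecal(s,t)$, and together with $x_t = t^\ua$ shows $c_t$ is boring. A short check — that the $v_i$ are distinct, that the constructed walk lies in $\Gcal(t)$, and that \ref{itm:compatible-intersection}--\ref{itm:compatible-endvertex} all hold — confirms that this $c_t$ is indeed a valid configuration compatible with $c_s$, so it exists.

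The point requiring the most care is the distinction between $p_s \cap \Ecal(s,t) = \emptyset$ (no virtual $(s,t)$-edge is traversed) and $p_s \cap \Vcal(s,t) = \emptyset$ (the adhesion set is untouched): only the second makes $c_t$ trivially empty, and in the first case one must use \ref{itm:compatible-alternate}, read in the direction forced by its vanishing left-hand side, to conclude that $p_t$ ``shortcuts'' through $\Vcal(s,t)$ using only virtual edges. The remaining verifications, including the degenerate subcase $l=1$, are routine.
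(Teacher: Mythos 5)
Your proof is correct and takes essentially the same route as the paper's own argument: deduce $x_t=s$ from (C3), the visited vertex sequence from (C1), the start/end vertices from the configuration definition and (C4), and then use (C2) with its vanishing left-hand side to force each sub-walk between consecutive $v_i$ to be exactly the single virtual edge of $\Ecal(s,t)$, which yields uniqueness, boringness, and (by running the construction forwards) existence. Your explicit split into the case $p_s \cap \Vcal(s,t)=\emptyset$, where the first alternative of compatibility forces the empty configuration, is a point the paper's proof leaves implicit, but it is the same argument in substance.
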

\begin{proof}
Suppose $c_t=(p_t,x_t)$ is a configuration on $t$ such that $c_t$ and $c_s$ are compatible. Then $x_t=s$ by \ref{itm:compatible-exitdirection} and $p_t \cap \Vcal(s,t)=(v_1, \dots, v_l)=p_s \cap \Vcal(s,t)$ by \ref{itm:compatible-intersection}. Property~\ref{itm:compatible-alternate} implies that the sub-walks $v_i p_t v_{i+1}$ contain only the virtual edge $v_iv_{i+1}$ in $\Ecal(s,t)$ and by the definition of configurations and \ref{itm:compatible-endvertex} the walk $p_t$ starts at $v_1$ and ends at $v_l$. We conclude that the configuration $c_t$ on $t$ is unique and boring. Moreover, the considerations above can be used to construct such a configuration, and in particular such a configuration exists.
\end{proof}

Our goal in this section is to establish a one-to-one correspondence $\psi_r$ between bounded consistent configurations $\Ccal$ on the rooted tree decomposition $\Tcal=(T,\Vcal,r)$ and self-avoiding walks of length at least $1$ on the underlying graph $G$ starting at its root $o$. The main idea is to contract the sub-tree induced by all vertices of $T$ carrying non-boring configurations to a single vertex. By also contracting the corresponding configurations, only a single non-boring configuration remains; its walk is a walk on $G$ and will be denoted $\psi_r(C)$.

In Section~\ref{sec:treedecomp} we already discussed how to contract a set $F$ of edges of a tree decomposition $\Tcal$ to obtain a contracted tree decomposition $\Tcal/F$. Let us repeat this process for rooted tree decompositions and configurations on those tree decompositions. As we are only interested in contractions of finite sets of edges, we first focus on the special case where a single edge is contracted.

The following definition of the contraction of a rooted tree decomposition coincides with our earlier definition of contractions of tree decompositions; the root part of the contraction is simply the equivalence class of $r$. We still give a detailed definition since we would like to introduce some notation.

Let $\Tcal=(T,\Vcal,r)$ be a rooted tree decomposition of a simple, locally finite, connected, rooted graph $(G,o)$ and let $f \in E(T)$. We may without loss of generality, assume that $f^- = (f^+)^\ua$ (if not, use the reversed edge). Define the contraction $\Tcal/f = (T/f,\Vcal/f,r/f)$ as follows.

The tree $T/f$ is obtained from $T$ by identifying the two endpoints $f^-$ and $f^+$ of $f$ and deleting the edge $f$. More precisely, $T/f$ can be described as follows. The vertex set of $T/f$ is obtained from the vertex set of $T$ by replacing $f^-$ and $f^+$ by a single vertex $t_f$.  Every edge $e \in E(T)\setminus \{f\}$ not incident to $f$ corresponds to an edge in $T/f$ with the same endpoints. Every edge $e = st$  of $T$ where $t$ is an endpoint of $f$ corresponds to an edge connecting $s$ and $t_f$ in $T/f$. We abuse notation and denote the edge corresponding to $e$ in $T/f$ by $e$ as well. The part $\Vcal/f(t_f)$ is defined as $\Vcal(f^-) \cup \Vcal(f^+)$; for every other vertex of $T/f$ we define $\Vcal/f(t) = \Vcal(t)$. Finally, if $r$ is incident to $f$, then let $r/f = t_f$, otherwise let $r/f = r$. 

Denote the parent of $t\in V(T/f)$ by $t^{\ua/f}$. From the assumption $f^- = (f^+)^\ua$ it follows that $(t_f)^{\ua/f} = (f^-)^\ua$, unless $f^- = r$, in this case $t_f = r/f$ has no parent. For every other vertex of $T/f$ we have $t^{\ua/f} = t_f$ if $t^\ua \in \{f^-, f^+\}$, and $t^{\ua/f} = t^\ua$ otherwise. Note that if an edge $e \in E(T) \setminus \{f\}$ connects $t$ to $t^\ua$ (or $f^-$ to $(f^-)^\ua$), then the corresponding edge in $T/f$ that is also denoted by $e$ connects $t$ to $t^{\ua/f}$ (or $t_f$ to $(t_f)^{\ua/f}$).

For $e \in E(T/f)$ let $\Vcal/f(e)$ and $\Ecal/f(e)$ denote the adhesion set corresponding to $e$ and the set of $e$-edges with respect to the tree decomposition $\Tcal/f$, respectively. For $t\in V(T/f)$ let $\Ecal/f(t)$  and $\Gcal/f(t)$ denote the set of $t$-edges and the $t$-graph with respect to the tree decomposition $\Tcal/f$, respectively. Using property~\ref{itm:td-nocrosssedge} of tree decompositions, it is not hard to see that $\Vcal/f(e) = \Vcal(e)$, $\Ecal/f(e) = \Ecal(e)$, $\Ecal/f(t_f) = \Ecal(f^-) \cup \Ecal (f^+)$, and $\Ecal/f(t) = \Ecal (t)$ for $t \neq t_f$.  It follows that 
\[\Gcal/f(t) = 
\begin{cases}
(\Gcal(f^-) \cup \Gcal(f^+)) - \Ecal(f) & \text{if }t = t_f, \\
\Gcal(t) & \text{otherwise.}
\end{cases}
\]

Next we define contractions of configurations. Let $C = (P,X)$ be a bounded consistent configuration on $\Tcal$. For the definition of the contracted configuration $C/f$, assume again without loss of generality that $f^- = (f^+)^\ua$, and let $P(f^-) \cap \Vcal(f) = (v_1, \dots, v_l) = P(f^+) \cap \Vcal(f)$, where the last equality follows from \ref{itm:compatible-intersection}. Let $t_0 = f^-$. For $1 \leq j \leq l-1$, let $t_j \in \{f^-,f^+\}$ be such that $P(t_j) \cap \Ecal(f) = \emptyset$; note that this uniquely defines a vertex by~\ref{itm:compatible-alternate}. If $X(f^-) = f^+$, then let $t_l = f^+$, otherwise let $t_l=f^-$. Define the walk $p_f$ as the concatenation
\[
    P(t_0)v_1P(t_1)v_2\dots v_lP(t_l).
\]
In other words, $p_f$ is obtained from $P(f^-)$ and $P(f^+)$ by deleting all edges in $\Ecal(f)$ and then piecing the walk components of the resulting multi-walks together in a consistent manner, see Figure~\ref{fig:defcontraction}.

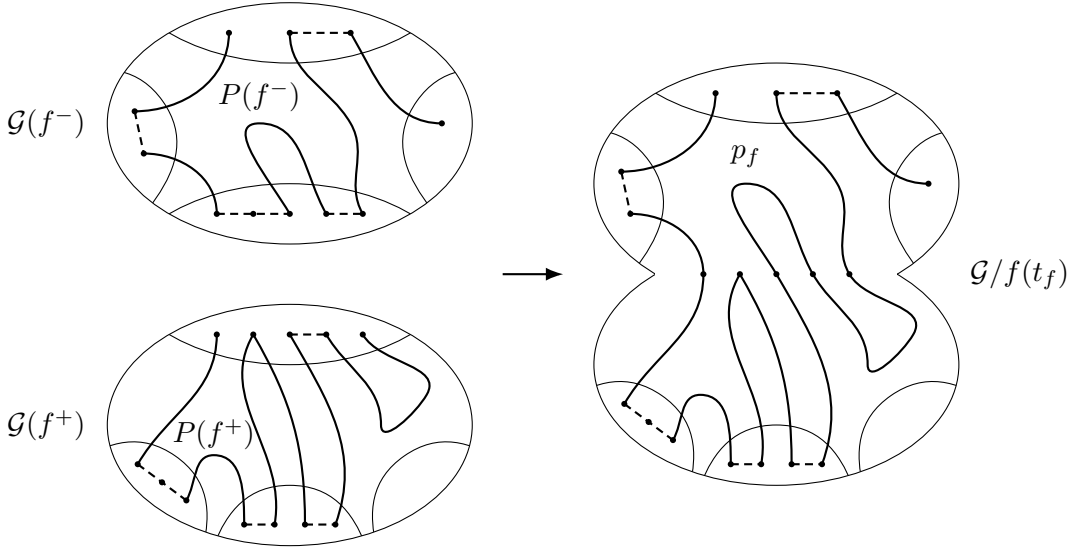
\begin{figure}
	\centering
	\begin{tikzpicture}
	\def\rad{0.1}
	\begin{scope}[scale=0.4]
        \begin{scope}
            \node at (-8,0) {$ \Gcal(f^-)$};
            \node at (-1,0.9) {$P(f^-)$};
            \draw[clip] (0,0) ellipse (6cm and 4cm);
            \draw (0,6) ellipse (6cm and 4cm);
            \draw (0,-6) ellipse (6cm and 4cm);
            \draw[rotate=-55,xshift=-3cm,yshift=-1cm] (-3.5,-8) to[out=75,in=180] (0,-2) to[out=0,in=105] (3.5,-8);
        	\draw[rotate=+55,xshift=+3cm,yshift=-1cm] (-3.5,-8) to[out=75,in=180] (0,-2) to[out=0,in=105] (3.5,-8);
        	\draw[thick] (-2,3) to[out=-90,in=0] (-5.1,0.4) (-4.8,-1) to[out=0,in=90] (-2.4,-3) (0,-3) to[out=120,in=180] (-1,0) to[out=0,in=120] (1.2,-3) (2.4,-3) to[out=120,in=-60] (2,0) to[out=120,in=270] (0,3) (2,3) to[out=-60,in=180] (5,0);
            \draw[thick, dash pattern= on 3pt off 2pt] (-5.1,0.4) -- (-4.8,-1) (0,3) -- (2,3) (-2.4,-3) -- (-1.2,-3) -- (0,-3) (1.2,-3) -- (2.4,-3);
            \fill (-2,3) circle (\rad);
        	\fill (0,3) circle (\rad);
           	\fill (2,3) circle (\rad);
        	\fill (-5.1,0.4) circle (\rad);
        	\fill (-4.8,-1) circle (\rad);
        	\fill (5,0) circle (\rad);
        	\fill (-2.4,-3) circle (\rad);
        	\fill (-1.2,-3) circle (\rad);
           	\fill (0,-3) circle (\rad);
        	\fill (1.2,-3) circle (\rad);
        	\fill (2.4,-3) circle (\rad);
        \end{scope}
        \begin{scope}[yshift=-10cm]
            \node at (-8,0) {$ \Gcal(f^+)$};
            \node at (-2.5,-0.3) {$P(f^+)$};
        	\draw[clip](0,0) ellipse (6cm and 4cm);
        	\draw (0,6) ellipse (6cm and 4cm);
        	\draw[rotate=-35,xshift=-2.5cm,yshift=-1cm] (-3.5,-8) to[out=75,in=180] (0,-2) to[out=0,in=105] (3.5,-8);
        	\draw (-3.5,-8) to[out=75,in=180] (0,-2) to[out=0,in=105] (3.5,-8);
        	\draw[rotate=+35,xshift=+2.5cm,yshift=-1cm] (-3.5,-8) to[out=75,in=180] (0,-2) to[out=0,in=105] (3.5,-8);
        	\draw[thick] (-2.4,3) to[out=-90,in=60] (-5,-1.3) (-3.4,-2.5) to[out=60,in=180] (-2.5,-1) to[out=0,in=90] (-1.5,-3.3) (-0.5,-3.3) to[out=80,in=240] (-1.2,3) to[out=-60,in=90] (0.5,-3.3) (1.5,-3.3) to[out=70,in=-60] (0,3) (1.2,3) to[out=-60,in=90] (3,0) to[out=270,in=240] (4.5,1) to[out=60,in=-60] (2.4,3);
        	\draw[thick, dash pattern= on 3pt off 2pt] (-5,-1.3) -- (-4.2,-1.9) -- (-3.4,-2.5) (-1.5,-3.3) -- (-0.5,-3.3) (0.5,-3.3) -- (1.5,-3.3) (0,3) -- (1.2,3);
        	\fill (-2.4,3) circle (\rad);
        	\fill (-1.2,3) circle (\rad);
           	\fill (0,3) circle (\rad);
        	\fill (1.2,3) circle (\rad);
        	\fill (2.4,3) circle (\rad);
        	\fill (-5,-1.3) circle (\rad);
        	\fill (-4.2,-1.9) circle (\rad);
        	\fill (-3.4,-2.5) circle (\rad);
        	\fill (-1.5,-3.3) circle (\rad);
        	\fill (-0.5,-3.3) circle (\rad);
        	\fill (0.5,-3.3) circle (\rad);
        	\fill (1.5,-3.3) circle (\rad);
        \end{scope}
        \draw[thick, -Latex] (7,-5) -- (9,-5);
	\end{scope}
	\begin{scope}[scale=0.4,xshift=16cm, yshift=-2cm]
	    \node at (8,-3) {$ \Gcal/f(t_f)$};
        \begin{scope}
            \draw[clip] (0,0) ellipse (6cm and 4cm);
            \draw (0,6) ellipse (6cm and 4cm);
            \draw[rotate=-55,xshift=-3cm,yshift=-1cm] (-3.5,-8) to[out=75,in=180] (0,-2) to[out=0,in=105] (3.5,-8);
        	\draw[rotate=+55,xshift=+3cm,yshift=-1cm] (-3.5,-8) to[out=75,in=180] (0,-2) to[out=0,in=105] (3.5,-8);
        \end{scope}
        \begin{scope}[yshift=-6cm]
        	\draw[clip](0,0) ellipse (6cm and 4cm);
        	\draw[rotate=-35,xshift=-2.5cm,yshift=-1cm] (-3.5,-8) to[out=75,in=180] (0,-2) to[out=0,in=105] (3.5,-8);
        	\draw (-3.5,-8) to[out=75,in=180] (0,-2) to[out=0,in=105] (3.5,-8);
        	\draw[rotate=+35,xshift=+2.5cm,yshift=-1cm] (-3.5,-8) to[out=75,in=180] (0,-2) to[out=0,in=105] (3.5,-8);
        \end{scope}
        \begin{scope}
            \node at (-1,0.9) {$p_f$};
            \fill[white] (-{sqrt(7*9)/2},-1.8) rectangle +({sqrt(7*9)},-2.4); 
        	\draw[thick] (-2,3) to[out=-90,in=0] (-5.1,0.4) (-4.8,-1) to[out=0,in=90] (-2.4,-3) (0,-3) to[out=120,in=180] (-1,0) to[out=0,in=120] (1.2,-3) (2.4,-3) to[out=120,in=-60] (2,0) to[out=120,in=270] (0,3) (2,3) to[out=-60,in=180] (5,0);
            \draw[thick, dash pattern= on 3pt off 2pt] (-5.1,0.4) -- (-4.8,-1) (0,3) -- (2,3);
            \fill (-2,3) circle (\rad);
        	\fill (0,3) circle (\rad);
           	\fill (2,3) circle (\rad);
        	\fill (-5.1,0.4) circle (\rad);
        	\fill (-4.8,-1) circle (\rad);
        	\fill (5,0) circle (\rad);
        \end{scope}
        \begin{scope}[yshift=-6cm]
        	\draw[thick] (-2.4,3) to[out=-90,in=60] (-5,-1.3) (-3.4,-2.5) to[out=60,in=180] (-2.5,-1) to[out=0,in=90] (-1.5,-3.3) (-0.5,-3.3) to[out=80,in=240] (-1.2,3) to[out=-60,in=90] (0.5,-3.3) (1.5,-3.3) to[out=70,in=-60] (0,3) (1.2,3) to[out=-60,in=90] (3,0) to[out=270,in=240] (4.5,1) to[out=60,in=-60] (2.4,3);
        	\draw[thick, dash pattern= on 3pt off 2pt] (-5,-1.3) -- (-4.2,-1.9) -- (-3.4,-2.5) (-1.5,-3.3) -- (-0.5,-3.3) (0.5,-3.3) -- (1.5,-3.3);
        	\fill (-2.4,3) circle (\rad);
        	\fill (-1.2,3) circle (\rad);
           	\fill (0,3) circle (\rad);
        	\fill (1.2,3) circle (\rad);
        	\fill (2.4,3) circle (\rad);
        	\fill (-5,-1.3) circle (\rad);
        	\fill (-4.2,-1.9) circle (\rad);
        	\fill (-3.4,-2.5) circle (\rad);
        	\fill (-1.5,-3.3) circle (\rad);
        	\fill (-0.5,-3.3) circle (\rad);
        	\fill (0.5,-3.3) circle (\rad);
        	\fill (1.5,-3.3) circle (\rad);
        \end{scope}
	\end{scope}
  	\end{tikzpicture}
  	\caption{Combining walks $P(f^-)$ and $P(f^+)$ of compatible configurations on the endpoints of $f$ into a walk $p_f$ on $\Gcal/f(t_f)$.}
  	\label{fig:defcontraction}
\end{figure}

The contraction $C/f = (P/f,X/f)$ of the configuration $C$ is defined as follows. For the contracted vertex $t_f$, let
\[
    P/f(t_f) = p_f 
    \qquad \text{and} \qquad 
    X/f(t_f) = 
    \begin{cases}
    X(f^-) & \text{if } X(f^-) \notin \{f^-,f^+\},\\
    X(f^+) & \text{if } X(f^+) \notin \{f^-,f^+\},\\
    t_f & \text{otherwise}.
    \end{cases}
\]
Note that by \ref{itm:compatible-exitdirection}, the conditions in the first two cases in the definition of $X/f$ cannot be satisfied simultaneously, and in the third case $X(f^-) = X(f^+) \in \{f^-,f^+\}$ holds. For $t \neq t_f$ we define
\[
    P/f(t) = P(t)
    \qquad \text{and} \qquad 
    X/f(t) =
    \begin{cases}
        X(t) &\text{if }X(t) \notin \{f^-,f^+\},\\
        t_f &\text{otherwise}.
    \end{cases}
\]
Note the similarity between the definition of $X/f(t)$ and our observations about $t^{\ua/f}$ above; clearly, if $X(t) = t^\ua$, then $X/f(t) = t^{\ua/f}$.

\begin{lem}
\label{lem:walkcontraction}
The walk $p_f$ is a self-avoiding walk on $\Gcal/f(t_f)$ satisfying $p_f \cap \Gcal(f^-) = P(f^-)  - \Ecal(f)$ and $p_f \cap \Gcal(f^+) = P(f^+)  - \Ecal(f)$. In particular, the set of edges contained in $p_f$ consists of the edge sets of $P(f^-)  - \Ecal(f)$ and $P(f^+)  - \Ecal(f)$.
\end{lem}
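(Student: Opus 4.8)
The plan is to verify the three assertions of the lemma in turn: that $p_f$ is a walk on $\Gcal/f(t_f)$, that it is self-avoiding, and that $p_f \cap \Gcal(f^-) = P(f^-) - \Ecal(f)$ and $p_f \cap \Gcal(f^+) = P(f^+) - \Ecal(f)$; the statement about edge sets is then immediate. We may assume $l \geq 1$: for $l = 0$, i.e.\ $P(f^-) \cap \Vcal(f) = \emptyset$, the definition of compatibility forces $C(f^+)$ to be the empty configuration, so $p_f = P(f^-)$, and since $P(f^-) \subseteq \Vcal(f^-)$ avoids $\Vcal(f) = \Vcal(f^-) \cap \Vcal(f^+)$ it meets $\Gcal(f^+)$ not at all and uses no edge of $\Ecal(f)$, making all three claims trivial.

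The first thing to record is a local observation. By \ref{itm:compatible-intersection} the vertices of $P(f^-)$ lying in $\Vcal(f)$ are exactly $v_1, \dots, v_l$, in this order; in particular, for $i \in [l-1]$ the sub-walk $v_i P(f^-) v_{i+1}$ has no interior vertex in $\Vcal(f)$. Any edge of $\Ecal(f)$ on this sub-walk joins two consecutive vertices of it, both in $\Vcal(f)$; hence that edge is $v_i v_{i+1}$ and — $P(f^-)$ being self-avoiding — the sub-walk equals the single virtual edge $(v_i, v_i v_{i+1}, v_{i+1})$. The same holds for $P(f^+)$, and by \ref{itm:compatible-alternate}, for each $i$ exactly one of $v_i P(f^-) v_{i+1}$, $v_i P(f^+) v_{i+1}$ is this virtual edge while the other avoids $\Ecal(f)$; thus $t_i$ is well defined and $\sigma_i := v_i P(t_i) v_{i+1}$ avoids $\Ecal(f)$. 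Now write $P(f^-) = A_0 v_1 A_1 v_2 \cdots v_l A_l$ with $A_0 = P(f^-) v_1$, $A_i = v_i P(f^-) v_{i+1}$ for $i \in [l-1]$, $A_l = v_l P(f^-)$, and likewise $P(f^+) = v_1 B_1 v_2 \cdots v_l B_l$ (note that $P(f^+)$ starts in $\Vcal(f)$, hence at $v_1$). Setting $\sigma_0 := A_0$ and $\sigma_l := v_l P(t_l)$, we have $p_f = \sigma_0 v_1 \sigma_1 v_2 \cdots v_l \sigma_l$. Every $\sigma_j$ is a sub-walk of $P(f^-)$ or of $P(f^+)$ avoiding $\Ecal(f)$ — for $j \in [l-1]$ by choice of $t_j$, and for $j \in \{0, l\}$ because $\sigma_j$ meets $\Vcal(f)$ in only a single (endpoint) vertex — hence is a walk in $\Gcal(f^-) - \Ecal(f)$ or $\Gcal(f^+) - \Ecal(f)$, each of which is a sub-graph of $\Gcal/f(t_f)$. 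As consecutive segments meet in the vertex $v_j$, the concatenation $p_f$ is a walk on $\Gcal/f(t_f)$.

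For self-avoidance I would use that $\Vcal(f^-) \cap \Vcal(f^+) = \Vcal(f)$ and that $A_0, \dots, A_l$ (resp.\ $B_1, \dots, B_l$) are pairwise interior-disjoint sub-walks of the self-avoiding walk $P(f^-)$ (resp.\ $P(f^+)$) meeting consecutively only in the $v_j$. A vertex interior to some $A_i$ lies in $\Vcal(f^-) \setminus \Vcal(f)$, hence in no segment coming from $P(f^+)$ and in exactly one segment coming from $P(f^-)$, so it occurs once in $p_f$; symmetrically for vertices interior to some $B_i$. Each $v_j$ occurs in $P(f^-)$ only in $A_{j-1}$ and $A_j$ and in $P(f^+)$ only in $B_{j-1}$ and $B_j$, always as an endpoint, so $v_j$ occurs in $\sigma_k$ only for $k \in \{j-1, j\}$ and only as an endpoint, i.e.\ exactly once in $p_f$, namely as the glue vertex between $\sigma_{j-1}$ and $\sigma_j$. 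Since every vertex of $p_f$ is of one of these two types, $p_f$ is self-avoiding.

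The last step — identifying $p_f \cap \Gcal(f^\pm)$ with $P(f^\pm) - \Ecal(f)$ as sequences, including their decomposition into walk components and not merely as vertex and edge sets — is the one I expect to require the most care. The key algebraic input is $E(\Gcal(f^-)) \cap E(\Gcal(f^+)) = \Ecal(f)$: indeed $\Ecal(f^-) \subseteq E(G[\Vcal(f^-)])$ is disjoint from $\Ecal(f^+) = E(G[\Vcal(f^+)]) \setminus E(G[\Vcal(f^-)])$, the virtual edge sets attached to distinct edges of $T$ are disjoint, and virtual edges are formally distinct from non-virtual ones. It follows that $p_f \cap \Gcal(f^-)$ retains each segment coming from $P(f^-)$ in its entirety, retains from each segment coming from $P(f^+)$ only its two endpoints (which lie in $\Vcal(f)$) and none of its edges, and of course every glue vertex $v_j$. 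On the other hand, by the local observation above, $P(f^-) - \Ecal(f)$ arises from $A_0 v_1 A_1 \cdots v_l A_l$ by deleting exactly those $A_i$ ($i \in [l-1]$) that are single virtual edges — equivalently, those with $t_i = f^+$ — while keeping all of the $v_j$. Comparing the two descriptions shows $p_f \cap \Gcal(f^-) = P(f^-) - \Ecal(f)$, the tails matching because, by \ref{itm:compatible-exitdirection} and \ref{itm:compatible-endvertex}, whichever of $A_l$, $B_l$ does not supply $\sigma_l$ is the trivial walk $(v_l)$. The identity $p_f \cap \Gcal(f^+) = P(f^+) - \Ecal(f)$ follows symmetrically. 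Finally, every vertex and edge of $p_f$ lies in $\Gcal(f^-)$ or $\Gcal(f^+)$, and the edge sets of $P(f^-) - \Ecal(f) \subseteq E(\Gcal(f^-)) \setminus \Ecal(f)$ and $P(f^+) - \Ecal(f) \subseteq E(\Gcal(f^+)) \setminus \Ecal(f)$ are disjoint by the displayed equality, so $E(p_f)$ is their disjoint union, as claimed.
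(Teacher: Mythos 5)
Your proof is correct and follows essentially the same route as the paper's: decompose $P(f^-)$ and $P(f^+)$ at the vertices $v_1,\dots,v_l$, use \ref{itm:compatible-intersection}--\ref{itm:compatible-endvertex} to see that the skipped segments are single virtual $\Ecal(f)$-edges and that the unused tail at $v_l$ is trivial, and use $\Vcal(f)=\Vcal(f^-)\cap\Vcal(f^+)$ together with self-avoidance of $P(f^\pm)$ to conclude. The only differences are cosmetic: you establish self-avoidance directly before the intersection identities (the paper does it afterwards) and you make explicit the fact $E(\Gcal(f^-))\cap E(\Gcal(f^+))=\Ecal(f)$, which the paper leaves implicit.
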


\begin{proof}
If $P(f^+)$ is the empty walk, then $p_f = P(f^-)$ and all claimed properties are trivially satisfied, so assume that $P(f^+) \neq \emptyset$. Since $C$ is a configuration, $P(f^+)$ must start in $v_1$, that is, $P(f^+)v_1$ is a trivial walk only consisting of $v_1$. By \ref{itm:compatible-alternate}, if $t_j = f^-$, then $P(f^+)$ contains the edge $v_jv_{j+1} \in \Ecal(f)$, and if $t_j = f^+$, then $P(f^-)$ contains the edge $v_jv_{j+1} \in \Ecal(f)$. Properties \ref{itm:compatible-exitdirection} and \ref{itm:compatible-endvertex} imply that if $t_l = f^+$, then $v_lP(t^-)$ is trivial and vice versa. Combining these observations with the fact that $P(f^-)$ can be decomposed as $P(f^-) = P(f^-)v_1P(f^-)v_2\dots v_l P(f^-)$
we conclude that 
\[p_f \cap \Gcal(f^-) = P(f^-)  - \Ecal(f),\] and similarly for $f^+$. This implies that $p_f$ uses no vertex more than once: for vertices in $\Vcal(f)$, this holds by definition, for vertices outside of $\Vcal(f)$, this follows from the fact that $P(f^-)$ and $P(f^+)$ are self-avoiding. Hence $p_f$ is self-avoiding.
\end{proof}

The following lemma shows that $C/f$ as defined above is indeed a bounded consistent configuration on $\Tcal/f$.

\begin{lem}\label{lem:contraction}
Let $\Tcal=(T,\Vcal,r)$ be a rooted tree decomposition of the simple, locally finite, connected, rooted graph $(G,o)$, let $C \in \Ccal_\Tcal$, and let $f\in E(T)$. Then $C/f \in \Ccal_{\Tcal/f}$ and $\norm{C/f}=\norm{C}$.
\end{lem}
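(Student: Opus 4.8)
The plan is to verify directly that $C/f=(P/f,X/f)$ is a configuration on $\Tcal/f$, that it is consistent and bounded, and to compute its weight, using Lemma~\ref{lem:walkcontraction} to handle everything that happens at the contracted vertex $t_f$ while observing that nothing else is affected by the contraction. I would first dispose of all vertices $t\neq t_f$ of $T/f$. For these, $P/f(t)=P(t)$ and $\Gcal/f(t)=\Gcal(t)$, and, as recorded just before the lemma, the adhesion set along every edge incident to $t$ is unchanged; moreover $X/f(t)$ arises from $X(t)$ by replacing an endpoint of $f$ by $t_f$ in exactly the way $t^{\ua/f}$ arises from $t^\ua$. Hence $C/f(t)$ is again a valid configuration, it is boring on $t$ with respect to $\Tcal/f$ precisely when $C(t)$ is boring with respect to $\Tcal$, and the compatibility of the two configurations incident to any edge of $T/f$ not incident to $t_f$ is literally the compatibility that already held in $\Tcal$. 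In particular boundedness of $C/f$ follows from boundedness of $C$, since $t_f$ is the only new vertex.

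Next I would check that $C/f(t_f)=(p_f,X/f(t_f))$ is a valid configuration on $\Gcal/f(t_f)$. By Lemma~\ref{lem:walkcontraction}, $p_f$ is a self-avoiding walk on $\Gcal/f(t_f)$ whose first piece is $P(f^-)v_1$, so it starts at $o$ if $f^-=r$ and otherwise in $\Vcal((f^-)^\ua,f^-)=\Vcal/f((t_f)^{\ua/f},t_f)$. For the condition on the exit direction and last edge of $p_f$, a short case distinction on $X(f^-)$ and $X(f^+)$ suffices: by~\ref{itm:compatible-exitdirection} applied to the compatible pair $C(f^-),C(f^+)$, either $X(f^-)=f^+$, in which case $t_l=f^+$ and $p_f$ ends with $v_lP(f^+)$ so that its exit data is inherited from $C(f^+)$, or $X(f^-)\neq f^+$, in which case $t_l=f^-$ and its exit data is inherited from $C(f^-)$; in both cases this matches the definition of $X/f(t_f)$.

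The technical heart is consistency along the edges of $T/f$ incident to $t_f$. Such an edge comes from an edge $st$ of $T$ with $t\in\{f^-,f^+\}$ and $s\notin\{f^-,f^+\}$; write $Q=P(t)$ for the walk on the $t$-side and $t'$ for the other endpoint of $f$. The adhesion set $\Vcal(s,t)$ lies in $\Vcal(t)$, the virtual edges $\Ecal(s,t)$ lie in $\Gcal(t)$, and $\Ecal(s,t)\cap\Ecal(f)=\emptyset$ because $st$ and $f$ are distinct edges of $T$. By Lemma~\ref{lem:walkcontraction}, $p_f$ agrees with $Q$ after deleting the (virtual) edges in $\Ecal(f)$; since deleting edges does not alter the vertex sequence of a walk, $p_f\cap\Vcal(s,t)=Q\cap\Vcal(s,t)=P(s)\cap\Vcal(s,t)$, which is~\ref{itm:compatible-intersection} (and settles the empty alternative of compatibility when this intersection is empty). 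For~\ref{itm:compatible-alternate} one observes that the $\Ecal(s,t)$-edges of $p_f$ are exactly those of $Q$, and that consecutive vertices of $\Vcal(s,t)$ on $p_f$ are consecutive on $Q$: here property~\ref{itm:td-nocrosssedge} is essential, since $t$ lies on the $s$--$t'$ path in $T$ and hence $\Vcal(s)\cap\Vcal(t')\subseteq\Vcal(t)\cap\Vcal(t')=\Vcal(f)$, so any vertex of $\Vcal(s,t)$ lying on the other-side walk $P(t')$ is in fact one of the break-vertices $v_j$ and therefore not strictly between two consecutive vertices of $\Vcal(s,t)$ on $p_f$. Finally~\ref{itm:compatible-exitdirection} and~\ref{itm:compatible-endvertex} follow from the corresponding conditions for the pair $C(t),C(s)$ together with the definition of $X/f(t_f)$ and the behaviour of the endpoint of $p_f$ established above; this is a routine case distinction according to whether $t=f^-$ or $t=f^+$ and whether $s$ is the parent of $t$ or a child of $t$.

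With all of this in place the weight identity is immediate: the non-virtual $t_f$-edges are $\Ecal(f^-)\uplus\Ecal(f^+)$ and $\Ecal(f)$ contains only virtual edges, so Lemma~\ref{lem:walkcontraction} gives that $p_f$ contains $\norm{C(f^-)}+\norm{C(f^+)}$ non-virtual edges, while $\norm{C/f(t)}=\norm{C(t)}$ for all other $t$; summing yields $\norm{C/f}=\norm{C}$. The one genuinely delicate point in the whole argument is the verification of~\ref{itm:compatible-alternate}: one has to be sure that the interleaving of $P(f^-)$- and $P(f^+)$-pieces inside $p_f$ does not disturb how $p_f$ meets a side adhesion set, and this is exactly where property~\ref{itm:td-nocrosssedge} of tree decompositions enters.
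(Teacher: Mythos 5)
Your proposal is correct and follows essentially the same route as the paper: check the configuration conditions at every vertex, use Lemma~\ref{lem:walkcontraction} and the identity $p_f\cap\Gcal(f^\pm)=P(f^\pm)-\Ecal(f)$ to verify conditions \ref{itm:compatible-intersection}--\ref{itm:compatible-endvertex} at the edges incident to $t_f$, and read off boundedness and the weight identity from the fact that the non-virtual edges are unchanged. Your extra appeal to \ref{itm:td-nocrosssedge} is a valid (if not strictly necessary) justification of the order-preservation step, and the only points you leave implicit (the empty walk $p_f$, and the degenerate situation where $P(t_l)$ ends exactly at $v_l$ when $X/f(t_f)=t_f$) are treated at a comparable level of detail in the paper's own argument.
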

\begin{proof}
We start by showing that $C/f=(P/f,X/f)$ is a configuration on $\Tcal/f$. First consider $t \neq t_f$.  If $C(t)$ is empty, then $P/f(t) = P(t) = \emptyset$ and $X(t) = t^\ua$, so by the above observation $X/f(t) = t^{\ua/f}$. This shows that $C/f(t)$ is the empty configuration. If $C(t)$ is non-empty, then $P(t) = P/f(t)$ is a non-empty self-avoiding walk on $\Gcal(t) = \Gcal/f(t)$ starting in $\Vcal(t^\ua,t) = \Vcal/f(t^{\ua/f},t)$, or in $o$, if $t = r$, and ending in $\Vcal(t,X(t)) = \Vcal/f(t,X/f(t))$. In case $X/f(t) = t$, clearly also $X(t) = t$, so in this case $P(t) = P/f(t)$ ends in a non-virtual edge. We conclude that $C/f(t) = (P/f(t),X/f(t))$ is a configuration on $t$.

Now consider the contracted vertex $t_f$. As before, without loss of generality assume that $f^- = (f^+)^\ua$. If $P/f(t_f) = p_f$ is the empty walk, then $P(f^-)$ is the empty walk and thus $C(f^-)$ must be the empty configuration. In particular $X(f^-) = (f^-)^\ua \notin \{f^-,f^+\}$, and thus $X/f(t_f) = X(f^-) = (t_f)^{\ua/f}$, showing that $C/f(t_f)$ is the empty configuration. 

So we may assume that $p_f$ is non-empty. By Lemma~\ref{lem:walkcontraction}, $p_f$ is a self-avoiding walk on $\Gcal/f(t_f)$; it only remains to show that the first and last vertex of $p_f$ lie in the appropriate adhesion sets. 
The first vertex of $p_f$ is the same as the first vertex of $P(f^-)$, consequently it lies in $\Vcal((f^-)^\ua,f^-) = \Vcal/f((t_f)^{\ua/f},t_f)$, or it is equal to $o$ if $t_f = r/f$ and thus $f^- = r$. 
The last vertex of $p_f$ is the last vertex of $P(t_l)$. If $X(f^+) \notin \{f^-,f^+\}$, then $X(f^-) = f^+$, and thus $t_l = f^+$. It follows that the last vertex of $p_f$ lies in $\Vcal(f^+,X(f^+)) = \Vcal/f(t_f, X/f(t_f))$. If $X(f^-) \notin \{f^-,f^+\}$ an analogous argument applies. If both $X(f^-)$ and $X(f^+)$ are contained in $\{f^-,f^+\}$, then $X/f(t_f) = t_f$; in this case $P(t_l)$ ends with a non-virtual edge, and consequently $p_f$ does not end in a virtual edge if $X/f(t_f) = t_f$. We conclude that $C/f(t_f) = (P/f(t_f),X/f(t_f))$ is a configuration on $t_f$.

By construction the number of non-boring parts with respect to $C/f$ is at most the number of non-boring parts with respect to $C$, so $C/f$ is bounded. Moreover $C$ and $C/f$ use the same non-virtual edges, so $\norm{C/f}=\norm{C}$ holds.

It remains to show that $C/f$ is consistent, or in other words, that $C/f(s)$ and $C/f(t)$ are compatible for any edge $st \in E(T/f)$. If $st$ is not incident with $f$, then this follows from the fact that $C$ is consistent, so we may without loss of generality assume that $t=t_f$. We only treat the case where $s$ is a neighbour of $f^-$ in $T$, the case where $s$ and $f^+$ are neighbours is completely analogous.

Note that $\Vcal/f(s,t_f) = \Vcal (s,f^-) \subseteq V(\Gcal(f^-))$. Thus Lemma~\ref{lem:walkcontraction} implies that $p_f \cap \Vcal/f(s,t_f) = P(f^-) \cap \Vcal(s,f^-)$ which in turn implies~\ref{itm:compatible-intersection}. Next, note that $\Ecal/f(s,t_f) = \Ecal (s,f^-) \subseteq E(\Gcal(f^-))$. By Lemma~\ref{lem:walkcontraction} we thus have $u p_f v \cap \Ecal/f(s,t_f) = u P(f^-) v \cap \Ecal(s,f^-)$ for any pair of vertices $u,v$ in $\Vcal/f(s,t_f)$, and~\ref{itm:compatible-alternate} follows. For condition~\ref{itm:compatible-exitdirection} observe that
\[
X/f(t_f) = s \iff X(f^-) = s \iff X(s) \neq f^- \iff X/f(s) \neq t_f.
\]
Finally, note that if $X/f(t_f) = s$, then $X(f^+) = f^-$, and consequently $p_f$ ends in the same vertex as $P(f^-)$, so \ref{itm:compatible-endvertex} is satisfied.
\end{proof}

\begin{lem}\label{lem:contrbij}
Let $\Tcal=(T,\Vcal,r)$ be a rooted tree decomposition of the rooted graph $(G,o)$ and $f \in E(T)$. Then the function $C \mapsto C/f$ bijectively maps $\Ccal_\Tcal$ to $\Ccal_{\Tcal/f}$.
\end{lem}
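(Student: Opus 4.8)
By Lemma~\ref{lem:contraction}, $C\mapsto C/f$ is a well-defined, weight-preserving map $\Ccal_\Tcal\to\Ccal_{\Tcal/f}$. The plan is to construct an explicit inverse $\Phi\colon\Ccal_{\Tcal/f}\to\Ccal_\Tcal$, an \emph{expansion along $f$}, and then to verify $\Phi(C/f)=C$ for all $C\in\Ccal_\Tcal$ and $(\Phi(C'))/f=C'$ for all $C'\in\Ccal_{\Tcal/f}$. As in the definition of $\Tcal/f$, assume without loss of generality that $f^-=(f^+)^\ua$.

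Given $C'=(P',X')\in\Ccal_{\Tcal/f}$, the expansion $\Phi(C')=(P,X)$ keeps the configuration at every vertex $t\neq t_f$ (resetting $X(t)$ from $t_f$ back to $f^-$ whenever $X'(t)=t_f$, which exactly reverses the definition of $X/f$), and splits $C'(t_f)$ into $C(f^-)$ and $C(f^+)$. The key structural point is that $\Gcal/f(t_f)=(\Gcal(f^-)\cup\Gcal(f^+))-\Ecal(f)$ has no edge joining $\Vcal(f^-)\setminus\Vcal(f)$ to $\Vcal(f^+)\setminus\Vcal(f)$, and that each of its edges lies in exactly one of $\Gcal(f^-)$, $\Gcal(f^+)$; both facts follow directly from the definitions of the $t$-graphs and of $\Vcal(f)=\Vcal(f^-)\cap\Vcal(f^+)$. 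Hence, writing $P'(t_f)\cap\Vcal(f)=(v_1,\dots,v_l)$, the walk splits as $P'(t_f)=q_0v_1q_1v_2\dots v_lq_l$ with $q_0=P'(t_f)v_1$, $q_j=v_jP'(t_f)v_{j+1}$, $q_l=v_lP'(t_f)$, and every nontrivial segment $q_j$ lies in exactly one of $\Gcal(f^-)$, $\Gcal(f^+)$. I then define $P(f^-)$ by keeping the $q_j$ lying in $\Gcal(f^-)$ and replacing each $q_j$ lying in $\Gcal(f^+)$ by the virtual $f$-edge on its two endpoints (which exists since $\Gcal(f)$ is complete on $\Vcal(f)$), define $P(f^+)$ symmetrically --- as the empty walk if no $q_j$ lies in $\Gcal(f^+)$ --- and read $X(f^-),X(f^+)$ off from $X'(t_f)$ and from which of $\Gcal(f^-),\Gcal(f^+)$ contains the last nontrivial segment of $P'(t_f)$, reversing the case distinction in the definition of $X/f$: if $X'(t_f)$ is a genuine neighbour $s$ of $f^-$ (resp.\ of $f^+$) in $T$, set $X(f^-)=s$ and $X(f^+)=f^-$ (resp.\ the mirror image); if $X'(t_f)=t_f$, place the ``sink'' on the side of $f$ carrying the last edge of $P'(t_f)$.

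The verification then splits into routine checks. (i) $\Phi(C')$ is a configuration on $\Tcal$: bookkeeping on the segment decomposition, using $(t_f)^{\ua/f}=(f^-)^\ua$ and $\Vcal/f(e)=\Vcal(e)$, shows that $P(f^-)$ starts in $\Vcal((f^-)^\ua,f^-)$ or at $o$, that $P(f^+)$ starts in $\Vcal(f)$, and that both end in the adhesion sets prescribed by $X$, with a non-virtual last edge whenever the exit direction of the vertex points to itself. (ii) $C(f^-)$ and $C(f^+)$ are compatible: \ref{itm:compatible-intersection} and~\ref{itm:compatible-alternate} are immediate (both walks meet $\Vcal(f)$ in $(v_1,\dots,v_l)$, and between consecutive $v_i$ precisely the walk that received the virtual edge uses an $\Ecal(f)$-edge), and~\ref{itm:compatible-exitdirection},~\ref{itm:compatible-endvertex} hold by the choice of $X(f^\pm)$; the degenerate alternative ($P(f^-)\cap\Vcal(f)=\emptyset$ and $C(f^+)$ empty) is exactly the case $l=0$. (iii) $\Phi(C')$ is consistent at every other edge of $T$: edges not incident to $f$ inherit this from $C'$, and for $s$ a neighbour of $f^-$ one has $P(f^-)\cap\Gcal(s,f^-)=P'(t_f)\cap\Gcal/f(s,t_f)$, so compatibility of $C(f^-)$ and $C(s)$ follows exactly as in the proof of Lemma~\ref{lem:contraction} (symmetrically for $f^+$). (iv) $\Phi(C')$ is bounded, having at most one more non-boring part than $C'$. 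Finally, $(\Phi(C'))/f=C'$ holds because the walk $p_f$ from Section~\ref{sec:configs} is, by construction, the concatenation $P(t_0)v_1\dots v_lP(t_l)$ with the $t_j$ the non-shortcutting sides --- exactly how the segments of $P'(t_f)$ were redistributed --- and $\Phi(C/f)=C$ holds because Lemma~\ref{lem:walkcontraction} gives $p_f\cap\Gcal(f^\pm)=P(f^\pm)-\Ecal(f)$, so the segment decomposition of $p_f$ recovers the non-$\Ecal(f)$ parts of $P(f^-)$ and $P(f^+)$, while~\ref{itm:compatible-alternate} prescribes which virtual $f$-edges to reinsert and~\ref{itm:compatible-exitdirection} recovers the exit directions.

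I expect the main obstacle to be the exit-direction bookkeeping together with the degenerate cases --- $P'(t_f)$ starting or ending in $\Vcal(f)$ (so $q_0$ or $q_l$ is trivial), $l=0$, or $C'(t_f)$ empty --- and in particular making sure that the unique sink vertex of Remark~\ref{rmk:sink} ends up on the correct side of $f$. None of this is deep, but keeping the case analysis organised and exhaustive is where the care lies.
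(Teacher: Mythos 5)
Your route is essentially the paper's: the paper also proves bijectivity by explicitly reconstructing the unique preimage of $C'$, splitting $P'(t_f)$ at its visits to $\Vcal(f)$, redistributing the pieces between $f^-$ and $f^+$ with virtual $\Ecal(f)$-edges inserted, and reading off the exit directions from $X'(t_f)$ and the last edge of $P'(t_f)$. However, one of your explicit prescriptions is wrong and, as written, breaks the construction. You set $P(f^+)$ equal to the empty walk whenever no segment $q_j$ lies in $\Gcal(f^+)$. This is incorrect when $P'(t_f)$ meets $\Vcal(f)$ in $(v_1,\dots,v_l)$ with $l\ge 1$ but uses only edges of $\Gcal(f^-)$: the empty configuration at $f^+$ is compatible with $C(f^-)$ only in the alternative where $P(f^-)\cap\Vcal(f)=\emptyset$, and otherwise \ref{itm:compatible-intersection} fails, so your $\Phi(C')$ would not be consistent and would not lie in $\Ccal_\Tcal$. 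Compatibility (via \ref{itm:compatible-intersection} and \ref{itm:compatible-alternate}, and Lemma~\ref{lem:boringextension} when $X(f^-)\neq f^+$) forces $P(f^+)$ to be the walk running through $v_1,\dots,v_l$ joined by virtual $f$-edges (the trivial walk $(v_1)$ if $l=1$), never the empty walk; the empty configuration is correct only when $l=0$. Your own remark in (ii), that the degenerate alternative ``is exactly the case $l=0$'', contradicts the construction and shows the two conditions are being conflated. Note also that a literally ``symmetric'' rule cannot be applied to $q_0$ (and possibly $q_l$), whose outer endpoint need not lie in $\Vcal(f)$; the paper sidesteps both issues by defining $P(f^\pm)$ as the walk components of $P'(t_f)\cap\Gcal(f^\pm)$ --- which include the isolated vertices $v_i$ --- joined by $\Ecal(f)$-edges.

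A second, smaller slip: for $t\neq t_f$ with $X'(t)=t_f$ you reset $X(t)$ to $f^-$; it must be reset to the unique neighbour of $t$ among $\{f^-,f^+\}$, which is $f^+$ whenever $t$ is adjacent to $f^+$ (with $f^-$ the pair $(P(t),X(t))$ is not even a configuration on $t$, since $X(t)$ would not be a neighbour of $t$). Both defects are local and repairable, and with them fixed your two-sided-inverse verification goes through along the same lines as the paper's argument.
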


\begin{proof}
As before, denote by $t_f$ the contracted vertex in $T/f$.
Let $C'=(P',X') \in  \Ccal_{\Tcal/f}$. We show that for any $t \in V(T)$, there is a unique choice $C(t) = (P(t),X(t))$ such that $C$ is a consistent configuration and $C/f = C'$.

First consider $t \notin \{f^-,f^+\}$. Necessarily $P(t) = P'(t)$, otherwise $P/f(t) \neq P'(t)$ by the definition of contraction. Similarly, if $X'(t) \notin \{f^-,f^+\}$, then $X(t) = X'(t)$ as otherwise $X/f(t) \neq X'(t)$. If $X'(t) = t_f$, then $X(t)$ must be either $f^-$ or $f^+$. Moreover, for $C$ to be a configuration, $X(t)$ must be adjacent to $t$, and since $T$ is a tree, $t$ cannot be adjacent to both $f^-$ and $f^+$. So we have shown that $X(t)$ must be the unique neighbour of $t$ in $\{f^-,f^+\}$.

By Lemma~\ref{lem:walkcontraction}, we know that if we want $C/f= C'$, we have to make sure that $P'(t_f) \cap \Gcal(f^-) = P(f^-) - \Ecal(f)$. So $P(f^-)$ can only differ from the multi-walk $P'(t_f) \cap \Gcal(f^-)$ by edges in $\Ecal(f)$. Let $q_1,\dots,q_l$ be the walk components of $P'(t_f) \cap \Gcal(f^-)$. Note that each $q_j$ for $j > 1$ starts in $\Vcal(f)$, and each $q_j$ for $j < l $ ends in $\Vcal(f)$. In particular, it is possible to define a walk $P(f^-) = q_1e_1q_2\dots e_{l-1}q_l$, where $e_j \in \Ecal(f)$ is a virtual edge connecting the last vertex of $q_j$ to the first vertex of $q_{j+1}$. By the above discussion, this is the only choice of $P(f^-)$ for which $P/f(t_f) = P'(t_f)$ can possibly hold. A completely analogous argument applies to $P(f^+)$.

Finally, let us consider the exit directions of $f^-$ and $f^+$. If $X'(t_f) \neq t_f$, then by \ref{itm:compatible-alternate}, there is a unique neighbour $x$ of $t_f$ in $T/f$ such that $X'(x) \neq t_f$ and thus $X'(x) \notin \{f^-,f^+\}$. If $x$ is a neighbour of $f^-$ in $T$, then necessarily $X(f^-) = x$ and $X(f^+) = f^-$, otherwise $C$ is not consistent. Similarly, if $x$ is a neighbour of $f^+$ in $T$, then necessarily $X(f^+) = x$ and $X(f^-) = f^+$. If $X'(t_f) = t_f$, then $X(f^-)=X(f^+) \in \{f^-,f^+\}$, since otherwise either $C/f \neq C'$, or $C$ is not consistent. Note that in this case $P'(t_f)$ ends in a non-virtual edge $e$ because $C'$ is a configuration. If $e \in \Ecal(f^-)$, then $X(f^+) = X(f^-) = f^-$, otherwise $C$ is either not a configuration (if both endpoints of $e$ lie in $\Vcal(f)$), or it is inconsistent due to \ref{itm:compatible-endvertex}. If $e \in \Ecal(f^+)$, then analogously $X(f^+) = X(f^-) = f^+$.

A straight forward check (left to the reader) shows that the above construction indeed gives a  bounded consistent configuration $C = (P,T) \in \Ccal_\Tcal$ with $C/f=C'$.
\end{proof}

Our next goal is to define contraction of finite sets of edges. For this purpose, let $\Tcal =(T,\Vcal,r)$ be a rooted tree-decomposition of a simple, locally finite, connected, rooted graph $(G,o)$ and let $F = \{f_1,\dots,f_k\}$ be a finite subset of $E(T)$. Then we define
\[
    \Tcal/F = \Tcal/f_1/f_2/ \dots /f_k.
\]
We note once again that this definition is consistent with the definition of $\Tcal/F$ given in Section~\ref{sec:treedecomp}. If the set $F$ induces a connected sub-graph of $T$, then there is a unique contracted vertex in $T/F$; we denote it by $t_F$.
Analogously, for a configuration $C$ on $\Tcal$, we define $C/F = (P/F,X/F)$ by
\[
    C/F = C/f_1/f_2/ \dots /f_k.
\]
We would like these definitions to be independent of the order in which the edge contractions are carried out. In order to make sense of this statement, we first need to clarify when we consider two tree decompositions and configurations on them to be the same. Let $\Tcal_1 = (T_1,\Vcal_1,r_1)$ and $\Tcal_2 = (T_2,\Vcal_2,r_2)$ be rooted tree decompositions of the same rooted graph $(G,o)$. We say that $\Tcal_1$ and $\Tcal_2$ are \emph{isomorphic} if there is an isomorphism $\iota\colon T_1 \to T_2$ such that $\iota(r_1) = r_2$ and $\Vcal_1 = \Vcal_2 \circ \iota$. We call two configurations $C_1 = (P_1,X_1)$ on $\Tcal_1$ and $C_2 = (P_2,X_2)$ on $\Tcal_2$ isomorphic, if there is an isomorphism $\iota$ as above additionally satisfying $P_1 = P_2 \circ \iota$ and $\iota \circ X_1 = X_2 \circ \iota$. Since we only care about tree decompositions and configurations up to isomorphism, we write $\Tcal_1 = \Tcal_2$ and $C_1 = C_2$ to denote the fact that the respective tree decompositions and configurations are isomorphic. Inductive application of the following lemma shows that $\Tcal/F$ and $C/F$ (up to isomorphism) indeed do not depend on the order in which edges are contracted.

\begin{lem}
\label{lem:contractionorder}
Let $\Tcal =(T,\Vcal,r)$ be a rooted tree-decomposition of a simple, locally finite, connected, rooted graph $(G,o)$, let $C = (P,X)$ be a configuration on $\Tcal$ let $F=\{f_1, f_2\} \subseteq E(T)$. Then $\Tcal/f_1/f_2 = \Tcal/f_2/f_1$ and $C/f_1/f_2 = C/f_2/f_1$.
\end{lem}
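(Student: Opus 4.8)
The plan is to treat the underlying tree decompositions first and then the configurations, in each case splitting according to whether $f_1$ and $f_2$ share an endpoint. For the tree decompositions, observe that both $T/f_1/f_2$ and $T/f_2/f_1$ are obtained from $T$ by collapsing each connected component of the subgraph $(V(T),\{f_1,f_2\})$ to a single vertex, with adjacency inherited from $T$; let $\pi\colon V(T)\to V(T/f_1/f_2)$ denote the resulting quotient map. Inspecting the definition of single-edge contraction twice, in either order, shows that $\pi$ is an isomorphism $T/f_1/f_2\to T/f_2/f_1$ mapping root to root and satisfying $\Vcal/f_1/f_2=(\Vcal/f_2/f_1)\circ\pi$, so $\Tcal/f_1/f_2=\Tcal/f_2/f_1$. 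The derived objects agree in both orders as well: for an uncollapsed vertex $t$ the $t$-graph $\Gcal/F(t)$ is just $\Gcal(t)$, while a vertex $t_F$ collapsed from a set $\{v_1,\dots\}$ of vertices of $T$ satisfies $\Ecal/F(t_F)=\bigcup_i\Ecal(v_i)$ and $\Gcal/F(t_F)=\Gcal(\{v_1,\dots\})$ (the virtual edges on adhesion sets of collapsed edges being discarded).

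It remains to compare the configurations. For any vertex $t$ of $T$ that is not collapsed, both contractions leave $P(t)$ unchanged, and the exit direction satisfies $X/F(t)=\pi(X(t))$ in either order — if $X(t)$ is collapsed both sides equal the collapsed vertex, otherwise both equal $X(t)$ — so there is nothing to prove for such $t$. Suppose next that $f_1$ and $f_2$ are vertex-disjoint, so they collapse to two distinct vertices. Contracting one of $f_1,f_2$ alters neither the configuration on the endpoints of the other (by the explicit formula for $C/f$) nor whether the exit direction of an endpoint of the other points across it; hence the walks attached to the two collapsed vertices are exactly those produced by the two individual contractions in either order, and the exit directions of the collapsed vertices are obtained from $X$ by one and the same case distinction in either order. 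A routine (if tedious) run through the three cases in the definition of $X/f$ confirms this and settles the vertex-disjoint subcase.

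The only real obstacle is the case where $f_1$ and $f_2$ share a vertex, so that $\{f_1,f_2\}$ spans a path $v_1\!-\!v_2\!-\!v_3$ in $T$ collapsing to a single vertex $t_F$; one must show that piecing $P(v_1),P(v_2),P(v_3)$ together in the order $(f_1,f_2)$ gives the same walk as in the order $(f_2,f_1)$. The key tool is Lemma~\ref{lem:walkcontraction}: applied twice in either order it identifies the set of edges of $P/F(t_F)$ with
\[
\bigl(E_1\cup E_2\cup E_3\bigr)\setminus\bigl(\Ecal(f_1)\cup\Ecal(f_2)\bigr),
\]
where $E_i$ is the set of edges of $P(v_i)$; here one uses that $\Ecal(f_1)\cap E(\Gcal(v_3))=\emptyset$ and $\Ecal(f_2)\cap E(\Gcal(v_1))=\emptyset$, so that the order of the set operations is irrelevant, and the displayed expression is patently symmetric in $f_1$ and $f_2$. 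By Lemma~\ref{lem:contraction} both $P/f_1/f_2(t_F)$ and $P/f_2/f_1(t_F)$ are self-avoiding walks, and a self-avoiding walk of positive length is the unique walk with its prescribed edge set and initial vertex, since its support is a path with precisely two vertices of degree at most one and the walk must traverse it from one end to the other (the empty-walk case being immediate). Both walks start at the initial vertex of $P(v^\ast)$, where $v^\ast$ is the forefather of $\{v_1,v_2,v_3\}$ in $T$, because at each single contraction the walk on the parent side contributes its initial vertex to the combined walk; hence the two walks coincide. Finally, tracking exit directions via Remark~\ref{rmk:sink} — by following $X$ starting from the $v_i$ — shows $X/F(t_F)=\pi(X(v^\dagger))$, where $v^\dagger\in\{v_1,v_2,v_3\}$ is the vertex reached by following $X$ within $\{v_1,v_2,v_3\}$, a description independent of the contraction order, while for every other vertex the identity $X/F=\pi\circ X$ noted above applies. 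Combining these observations yields $C/f_1/f_2=C/f_2/f_1$.
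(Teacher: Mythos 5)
Your proof is correct and follows essentially the same route as the paper: you identify the two double contractions via the natural renaming/quotient map, use Lemma~\ref{lem:walkcontraction} to show that the walk at a contracted vertex has the same (order-symmetric) edge set under either order and hence coincides, and settle the exit directions by the case analysis in the definition of $X/f$ together with Remark~\ref{rmk:sink}. Your extra observation that both walks share the initial vertex of the forefather's walk merely makes explicit what the paper leaves implicit when asserting that a self-avoiding walk is determined by its edge set.
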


\begin{proof}
Before we get started, we need to discuss a notational issue. Recall that when we defined contractions, we abused notation so that we could refer to vertices and edges of $T$ and $T/f$ by the same names. When considering contractions of different edges, this is a potential source of confusion. For example, if there is an edge $e$ connecting $f_1^-$ to $f_2^-$, then $e$ refers to the edge connecting $t_{f_1}$ to $f_2^-$ in $T/f_1$, as well as to the edge connecting $t_{f_2}$ to $f_1^-$ in $T/f_2$. 

For double contractions as considered in this lemma, however, this abuse of notation works in our favour, that is, the function mapping every vertex of $T/f_1/f_2$ to the vertex of $T/f_2/f_1$ with the same name is an isomorphism (which will play the role of $\iota$). More precisely, there are two cases to consider: if $f_1$ and $f_2$ are not incident, then $T/f_1/f_2$ and $T/f_2/f_1$ both contain two contracted vertices denoted by $t_{f_1}$ and $t_{f_2}$. In this case, any edge incident to $f_1^-$ or $f_1^+$ in $T$ is incident to $t_{f_1}$ in $T/f_1/f_2$ and $T/f_2/f_1$ and  any edge incident to $f_2^-$ or $f_2^+$ in $T$ is incident to $t_{f_2}$ in $T/f_1/f_2$ and $T/f_2/f_1$. If $f_1$ and $f_2$ are incident, then $T/f_1/f_2$ and $T/f_2/f_1$ both contain a unique contracted vertex which we will denote by $t_F$. In this case, any edge incident to $f_1^-$, $f_1^+$, $f_2^-$, or $f_2^+$ in $T$  is incident to $t_F$ in $T/f_1/f_2$ and $T/f_2/f_1$. The endpoints of all other edges are the same in $T$, $T/f_1/f_2$, and $T/f_2/f_1$ thus giving the desired isomorphism. In light of the above discussion, we will from now on treat $T/f_1/f_2$ and $T/f_2/f_1$ as the same tree and denote it by $T/F$.

The claim $\Tcal/f_1/f_2 = \Tcal/f_2/f_1$ now follows directly from the definition of contraction. First note that by definition, if $t \notin \{f_1^-,f_1^+,f_2^-,f_2^+\}$, then  $\Vcal/f_1/f_2 (t) = \Vcal(t) = \Vcal/f_2/f_1 (t)$. If $f_1$ and $f_2$ are not incident, then 
\[
\Vcal/f_1/f_2 (t_{f_1}) = \Vcal/f_1(t_{f_1}) = \Vcal(f_1^-) \cup \Vcal(f_1^+) = \Vcal/f_2(f_1^-) \cup \Vcal/f_2(f_1^+) = \Vcal/f_2/f_1(t_{f_1}),
\]
and analogous arguments show that $\Vcal/f_1/f_2 (t_{f_2}) = \Vcal/f_2/f_1 (t_{f_2})$. In case $f_1$ and $f_2$ are incident, the same line of reasoning leads to 
\[
\Vcal/f_1/f_2 (t_F) = \Vcal/f_2/f_1 (t_F) = \Vcal(f_1^-) \cup \Vcal(f_1^+) \cup \Vcal(f_2^-) \cup \Vcal(f_2^+),
\]
where two of the sets in the union on the right-hand side are the same. It is also easily verified that $r/f_1/f_2 = r/f_2/f_1$, thus showing that indeed $\Tcal/f_1/f_2 = \Tcal/f_2/f_1$.

Our next goal is to show that $P/f_1/f_2 = P/f_2/f_1$. If $t \notin \{f_1^-,f_1^+,f_2^-,f_2^+\}$, then  $P/f_1/f_2 (t) = P(t) = P/f_2/f_1 (t)$ by definition, so it only remains to consider the contracted vertices. 

If $f_1$ and $f_2$ are not incident, then $P/f_1/f_2(t_{f_1}) = P/f_1(t_{f_1})$ and Lemma~\ref{lem:walkcontraction} tells us that this walk contains exactly the edges of $P(f_1^-) - \Ecal(f_1)$ and $P(f_1^+) - \Ecal(f_1)$. On the other hand, $P/f_2/f_1(t_{f_1})$ contains exactly the edges of $P/f_2(f_1^-) - \Ecal/f_2(f_1)$ and $P/f_2(f_1^+) - \Ecal/f_2(f_1)$. Since $P/f_2(f_1^-) = P(f_1^-)$, $P/f_2(f_1^+) = P(f_1^+)$, and $\Ecal/f_2(f_1) = \Ecal(f_1)$, the two edge sets coincide, and using that a self avoiding walk is uniquely determined by its set of edges we conclude that $P/f_1/f_2(t_{f_1}) = P/f_2/f_1(t_{f_1})$. An analogous argument shows that $P/f_1/f_2(t_{f_2}) = P/f_2/f_1(t_{f_2})$.

If $f_1$ and $f_2$ are incident, then we may assume without loss of generality that $f_1^- = f_2^-$, in particular the edge $f_2$ connects $f_2^+$ to $t_{f_1}$ in $T/f_1$.
By Lemma~\ref{lem:walkcontraction}, the edge set of $P/f_1/f_2(t_F)$ consists of the edges of $P/f_1(f_2^+) - \Ecal/f_1(f_2) = P(f_2^+) - \Ecal(f_2)$ and $P/f_1(t_{f_1}) - \Ecal/f_1(f_2)$. Again by Lemma~\ref{lem:walkcontraction}, the edge set of the latter multi-walk consists of the edge sets of $P(f_1^-) - (\Ecal(f_1) \cup \Ecal(f_2))$ and $P(f_1^+) - (\Ecal(f_1) \cup \Ecal(f_2))$. Since $f_1$ is not incident to $f_2^+$, the graph $\Gcal(f_2^+)$ and thus also the walk $P(f_2^+)$ is disjoint from $\Ecal(f_1)$, and we conclude that the edge set of the walk $P/f_1/f_2(t_F)$ consists of the edge sets of $P(f_1^-) - (\Ecal(f_1) \cup \Ecal(f_2))$, $P(f_1^+) - (\Ecal(f_1) \cup \Ecal(f_2))$, and $P(f_2^+) - (\Ecal(f_1) \cup \Ecal(f_2))$. Since $f_1^- = f_2^-$, this is symmetric in $f_1$ and $f_2$, and an analogous argument shows that the edge set of  $P/f_2/f_1(t_F)$ is the same. Thus the two walks coincide.

Finally, we need to show that $X/f_1/f_2 = X/f_2/f_1$. By Lemma~\ref{lem:contraction}, both $C/f_1/f_2$ and $C/f_2/f_1$ are bounded consistent configurations, thus by Remark \ref{rmk:sink} it suffices to show that the unique vertex $t \in T/F$ with $X/f_1/f_2(t) = t$ also satisfies $X/f_2/f_1(t) = t$. This clearly follows from the definition of $X/f$.
\end{proof}

Recall that the goal of this section is relating bounded consistent configurations on $\Tcal$ to self-avoiding walks of length at least $1$ starting at the root $o$ of $G$. In this sense the upcoming Theorem~\ref{thm:bijsawsconfigs} is the main result of this section. In preparation of this theorem, for each vertex $t$, we define a map $\psi_t$ mapping bounded consistent configurations $C$ on $\Tcal$ to SAWs on the graph $\Gcal({K_t})$ corresponding to the cone $K_t$ as follows.

First, recall the definition of $\Gcal(S)$ for $S$ subset of $V(T)$. In particular, when $S = K_s$ is a cone,
\[
    \Gcal(K_s) = \left(\bigcup_{t \in K_s} \Vcal(t)\;, \; \bigcup_{t \in K_s} \Ecal(t) \uplus \Ecal({s,s^{\ua}})\right).
\]
Let $S \subseteq K_t$ consist of the vertex $t$ and all vertices of $K_t$ carrying non-boring configurations. Note that Lemma~\ref{lem:boringextension} implies that $T[S]$ is connected and thus a finite subtree of $K_t$. Let $F=E(T[S])$ be the set of its edges. We define \phantomsection{}\label{def:psit}
\[
\psi_t(C)=P/F(t_F),
\]
where $t_F$ is the unique contracted vertex in $T/F$.
In other words, $\psi_t(C)$ is the self-avoiding walk on the finite graph $\Gcal/F(t_F)$ obtained by contracting all edges of $T[K_t]$ connecting two vertices carrying non-boring configurations. By \ref{itm:compatible-alternate} all its virtual edges must be in $\Ecal/F(t_F^\ua,t_F)=\Ecal(t^\ua,t)$, because all other neighbours of $t_F$ carry boring configurations. In particular, $\psi_t(C)$ is a SAW on $\Gcal(K_t)$ as claimed.

Let us illustrate this definition using the bounded consistent configuration depicted in Figure~\ref{fig:configurations}. In Figure~\ref{fig:contraction} we iteratively contract edges incident to the root vertex $r$ until only a single vertex carrying a non-boring configuration remains. This only takes two steps. Any further contraction, for example the one done in the third step, does not change the walk $\psi_r(C)$ anymore.

\input{contraction}

\begin{thm}\label{thm:bijsawsconfigs}
Let $(G,o)$ be a simple, locally finite, connected graph rooted at $o \in V(G)$, and let $\Tcal=(T, \Vcal,r)$ be a rooted tree decomposition of $(G,o)$. Then $\psi_r$ is a bijection between the set $\Ccal_{\Tcal}$ and the set of self-avoiding walks of length at least 1 on $G$ starting at $o$ and for every $C \in \Ccal_\Tcal$, the weight $\norm{C}$ coincides with the length of $\psi_r(C)$.
\end{thm}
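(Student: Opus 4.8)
The plan is to avoid arguing with $\psi_r$ directly and instead build everything from the single-edge contraction lemmas, exploiting that every $C \in \Ccal_\Tcal$ has only finitely many non-boring parts. For $C \in \Ccal_\Tcal$ let $S_C$ denote the (finite, connected, by Lemma~\ref{lem:boringextension}) subtree consisting of $r$ together with all vertices carrying a non-boring configuration, and $F_C = E(T[S_C])$, so that $\psi_r(C) = P/F_C(t_{F_C})$. The first thing I would prove is a \emph{stabilisation} property: if $S_0 \supseteq S_C$ is any finite connected subtree of $T$ containing $r$ and $F_0 = E(T[S_0])$, then $P/F_0(t_{F_0}) = \psi_r(C)$. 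Using Lemma~\ref{lem:contractionorder} one may contract $F_C$ first and then the edges of $F_0 \setminus F_C$ in any order; each of the latter joins two boring vertices, or joins a boring vertex to the already contracted vertex, which is the root of the current tree, carries $\psi_r(C)$, and hence — being the root — has no virtual edges by \ref{itm:compatible-alternate}. By Lemma~\ref{lem:walkcontraction} the contracted walk has as edge set the union of the two constituent edge sets minus the virtual edges of the contracted edge; a boring walk contributes no edges outside that set, and $\psi_r(C)$ contains none of those virtual edges, so the edge set, and hence (a self-avoiding walk being uniquely determined by its set of edges) the walk, is unchanged. Combining the same bookkeeping with $\norm{C/F_C}=\norm{C}$ from iterated Lemma~\ref{lem:contraction} and the vanishing weight of boring parts shows that $\psi_r(C)$ is a self-avoiding walk on $\Gcal(V(T))=G$ that starts at $o$ (the first vertex is preserved by each contraction) and has length equal to its number of non-virtual edges, namely $\norm{C}$; by Remark~\ref{rmk:sink} this is at least $1$. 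So $\psi_r$ lands where it should and respects lengths.

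For injectivity, suppose $\psi_r(C_1)=\psi_r(C_2)=p$ and set $S_0 = S_{C_1}\cup S_{C_2}$, $F_0 = E(T[S_0])$. By stabilisation, $C_i/F_0$ has $p$ on $t_{F_0}$, and since $S_{C_i}\subseteq S_0$ every other part of $C_i/F_0$ is boring. I would then show that a bounded consistent configuration $C'$ on $\Tcal/F_0$ whose only non-boring part is $t_{F_0}$ is completely determined by $P'(t_{F_0})$: Remark~\ref{rmk:sink} forces $X'(t_{F_0})=t_{F_0}$ (the boring vertices point to their parents, and $t_{F_0}$ has none), after which Lemma~\ref{lem:boringextension}, applied at $t_{F_0}$ and its children — legitimate as $P'(t_{F_0})$ has no virtual edges toward them — and cascaded down the tree (each boring walk being confined to the virtual edges of its parent edge, hence disjoint from those of its child edges), pins down every remaining part uniquely. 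Hence $C_1/F_0 = C_2/F_0$, and since $C\mapsto C/F_0$ is a bijection $\Ccal_\Tcal\to\Ccal_{\Tcal/F_0}$ by iterated Lemma~\ref{lem:contrbij} (well defined by Lemma~\ref{lem:contractionorder}), $C_1 = C_2$.

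For surjectivity, given a self-avoiding walk $p$ of length at least $1$ starting at $o$, let $S_0$ be the smallest subtree of $T$ containing $r$ and every vertex $w$ with $\Ecal(w)\cap E(p)\neq\emptyset$; as $E(G)=\biguplus_t\Ecal(t)$ and $p$ is finite, $S_0$ is finite. The key point is that $p$ is then already a walk on $\Gcal(S_0)=\Gcal/F_0(t_{F_0})$ using only non-virtual edges, where $F_0 = E(T[S_0])$: every edge of $p$ lies in $\bigcup_{w\in S_0}\Ecal(w)$ by construction, and if a vertex $z$ of $p$ were outside $\bigcup_{s\in S_0}\Vcal(s)$, then, taking $t$ to be the vertex of $T$ closest to $r$ with $z\in\Vcal(t)$, all parts containing $z$ lie in the cone $K_t$; since $S_0$ is connected and contains $r\notin K_t$ we get $S_0\cap K_t=\emptyset$, yet the edge of $p$ incident with $z$ lies in $\Ecal(w)$ for some $w$ among the parts containing $z$, so $w\in K_t\cap S_0$, a contradiction (using property \ref{itm:td-nocrosssedge}). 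Given this, set $C'(t_{F_0})=(p,t_{F_0})$ — a valid configuration on $t_{F_0}$ since it starts at $o$, $t_{F_0}$ being the root imposes no adhesion constraint, and its last edge is non-virtual — and define the remaining parts of $C'$ by the unique boring extension cascading from $t_{F_0}$ via Lemma~\ref{lem:boringextension} (legitimate since boring walks use only their parent edge's virtual edges); this gives $C'\in\Ccal_{\Tcal/F_0}$. Let $C$ be its preimage under the contraction bijection. Every vertex outside $S_0$ is an endpoint of no edge of $F_0$, so its $C$-part equals its (boring) $C'$-part; thus $S_C\subseteq S_0$, and stabilisation yields $\psi_r(C)=P/F_0(t_{F_0})=P'(t_{F_0})=p$.

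The step I expect to be the main obstacle is the stabilisation property together with the claim that $p$ lives on the finite graph $\Gcal(S_0)$: both require carefully separating the virtual-edge sets of distinct tree-decomposition edges and combining the edge-set bookkeeping of Lemma~\ref{lem:walkcontraction} with property \ref{itm:td-nocrosssedge}. Everything else is essentially a matter of pushing configurations through the contraction bijection and invoking uniqueness of boring extensions.
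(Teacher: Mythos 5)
Your proposal is correct and follows essentially the same route as the paper's proof: contract the subtree of non-boring parts, use the uniqueness of boring extensions (Lemma~\ref{lem:boringextension} together with Remark~\ref{rmk:sink}) for injectivity, and pull a configuration back through the contraction bijection (Lemmas~\ref{lem:contrbij} and~\ref{lem:contractionorder}) for surjectivity. Your explicit ``stabilisation'' lemma is just a careful write-up of the step the paper asserts in passing (that contracting additional boring vertices does not change $P/F(t_F)$), so the two arguments differ only in the level of detail, not in substance.
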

\begin{proof}
Let $C \in \Ccal_{\Tcal}$. As above, let $S$ be the set of all vertices of $T$ carrying non-boring configurations and let $F=E(T[S])$. Note that the root $r$ is contained in $S$, and consequently $t_F$ is the root of $T/F$. By the above discussion, $\psi_r(C)=P/F(t_F)$ is a self-avoiding walk on $G$ starting at the vertex $o$. Furthermore, by inductive application of Lemma~\ref{lem:contraction}, the weight $\norm{C}$ is equal to the weight $\norm{C/F}$, which is the length of the walk $\psi_r(C)$, because $\psi_r(C)$ contains no virtual edges. Finally, note that $X/F(t_F)=t_F$ and thus $P/F(t_F)$ ends with a non-virtual edge, in particular $\psi_r(C)$ has length at least 1.

It remains to show that $\psi_r$ is bijective. We first show that it is injective. For $i=1,2$ let $C_i=(P_i,X_i) \in \Ccal_\Tcal$ such that $\psi_r(C_1)=\psi_r(C_2)$. Let $S$ consist of all vertices $s$ of $T$ such that at least one of $C_1(s)$ and $C_2(s)$ is non-boring and let $F=E(T[S])$. Then $X_1/F(t_F)=X_2/F(t_F)=t_F$ because all neighbours of $t_F$ carry boring configurations. While $S$ could potentially contain some vertices $s$ of $T$ such that $C_i(s)$ is boring, this does not influence the result of $P_i/F(t_F)$. Thus by assumption 
\[
P_1/F(t_F)=\psi_r(C_1)=\psi_r(C_2)=P_2/F(t_F)
\]
and this walk does not contain virtual edges, so Lemma~\ref{lem:boringextension} implies that $C_1/F=C_2/F$. Inductive application of Lemma~\ref{lem:contrbij} yields $C_1=C_2$, so $\psi_r$ is injective.

To prove that $\psi_r$ is surjective, let $p$ be a SAW of length at least $1$ on $G$ starting at $o$. There is a finite subset $S \subseteq V(T)$ such that all edges of $p$ are contained in $\Gcal(S)$ and $T[S]$ is connected. As before, let $F=E(T[S])$. Then $(p,t_F)$ is a configuration on $t_F$ and $p$ does not contain virtual edges, thus Lemma~\ref{lem:boringextension} provides a bounded consistent configuration $C \in \Ccal_{\Tcal/F}$ such that $C(t_F)=(p,t_F)$. Lemma~\ref{lem:contrbij} yields a configuration $C \in \Ccal_{\Tcal}$ with $\psi_r(C)=P$ and therefore $\psi_r$ is surjective.
\end{proof}

\begin{rmk}
While $\psi_t$ maps bounded consistent configurations on the cone $K_t$ to walks on the corresponding graph $\Gcal(K_t)$, it is (in general) only a bijection in the case $t=r$. This is due to the fact that for $t \neq r$ the exit direction $X(t)$ at vertex $t$ is in general not uniquely determined by the walk $\psi_t(C)$; in many cases it can be either $t$ or $t^\ua$.
\end{rmk}

\section{A grammar for bounded consistent configurations} \label{sec:configgrammar}

Throughout this section let $(G,o)$ be a rooted, simple, locally finite, connected graph having only thin ends and let $\Gamma$ be a group acting quasi-transitively on $G$.
Thomassen and Woess~\cite{MR1223698} showed that any transitive graph without thick ends is accessible. Using the fact that any quasi-transitive graph is quasi-isometric to some transitive graph, and additionally that quasi-isometries preserve accessibility, we obtain that the graph $G$ is accessible.

By Corollary~\ref{cor:graphdecomp} there is a reduced, $\Gamma$-invariant, rooted tree decomposition $\Tcal=(T,\Vcal,r)$ of $(G,o)$ such that there are only finitely many $\Gamma$-orbits on $E(T)$. By property~\ref{itm:finiteparts-thinends} in Section~\ref{sec:treedecomp} the parts of such a tree decomposition are finite and by \ref{itm:thinends-locfintree} the tree $T$ is locally finite. 

Our goal in this section is to reveal a recursive structure in the set of bounded consistent configurations, which we later use to define a context-free grammar. To this end we first show that there are only finitely many essentially different configurations on vertices of $T$. The letters in $\Si$ will correspond to these different configurations, and the production rules will reflect the ways in which individual configurations can be combined in a compatible way.

\subsection{Choosing representatives of configurations}

We would like to define a function $\rho$ which assigns one of finitely many representatives to each vertex $t$ of $T$ and each configuration $c = (p,x)$ on $t$. This function $\rho$ is chosen in a way that for neighbouring vertices $s$ and $t$ of $T$ and configurations $c_s$ and $c_t$ on them compatibility only depends on $\rho(c_s)$ and $\rho(c_t)$.

We start by choosing representatives of the cone equivalence classes of vertices of $T$.

\begin{lem}
There is a finite subset $R$ of $V(T)$ containing exactly one vertex of every cone type such that $T[R]$ is connected.
\end{lem}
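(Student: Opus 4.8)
The plan is to build $R$ greedily: grow a finite subtree of $T$ outward from the root $r$, adding at each step a single new vertex whose cone type has not yet appeared. Since by Lemma~\ref{lem:finiteconetypes} there are only finitely many cone types — here we use that $G$ is locally finite, that $\Tcal$ is reduced, and that $\Gamma$ acts on $E(T)$ with finitely many orbits — such a process must terminate after finitely many steps, and upon termination the set $R$ we have built will contain every cone type, each exactly once, and will induce a connected subgraph (in fact a subtree) of $T$ by construction. So the whole argument reduces to showing that the greedy step is always available until every type has been hit.

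The core of the proof is therefore the following claim: if $R \subseteq V(T)$ is finite, $r \in R$, $T[R]$ is connected, and some cone type is not represented in $R$, then there is a vertex $t \in V(T) \setminus R$ with $t^\ua \in R$ whose cone type is also not represented in $R$. Adding such a $t$ to $R$ keeps the set finite and connected (it is attached to $R$ via its parent, and since $R$ is a connected subgraph of the tree $T$ containing $r$, none of the children of $t$ can lie in $R$, so $t$ becomes a leaf of $R \cup \{t\}$), and it strictly increases the number of cone types present; this is exactly the greedy step. To prove the claim I would argue by contraposition: assuming every vertex hanging off $R$ (i.e.\ every $t \notin R$ with $t^\ua \in R$) already has a represented cone type, I would show by induction along paths from $r$ that \emph{every} vertex of $T$ has a represented cone type, contradicting the hypothesis on the missing type. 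For the inductive step, given a vertex $t_i$ on such a path whose cone type is witnessed by some $w \in R$, I would use an automorphism $\gamma \in \Gamma$ taking $t_i$ to $w$ and $t_i^\ua$ to $w^\ua$ (for $t_i = r$ take $\gamma$ to be the identity, since $r$ is cone-equivalent only to itself) to transport the next vertex $t_{i+1}$ of the path to a child $\gamma t_{i+1}$ of $w$ which is cone-equivalent to $t_{i+1}$; then either $\gamma t_{i+1} \in R$, or it is a vertex hanging off $R$ and the standing assumption applies, so in either case the cone type of $t_{i+1}$ is represented.

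The main obstacle is precisely this claim, and within it the subtlety that we must add a vertex of a \emph{genuinely new} cone type rather than just any vertex not yet in $R$: the naive choice — take a missing type $\tau$, a vertex $v$ of type $\tau$, and the first vertex along the $r$–$v$ path lying outside $R$ — can fail because that vertex's type might already be present in $R$. The equivariance argument above is what repairs this, by letting us slide a representative of an already-present type one step further down the tree and test the child there instead. Once the claim is established, the construction and the verification of the three invariants (finite; inducing a connected subgraph; exactly one vertex per occurring cone type) at each stage of the greedy process are routine, and termination follows from the finiteness of the set of cone types.
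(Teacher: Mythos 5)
Your proof is correct, but it takes a genuinely different route from the paper. The paper argues extremally: it chooses $R$ of minimal cardinality among all connected subsets of $V(T)$ meeting every cone type (a finite such set exists because there are finitely many cone types and one can join representatives by paths in $T$), and then excludes two vertices $s,t$ of the same type by an exchange argument, replacing $R \cap K_s$ by $\gamma(R \cap K_s)$ for an automorphism $\gamma$ carrying $K_s$ onto $K_t$; connectivity is preserved, all types are still met because $\gamma$ preserves cone types on $K_s$, and the cardinality strictly drops since $\gamma s = t$ already lies in $R$, contradicting minimality. You instead build $R$ greedily from the root and justify the greedy step by the contrapositive ``sliding'' induction: if every vertex $t \notin R$ with $t^\ua \in R$ had a represented type, then transporting each step of the $r$--$v$ path by a witnessing automorphism of the parent's type (which maps children to children and preserves cone equivalence, as the paper itself notes after Lemma~\ref{lem:coneequivalence}) shows that every vertex of $T$, hence every cone type, is represented. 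Both arguments rest on the same two ingredients (finitely many cone types, and the behaviour of witnessing automorphisms on cones), but yours is constructive and yields connectivity and ``exactly one vertex per type'' directly, at the cost of the more delicate induction, including the correct special handling of the root (whose type is the singleton $\{r\}$); the paper's extremal/exchange proof is shorter but leaves the finiteness of the minimal set and the preservation of all cone types under the exchange as small verifications for the reader. Your observation that the naive choice of ``first vertex outside $R$ on a path towards a missing type'' does not suffice, and the equivariant repair of it, are exactly the right points to address, so I see no gap.
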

\begin{proof}
Choose a subset $R$ of $V(T)$ such that $R$ contains at least one vertex of every cone type, $T[R]$ is connected, and $R$ has minimal cardinality among all such sets. Clearly $R$ is finite as there are only finitely many cone types. Assume that $R$ contains two vertices $s$ and $t$ in the same cone type and let $\gamma \in \Gamma$ map $K_{s}$ onto $K_{t}$. Then the set $R'=R \cup \gamma(R \cap K_s) \setminus (R \cap K_s)$ still satisfies the condition that $T[R']$ is connected and has smaller cardinality than $R$, because $t \in R \cap \gamma(R \cap K_s)$.
\end{proof}

Let us now fix a set of representatives $R$ of the cone types of $\Tcal$ as in the previous lemma. The representative of a vertex $t$ of $T$ is denoted by $\rho(t) \in R$.

For every vertex $t$ of $T$ we define an automorphism $\delta_t \in \Gamma$ mapping $K_t$ to $K_{\rho(t)}$. For $t \in R$ let $\delta_t=1_\Gamma$, the neutral element in $\Gamma$, which acts as the identity on $G$. For any vertex $t \in V(T) \setminus R$ with $t^\ua \in R$ fix some arbitrary automorphism $\delta_t \in \Gamma$ mapping $K_t$ to $K_{\rho(t)}$. Finally for all other vertices $t$ of $T$ inductively define
\begin{equation}\label{eq:deltadef}
\delta_t=\delta_{\delta_{t^\ua}(t)} \circ \delta_{t^\ua}.
\end{equation}
This is well defined because $\delta_{t^\ua}(t)$ must be a child of a vertex of $R$ for every $t$; in particular, if $\delta_{t^\ua}(t) \in R$, then $\delta_{\delta_{t^\ua}(t)} = 1_\Gamma$. Note that equation \eqref{eq:deltadef} in fact holds for all vertices $t$ of $T$ besides the root $r$.

We use these maps to extend the map $\rho$ to configurations. Let $c=(p,x)$ be a configuration on a vertex $t$ of $T$. Then $\rho$ maps $c$ onto a configuration on the representative $\rho(t)=\delta_t(t)$ of $t$ by 
\[
\rho(c)=\delta_t(c)=(\delta_t(p),\delta_t(x)).
\]

Additionally for every $t \in T$ let $t_1^\da, \dots, t_{k(t)}^\da$ be the $k(t)$ children of $t$ ordered in a way such that 
\begin{equation}\label{eq:deltacomp}
\delta_t(t_i^\da)=\delta_t(t)_i^\da=\rho(t)_i^\da.
\end{equation}
This can be achieved by fixing any order of the children of vertices in $R$ and then ordering the children of any vertex $t$ accordingly. From the definition \eqref{eq:deltadef} of $\delta_t$ it is immediate that 
\begin{equation}
    \label{eq:deltadefchild}
    \delta_{t_i^\da} = \delta_{\delta_t(t_i^\da)} \circ \delta_t.
\end{equation}

Note that the representative of the $i$-th child of $t$ is exactly the representative of the $i$-th child of $\rho(t)$:
\begin{equation}\label{eq:bijtrees1}
\rho(t_i^\da)=\delta_{t_i^\da}(t_i^\da)=\delta_{\delta_t(t_i^\da)}(\delta_t(t_i^\da))=\delta_{\rho(t)_i^\da}(\rho(t)_i^\da)=\rho(\rho(t)_i^\da).
\end{equation}
In the above equation, the first and last equalities use the definition of $\rho$ on $t_i^\da$ and $\rho(t)_i^\da$ respectively, the other equalities follow from \eqref{eq:deltadefchild} and \eqref{eq:deltacomp}.

Moreover we can use $\delta_t$ to map a consistent configuration $C$ on $t$ and its children to a consistent configuration $\delta_t \circ C \circ \delta_t^{-1}$ on the representative $\rho(t)$ and its children. Note that $\delta_t^{-1}$ is applied to vertices of $T$ whereas $\delta_t$ is applied to configurations on the corresponding parts. Similar to \eqref{eq:bijtrees1}, we get that the configuration assigned by $C$ to the $i$-th child of $t$ and the configuration assigned by $\delta_t \circ C \circ \delta_t^{-1}$ to the $i$-th child of the representative of $t$ have the same image under $\rho$:
\begin{equation}\label{eq:bijtrees2}
\rho(C(t_i^\da))=\delta_{t_i^\da}(C(t_i^\da))=\delta_{\delta_t(t_i^\da)}(\delta_t(C(t_i^\da)))=\rho(\delta_t(C(t_i^\da)))=\rho(\delta_t \circ C \circ \delta_t^{-1}(\rho(t)_i^\da)).
\end{equation}
Similarly to \eqref{eq:bijtrees1}, the first and second equalities in \eqref{eq:bijtrees2} follow from the definition of $\rho$ and equation \eqref{eq:deltadefchild}, respectively. For the third equality, we use the definition of $\rho$ and the fact that $\delta_t (C (t_i^\da))$ is a configuration on $\delta_t (t_i^\da)$. The last equality follows from \eqref{eq:deltacomp}.

Let us illustrate the above definitions using the tree decomposition $(T,\Vcal)$ shown in Figure~\ref{fig:treedecomp}. As before, we choose the central vertex corresponding to a part of cardinality 3 as the root $r$. It is not hard to see that $\Gamma$ acts with six orbits on the set of directed edges of $T$, so we obtain seven cone types and the set $R$ has to contain seven vertices (see also Lemma~\ref{lem:finiteconetypes}). Figure~\ref{fig:conetypes} shows a valid choice of the set $R$. Note that we could also have taken the seven vertices in the first three layers of the rooted tree $T$ as our set $R$. 

Let us sketch the recursive definition of $\delta_t$ using the vertices $t_1$, $t_2$ and $t_3$ in Figure~\ref{fig:conetypes}. First, note that the vertex $t_1$ is contained in $R$, so by definition $\delta_{t_1}$ is the identity map. The parent of $t_2$ is in $R$, so we may choose an arbitrary automorphism $\gamma$ in $\Gamma$ mapping $t_2$ to its representative $t_1$ in $R$ and set $\delta_{t_2}=\gamma$. Regarding $t_3$, note that the representative of $t_3^\ua$ in $R$ is $t_2^\ua$, thus by definition $\delta_{t_3^\ua}$ maps the cone $K_{t_3^\ua}$ onto $K_{t_2^\ua}$ and thus $t_3$ onto $t_2$. The map $\delta_{t_3}$ is obtained by first applying $\delta_{t_3^\ua}$ (which maps $t_3$ to $t_2$), and then applying $\delta_{\delta_{t_3^\ua}} = \delta_{t_2}$ (which maps $t_2$ to $t_1$).

\begin{figure}
	\centering
	\begin{tikzpicture}
	\begin{scope}[scale=0.5]
	\def\dista{5}
	\def\distb{6}
	\def\distc{2}
	\def\distd{3}
	\def\diste{1.2}
	\def\ha{-2}
	\def\hb{-2}
	\def\hc{-2}
	\def\diamr{0.18}
	\def\diamrep{0.3}
	\coordinate (A) at (0,0);
	\draw[fill=black] (A) circle (\diamr);
	\foreach \i [evaluate=\i as \ieva using {int(mod(\i+1,3))}] in {0,1,2} {
        \coordinate (B\i) at (\i*\dista-\dista,\ha);
        \coordinate (A\i) at (\i*\distb-\distb,2*\ha);
 		\draw (A)--(B\i);
 		\draw (B\i)--(A\i);
 		\draw[fill=gen\ieva] (B\i) circle (\diamr);
 		\draw[fill=black] (A\i) circle (\diamr);
 		\foreach \j [evaluate=\j as \jeva using {int(mod(\i+\j+2,3))}] in {0,1} {
 		    \coordinate(B\i\j) at (\i*\distb-\distb+\j*\distc-\distc/2,2*\ha+\hb);
 		    \coordinate(A\i\j) at (\i*\distb-\distb+\j*\distd-\distd/2,2*\ha+2*\hb);
 		    \draw (A\i)--(B\i\j);
 		    \draw (B\i\j)--(A\i\j);
 		    \draw[fill=gen\jeva] (B\i\j) circle (\diamr);
 		    \draw[fill=black] (A\i\j) circle (\diamr);
 		    \foreach \k [evaluate=\k as \keva using {int(mod(\i+\j+\k,3))}] in {0,1} {
     		    \coordinate(B\i\j\k) at (\i*\distb-\distb+\j*\distd-\distd/2+\k*\diste-\diste/2,2*\ha+2*\hb+\hc);
     		    \draw (A\i\j)--(B\i\j\k);
     		    \draw (B\i\j\k) -- +(0,-0.8);
     		    \draw[fill=gen\keva] (B\i\j\k) circle (\diamr);
     		}
 		}
	}
	\draw[fill=black] (A) +(-\diamrep,-\diamrep) rectangle +(\diamrep,\diamrep);
	\draw[fill=black] (A0) +(-\diamrep,-\diamrep) rectangle +(\diamrep,\diamrep);
	\draw[fill=black] (A2) +(-\diamrep,-\diamrep) rectangle +(\diamrep,\diamrep);
	\draw[fill=black] (A00) +(-\diamrep,-\diamrep) rectangle +(\diamrep,\diamrep);
	\draw[fill=gen1] (B0) +(-\diamrep,-\diamrep) rectangle +(\diamrep,\diamrep);
	\draw[fill=gen0] (B2) +(-\diamrep,-\diamrep) rectangle +(\diamrep,\diamrep);
	\draw[fill=gen2] (B00) +(-\diamrep,-\diamrep) rectangle +(\diamrep,\diamrep);
	\draw [line width=0.7pt] (B110)+(-0.7,-1) to[out=90,in=240] ($(B11)+(-0.3,0.7)$) to[out=-20,in=100] ($(B111)+(0.7,-1)$);
	\draw [line width=0.7pt] (B001)+(-0.7,-1) to[out=90,in=240] ($(B001)+(-0.3,0.7)$) to[out=0,in=100] ($(B001)+(0.7,-1)$);
	\draw [line width=0.7pt] (B000)+(-0.7,-1) to[out=90,in=180] ($(B0)+(0.5,0.8)$) to[out=-50,in=90] ($(B011)+(0.7,-1)$);
	\node at ($(A)+(0,1)$) {$r$};
	\node at ($(B11)+(0.3,1)$) {$t_3$};
	\node at ($(B001)+(0.3,1)$) {$t_2$};
	\node at ($(B0)+(0,1.3)$) {$t_1$};
	\end{scope}
  	\end{tikzpicture}
  	\caption{Cone types of the tree decomposition $T$. The square nodes give one possible valid choice for the set $R$. The marked cones rooted at $t_1$, $t_2$ and $t_3$ have the same type.}
  	\label{fig:conetypes}
\end{figure}

\subsection{Construction of the grammar}
\label{sec:constructgrammar}

The goal of this section is to construct a context-free language, whose words are in one-to-one correspondence with bounded consistent configurations on the rooted tree decomposition $\Tcal$. Essentially, this language is obtained by going through the vertices of the decomposition tree carrying non-boring configurations in the order given by depth-first search and noting down the corresponding representatives of configurations (that is, their images under $\rho$).

Instead of constructing a context free grammar for this language, we construct a 1-multiple context free grammar. 
Clearly the two notions are equivalent; recall that by Remark~\ref{rmk:cflanguages} every context-free grammar stands in one-to-one correspondence to a 1-multiple context-free grammar generating the same language.
However, in anticipation of Section~\ref{sec:mcflofsaws}, where a multiple context free grammar for SAWs is constructed in a very similar way, it makes more sense to consider multiple context-free grammars in the present section as well.

The 1-multiple context-free grammar $\Gra=(\Non,\Si,\Pro,A_r)$ generating our desired language is defined as follows.

The alphabet is
\[
\Si=\{a_c \mid c \text{ configuration on some } t \in R\}.
\]
The set of non-terminals is 
\[\Non=\{A_r\} \cup \{A_{t,c} \mid t \in R,\; c \text{ configuration on } t\}.
\] 
For each $t \in R$  and every consistent configuration $C$ on the set $\{t, t_1^\da, \dots, t_{k(t)}^\da\}$, the set $\Pro$ contains a production rule as follows. If $C(t)$ is non-boring, then the production
\[
A_{t,C(t)}(a_{C(t)} x_1 \dots x_{k(t)}) \leftarrow  \left( A_{\rho(t_{i}^\da),\rho(C(t_{i}^\da))}(x_i) \right)_{i \in [k(t)]}
\]
is in $\Pro$. If $C(t)$ is boring, then $\Pro$ contains the terminal rule
\[
A_{t,C(t)}(\epsilon) \leftarrow.
\]
Additionally $\Pro$ contains for every configuration $C(r)$ on $r$ the production
\[
A_r(x) \leftarrow A_{r,C(r)}(x).
\]

Note that any production rule of this grammar is uniquely determined by its head and tail. This means that we do not lose any information by using simplified derivation trees, where every vertex is labelled by the head of its corresponding rule instead of the complete rule. To shorten notation, we henceforth work with simplified labels.

\begin{rmk} \label{rmk:dtlabelswords}
Observe that for any simplified derivation tree $D$ of $\Gra$ whose root $d$ is labelled $A_{t,c}$, the following three conditions hold.
\begin{enumerate}
    \item[(i)] The number of children of $d$ is uniquely given by the pair $(t,c)$; we denote it by $k$.
    \item[(ii)] If $c$ is non-boring, the word corresponding to $D$ is $w(D)=a_c w(D_1) \dots w(D_k)$, where $D_i$ is the sub-tree of $D$ rooted at the $i$-th child of $d$ and $k>0$. Otherwise $c$ is boring, $w(D)=\epsilon$ and $d$ is the only vertex of $D$.
    \item[(iii)] Let $v_1(=d),v_2,\dots, v_n$ be the vertices of $D$ in DFS-order and let $\lambda(v_i)=A_{t_i,c_i}$ be their labels. Then 
    \[
    w(D)= x_1 \dots x_n, \quad \text{where} \quad x_i=
    \begin{cases}
    a_{c_i} \text{ if $c_i$ is non-boring}\\
    \epsilon \text{ otherwise}
    \end{cases}
    \]
\end{enumerate}
Observations (i) and (ii) are direct consequences of the structure of $\Gra$. For (iii) we use induction on the number of vertices $n$ of $D$. If $n=1$, then (ii) implies that $c_1$ is boring and $w(D)=\epsilon$, so (iii) holds. Let now $n>1$ and suppose (iii) holds for every derivation tree with at most $n-1$ vertices. Then $c_1$ is non-boring and the claim follows from (ii) by applying the induction hypothesis on the sub-trees $D_1, \dots, D_k$ rooted at the $k$ children of $v_1$.
\end{rmk}

\begin{lem}
The grammar $\Gra$ is unambiguous 1-multiple context-free.
\end{lem}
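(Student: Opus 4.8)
\emph{The $1$-MCFG part is immediate:} every non-terminal of $\Gra$ — the start symbol $A_r$ and each $A_{t,c}$ — has rank $1$, so all production rules involve only rank-$1$ non-terminals. The real content is unambiguity, and the plan rests on one structural observation. Fix $t\in R$, a configuration $c=(p,x)$ on $t$, and $i\in[k(t)]$; I claim that whether the $i$-th tail entry of a production of $\Gra$ with head $A_{t,c}$ is boring — and if so, which configuration it is — depends only on $(t,c)$. Indeed, such a production comes from a consistent configuration $C$ on $\{t,t_1^\da,\dots,t_{k(t)}^\da\}$ with $C(t)=c$, so $C(t)$ and $C(t_i^\da)$ are compatible; combining the compatibility conditions \ref{itm:compatible-intersection}, \ref{itm:compatible-alternate} and \ref{itm:compatible-exitdirection} with self-avoidance of $p$ one checks that $C(t_i^\da)$ is boring precisely when $p\cap\Ecal(t,t_i^\da)=\emptyset$ and $x\neq t_i^\da$, a condition on $(t,c)$ alone, and in that case Lemma~\ref{lem:boringextension} identifies $C(t_i^\da)$ as \emph{the} configuration on $t_i^\da$ compatible with $c$, so $\rho(C(t_i^\da))$ is likewise a function of $(t,c)$ (the automorphisms $\delta_{t_i^\da}$ being fixed once and for all). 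In particular, for all productions with head $A_{t,c}$ the first index of each tail non-terminal is $\rho(t_i^\da)$, independent of $C$.

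Next I would prove that for every non-terminal $A_{t,c}$ the language it generates, $L_{A_{t,c}}=\{w\in\Si^*\mid\; \vdash A_{t,c}(w)\}$, is \emph{prefix-free}: no element is a proper prefix of another. The argument is a strong induction on $|u|+|u'|$, where $u,u'\in L_{A_{t,c}}$ and $u$ is a prefix of $u'$. If $c$ is boring then $L_{A_{t,c}}=\{\epsilon\}$ by Remark~\ref{rmk:dtlabelswords}(ii). If $c$ is non-boring, write $u=a_c u_1\cdots u_k$ and $u'=a_c u_1'\cdots u_k'$ with $k=k(t)$, using Remark~\ref{rmk:dtlabelswords}(ii); then $u_1\cdots u_k$ is a prefix of $u_1'\cdots u_k'$. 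Let $j$ be maximal with $u_1\cdots u_j=u_1'\cdots u_j'$; if $j<k$ then $u_{j+1}\neq u_{j+1}'$ and, after cancelling the common prefix, one of them is a proper prefix of the other. By Step~1 the position $j+1$ has the same boring/non-boring status in both trees, and it cannot be boring (that would force $u_{j+1}=u_{j+1}'=\epsilon$); so both $u_{j+1},u_{j+1}'$ are generated by non-boring children and begin with the letters recording their $\rho$-configurations, equality of which forces the two $(j+1)$-th tail non-terminals to coincide. The induction hypothesis, applicable since $|u_{j+1}|+|u_{j+1}'|<|u|+|u'|$, now gives $u_{j+1}=u_{j+1}'$, contradicting the maximality of $j$. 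Hence $j=k$ and $u=u'$. (At bottom this is just the elementary fact that a concatenation of prefix-free languages is prefix-free, bootstrapped along the recursive structure of $\Gra$.)

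With prefix-freeness available, unambiguity follows by strong induction on the total size of two derivation trees $D,D'$ with the same root label and $w(D)=w(D')$. If the root is $A_r$, the unique children of $D$ and $D'$ are rooted at $A_{r,C(r)}$ and $A_{r,C'(r)}$; since a configuration on $r$ is never boring, their words begin with $a_{C(r)}$ and $a_{C'(r)}$, so $C(r)=C'(r)$ and we are reduced to a smaller instance. If the root is $A_{t,c}$ with $c$ boring, both trees are single vertices. If the root is $A_{t,c}$ with $c$ non-boring, write $w(D)=a_c w_1\cdots w_k$ and $w(D')=a_c w_1'\cdots w_k'$ and show, by a side induction on $i$, that $w_i=w_i'$, that the $i$-th tail entries of the two root productions agree, and that the $i$-th subtrees agree: for a boring position both contributions are $\epsilon$ and the tail entries agree by Step~1; for a non-boring position, $w_i$ and $w_i'$ both begin with the letter of their (necessarily common, by Step~1 and equality of leading letters) $i$-th tail non-terminal and are each prefixes of the common word $w_i\cdots w_k=w_i'\cdots w_k'$, so prefix-freeness gives $w_i=w_i'$ and the main induction hypothesis gives $D_i=D_i'$. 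Thus $D$ and $D'$ share a root production and all subtrees, hence $D=D'$.

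The delicate point — and where I expect essentially all of the effort to lie — is the splitting of the word $a_c w_1\cdots w_{k(t)}$ into the contributions of the $k(t)$ children: a single non-terminal $A_{t,c}$ generates words of many different lengths, so there is no syntactic separator marking the end of $w_i$, and it is precisely prefix-freeness of the $L_{A_{t,c}}$, which itself rests on the $(t,c)$-only dependence of boringness established in Step~1, that makes the parse of a given word unique.
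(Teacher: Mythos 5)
Your argument is correct, and its structural core coincides with the paper's: your Step~1 --- that for a fixed head $A_{t,c}$ the boring/non-boring status of each tail position, and the identity of any boring tail entry, depend on $(t,c)$ alone --- is precisely the role Lemma~\ref{lem:boringextension} plays in the paper's proof, and your characterisation of when a child must be boring is the right one. Where you differ is in how this fact is converted into unambiguity. The paper compares two distinct derivation trees directly at the first DFS position where they differ: Remark~\ref{rmk:dtlabelswords}(i) upgrades ``different number of children or different label'' to ``different label'', Lemma~\ref{lem:boringextension} forces both differing configurations to be non-boring, and Remark~\ref{rmk:dtlabelswords}(iii) (the DFS linearisation of the generated word) then shows that the two words already disagree at the corresponding letter --- a short direct argument with no auxiliary lemma. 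You instead prove prefix-freeness of every $L_{A_{t,c}}$ and reconstruct the derivation tree from the word top-down; this costs an extra induction (which, as written, should be stated uniformly over all non-terminals, since you apply the hypothesis to a different non-terminal than the one fixed in the statement --- your length measure $|u|+|u'|$ makes this harmless), but it makes fully explicit the point you flag at the end: there is no syntactic separator between the children's contributions, and prefix-freeness, resting on the forced boring entries, is what substitutes for one. Both proofs are sound; the paper's is shorter because the global DFS comparison lets it bypass the compositional parsing argument entirely, while yours isolates a reusable property (prefix-freeness of the per-non-terminal languages) at the price of two nested inductions.
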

\begin{proof}
Let $D$ and $D'$ be different non-trivial (consisting of at least 2 vertices) derivation trees. Then there is a smallest positive number $m$, such that the $m$-th vertices $u$ and $u'$ in the DFS-orders on $D$ and $D'$ either have a different number of children or have a different label. Remark~\ref{rmk:dtlabelswords} (i) implies that in any case $\lambda(u)=A_{t,c} \neq \lambda(u')=A_{t',c'}$, so in particular $c \neq c'$. By minimality of $m$, the parents of $u$ and $u'$ have the same label, so by Lemma~\ref{lem:boringextension} the configurations $c$ and $c'$ must be non-boring. Thus Remark~\ref{rmk:dtlabelswords} (iii) implies that $w(D) \neq w(D')$ and $\Gra$ is unambiguous.
\end{proof}

Bounded consistent configurations on $\Tcal$ are closely related to derivation trees of $\Gra$. 
Let us define a map $\phi$ assigning to any given $C=(P,X) \in \Ccal_\Tcal$ a derivation tree of our grammar $\Gra$ as follows. Let $S \subseteq V(T)$ consist of all vertices $s$ carrying non-boring configurations $C(s)$ and their neighbours. Then $T[S]$ is connected and can be seen as an ordered tree with root $r$, where the order of the children is inherited from the tree $T$. We label every vertex $s$ of $T[S]$ with 
$\lambda(s)=A_{\rho(s),\rho(C(s))}$.

As an example, we provide in Figure~\ref{fig:derivationtree} the derivation tree $\phi(C)$ of the configuration $C$ given in Figure~\ref{fig:configurations}. Note that the vertex $t_2$ is not contained in the set $R$ from Figure~\ref{fig:conetypes}, so we need to apply $\rho$ to the respective configuration. For all other $t_i$, the map $\rho$ is the identity map and can be omitted. The vertex $t_5$ is not contained in $\phi(C)$ because its parent $t_2$ carries a boring configuration.

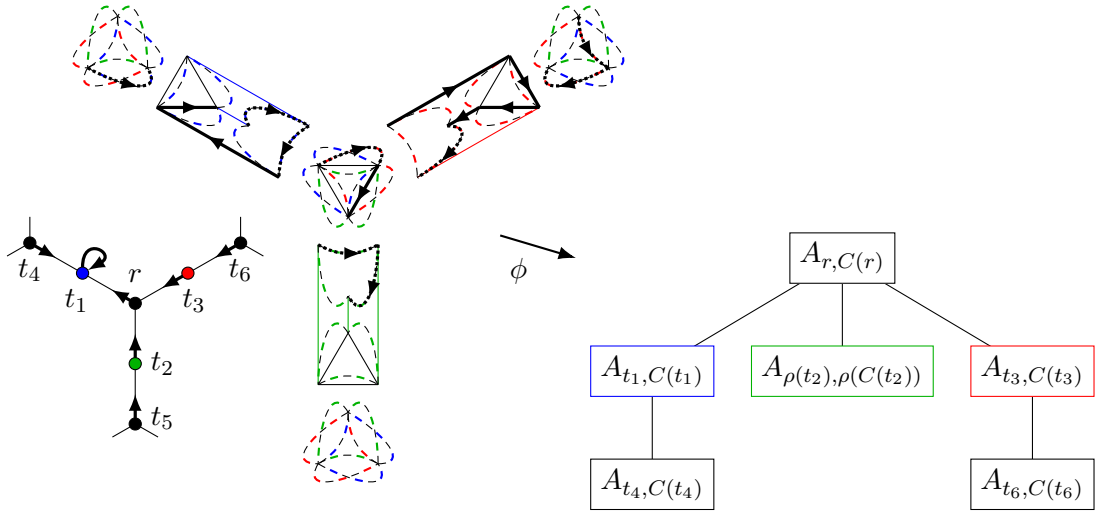
\begin{figure}[tb]
	\centering
	\begin{tikzpicture}
	\begin{scope}[scale=0.35]
	\def\distt{3}
	\def\dista{4}
	\def\diamr{1.3}
	\def\diama{1.3}
	\foreach \i [evaluate=\i as \ieva using {int(mod(\i+1,3))},evaluate=\i as \ievb using {int(mod(\i+2,3))}] in {0,1,2} {
		\draw (120*\i+30:\diamr)--(120*\i+150:\diamr);
		\coordinate (A\i) at (120*\i+30:\distt);
		\coordinate (B\i) at (120*\i+30:{\distt+\dista});
		\coordinate (C\i) at (120*\i+30:{2*\distt+\dista});
		\draw[gen\i] (B\i)+(120*\i+210:\diama) -- ($(A\i)+(120*\i+30:\diamr)$);
		\draw[gen\i] (B\i)+(120*\i+90:\diama) -- ($(A\i)+(120*\i+150:\diamr)$);
		\draw[gen\i] (B\i)+(120*\i+330:\diama) -- ($(A\i)+(120*\i+270:\diamr)$);
		\draw[virtual=gen\i]  (120*\i+30:\diamr) to[in={120*\i+30}, out={120*\i+60},looseness=1.5] (120*\i+150:\diamr);
		\draw[virtual=gen\i]  (120*\i+270:\diamr) to[in={120*\i}, out={120*\i+30},looseness=1.5] (120*\i+30:\diamr);
		\draw[virtual=gen\i]  (120*\i+150:\diamr) to[in={120*\i+90}, out={120*\i-30}] (120*\i+270:\diamr);
		\draw[virtual=gen\i]  (A\i)+(120*\i+30:\diamr) to[in={120*\i+30}, out={120*\i+60},looseness=1.5] +(120*\i+150:\diamr);
		\draw[virtual=gen\i]  (A\i)+(120*\i+270:\diamr) to[in={120*\i}, out={120*\i+30},looseness=1.5] +(120*\i+30:\diamr);
		\draw[virtual=gen\i]  (A\i)+(120*\i+150:\diamr) to[in={120*\i+90}, out={120*\i-30}] +(120*\i+270:\diamr);
		\draw[virtual=gen\i]  (B\i)+(120*\i+210:\diamr) to[in={120*\i+210}, out={120*\i+240},looseness=1.5] +(120*\i+330:\diamr);
		\draw[virtual=gen\i]  (B\i)+(120*\i+90:\diamr) to[in={120*\i+180}, out={120*\i+210},looseness=1.5] +(120*\i+210:\diamr);
		\draw[virtual=gen\i]  (B\i)+(120*\i+330:\diamr) to[in={120*\i+270}, out={120*\i+150}] +(120*\i+90:\diamr);
	}	
	\foreach \j in {0,1,2} {
	    \foreach \i in {0,1,2}{
		    \draw (B\i)+(120*\i+120*\j+210:\diama) -- +(120*\i+120*\j+90:\diama);
	    	\draw[virtual=gen\i]  (C\j)+(120*\i+210:\diamr) to[in={120*\i+210}, out={120*\i+240},looseness=1.5] +(120*\i+330:\diamr);
		    \draw[virtual=gen\i]  (C\j)+(120*\i+90:\diamr) to[in={120*\i+180}, out={120*\i+210},looseness=1.5] +(120*\i+210:\diamr);
		    \draw[virtual=gen\i]  (C\j)+(120*\i+330:\diamr) to[in={120*\i+270}, out={120*\i+150}] +(120*\i+90:\diamr);
		}
	}
    \begin{scope}[walkarrow] 
        \draw[postaction={decorate},densely dotted] (150:\diamr) to[out=30, in=60,looseness=1.5] (30:\diamr);
        \draw[postaction={decorate}] (30:\diamr) -- (270:\diamr);
        \draw[postaction={decorate}] (A0)+(150:\diama) -- ($(B0)+(90:\diama)$);
        \draw[postaction={decorate}] ($(B0)+(90:\diama)$) -- ($(B0)+(-30:\diama)$);
        \draw[postaction={decorate}] ($(B0)+(-30:\diama)$) -- ($(B0)+(210:\diama)$);
        \draw[postaction={decorate}] ($(B0)+(210:\diama)$) -- ($(A0)+(30:\diama)$);
        \draw[postaction={decorate},densely dotted] ($(A0)+(30:\diama)$) to[out=0, in=30,looseness=1.5] ($(A0)+(-90:\diama)$);
        \draw[postaction={decorate},densely dotted] ($(A1)+(150:\diama)$) to[out=120, in=150,looseness=1.5] ($(A1)+(30:\diama)$);
        \draw[postaction={decorate},densely dotted] ($(A1)+(30:\diama)$) to[out=210, in=90] ($(A1)+(-90:\diama)$);
        \draw[postaction={decorate}] ($(A1)+(-90:\diama)$) -- ($(B1)+(210:\diama)$);
        \draw[postaction={decorate}] ($(B1)+(210:\diama)$) -- ($(B1)+(-30:\diama)$);
        \draw[postaction={decorate},densely dotted] ($(A2)+(150:\diama)$) to[out=-30, in=210] ($(A2)+(30:\diama)$);
        \draw[postaction={decorate},densely dotted] ($(A2)+(30:\diama)$) to[out=270, in=-60,looseness=1.5] ($(A2)+(-90:\diama)$);
        \draw[postaction={decorate},densely dotted] ($(C0)+(90:\diama)$) to[out=-90, in=150] ($(C0)+(-30:\diama)$);
        \draw[postaction={decorate},densely dotted] ($(C0)+(-30:\diama)$) to[out=210, in=240,looseness=1.5] ($(C0)+(210:\diama)$);
        \draw[postaction={decorate},densely dotted] ($(C1)+(210:\diama)$) to[out=-30, in=-60,looseness=1.5] ($(C1)+(-30:\diama)$);
    \end{scope}
	\end{scope}
	\begin{scope}[scale=0.4, xshift=-7cm,yshift=-4cm]
	\def\dista{4}
	\def\diamr{0.2}
	\def\diama{0.2}
	\def\diamb{0.2}
	\def\diamc{0.2}
    \draw[fill=black] (0,0) circle (\diamr);
	\foreach \i in {0,1,2} {
		\coordinate (A\i) at (120*\i+30:\dista);
		\draw[fill=black] (A\i) circle (\diama);
		\draw (A\i)+(120*\i+210:\diama) --coordinate[midway](m) (120*\i+30:\diamr);
	    \draw[fill=gen\i] (m) circle (\diama);
		\draw (A\i)++(120*\i+90:\diama) -- +(120*\i+90:\dista/6);
	    \draw (A\i)++(120*\i+330:\diama) -- +(120*\i+330:\dista/6);
	}
	\node at (0,0.9) {$r$};
	\node at ($(30:{\dista/2})+(0.2,-0.9)$) {$t_3$};
	\node at ($(270:{\dista/2})+(0.9,0)$) {$t_2$};
	\node at ($(150:{\dista/2})+(-0.2,-0.9)$) {$t_1$};
	\node at ($(30:{\dista})+(0,-0.9)$) {$t_6$};
	\node at ($(270:{\dista})+(0.9,0.2)$) {$t_5$};
	\node at ($(150:{\dista})+(0,-0.9)$) {$t_4$};
	\draw[very thick,-latex] (0,0)++(150:\diama) -- +(150:0.7);
    \draw[very thick,-latex] (30:{\dista/2})++(210:\diama) -- +(210:0.8);
    \draw[very thick,-latex] (-90:{\dista/2})++(90:\diama) -- +(90:0.8);
    \draw[very thick,-latex] (30:{\dista})++(210:\diama) -- +(210:0.8);
    \draw[very thick,-latex] (150:{\dista})++(-30:\diama) -- +(-30:0.8);
    \draw[very thick,-latex] (-90:{\dista})++(90:\diama) -- +(90:0.8);
    \draw[very thick,-latex] (150:{\dista/2})+(90:\diama) to[out=90,in=30,looseness=14] +(30:\diama);
	\end{scope}
	\draw[-Latex, thick] (2,-0.7) -- node[below left] {$\phi$} (3,-1);
	\begin{scope}[scale=0.5,xshift=13cm,yshift=-2cm]
	\def\dista{5}
	\def\distb{5}
	\def\ha{-3}
	\def\hb{-3}
	\def\diamr{0.2}
	\coordinate (A) at (0,0);
	\draw[fill=black] (A) circle (\diamr);
	\foreach \i [evaluate=\i as \ieva using {int(mod(\i+1,3))}] in {0,1,2} {
        \coordinate (B\i) at (\i*\dista-\dista,\ha);
        \coordinate (A\i) at (\i*\distb-\distb,2*\ha);
 		\draw (A)--(B\i);
 		\draw[fill=gen\ieva] (B\i) circle (\diamr);
	}
 	\draw (B0)+(0,-\diamr)--(A0);
 	\draw (B2)+(0,-\diamr)--(A2);
 	\draw[fill=black] (A0) circle (\diamr);
 	\draw[fill=black] (A2) circle (\diamr);
 	\node[fill=white,draw=black] at (A) {$A_{r,C(r)}$}; 
 	\node[fill=white,draw=gen1] at (B0) {$A_{t_1,C(t_1)}$}; 
 	\node[fill=white,draw=gen2] at (B1) {$A_{\rho(t_2),\rho(C(t_2))}$}; 
 	\node[fill=white,draw=gen0] at (B2) {$A_{t_3,C(t_3)}$}; 
 	\node[fill=white,draw=black] at (A0) {$A_{t_4,C(t_4)}$};
 	\node[fill=white,draw=black] at (A2) {$A_{t_6,C(t_6)}$}; 
	\end{scope}
  	\end{tikzpicture}
  	\caption{The map $\phi$ transforms the configuration $C$ on the decomposition tree $T$ into the derivation tree $\phi(C)$ over the grammar $\Gra$.
      The generated word is $w(\phi(C))=a_{C(r)} a_{C(t_1)} a_{C(t_4)} a_{\rho(C(t_2))} a_{C(t_3)} a_{C(t_6)}$.
}
  	\label{fig:derivationtree}
\end{figure}

\begin{lem}\label{lem:bijconder}
The map $\phi$ is a bijection between the set $\Ccal_\Tcal$ of bounded consistent configurations on $\Tcal$ and the set of derivation trees whose root is labelled by $A_{r,c}$ for some configuration $c$ on $r$.
\end{lem}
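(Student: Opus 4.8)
The plan has two parts: first verify that $\phi$ really lands in the set of derivation trees, and then exhibit an explicit inverse.

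\emph{$\phi$ is well-defined.} Given $C=(P,X)\in\Ccal_\Tcal$, let $S$ and the ordered rooted tree $T[S]$ be as in the definition of $\phi$; the plan is to check that $\lambda(s)=A_{\rho(s),\rho(C(s))}$ turns $T[S]$ into a (simplified) derivation tree. If $C(s)$ is non-boring, then every child of $s$ is a neighbour of $s$ and hence lies in $S$, so $s$ has exactly $k(s)$ children $s_1^\da,\dots,s_{k(s)}^\da$ in $T[S]$. The restriction of $C$ to $\{s,s_1^\da,\dots,s_{k(s)}^\da\}$ is a consistent configuration, and conjugating by $\delta_s$ yields a consistent configuration on $\{\rho(s),\rho(s)_1^\da,\dots,\rho(s)_{k(s)}^\da\}$ whose value at $\rho(s)$ is $\rho(C(s))$; the grammar rule attached to this configuration has head $A_{\rho(s),\rho(C(s))}=\lambda(s)$ and tail $(\lambda(s_i^\da))_{i\in[k(s)]}$ by~\eqref{eq:bijtrees1} and~\eqref{eq:bijtrees2}. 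If $C(s)$ is boring, then an induction via Lemma~\ref{lem:boringextension} shows that all descendants of $s$ carry boring configurations, so $s$ is a leaf of $T[S]$; since $\delta_s$ preserves boringness, $\rho(C(s))$ is boring and the terminal rule $A_{\rho(s),\rho(C(s))}(\epsilon)\leftarrow{}$ lies in $\Pro$. As $\delta_r=1_\Gamma$, the root $r$ receives the label $A_{r,C(r)}$, so $\phi(C)$ is a derivation tree rooted at some $A_{r,c}$.

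\emph{The inverse.} Let $D$ be a derivation tree with root labelled $A_{r,c}$. First embed $D$ into $T$ by $\iota\colon V(D)\to V(T)$, sending the root to $r$ and, whenever $\iota(d)=s$ and $d$ has ordered children $d_1,\dots,d_k$, setting $\iota(d_i)=s_i^\da$. The shape of $\Gra$ together with~\eqref{eq:bijtrees1} shows inductively that a vertex $d$ with $\iota(d)=s$ carries a label $A_{\rho(s),c_d}$ with $c_d$ a configuration on $\rho(s)$, and has exactly $k(s)$ children when $c_d$ is non-boring and none otherwise; thus $\iota$ is a well-defined injective embedding of ordered trees onto a finite subtree $S=\iota(V(D))$ containing $r$. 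Define a configuration on $S$ by $C(s)=\delta_s^{-1}(c_d)$ for $s=\iota(d)$. Consistency of $C$ on $S$ follows because for adjacent $s=\iota(d)$ and $s_i^\da=\iota(d_i)$ the grammar rule at $d$ encodes a consistent configuration whose two relevant values, pulled back by $\delta_s$ and $\delta_{s_i^\da}$, are exactly $C(s)$ and $C(s_i^\da)$; here one uses~\eqref{eq:deltacomp} and~\eqref{eq:deltadefchild} to identify $\delta_{s_i^\da}^{-1}\circ\delta_{\rho(s)_i^\da}=\delta_s^{-1}$, and the fact that compatibility (conditions~\ref{itm:compatible-intersection}--\ref{itm:compatible-endvertex}) is preserved by graph automorphisms. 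Finally, every vertex of $V(T)\setminus S$ adjacent to $S$ is a child of some $s\in S$ with $C(s)$ boring --- a neighbour of $S$ that is a parent of an $S$-vertex lies in $S$, and a vertex with non-boring configuration has all its children in $S$ --- so Lemma~\ref{lem:boringextension} assigns it a unique, necessarily boring, compatible configuration; iterating outwards produces a unique bounded consistent $C\in\Ccal_\Tcal$. Checking that the non-boring vertices of $C$ together with their neighbours form exactly $S$ and that the induced labels coincide with those of $D$ gives $\phi(C)=D$, hence surjectivity. Injectivity is then immediate: if $\phi(C')=D$, comparing labels forces $\rho(C'(s))=c_d$, so $C'(s)=\delta_s^{-1}(c_d)=C(s)$ on $S$, while on $V(T)\setminus S$ the values of $C'$ are forced by the uniqueness in Lemma~\ref{lem:boringextension}.

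\emph{Main obstacle.} The conceptual content is modest; the work lies in the bookkeeping with the automorphisms $\delta_t$. The crux is the consistency verification above: one must show that the compatibility conditions between $C(s)$ and each $C(s_i^\da)$ degenerate, after conjugation by $\delta_s$, precisely into the compatibility of the pair of configurations glued by the corresponding production rule --- which is exactly what~\eqref{eq:bijtrees1}, \eqref{eq:bijtrees2}, \eqref{eq:deltacomp} and~\eqref{eq:deltadefchild} are set up to provide. A secondary subtlety is keeping the distinction between an abstract labelled ordered tree and its realisation as the subtree $T[S]\subseteq T$, and confirming that the boring-extension step fills in exactly $V(T)\setminus S$.
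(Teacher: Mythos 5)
Your proposal is correct and follows essentially the same route as the paper: the same verification that $\phi(C)$ is a derivation tree via \eqref{eq:bijtrees1} and \eqref{eq:bijtrees2}, and the same top-down construction of a configuration $C$ on $\iota(V(D))$ with $C(s)=\delta_s^{-1}(c_d)$, extended by Lemma~\ref{lem:boringextension}. The only (harmless) deviation is that you obtain injectivity from the uniqueness of this inverse construction, whereas the paper argues injectivity directly by comparing two configurations at a vertex of minimal depth where they differ.
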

\begin{proof}
Observe that an ordered tree labelled with non-terminals of $\Non$ is a simplified derivation tree of $\Gra$ if and only if for every vertex $t$ and its children $t_1, \dots, t_k$ there is a rule in $\Pro$ with head $\lambda(t)$ and tail $(\lambda(t_1), \dots, \lambda(t_k))$.

Let $S$ be as above and $s$ be a vertex of $T[S]$. If $C(s)$ is boring, then $s$ is a leaf in $T[S]$ and $A_{\rho(s),\rho(C(s))}(\epsilon) \leftarrow$  is a rule in $\Pro$. 

Otherwise $C(s)$ is non-boring and we consider the children $s_1^\da, \dots, s_{k(s)}^\da$ of $s$ in $T$, which are also the children of $s$ in $T[S]$. Then the production
\[
A_{\rho(s),\rho(C(s))}(a_{\rho(C(s))} x_1 \dots x_{k(s)}) \leftarrow  \left(A_{\rho(s_i^\da),\rho(C(s_i^\da))} (x_i)\right)_{i \in [k(s)]}
\]
is in $\Pro$ because $\delta_s \circ C \circ \delta_s^{-1}$ is a consistent configuration on $\{\delta_s(s), \delta_s(s_i^\da), \dots, \delta_s(s_{k(t)}^\da)\}$ and \eqref{eq:bijtrees1} and \eqref{eq:bijtrees2} hold.
We conclude that $\phi(C)$ is a derivation tree of $\Gra$.

Our next step is to show that $\phi$ is surjective. Let $D$ be a derivation tree of $\Gra$ whose root $d$ is labelled $A_{r,c}$. We recursively construct an embedding $\iota$ of $D$ into $T$ and a bounded consistent configuration $C$ on $T$ such that every vertex $u$ of $D$ has the label 
\begin{equation} \label{eq:labelderiv}
    \lambda(u)=A_{\rho(\iota(u)),\rho(C(\iota(u)))}.
\end{equation} 

We start the top-down construction by setting $\iota(d)=r$ and $C(r)=c$. Then clearly $d$ satisfies \eqref{eq:labelderiv}. Suppose now $\iota(u)=t$ is already defined for a vertex $u$ of $D$ and that $u$ satisfies \eqref{eq:labelderiv}. Let $u_1, \dots, u_k$ be the $k>0$ children of $u$ in $D$ and $\lambda(u_i)=A_{s_i,c_i}$ their labels. Then 
\begin{equation} \label{eq:phibijrule}
A_{\rho(t),\rho(C(t))}(a_{\rho(C(t))} x_1 \dots x_k) \leftarrow (A_{s_i,c_i}(x_i))_{i \in [k]}
\end{equation} 
is a rule in $\Pro$. This implies that $t$ has precisely $k$ children $t_1^\da, \dots, t_k^\da$ in $T$ and moreover that $\rho(t_i^\da)=s_i$ holds for every $i$. We define $\iota(u_i)=t_i^\da$ and $C(t_i^\da)=\delta_{t_i}^{-1}(c_i)$. Then $C(t_i^\da)$ is compatible with $C(t)$ for every $i \in [k]$; the production rule \eqref{eq:phibijrule} is in $\Pro$ if and only if $\rho(C(t))=\delta_t(C(t))$ and $\delta_{\rho(t)_i^\da}^{-1}(c_i)= \delta_t(\delta_{t_i}^{-1}(c_i))$ are compatible. In this way we have constructed a consistent configuration $C$ on the sub-tree $\iota(D)$ of $T$. Note that by definition of the set $\Pro$, configurations on leaves of $\iota(D)$ are boring. By Lemma~\ref{lem:boringextension} the configuration $C$ can be (uniquely) extended to a bounded consistent configuration on $T$. Moreover it follows directly from \eqref{eq:labelderiv} that $\phi(C)=D$, so $\phi$ is surjective.

Finally, it is not hard to see that $\phi$ is injective. Any two different configurations $C_1 \neq C_2$ have to differ on some vertex $t$ of $T$; we pick such a $t$ with minimal distance to the root $r$. Then $C_1(t^\ua)=C_2(t^\ua)$, so Lemma~\ref{lem:boringextension} yields that $C_1(t^\ua)$ is non-boring. Thus $t$ is a vertex of $\phi(C_1)$ and $\phi(C_2)$. For $i \in \{1,2\}$ the label of $t$ in $\phi(C_i)$ is $A_{\rho(t),\rho(C_i(t))}$, so in particular $\phi(C_1) \neq \phi(C_2)$ and we conclude that $\phi$ is injective.
\end{proof}

It is clear that $\phi$ also describes a bijection between the set $\Ccal_{\Tcal}$ of bounded consistent configurations on $\Tcal$ and derivation trees of $\Gra$ whose roots are labelled $A_r$. Moreover, the number of occurrences of a given letter $a_c$ in the word $w(\phi(C)) \in L(\Gra)$ corresponding to $\phi(C)$ is equal to the number of vertices $t$ of $T$ with $\rho(C(t))=c$.

Combining the previous results, the composition of the bijection $\phi$ mapping configurations onto derivation trees and the natural bijection $w$ between derivation trees and their corresponding words of an unambiguous 1-multiple context-free grammar $\Gra$ is a bijection between $\Ccal_{\Tcal}$ and words in $L(\Gra)$, as stated in the following corollary.

\begin{cor}\label{cor:bijconfigswords}
There is a bijection $\theta$ between the set $\Ccal_\Tcal$ of bounded consistent configurations on $\Tcal$ and the words in the unambiguous 1-multiple context-free language $L(\Gra)$ such that the number of occurrences of a given letter $a_c$ in $\theta(C)$ coincides with the number of vertices $t$ of $T$ with $\rho(C(t))=c$.
\end{cor}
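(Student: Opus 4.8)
The plan is to realise $\theta$ as a composition of two bijections that are, in essence, already available. First I would upgrade the bijection $\phi$ of Lemma~\ref{lem:bijconder} to a bijection $\widetilde\phi$ from $\Ccal_\Tcal$ onto the set of derivation trees of $\Gra$ \emph{rooted at the start symbol} $A_r$. This is exactly the observation recorded right after Lemma~\ref{lem:bijconder}: prepending the unique production $A_r(x)\leftarrow A_{r,c}(x)$ is a bijection between the $A_{r,c}$-rooted derivation trees (over all configurations $c$ on $r$) and the $A_r$-rooted ones, and this extra top vertex contributes no terminal symbol. Second, I would use that $\Gra$ is an unambiguous $1$-MCFG (proved in the excerpt): by unambiguity every word of $L(\Gra)=\{w\in\Si^*:\;\vdash_\Gra A_r(w)\}$ has exactly one $A_r$-rooted derivation tree, so the standard map $D\mapsto w(D)$ restricts to a bijection from $A_r$-rooted derivation trees onto $L(\Gra)$. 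Setting $\theta(C)=w(\widetilde\phi(C))$ then gives the desired bijection $\Ccal_\Tcal\to L(\Gra)$.

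For the multiplicity statement I would proceed as follows. Fix $C\in\Ccal_\Tcal$ and put $D=\widetilde\phi(C)$. By the construction of $\phi$, deleting the $A_r$-labelled root of $D$ leaves the ordered tree $T[S]$, where $S$ consists of the vertices of $T$ carrying a non-boring configuration together with their neighbours, each $s\in S$ labelled $A_{\rho(s),\rho(C(s))}$. Applying Remark~\ref{rmk:dtlabelswords}(iii) to the $A_{r,C(r)}$-rooted subtree $T[S]$ yields $\theta(C)=w(D)=x_1\cdots x_n$, where $v_1,\dots,v_n$ list the vertices of $S$ in depth-first order, $\lambda(v_i)=A_{t_i,c_i}$, and $x_i=a_{c_i}$ when $c_i$ is non-boring while $x_i=\epsilon$ otherwise. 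Hence for a non-boring configuration $c$ on a vertex of $R$, the number of occurrences of $a_c$ in $\theta(C)$ equals the number of $s\in S$ with $\rho(C(s))=c$. Finally I would note that $\rho$ acts on configurations through graph automorphisms $\delta_t\in\Gamma$, which send $s^\ua$ to $\rho(s)^\ua$ and adhesion sets to adhesion sets, so $\rho(C(s))$ is boring precisely when $C(s)$ is; therefore every vertex $t$ of $T$ with $\rho(C(t))=c$ (for non-boring $c$) carries a non-boring configuration and hence lies in $S$. This identifies the count with the number of vertices $t$ of $T$ with $\rho(C(t))=c$. (For a boring $c$ the letter $a_c$ never occurs in a word of $L(\Gra)$; this is harmless, since such letters carry no non-virtual edges and will be specialised to $1$ in the generating-function argument that follows.)

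I do not expect a genuine obstacle here: the corollary is essentially bookkeeping built on Lemma~\ref{lem:bijconder}, the unambiguity of $\Gra$, and Remark~\ref{rmk:dtlabelswords}. The two places that require a little care are the passage between $A_{r,c}$-rooted and $A_r$-rooted derivation trees (handled by the trivial start production), and keeping the boring/non-boring dichotomy straight, so that the letters occurring in $\theta(C)$ correspond exactly to the vertices of $T$ carrying non-boring configurations with prescribed $\rho$-image.
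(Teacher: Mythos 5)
Your proposal is correct and follows essentially the same route as the paper: the corollary is obtained there by composing the bijection $\phi$ of Lemma~\ref{lem:bijconder} (extended to $A_r$-rooted derivation trees via the start production) with the bijection $D\mapsto w(D)$ coming from unambiguity, and counting letters via Remark~\ref{rmk:dtlabelswords}. Your extra care about the boring/non-boring dichotomy and the fact that $\rho$ preserves boringness just makes explicit what the paper leaves implicit.
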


Further combining Corollary~\ref{cor:bijconfigswords} with the connection between bounded consistent configurations and SAWs established in Theorem~\ref{thm:bijsawsconfigs}, we obtain the proof of our first main result, Theorem~\ref{thm:main-1}.  Before we turn to the proof, let us recall the statement of the theorem.

\begingroup
\def\thethm{\ref{thm:main-1}}
\begin{thm}
Let $G$ be a locally finite, connected, quasi-transitive graph having only thin ends and let $o \in V(G)$. Then $F_{\SAW,o}$ is algebraic over $\mathbb{Q}$. In particular the connective constant $\mu(G)$ is an algebraic number.
\end{thm}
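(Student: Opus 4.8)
The plan is to assemble the structural results of Sections~\ref{sec:treedecomp}--\ref{sec:configgrammar} into a closed formula expressing $F_{\SAW,o}$ as a monomial substitution into the commutative generating function of the context-free language $L(\Gra)$, and then to apply the Chomsky--Sch\"utzenberger theorem. First I would fix the data: since $G$ is accessible (as noted at the start of this section), Corollary~\ref{cor:graphdecomp} supplies a reduced $\Gamma$-invariant rooted tree decomposition $\Tcal=(T,\Vcal,r)$ with finitely many $\Gamma$-orbits on $E(T)$, whose parts are finite by~\ref{itm:finiteparts-thinends} and whose tree $T$ is locally finite by~\ref{itm:thinends-locfintree}; this is exactly the setting in which $\Gra$ was constructed. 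By Theorem~\ref{thm:bijsawsconfigs}, $\psi_r$ is a bijection from $\Ccal_\Tcal$ onto the self-avoiding walks of length at least $1$ starting at $o$, with $\norm C=\operatorname{length}\psi_r(C)$, so $c_n(o)=\#\{C\in\Ccal_\Tcal:\norm C=n\}$ and hence
\[
F_{\SAW,o}(z)=\sum_{C\in\Ccal_\Tcal}z^{\norm C},
\]
a well-defined formal power series because local finiteness of $G$ guarantees only finitely many self-avoiding walks of each length.

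Next I would push this forward to $L(\Gra)$. Assign to each letter $a_c\in\Si$ the weight $\norm c$, the number of non-virtual edges of $c$; boring configurations get weight $0$ and never occur as letters. Since the maps $\rho$ act by graph automorphisms, which preserve non-virtual edges (Lemma~\ref{lem:coneequivalence}), we have $\norm{C(t)}=\norm{\rho(C(t))}$, so that $\norm C=\sum_{a_c\in\Si}\norm c\cdot\#\{t\in V(T):\rho(C(t))=c\}$. By Corollary~\ref{cor:bijconfigswords} the last count equals the number $|\theta(C)|_{a_c}$ of occurrences of $a_c$ in $\theta(C)$, and $\theta$ is a bijection $\Ccal_\Tcal\to L(\Gra)$; summing over $\Ccal_\Tcal$ and writing $c_1,\dots,c_m$ for the configurations indexing $\Si$ yields
\[
F_{\SAW,o}(z)=\sum_{w\in L(\Gra)}z^{\sum_i\norm{c_i}\,|w|_{a_{c_i}}}=F_{L(\Gra)}\!\left(z^{\norm{c_1}},\dots,z^{\norm{c_m}}\right),
\]
where $F_{L(\Gra)}$ is the commutative language generating function of $L(\Gra)$.

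To finish, I would observe that $\Gra$ is an unambiguous $1$-multiple context-free grammar, so by the Chomsky--Sch\"utzenberger theorem~\cite{MR0152391} the series $F_{L(\Gra)}$ is algebraic over $\mathbb{Q}$; substituting $a_{c_i}\mapsto z^{\norm{c_i}}$ into an annihilating polynomial then shows that $F_{\SAW,o}$ is algebraic over $\mathbb{Q}$ as well. For the second assertion, $\mu(G)$ is the reciprocal of the radius of convergence $R$ of $F_{\SAW,o}$, and $0<R<\infty$ by Hammersley's theorem~\cite{MR0091568} (together with $\mu(G)\ge 1$ on an infinite connected graph). Since $F_{\SAW,o}$ has non-negative coefficients and is an algebraic power series, it is analytic at $0$ and its dominant singularity is one of the finitely many singularities of the underlying algebraic function, each of which is a root of the discriminant or of the leading coefficient of its minimal polynomial and hence an algebraic number; therefore $R$ is the modulus of an algebraic number, hence itself algebraic, and so is $\mu(G)=1/R$.

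The bulk of the work having been done earlier, the only delicate point is the substitution step: some of the weights $\norm{c_i}$ may be $0$ (non-boring configurations can have no non-virtual edges), so $a_{c_i}\mapsto z^{\norm{c_i}}$ is not an everywhere-defined substitution on $\mathbb{Q}[[a_1,\dots,a_m]]$, and one must verify both that it is defined on $F_{L(\Gra)}$ and that it does not degenerate the annihilating polynomial into something independent of the distinguished variable. The formula of the previous step settles this, since it identifies the image of $F_{L(\Gra)}$ under the substitution with the \emph{a priori} well-defined power series $F_{\SAW,o}$; more generally one is invoking the closure of the class of algebraic power series under substitutions $a_i\mapsto z^{k_i}$ that are defined on the series in question, which can, if desired, be made transparent by first recording the total word length with an auxiliary variable (so that every letter receives positive degree, making the substitution everywhere defined) and specialising afterwards. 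I expect this bookkeeping, rather than any single deep idea, to be the main obstacle.
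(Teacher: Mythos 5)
Your proposal is correct and follows essentially the same route as the paper: the same tree decomposition from Corollary~\ref{cor:graphdecomp}, the bijections of Theorem~\ref{thm:bijsawsconfigs} and Corollary~\ref{cor:bijconfigswords}, Chomsky--Sch\"utzenberger for the unambiguous grammar $\Gra$, and the monomial substitution $a_{c_i}\mapsto z^{\norm{c_i}}$. Your additional care about zero-weight letters and about why the radius of convergence of an algebraic series with finite coefficients is algebraic only makes explicit points the paper's proof passes over silently, and does not change the argument.
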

\addtocounter{thm}{-1}
\endgroup

\begin{proof}
Let $\Gamma$ be a group acting quasi-transitively on $G$ and $\Tcal=(T,\Vcal,r)$ be a reduced, $\Gamma$-invariant, rooted tree decomposition of $(G,o)$ such that there are only finitely many $\Gamma$-orbits on $E(T)$. Then by Theorem~\ref{thm:bijsawsconfigs} the generating function of self-avoiding walks coincides with the generating function of the set $\Ccal_{\Tcal}$,
\[
F_{\SAW,o}(z)=\sum_{C \in \Ccal_\Tcal} z^{\norm{C}}.
\]
Let $\Gra=(\Non,\Si,\Pro,S)$ be the unambiguous 1-MCFG over the alphabet $\Si=\{a_{c_1}, \dots, a_{c_m}\}$ as defined at the start of Section~\ref{sec:constructgrammar}, where the $c_i$ are configurations on vertices in $R$, and let $F_{L(\Gra)}(a_{c_1}, \dots, a_{c_m})$ be the commutative language generating function of $L(\Gra)$. Chomsky and Schützenberger showed in \cite{MR0152391} that commutative language generating functions of unambiguous context-free languages are algebraic over $\mathbb{Q}$, that is, there is an irreducible polynomial $P$ in $m+1$ variables with coefficients in $\mathbb{Q}$ such that
\[
P(F_{L(\Gra)}(a_{c_1}, \dots, a_{c_m}),a_{c_1}, \dots, a_{c_m})=0.
\] 
Corollary~\ref{cor:bijconfigswords} yields that the generating function of $\Ccal_\Tcal$ coincides with the generating function obtained by substituting every variable $a_{c_i}$ in $F_{L(\Gra)}$ by the monomial $z^{\norm{c_i}}$,
\[
F_{\SAW,o}(z)=F_{L(\Gra)}\left(z^{\norm{c_1}}, \dots, z^{\norm{c_m}}\right)
\]
In particular $F_{\SAW,o}(z)$ is algebraic over $\mathbb{Q}$; it satisfies the equation 
\[
Q\left(F_{\SAW,o}(z),z\right)=0,
\]
where $Q(y,z)=P\left(y, z^{\norm{c_1}}, \dots, z^{\norm{c_m}}\right)$ is a polynomial with coefficients in $\mathbb{Q}$. The connective constant $\mu(G)$ is the reciprocal of the radius of convergences of the algebraic function $F_{\SAW,o}(z)$ and thus an algebraic number.
\end{proof}

\section{The multiple context-free language of self-avoiding walks}
\label{sec:mcflofsaws}

In this section, we prove the second main result of this paper, which we briefly recall for convenience.

\begingroup
\def\thethm{\ref{thm:main-2}}
\begin{thm}
Let $G$ be a simple, locally finite, connected, quasi-transitive deterministically edge-labelled graph and let $o \in V(G)$. Then $L_{\SAW,o}$ is $k$-multiple context-free if and only if every end of $G$ has size at most $2k$.
\end{thm}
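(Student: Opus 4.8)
The plan is to prove the two implications separately. For the ``if'' direction I would run the construction of Section~\ref{sec:configgrammar} again, but now recording edge labels rather than mere weights, and I would first note that the bound on end sizes translates into a bound on adhesion sets: since the tree decomposition $\Tcal=(T,\Vcal,r)$ from Corollary~\ref{cor:graphdecomp} is reduced and efficiently distinguishes all ends, every adhesion set $\Vcal(e)$ is a minimal separator of some pair of ends, hence (by the Menger-type duality between minimal separators of ends and disjoint double rays) is crossed by $|\Vcal(e)|$ pairwise disjoint rays of either of those ends; so if all ends of $G$ have size at most $2k$, then $|\Vcal(e)|\le 2k$ for every edge $e$ of $T$. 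Here $G$ has no thick ends, hence is accessible, so such a $\Tcal$ exists and has finite parts and locally finite $T$ by \ref{itm:finiteparts-thinends} and \ref{itm:thinends-locfintree}. For the ``only if'' direction I would argue by contraposition, where this preparation is not needed.

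For the construction, fix a bounded consistent configuration $C=(P,X)\in\Ccal_\Tcal$ and a vertex $t\neq r$ of $T$. The walk $\psi_t(C)$ is a self-avoiding walk on $\Gcal(K_t)$ all of whose virtual edges lie in $\Ecal(t^\ua,t)$; deleting these virtual edges breaks it into its non-trivial walk components, and the tuple of words read along these components is what the non-terminal for $t$ will carry. The number $q$ of these components satisfies $q\le k$: each component, with at most one exception (the one containing the terminal vertex of $\psi_r(C)$ when the walk ends inside $K_t$; note that if $\psi_r(C)$ \emph{starts} in $\Gcal(K_t)$ then necessarily $o\in\Vcal(t^\ua,t)$, so no exception occurs at the start), has both endpoints in $\Vcal(t^\ua,t)$, and distinct components are vertex-disjoint, whence $2q-1\le|\Vcal(t^\ua,t)|\le 2k$ and so $q\le k$. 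Moreover $q$, and more generally the combinatorial ``type'' of how $\psi_t(C)$ meets $\Vcal(t^\ua,t)$, depends only on $C(t)$, since expanding the child detours does not affect the way the edges of $P(t)$ in $\Ecal(t^\ua,t)$ partition the walk. I would therefore take as non-terminals a rank-$1$ start symbol $S$ together with symbols $A_{t,c}$, indexed by a representative $t$ of each cone type and each configuration $c$ on it, of rank equal to this number $q\le k$. To each $t\in R$ and each consistent configuration $C$ on $\{t,t_1^\da,\dots,t_{k(t)}^\da\}$ I would associate the production
\[
A_{t,C(t)}(\alpha_1,\dots,\alpha_q) \leftarrow \bigl(A_{\rho(t_i^\da),\rho(C(t_i^\da))}(z_{i,1},\dots,z_{i,r_i})\bigr)_{i\in[k(t)]},
\]
where $\alpha_1,\dots,\alpha_q$ are obtained by walking along the components of $\psi_t(C)$, emitting the label of each $t$-edge and, at the $j$-th visit to $K_{t_i^\da}$, the variable $z_{i,j}$; to this I add the rules $S(z_1)\leftarrow A_{r,c}(z_1)$ for each configuration $c$ on $r$ (for which the corresponding non-terminal automatically has rank $1$, as $r$ has no parent adhesion set). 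Self-avoidance of $\psi_r(C)$ forces the child components to be visited in order and each exactly once, so each $z_{i,j}$ occurs exactly once in $\alpha_1\cdots\alpha_q$; finiteness of the parts, local finiteness of $T$ and finiteness of the set of cone types give finitely many rules; hence this is a $k$-MCFG $\Gra$. Arguing exactly as in Lemma~\ref{lem:bijconder} one obtains a bijection between $\Ccal_\Tcal$ and the derivation trees of $\Gra$ rooted at some $A_{r,c}$, and an induction on $T$ paralleling Lemmas~\ref{lem:walkcontraction}--\ref{lem:contractionorder} shows that the word generated by the derivation tree of $C$ is $\ell(\psi_r(C))$. Combined with Theorem~\ref{thm:bijsawsconfigs} and injectivity of $\ell$ on $\Pcal_{\SAW,o}$ this yields $L(\Gra)=L_{\SAW,o}$, and $\Gra$ is in fact unambiguous.

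For the ``only if'' direction, suppose some end $\omega$ of $G$ has size at least $2k+1$ and choose $2k+1$ pairwise disjoint rays in $\omega$. Following \cite{Lindorfer2020} and using Halin's structure theory together with quasi-transitivity, one locates inside $G$ a ``periodic thick ray of width $2k+1$'': a sequence of uniformly bounded finite separators nested towards $\omega$ and joined by $2k+1$ disjoint strands, with the whole picture eventually repeating. From this one builds a family of self-avoiding walks whose label sequences, after applying a homomorphism $h$ collapsing all but $2k+1$ distinguished ``strand letters'' and intersecting with a suitable regular language $\mathcal{R}$, produce exactly $\{a_1^{n_1}a_2^{n_2}\cdots a_{2k+1}^{n_{2k+1}} \mid n_1\ge n_2\ge\dots\ge n_{2k+1}\ge 0\}$. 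Since the family of $k$-MCFLs is closed under homomorphism, inverse homomorphism and intersection with regular languages, $L_{\SAW,o}$ being $k$-MCFL would make this language $k$-MCFL as well; but $\lceil(2k+1)/2\rceil-1=k$, so by Theorem~\ref{thm:mcfgpump} it is \emph{not} $k$-MCFL, a contradiction. The same construction with arbitrarily many strands, applied to a thick end, simultaneously gives the ``only if'' direction of Theorem~\ref{thm:main-2x}.

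The step I expect to be most delicate is, in the ``if'' direction, the verification that the walk components of $\psi_t(C)$ slot together consistently under the recursion: one must check that conditions \ref{itm:compatible-intersection}--\ref{itm:compatible-endvertex} pin down the order in which a child's components are consumed by the parent's walk, that the rank $q$ really is an invariant of $C(t)$ alone, and that the emitted tuple assembles to $\ell(\psi_r(C))$ --- essentially a word-level refinement of the contraction lemmas of Section~\ref{sec:configs}. On the ``only if'' side the analogous obstacle is making the geometric construction uniform enough that every tuple $n_1\ge\dots\ge n_{2k+1}$ is realised by an honest self-avoiding walk while the intersection with $\mathcal{R}$ eliminates all others.
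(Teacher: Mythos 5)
Your ``if'' direction is essentially the paper's own proof of Theorem~\ref{thm:sawlangmcf}: the same reduced invariant tree decomposition, the same non-terminals $A_{t,c}$ whose rank counts the walk components crossing the adhesion set $\Vcal(t^\ua,t)$, the same bound $2q-1\le|\Vcal(t^\ua,t)|\le 2k$, and production rules that are the paper's rules \eqref{eq:mcfprod} (including the special treatment of the trailing component ending inside a child cone, which the paper encodes via the extra variable $\beta=z_{i,\mu_i+1}$); the word-level verification you defer is exactly Lemma~\ref{lem:pathlabelequalsword}. That half of the proposal is sound as a plan.

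The genuine gap is in the ``only if'' direction, in the thick-end case. You claim that from an end of size at least $2k+1$, ``Halin's structure theory together with quasi-transitivity'' yields a periodic strip of width $2k+1$. This is true when that end is \emph{thin} (Lemma~\ref{lem:existtaustrips}(1), via a hyperbolic automorphism), but it is false in general for thick ends: a $\tau$-strip requires $\langle\tau\rangle$ to act quasi-transitively on a two-ended subgraph, which forces $\tau$ to have infinite order and to be non-elliptic, and there are one-ended quasi-transitive graphs with a thick end in which \emph{every} automorphism is elliptic (Cayley graphs of infinite finitely generated torsion groups). So quasi-transitivity plus Halin alone cannot produce the periodic structure; the periodicity has to be extracted from the language hypothesis itself. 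This is precisely why the paper's argument takes a detour: first the thin-end bound gives accessibility and a tree decomposition (Theorem~\ref{thm:graphdecomp}), a thick end forces an infinite part whose stabiliser acts quasi-transitively (Lemma~\ref{lem:partsqtrans}) on a connected one-ended auxiliary graph $H$; Lemma~\ref{lem:subgraphmcf} transfers $k$-multiple context-freeness of $L_{\SAW,o}$ to $L_{\SAW,o'}(H)$; the pumping lemma for MCFLs (Lemma~\ref{lem:existnonell}) then produces a non-elliptic, hence parabolic, automorphism of $H$, and only then does Halin's theorem (Lemma~\ref{lem:existtaustrips}(2)) give $\tau$-strips of size $2k+1$ to feed into the spiral-walk argument of Lemma~\ref{lem:snailwalks}. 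Your contraposition, as written, skips all of this and therefore does not cover thick ends; the same omission undermines your closing remark that the construction ``applied to a thick end'' gives the only-if direction of Theorem~\ref{thm:main-2x}. The spiral-walk/closure-property/Theorem~\ref{thm:mcfgpump} part of your sketch does match the paper once a strip of size $2k+1$ is actually available.
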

\addtocounter{thm}{-1}
\endgroup

The proofs of the two implications are quite different from one another and will be discussed separately in the two subsections of this section. In order to show that bounded end size implies that $L_{\SAW,o}$ is a MCFL, we adopt a similar approach as in the previous section and construct a MCFG which is very closely related to the grammar $\Gra$ defined in Section~\ref{sec:configgrammar}. For the converse implication we essentially follow the approach of Lindorfer and Woess~\cite{Lindorfer2020}; we note that this final part does not depend on the other results of this paper and can be read independently.

\subsection{Bounded end size implies multiple context-freeness}

We use again the assumptions, notation and definition from the previous section. In particular $\Tcal = (T,\Vcal,r)$ again denotes a rooted tree decomposition of a rooted graph $(G,o)$ which will be fixed throughout this section; we also fix a map $\rho$ as constructed in Section~\ref{sec:configgrammar}. Our aim is proving the following theorem, a stronger version of one of the two implications of Theorem~\ref{thm:main-2}.

\begin{thm}\label{thm:sawlangmcf}
Let $G$ be a simple, locally finite, connected, quasi-transitive edge-labelled graph having only ends of size at most $k$ and let $o$ be a given vertex of $G$. Then the language of self-avoiding walks $L_{\SAW,o}$ is $\lceil k/2 \rceil$-multiple context-free.

If the edge-labelling is deterministic, then $L_{\SAW,o}$ is unambiguous $\lceil k/2 \rceil$-multiple con\-text-free.
\end{thm}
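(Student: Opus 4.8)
## Proof proposal for Theorem~\ref{thm:sawlangmcf}

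The plan is to mimic the construction of the $1$-MCFG $\Gra$ from Section~\ref{sec:configgrammar}, but this time to build a genuine $\lceil k/2\rceil$-MCFG whose derivable terms correspond to (contracted) configurations on cones, with the \emph{tuple of strings} carried by a non-terminal recording the \emph{labelled pieces} of a self-avoiding walk inside the corresponding cone. The starting observation is the following: by Theorem~\ref{thm:bijsawsconfigs}, SAWs starting at $o$ are in weight-preserving bijection with $\Ccal_\Tcal$, and any bounded consistent configuration $C$ restricted to a cone $K_t$ yields, via $\psi_t$, a self-avoiding walk on $\Gcal(K_t)$ whose only virtual edges lie in the single adhesion set $\Vcal(t^\ua,t)$. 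Such a walk enters and leaves the cone only through $\Vcal(t^\ua,t)$, which has size at most $k$; hence it decomposes into at most $\lceil k/2\rceil$ maximal sub-walks lying (essentially) inside the cone, strung together by virtual edges in $\Ecal(t^\ua,t)$. Reading labels, $\psi_t(C)$ gives an ordered tuple of at most $\lceil k/2\rceil$ words over $\Si$ — this tuple will be $w(D)$ for the derivation tree $D$ rooted at the non-terminal associated to $t$.

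Concretely, I would take as non-terminals symbols $A_{t,c}$ indexed by a representative vertex $t\in R$ and a configuration $c=(p,x)$ on $t$, assigning $A_{t,c}$ the rank equal to the number of walk components of $\psi_t$-type restrictions compatible with $c$ (this number is determined by $c$, since $c$ records how the walk meets $\Vcal(t^\ua,t)$, and is at most $\lceil k/2\rceil$ because $|\Vcal(t^\ua,t)|\le k$ and each component uses up two of these boundary vertices, except possibly one component ending at the ``sink''). For each $t\in R$ and each consistent configuration $C$ on $\{t,t^\da_1,\dots,t^\da_{k(t)}\}$ one writes a production rule
\[
A_{t,C(t)}(\alpha_1,\dots,\alpha_r)\leftarrow\bigl(A_{\rho(t^\da_i),\rho(C(t^\da_i))}(z_{i,1},\dots,z_{i,r_i})\bigr)_{i\in[k(t)]},
\]
where the strings $\alpha_1,\dots,\alpha_r$ over $\Si\cup\{z_{i,j}\}$ are obtained by literally following the walk $\psi_t$ built out of $P(t)$ and the $\psi_{t^\da_i}$: each non-virtual $t$-edge contributes its label letter, each virtual edge $e\in\Ecal(t,t^\da_i)$ traversed by $P(t)$ is replaced by the appropriate sub-walk piece $z_{i,j}$ of the $i$-th child, and the compatibility conditions \ref{itm:compatible-intersection}--\ref{itm:compatible-endvertex} guarantee exactly which piece plugs into which slot, and in which orientation — note each $z_{i,j}$ occurs at most once, as required, because each virtual edge is traversed at most once by a SAW. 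One adds a start symbol $S=A_r$ of rank $1$ with rules $A_r(z_1\cdots z_r)\leftarrow A_{r,C(r)}(z_1,\dots,z_r)$ concatenating the (at most $\lceil k/2\rceil$) pieces of a walk in the root cone into one word; since a SAW starting at $o$ is a single self-avoiding walk, concatenation reconstructs exactly $\ell(\psi_r(C))$. The correctness proof then parallels Lemma~\ref{lem:bijconder}: one shows $\phi\colon C\mapsto$ (labelled derivation tree) is a bijection between $\Ccal_\Tcal$ and derivation trees of this grammar rooted at some $A_{r,c}$, using \eqref{eq:bijtrees1}, \eqref{eq:bijtrees2} and Lemma~\ref{lem:boringextension} verbatim, and that $w(\phi(C))=\ell(\psi_r(C))=\ell(p)$ where $p$ is the SAW corresponding to $C$. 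Combined with Theorem~\ref{thm:bijsawsconfigs} this gives $L(\Gra)=L_{\SAW,o}$, and the rank bound $\lceil k/2\rceil$ follows from the boundary-size count above. For the unambiguity claim when $\ell$ is deterministic: $\ell$ is then a bijection $\Pcal_{\SAW,o}\to L_{\SAW,o}$, so each word $w$ comes from a unique SAW $p$, hence a unique $C\in\Ccal_\Tcal$ by Theorem~\ref{thm:bijsawsconfigs}, hence a unique derivation tree $\phi(C)$ — exactly as in the proof that $\Gra$ is unambiguous in Section~\ref{sec:configgrammar}, one checks the simplified labels determine the rule, and that distinct derivation trees produce distinct label sequences along DFS order and hence distinct words.

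The main obstacle I anticipate is \textbf{bookkeeping the ordering and orientation of the tuple components} so that the strings $\alpha_1,\dots,\alpha_r$ in each production rule are well-defined and the plugging-in is consistent. A SAW may traverse the adhesion set $\Vcal(t^\ua,t)$ several times and in either direction, and when it dips into a child cone $K_{t^\da_i}$ it does so via some virtual edge of $\Ecal(t,t^\da_i)$; one must match the $j$-th excursion into that child with the $j$-th string-component of $A_{\rho(t^\da_i),\cdot}$, possibly reversed. Getting this matching canonical requires fixing, once and for all, an ordering convention on the components of a cone-restricted walk (say, by the order in which their initial vertices appear along $\psi_t$, together with a rule for the at-most-one component that is an ``exit'' component ending at the sink), and then checking that the compatibility conditions \ref{itm:compatible-alternate}--\ref{itm:compatible-endvertex} make this convention stable under the contraction operation of Section~\ref{sec:configs}. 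This is essentially the content that made the $1$-MCFG construction work, but with rank $1$ the ordering issue was invisible; here it is the crux. Everything else — finiteness of $\Non$ and $\Pro$ (from finitely many cone types and finitely many configurations per representative, using local finiteness of $T$ and finiteness of parts, which hold since all ends have size at most $k<\infty$), the rank bound, and the translation back to SAWs — is routine given the machinery already set up.
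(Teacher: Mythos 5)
Your proposal is correct and follows essentially the same route as the paper: it refines the configuration grammar of Section~\ref{sec:configgrammar} into an MCFG whose non-terminals $A_{t,c}$ have rank equal to the number of boundary walk components (bounded by $\lceil k/2\rceil$ since $|\Vcal(t^\ua,t)|\le k$), with productions splicing the children's labelled pieces into $P(t)$ via variables, correctness via the bijection with $\Ccal_\Tcal$ and the identity $w(\phi(C))=\ell(\psi_r(C))$, and unambiguity from determinism of $\ell$. The ``crux'' you flag (ordering of components and the single exit component ending at the sink) is exactly what the paper resolves through its definitions of $r(c)$, $\mu(c)$ and the extra string $\beta$ appended to the last component, using Lemma~\ref{lem:contractionorder} to justify the plug-in description of $\psi_t(C)$.
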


To prove the above theorem, we give an MCFG $\Gra=(\Non,\Si,\Pro,A_r)$ and show that it generates the language $L_{\SAW,o}$. As mentioned above, $\Gra$ is a refinement of the 1-MCFG from Section~\ref{sec:configgrammar}. 

Obviously, the alphabet $\Si$ has to consist of all edge-labels. Note that this is a finite set since $G$ is locally finite and the group of label preserving automorphisms is assumed to act quasi-transitively.

Like in Section~\ref{sec:configgrammar}, the set of non-terminals is 
\[\Non=\{A_r\} \cup \{A_{t,c} \mid t \in R,\; c \text{ configuration on $t$}\}.\]
However, since we are constructing a MCFG, we need to assign a rank to each non-terminal; to this end some additional definitions are necessary.
For a vertex $t$ of $T$ and a configuration $c=(p,x)$ on $t$ let $\mu(c)$ denote the number of walk components of $p-\Ecal({t^\ua,t})$ containing at least two vertices of $\Vcal({t^\ua,t})$. Furthermore let $r(c)=\mu(c)+1$ if $x \neq t^\ua$ and the final walk component of $p-\Ecal({t^\ua,t})$ contains only a single vertex of $\Vcal({t^\ua,t})$ and let $r(c)=\mu(c)$ otherwise. Define the rank of the non-terminal $A_{t,c}$ to be $r(c)$. Note that $r(c)=0$ if and only if $c$ is boring. For a configuration $c = (p,x)$ on the root part, we define $r(c) = 1$; note that this is consistent with the above definition in the sense that the root has no parent and there is exactly one walk component of $p$.

Next we turn to the set $\Pro$ of production rules. For every boring configuration $c$ on a vertex $t \in R$, $\Pro$ contains the rule
\[
A_{t,c}(\emptyset) \leftarrow.
\]


For non-boring configurations, we need more involved production rules that require some preliminary definitions. Let $t \in R$ and let $C=(P,X)$ be a consistent configuration on $\{t, t_1^\da, \dots, t_{k(t)}^\da\}$ such that $C(t)$ is non-boring. Let $P_{1}, \dots, P_{\mu(C(t))}$ be the walk components of $P(t)-\Ecal({t^\ua,t})$ containing at least 2 vertices of $\Vcal({t^\ua,t})$, and if $r(C(t))>\mu(C(t))$ let $P_{r(C(t))}$ be the (possibly trivial) final walk component of $P(t)-\Ecal({t^\ua,t})$. Moreover, for each $i \in [k(t)]$ let $U_i^{1}, \dots, U_i^{\mu_i}$ be the non-trivial walk components of $P(t) \cap \Gcal({t,t_i^\da})$, that is, the walk components that contain more than one vertex. Every $P_h$ admits a unique decomposition into an odd number of sub-walks $P_h= P_h^1 P_h^2 \dots P_h^{2m+1}$ such that $P_h^{l}$ is a (possibly trivial) non-virtual walk if $l$ is odd and equal to some $U_i^j$ if $l$ is even. Observe that $P_h^l = U_i^j$ means that the $l$-th part in the decomposition of $P_h$ is the $j$-th walk component of $P(t) \cap \Gcal(t,t_i^\da)$, that is, the notation $P_h^l$ indicates which walk component of $P(t)$ the walk lies in, whereas the notation $U_i^j$ tells us which adhesion set the virtual edges belong to, see Figure~\ref{fig:puwalks} for an example. 

\begin{figure}[tb]
	\centering
	\begin{tikzpicture}
	\def\rad{0.08}
	\begin{scope}[scale=0.8]
	    \coordinate (E1) at (-2,3);
    	\coordinate (E2) at (0,3);
    	\coordinate (E3) at (2,3);
    	\coordinate (E4) at (-5.3,-0.5);
    	\coordinate (E5) at (-4.65,-1.2);
    	\coordinate (E6) at (-4,-1.9);
    	\coordinate (E7) at (-3.35,-2.6);
    	\coordinate (E8) at (-1.5,-3.1);
    	\coordinate (E9) at (-0.5,-3.1);
    	\coordinate (E10) at (0.5,-3.1);
    	\coordinate (E11) at (1.5,-3.1);
    	\coordinate (E12) at (7.35/2,-4.5/2);
    	\coordinate (E13) at (8.65/2,-3.1/2);
    	\coordinate (E14) at (9.95/2,-1.7/2);
	\foreach \i in {1,...,14}{
	    \fill (E\i) circle (\rad);
	}
    	\draw[thick] (E3) to[out=-90,in=150] (E14) (E12) to[out=100,in=60,looseness=1.1] (E5) (E4) to[out=70,in=270] (E1) (E2) to[out=-70,in=90] (E11) (E10) to[out=80,in=70,looseness=1.7] (E6) (E7) to[out=60,in=120] (E8) (E9) to[out=90,in=-10] (-2,-1.5);
    	\draw[thick, dash pattern= on 3pt off 2pt] (E14) -- (E13) -- (E12) (E5) -- (E4) (E1) -- (E2) (E11) -- (E10) (E6) -- (E7) (E8) -- (E9);
    	\node at (3.4,0.8) {$P_1^1$};
    	\node at (4.05,-1.2) {$P_1^2$};
    	\node at (-0.5,1.2) {$P_1^3$};
    	\node at (-4.7,-0.6) {$P_1^4$};
    	\node at (-4,1.3) {$P_1^5$};
    	\node at (1.7,-1) {$P_2^1$};
    	\node at (1,-2.75) {$P_2^2$};
    	\node at (-0.5,-0.2) {$P_2^3$};
    	\node at (-3.4,-2) {$P_2^4$};
    	\node at (-2.2,-2.1) {$P_2^5$};
    	\node at (-1.1,-2.75) {$P_2^6$};
    	\node at (-1.5,-1.3) {$P_2^7$};
    	\node at (-5.3,-1.1) {$U_1^1$};
    	\node at (-4,-2.5) {$U_1^2$};
    	\node at (-1,-3.5) {$U_2^2$};
    	\node at (1,-3.5) {$U_2^1$};
    	\node at (4.75,-1.75) {$U_3^1$};
        \node at (0,4.5) {$\Gcal(t^\ua,t)$};
        \node at (-5.7,-2.5) {$\Gcal(t,t_1^\da)$};
        \node at (0,-4.5) {$\Gcal(t,t_2^\da)$};
        \node at (5.7,-2.5) {$\Gcal(t,t_3^\da)$};
        \draw[clip] (0,0) ellipse (6cm and 4cm);
    	\draw (0,6) ellipse (6cm and 4cm);
    	\draw[rotate=-35,xshift=-3cm,yshift=-0.7cm] (-4,-8) to[out=75,in=180] (0,-2) to[out=0,in=105] (4,-8);
        \draw (-4,-8) to[out=75,in=180] (0,-2) to[out=0,in=105] (4,-8);
        \draw[rotate=+35,xshift=+3cm,yshift=-0.7cm] (-4,-8) to[out=75,in=180] (0,-2) to[out=0,in=105] (4,-8);
	\end{scope}

  	\end{tikzpicture}
  	\caption[test]{Decomposition of $P(t)$ into subwalks $P_h^l$. For even $l$, the walk $P_h^l$ is equal to some $U_i^j$; for instance $P_2^4=U_1^2$ means that the fourth subwalk in the second walk component of $P(t) - \Ecal(t^\ua,t)$ coincides with the second walk component of $P(t) \cap \Gcal(t,t_1^\protect\da)$.}
  	\label{fig:puwalks}
\end{figure}
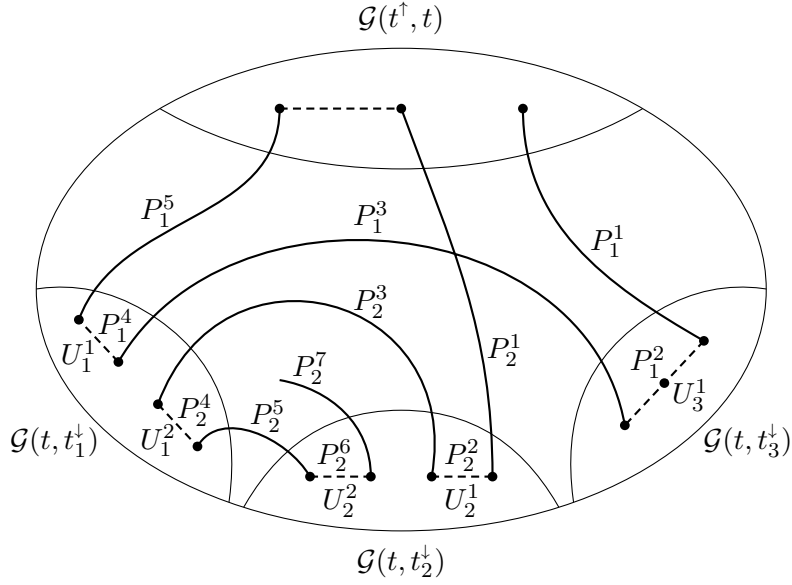

For $h <  r(C(t))$, the string $\alpha_h$ corresponding to $P_h$ is 
\[
\alpha_h=\ell(P_h^1) z(P_h^2) \ell(P_h^3) z(P_h^4) \dots \ell(P_h^{2m+1}),
\]
where $z(P_h^l)=z_{i,j}$ with $i,j$ chosen such that $P_h^l=U_i^j$; in other words, $\alpha_h$ is obtained from $P_h$ by concatenating labels of non-virtual walks $P_h^l$, and variables $z_{i,j}$ for virtual walks $P_h^l$. For $h = r(C(t))$, we define
\[
\alpha_h=\ell(P_h^1) z(P_h^2) \ell(P_h^3) z(P_h^4) \dots \ell(P_h^{2m+1})\beta,
\]
where $z(P_h^l)=z_{i,j}$ as above, and $\beta = \epsilon$ unless there is some $i \in [k(t)]$ such that $X(t)=t_i^\da$ and $P(t)$ does not end with an edge in $\Ecal({t,t_i^\da})$, in which case $\beta$ is equal to the single variable $z_{i,\mu_i+1}$.

For every $t \in R$ and each configuration $C$ on $t$ and its children such that $C(t)$ is non-boring, $\Pro$ includes the production rule
\begin{equation}\label{eq:mcfprod}
A_{t,C(t)}(\alpha_1, \dots, \alpha_{r(C(t))}) \leftarrow \left(A_{\rho(t_i^\da),\rho(C(t_i^\da))}(z_{i,1}, \dots, z_{i,r(C(t_i^\da))})\right)_{i\in [k(t)]}
\end{equation}
where the strings $\alpha_1, \dots, \alpha_{r(C(t))}$ are defined as above.

Finally, for every configuration $c$ on the root $r$ the production rule
\[
A_r(z) \leftarrow A_{r,c}(z)
\]
is in $\Pro$; this is a well formed production rule since $A_{r,c}$ has rank 1.

Before showing that this grammar indeed generates the language $L_{\SAW,o}$, let us discuss why this intuitively should be true. 

Let us extend the label function $\ell$ to walks $p$ on the graph $\Gcal({K_t})$ corresponding to the cone $K_t$ in the natural way by mapping $p$ onto the tuple of labels $\ell(p)= (\ell(p_1), \dots, \ell(p_m))$ of the non-trivial walk components $p_1, \dots, p_m$ of $p-\Ecal({t^\ua,t})$. If $p$ is the empty walk, then $\ell(p)=\emptyset$. 
For a bounded consistent configuration $C \in \Ccal_\Tcal$ with $C(t) = c$, we want the term  $A_{t,c}(\ell(\psi_t(C)))$ to be derivable in $\Gra$, where $\psi_t(C)$ is the walk on $\Gcal(K_t)$ corresponding to the configuration $C$ as defined on page \pageref{def:psit}. This is inductively taken care of by production rules of type \eqref{eq:mcfprod}: sub-walks of $P_h$ consisting of virtual edges correspond to variables $z_{i,j}$ in the string $\alpha_h$ which are subsequently replaced by strings corresponding to walk components of $\psi_s(C)-\Ecal({t,s})$ where $s$ is some child of $t$. To see that this intuitively makes sense, recall that by definition $\psi_t(C)$ is obtained by contracting all edges vertices carrying non-boring configurations and then taking the walk $P(t_F)$, where $t_F$ is the unique contracted vertex. By Lemma~\ref{lem:contractionorder}, the order of edge contractions does not matter, hence we can first contract all edges not incident to $t$; in particular, the walk $\psi_t(C)$ can be obtained by replacing sub-walks consisting of virtual edges in $\Ecal(t,s)$ by appropriate sub-walks of $\psi_s(C)$, see Figure~\ref{fig:mcfgcontraction}. This replacement procedure is essentially captured by rules of type \eqref{eq:mcfprod}.

\begin{figure}[tb]
	\centering
	\begin{tikzpicture}
	\def\rad{0.15}
	\begin{scope}[scale=0.35, xscale=0.9]
    	\begin{scope}[xshift=-0.5cm, yshift=-2.5cm] 
        	\draw[rotate=-30,xshift=-3cm,yshift=-1cm,clip] (-3.5,-8) to[out=75,in=180] (0,-2) to[out=0,in=105] (3.5,-8) (-3.5,-8);
        	\draw[thick] (-5,-1.3) to[out=240,in=90] (-8.8,-4.8) (-8,-5.5) to[out=60,in=210] (-4.2,-1.9) to[out=270,in=150] (-5,-4) to[out=-30,in=240] (-3.4,-2.5);
        	\draw (0,0) ellipse (6cm and 4cm);
            \fill (-5,-1.3) circle (\rad);
        	\fill (-4.2,-1.9) circle (\rad);
        	\fill (-3.4,-2.5) circle (\rad);
    	\end{scope}
    	\begin{scope}[yshift=-3cm] 
        	\draw[clip] (-3.5,-8) to[out=75,in=180] (0,-2) to[out=0,in=105] (3.5,-8) (-3.5,-8);
        	\draw[thick] (-1.5,-3.3) to[out=-90,in=150] (-1.5,-5.5) to[out=-30,in=260] (-0.5,-3.3) (0.5,-3.3) to[out=-90,in=80] (0.5,-5) to[out=260,in=60] (-2,-7) to[out=240,in=150] (1.5,-7) to[out=-30,in=260,] (1.5,-3.3);
        	\draw[thick, dash pattern= on 3pt off 2pt] (-0.5,-3.3) -- (0.5,-3.3);
        	\draw (0,0) ellipse (6cm and 4cm);
            \fill (-1.5,-3.3) circle (\rad);
    	    \fill (-0.5,-3.3) circle (\rad);
    	    \fill (0.5,-3.3) circle (\rad);
    	    \fill (1.5,-3.3) circle (\rad);
    	\end{scope}
    	\begin{scope}[xshift=0.5cm, yshift=-2.5cm] 
        	\draw[rotate=+30,xshift=+3cm,yshift=-1cm,clip] (-3.5,-8) to[out=75,in=180] (0,-2) to[out=0,in=105] (3.5,-8) (-3.5,-8);
        	\draw[thick] (5,-1.3) to[out=280,in=90] (7,-4) to[out=-90,in=-90] (4.2,-1.9) (3.4,-2.5) to[out=280,in=90] (6,-7);
        	\draw[thick, dash pattern= on 3pt off 2pt] (4.2,-1.9) -- (3.4,-2.5);
        	\draw (0,0) ellipse (6cm and 4cm);
            \fill (5,-1.3) circle (\rad);
        	\fill (4.2,-1.9) circle (\rad);
        	\fill (3.4,-2.5) circle (\rad);
    	\end{scope}
    	\draw[thick] (-2,3) to[out=-90,in=60] (-5,-1.3) (-3.4,-2.5) to[out=60,in=180] (-2,0) to[out=0,in=90] (-1.5,-3.3) (-0.5,-3.3) to[out=80,in=-90] (0,3) (2,3) to[out=-90,in=90] (0.5,-3.3) (1.5,-3.3) to[out=70,in=240] (3,1.5) to[out=60,in=100] (5,-1.3)  (4.2,-1.9) to[out=90,in=30] (3.5,-0.5) to[out=210,in=100] (3.4,-2.5);
    	\draw[thick, dash pattern= on 3pt off 2pt] (-5,-1.3) -- (-4.2,-1.9) -- (-3.4,-2.5) (-1.5,-3.3) -- (-0.5,-3.3) (0.5,-3.3) -- (1.5,-3.3) (5,-1.3) -- (4.2,-1.9);
        \draw[thick, dash pattern= on 3pt off 2pt] (0,3) -- (2,3);
    	\fill (-2,3) circle (\rad);
    	\fill (-5,-1.3) circle (\rad);
    	\fill (-4.2,-1.9) circle (\rad);
    	\fill (-3.4,-2.5) circle (\rad);
    	\fill (-1.5,-3.3) circle (\rad);
    	\fill (-0.5,-3.3) circle (\rad);
    	\fill (0,3) circle (\rad);
    	\fill (2,3) circle (\rad);
    	\fill (0.5,-3.3) circle (\rad);
    	\fill (1.5,-3.3) circle (\rad);
    	\fill (5,-1.3) circle (\rad);
    	\fill (4.2,-1.9) circle (\rad);
    	\fill (3.4,-2.5) circle (\rad);
    	\draw[-Latex, thick] (10.5,-1) -- (13.5,-1);
        \draw[clip] (0,0) ellipse (6cm and 4cm);
    	\draw (0,6) ellipse (6cm and 4cm);
    	\draw[rotate=-30,xshift=-3cm,yshift=-1cm] (-3.5,-8) to[out=75,in=180] (0,-2) to[out=0,in=105] (3.5,-8);
        \draw (-3.5,-8) to[out=75,in=180] (0,-2) to[out=0,in=105] (3.5,-8);
        \draw[rotate=+30,xshift=+3cm,yshift=-1cm] (-3.5,-8) to[out=75,in=180] (0,-2) to[out=0,in=105] (3.5,-8);
	\end{scope}
	\begin{scope}[scale=0.35,xshift=22cm, xscale=0.9]
	    \path[name path=ell] (0,0) ellipse (6cm and 4cm);
	    \begin{scope}
    	    \draw[clip] (0,0) ellipse (6cm and 4cm);
        	\draw (0,6) ellipse (6cm and 4cm);
	    \end{scope}
    	\path[name path=conea, rotate=-30,xshift=-3cm,yshift=-1cm,fill=white] (-3.5,-8) to[out=75,in=180] (0,-2) to[out=0,in=105] (3.5,-8) (-3.5,-8);
    	\path[name path=coneb, fill=white] (-3.5,-8) to[out=75,in=180] (0,-2) to[out=0,in=105] (3.5,-8) (-3.5,-8);
    	\path[name path=conec, rotate=+30,xshift=+3cm,yshift=-1cm,fill=white] (-3.5,-8) to[out=75,in=180] (0,-2) to[out=0,in=105] (3.5,-8) (-3.5,-8);
        \draw[intersection segments={of=conea and ell,sequence={L1 L3}}];
        \draw[intersection segments={of=coneb and ell,sequence={L1 L3}}];
        \draw[intersection segments={of=conec and ell,sequence={L1 L3}}];
    	\begin{scope}
    	    \draw[thick] (-5,-1.3) to[out=240,in=90] (-8.8,-4.8) (-8,-5.5) to[out=60,in=210] (-4.2,-1.9) to[out=270,in=150] (-5,-4) to[out=-30,in=240] (-3.4,-2.5);
    	    \draw[thick] (-1.5,-3.3) to[out=-90,in=150] (-1.5,-5.5) to[out=-30,in=260] (-0.5,-3.3) (0.5,-3.3) to[out=-90,in=80] (0.5,-5) to[out=260,in=60] (-2,-7) to[out=240,in=150] (1.5,-7) to[out=-30,in=260,] (1.5,-3.3);
        	\draw[thick] (5,-1.3) to[out=280,in=90] (7,-4) to[out=-90,in=-90] (4.2,-1.9) (3.4,-2.5) to[out=280,in=90] (6,-7);
    	\end{scope}
    	\draw[thick] (-2,3) to[out=-90,in=60] (-5,-1.3) (-3.4,-2.5) to[out=60,in=180] (-2,0) to[out=0,in=90] (-1.5,-3.3) (-0.5,-3.3) to[out=80,in=-90] (0,3) (2,3) to[out=-90,in=90] (0.5,-3.3) (1.5,-3.3) to[out=70,in=240] (3,1.5) to[out=60,in=100] (5,-1.3)  (4.2,-1.9) to[out=90,in=30] (3.5,-0.5) to[out=210,in=100] (3.4,-2.5);
    	\draw[thick, dash pattern= on 3pt off 2pt] (0,3) -- (2,3);
    	\fill (-2,3) circle (\rad);
    	\fill (-5,-1.3) circle (\rad);
    	\fill (-4.2,-1.9) circle (\rad);
    	\fill (-3.4,-2.5) circle (\rad);
    	\fill (-1.5,-3.3) circle (\rad);
    	\fill (-0.5,-3.3) circle (\rad);
    	\fill (0,3) circle (\rad);
    	\fill (2,3) circle (\rad);
    	\fill (0.5,-3.3) circle (\rad);
    	\fill (1.5,-3.3) circle (\rad);
    	\fill (5,-1.3) circle (\rad);
    	\fill (4.2,-1.9) circle (\rad);
    	\fill (3.4,-2.5) circle (\rad);
	\end{scope}
  	\end{tikzpicture}
  	\caption{Iterated contraction shows that we can obtain $\psi_t(C)$ by combining walks $P(t)$ and $\psi_s(C)$ for all children $s$ of $t$.}
  	\label{fig:mcfgcontraction}
\end{figure}
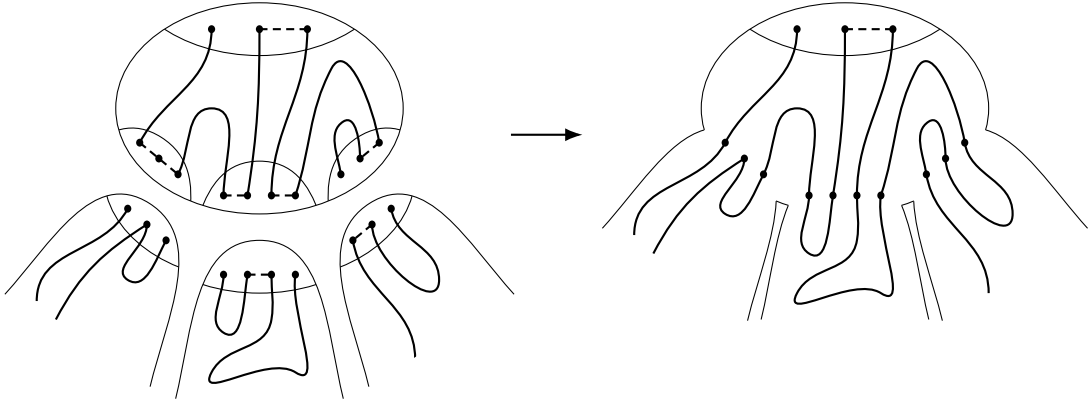

When making the above intuition precise, there are some technical issues that need to be addressed, leading to the fairly involved definition of $r(c)$ and to the subtle difference between $\alpha_h$ for $h < r(C(t))$ and $\alpha_{r(C(t))}$. These are due to the fact that walk components of $\psi_t(C) - \Ecal(t^\ua,t)$ come in two different flavours. Recall that the walk $\psi_t(C)$ starts in $\Vcal(t^\ua,t)$, and so does every walk component of $\psi_t(C)-\Ecal(t^\ua,t)$. Call a walk component a \emph{U-walk} if it returns to $\Vcal(t^\ua,t)$, and an \emph{I-walk} if it doesn't. We now sketch how these two types of walks relate to the definitions of $r(c)$ and $\alpha_h$.

Firstly, recall that the rank $r(C(t))$ of $A_{t,C(t)}$ should equal the number of walk components of $\psi_t(C) - \Ecal(t^\ua,t)$.  It is not hard to see that the projection of any U-walk is a non-trivial walk component of $P(t) - \Ecal(t^\ua,t)$ containing at least two vertices in $\Vcal(t^\ua,t)$. I-walks on the other hand may or may not use edges of $\Gcal(t)$. Note however that if there is an I-walk, then it is necessarily the last walk component of $\psi_t(C)-\Ecal(t^\ua,t)$, and it is not hard to see that in this case $X(t) \neq t^\ua$. In particular, we have $r(C(t)) = \mu(C(t)) + 1$ if and only if there is an I-walk. Since $\mu(C(t))$ clearly counts the number of U-walks, we conclude that $r(C(t))$ is indeed the number of walk components.

I-walks are also the reason why we need to include $\beta$ in the definition of $\alpha_{r(C(t))}$. If $t$ has a child $t_i^\da$ such that $\psi_{t_i^\da}(C)-\Ecal(t,t_i^\da)$ contains an I-walk, then $\psi_t(C)$ ends in this I-walk. Note that this happens if and only if $X(t) = t_i^\da$ and the last edge of $P(t)$ is not contained in $\Ecal(t,t_i^\da)$. We would like the  production rules to reflect this possibility, but the I-walk only intersects $\Vcal(t)$ in its starting point and thus it does not correspond to any non-trivial walk component of $P(t) \cap \Gcal(t,t_i^\da)$. Adding $z_{i,\mu_i+1}$ to the end of the string $\alpha_{r(C(t))}$ allows us to replace the trivial walk consisting of the last vertex of $P(t)$ by such an I-walk. We once again point out that an I-walk always sits at the end of $\psi_t(C)$, so we only have to consider this in the definition of $\alpha_h$ for $h = r(C(t))$.

With the above intuition and the resulting subtleties in mind, let us start by proving some basic results about the grammar $\Gra$.

\begin{lem}\label{lem:propgra}
The grammar $\Gra$ is $\lceil k/2 \rceil$-multiple context-free.
\end{lem}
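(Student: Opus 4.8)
The plan is to bound the rank of every non-terminal of $\Gra$ by $\lceil k/2 \rceil$, since $\Gra$ being $\lceil k/2\rceil$-multiple context-free is by definition precisely the statement that all its non-terminals have rank at most $\lceil k/2\rceil$. The rank of $A_r$ is $1 \le \lceil k/2\rceil$, so it remains to show $r(c) \le \lceil k/2 \rceil$ for every configuration $c = (p,x)$ on a vertex $t \in R$. Recall that $r(c) \in \{\mu(c), \mu(c)+1\}$, where $\mu(c)$ counts the walk components of $p - \Ecal(t^\ua,t)$ that contain at least two vertices of the adhesion set $\Vcal(t^\ua,t)$, and that $r(c) = \mu(c)+1$ can only happen when the final walk component of $p - \Ecal(t^\ua,t)$ meets $\Vcal(t^\ua,t)$ in exactly one vertex (and $x \ne t^\ua$).

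First I would set $A = \Vcal(t^\ua,t)$ and observe that $\abs{A} \le k$: by property~\ref{itm:boundedadhesion} adhesion sets have size at most the bound $k$ on end sizes, and more to the point the hypothesis of Theorem~\ref{thm:sawlangmcf} is that all ends have size at most $k$, so every adhesion set has at most $k$ vertices (a separating adhesion set of size $m$ forces an end of size $\ge m$ via disjoint rays through it, or one can simply invoke that the tree decomposition is chosen with adhesion bounded by the maximal end size). Now count incidences between $A$ and the walk components of $p - \Ecal(t^\ua,t)$. Since $p$ is self-avoiding, each vertex of $A$ lies in exactly one walk component of $p - \Ecal(t^\ua,t)$ (if it lies in $p$ at all), and it lies in at most one such component, so the components partition the subset of $A$ they cover. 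A walk component counted by $\mu(c)$ uses at least two vertices of $A$; there are at most $\lfloor \abs{A}/2 \rfloor \le \lfloor k/2 \rfloor$ such components, hence $\mu(c) \le \lfloor k/2\rfloor$.

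It remains to rule out $\mu(c) = \lfloor k/2 \rfloor$ together with the ``extra'' $+1$ when $k$ is even. If $r(c) = \mu(c) + 1$, then there is an additional walk component of $p - \Ecal(t^\ua,t)$ — the final one — which meets $A$ in exactly one vertex. Together with the $\mu(c)$ components that each use at least two vertices of $A$, this accounts for at least $2\mu(c) + 1$ distinct vertices of $A$, so $2\mu(c) + 1 \le \abs{A} \le k$, giving $\mu(c) \le \lfloor (k-1)/2\rfloor$ and therefore $r(c) = \mu(c)+1 \le \lfloor (k-1)/2\rfloor + 1 = \lceil k/2 \rceil$. Combining the two cases, $r(c) \le \lceil k/2\rceil$ in all cases, which finishes the proof. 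I do not expect a serious obstacle here; the only point requiring a little care is the bookkeeping that distinguishes the ``$U$-walk'' components (at least two vertices in $A$) from the optional trailing ``$I$-walk'' component (exactly one vertex in $A$), and checking that the definitions of $\mu(c)$ and $r(c)$ really do guarantee the counted vertices are pairwise distinct — which they are, because the walk components of a self-avoiding walk are vertex-disjoint.
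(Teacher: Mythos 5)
Your counting argument for the rank bound is correct and is essentially the paper's own: the walk components counted by $\mu(c)$ each contain two distinct vertices of $\Vcal(t^\ua,t)$, the optional trailing component contributes one more, all of these are distinct because $p$ is self-avoiding, so $2r(c)-1\le\abs{\Vcal(t^\ua,t)}\le k$ and hence $r(c)\le\lceil k/2\rceil$. The gap is in your opening sentence, where you claim that being $\lceil k/2\rceil$-multiple context-free is ``by definition precisely'' the rank bound. It is the rank bound \emph{together with} the statement that $\Gra$ is a multiple context-free grammar at all, and for the object constructed in this section the latter is not automatic; it is what roughly half of the paper's proof verifies. Concretely, one must check that $\Non$ and $\Pro$ are finite and, more substantially, that each expression \eqref{eq:mcfprod} is a well-formed production rule over $(\Non,\Si)$: every variable $z_{i,j}$ occurring in $\alpha_1\dots\alpha_{r(C(t))}$ must be one of the variables declared for the tail non-terminal, i.e.\ must satisfy $j\le r(C(t_i^\da))$, and no variable may occur more than once. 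The index condition is exactly where consistency of the configuration enters: compatibility of $C(t)$ and $C(t_i^\da)$ (conditions \ref{itm:compatible-intersection} and \ref{itm:compatible-alternate}) yields $\mu_i=\mu(C(t_i^\da))$, so $z_{i,1},\dots,z_{i,\mu_i}$ are legal; and in the case where $\beta=z_{i,\mu_i+1}$ is appended (namely $X(t)=t_i^\da$ and $P(t)$ not ending in $\Ecal(t,t_i^\da)$), conditions \ref{itm:compatible-exitdirection} and \ref{itm:compatible-endvertex} force $X(t_i^\da)\ne t$ and the final walk component of $P(t_i^\da)-\Ecal(t,t_i^\da)$ to meet $\Vcal(t,t_i^\da)$ in a single vertex, whence $r(C(t_i^\da))=\mu_i+1$ and the extra variable is legal as well. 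Without this verification the displayed ``rules'' could refer to undeclared variables, $\Gra$ would not be an MCFG, and the lemma would not be proved; your proposal never touches this step.

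A secondary point: your justification of $\abs{\Vcal(t^\ua,t)}\le k$ is not right as stated — an arbitrary separating set of size $m$ does not force an end of size at least $m$. What is needed is that the decomposition is reduced and efficiently distinguishes ends, so every adhesion set is a \emph{minimum-size} separator of some pair of ends, and a minimum separator of two ends of size at most $k$ has at most $k$ vertices (disjoint rays through a minimum separator). The paper also only asserts this bound inside the lemma, so this is a minor imprecision rather than a fatal flaw, but if you give a reason it should be that one.
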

\begin{proof}
As mentioned in the previous section, $R$ is a finite set and the number of valid configurations on a given part is finite, so $\Non$ and $\Pro$ are finite sets. For the proof of multiple context-freeness of $\Gra$, it only remains to verify that every expression \eqref{eq:mcfprod} is a well-formed production rule over $(\Non, \Si)$. Compatibility of $C(t)$ and $C(t_i^\da)$ implies that $\mu_i = \mu(C(t_i^\da)))$ for every $i$. Additionally, if $X(t) =t_i^\da$ for some $i \in [k(t)]$ and $P(t)$ does not end with an edge in $\Ecal(t,t_i^\da)$, then $X(t_i^\da) \neq t$ and the final walk component of $P(t_i^\da)-\Ecal({t,t_i^\da})$ contains only a single vertex of $\Vcal({t,t_i^\da})$, so $\mu_i+1 = \mu(C(t_i^\da))+1= r(C(t_i^\da))$. We conclude that every variable $z_{i,j}$ with $j\leq r(C(t_i^\da))$ occurs in $\alpha_1 \dots \alpha_{r(C(t))}$ and it follows directly from the construction that none of them occurs more than once. As a consequence, $\Gra$ is multiple-context-free.

Let $P_1, \dots, P_{r(C(t))}$ be the walk components of $P(t) - \Ecal(t^\ua,t)$. Then $P_i$ contains at least two vertices of $\Vcal(t^\ua,t)$ for $i<r(C(t))$ and at least one vertex of $\Vcal(t^\ua,t)$ for $i=r(C(t))$.

The size of $\Vcal({t^\ua,t})$ is at most $k$, so $2r(c)-1 \leq k$ holds, which for an integer $k$ is equivalent to $r(c) \leq \lceil k/2 \rceil$. We conclude that $\Gra$ is $\lceil k/2 \rceil$-multiple-context-free.
\end{proof}

While the grammar $\Gra$ may appear more complicated than the 1-multiple context-free grammar of configurations introduced in Section~\ref{sec:configgrammar}, the two grammars share many structural similarities. In particular, production rules are again uniquely determined by their heads and tails, so we can again work with simplified derivation trees, where every vertex is labelled with the head of its production rule. In fact we will even show that simplified derivation trees of the two grammars are the same.

To this end, let us again define a map $\phi$ mapping bounded consistent configurations $C$ on $T$ to simplified derivation trees of $\Gra$. Let the set $S$ consist of all vertices $s \in V(T)$ carrying non-boring configurations $C(s)$ and the neighbours of such vertices. Then $T[S]$ is an ordered tree with root $r$, where the order on the children of a vertex $s$ is $s_1^\da, \dots, s_{k(t)}^\da$. By labelling every vertex $s$ of $T[S]$ with $A_{\rho(s), \rho(C(s))}$, we obtain an ordered tree labelled with elements of $\Non$.

The following lemma is analogous to Lemma~\ref{lem:bijconder}, the proof is exactly the same and is thus omitted.
\begin{lem}\label{lem:bijconder2}
The map $\phi$ is a bijection between the set $\Ccal_{\Tcal}$ of bounded consistent configurations on $\Tcal$ and the set of derivation trees whose root is labelled by $A_{r,c}$ for some configuration $c$ on $r$.
\end{lem}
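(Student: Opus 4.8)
The plan is to deduce this from Lemma~\ref{lem:bijconder} with essentially no extra work. First I would observe that $\phi$ is defined here by exactly the same recipe as in Section~\ref{sec:configgrammar}: given $C=(P,X)\in\Ccal_\Tcal$, one lets $S$ be the set of vertices carrying non-boring configurations together with their neighbours, views $T[S]$ as an ordered tree rooted at $r$, and labels each vertex $s$ by $A_{\rho(s),\rho(C(s))}$. Since the non-terminal set $\Non$ is literally the same in both sections, $\phi$ is the same function; the only thing that could differ between the MCFG $\Gra$ of the present section and the $1$-MCFG of Section~\ref{sec:configgrammar}, call it $\Gra'$, is the \emph{codomain}, i.e.\ the collection of simplified derivation trees. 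So it suffices to check that $\Gra$ and $\Gra'$ have the same simplified derivation trees with root label $A_{r,c}$, and then the claim is immediate from Lemma~\ref{lem:bijconder}.

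The key step is therefore a comparison of the two grammars at the level of simplified rules. Recall that an ordered tree labelled by elements of $\Non$ is a simplified derivation tree of a grammar precisely when, for every vertex $s$ with ordered children $s_1,\dots,s_m$, there is a production rule whose head is the label of $s$ and whose tail-of-non-terminals is the sequence of labels of $s_1,\dots,s_m$. I would thus run through the rule sets and match the (head, tail-of-non-terminals) pairs: for a boring configuration $c$ on $t\in R$ both grammars contain a terminal rule with head $A_{t,c}$ and empty tail; for a configuration $c$ on $r$ both contain a rule with head $A_r$ and tail $(A_{r,c})$; and for every $t\in R$ and every consistent configuration $C$ on $\{t,t_1^\da,\dots,t_{k(t)}^\da\}$ with $C(t)$ non-boring, both contain a rule with head $A_{t,C(t)}$ and tail $(A_{\rho(t_i^\da),\rho(C(t_i^\da))})_{i\in[k(t)]}$ — for $\Gra$ this is rule~\eqref{eq:mcfprod}, whose existence is guaranteed by Lemma~\ref{lem:propgra} since that lemma shows the accompanying strings $\alpha_1,\dots,\alpha_{r(C(t))}$ are well-formed. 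As these exhaust all rules of both grammars, the two sets of simplified rules coincide, hence so do the simplified derivation trees with root label $A_{r,c}$.

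Alternatively — and this is the route indicated in the text — one simply transcribes the proof of Lemma~\ref{lem:bijconder} word for word: the fact that $\phi(C)$ is a derivation tree of $\Gra$ follows because $\delta_s\circ C\circ\delta_s^{-1}$ is a consistent configuration on $\rho(s)$ and its children and \eqref{eq:bijtrees1}, \eqref{eq:bijtrees2} identify the relevant labels; surjectivity is obtained by the same top-down reconstruction of an embedding $\iota\colon D\hookrightarrow T$ and a consistent configuration along $\iota(D)$, extended to a bounded consistent configuration by Lemma~\ref{lem:boringextension}; and injectivity follows from the minimal-distance argument, using that the parent of the first disagreeing vertex carries a non-boring configuration by Lemma~\ref{lem:boringextension}. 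In this formulation the strings $\alpha_h$ and the ranks never enter the combinatorics; their only role is through Lemma~\ref{lem:propgra}, which makes the non-boring rules of $\Gra$ available.

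I do not expect a genuine obstacle: the argument is bookkeeping, and most of the needed facts are already recorded. The one point that wants a moment of care is the assertion that passing to simplified derivation trees discards exactly the data in which $\Gra$ and $\Gra'$ differ — namely that a production rule of $\Gra$ is determined by its head and tail-of-non-terminals (so that distinct rules do not collapse to the same simplified pattern) and that for every admissible $C$ the strings $\alpha_h$ of \eqref{eq:mcfprod} can in fact be formed. Both are available: the former from the remark preceding Lemma~\ref{lem:propgra}, the latter from Lemma~\ref{lem:propgra} itself.
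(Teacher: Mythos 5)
Your proposal is correct and matches the paper's treatment: the paper omits the proof of this lemma, stating it is exactly the same as that of Lemma~\ref{lem:bijconder}, which is precisely your second route. Your first route—checking that the two grammars have identical (head, tail-of-non-terminals) pairs so that their simplified derivation trees coincide and the claim follows from Lemma~\ref{lem:bijconder}—is just a careful formalisation of that same observation, which the paper itself records in the remark that rules are determined by their heads and tails and that the simplified derivation trees of the two grammars agree.
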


It remains to show that  for any configuration $C \in \Ccal_\Tcal$, the word $\ell(\psi_r(C))$ given by the SAW $\psi_r(C)$ on $G$ coincides with the word corresponding to the derivation tree $\phi(C)$.

\begin{lem}\label{lem:pathlabelequalsword}
Let $C \in \Ccal_\Tcal$ be a bounded consistent configuration on $\Tcal$. Then
\[
\ell(\psi_r(C))=w(\phi(C)).
\]
\end{lem}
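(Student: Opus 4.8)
The plan is to prove, by induction on height, a statement covering all vertices at once. Recall that $S$ consists of the vertices carrying non-boring configurations together with their neighbours, so that $\phi(C)$ is $T[S]$ labelled by $s\mapsto A_{\rho(s),\rho(C(s))}$; for $t\in S$ let $D_t$ denote the subtree of $\phi(C)$ rooted at $t$. Being a subtree of the derivation tree $\phi(C)$ from Lemma~\ref{lem:bijconder2}, $D_t$ is itself a derivation tree of $\Gra$, and its root carries the head $A_{\rho(t),\rho(C(t))}$ by construction of $\phi$. The claim I would prove is that $D_t$ derives the term $A_{\rho(t),\rho(C(t))}\bigl(\ell(\psi_t(C))\bigr)$ for every $t\in S$, where $\psi_t(C)$ is the SAW on $\Gcal(K_t)$ from page~\pageref{def:psit} and $\ell$ is extended to such walks, as in the discussion preceding the lemma, by taking the tuple of labels of the non-trivial walk components of $\psi_t(C)-\Ecal(t^\ua,t)$. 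Granting this claim, the lemma follows by taking $t=r$: then $\rho(r)=r$ and $\delta_r=1_\Gamma$, the tree $D_r$ is all of $\phi(C)$, and since $r$ has no parent $\ell(\psi_r(C))$ is the one-entry tuple consisting of the label of the SAW $\psi_r(C)$, so $w(\phi(C))=\ell(\psi_r(C))$.

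For the base case, let $t$ be a leaf of $T[S]$, so that $C(t)$ is boring; by Lemma~\ref{lem:boringextension} every descendant of $t$ is then boring as well, so the set $S'$ of vertices relevant for $\psi_t(C)$ reduces to $\{t\}$ and $\psi_t(C)=P(t)$. As $P(t)$ uses only virtual edges of $\Ecal(t^\ua,t)$, no component of $\psi_t(C)-\Ecal(t^\ua,t)$ is non-trivial and $\ell(\psi_t(C))=\emptyset$. On the other side $\rho(C(t))$ is boring, $D_t$ is the single vertex labelled by the terminal rule $A_{\rho(t),\rho(C(t))}(\emptyset)\leftarrow$, and $w(D_t)=\emptyset$, as required.

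For the inductive step, let $C(t)$ be non-boring with children $t_1^\da,\dots,t_{k(t)}^\da$, all in $S$. By the induction hypothesis $D_{t_i^\da}$ derives $A_{\rho(t_i^\da),\rho(C(t_i^\da))}\bigl(\ell(\psi_{t_i^\da}(C))\bigr)$, and the root of $D_t$ is labelled by the rule \eqref{eq:mcfprod} for the consistent configuration $\delta_t\circ C\circ\delta_t^{-1}$ on $\rho(t)$ and its children; hence $w(D_t)$ is obtained from the strings $\alpha_1,\dots,\alpha_{r(C(t))}$ of that rule by replacing each $z_{i,j}$ with the $j$-th entry of $\ell(\psi_{t_i^\da}(C))$. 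So it suffices to show that this substitution yields the tuple $\ell(\psi_t(C))$. I would argue this using Lemma~\ref{lem:contractionorder}: computing $\psi_t(C)$ by first contracting all edges of $T[S]$ strictly below the children $t_i^\da$ yields the walks $\psi_{t_i^\da}(C)$ on the $\Gcal(K_{t_i^\da})$, and then contracting the edges $t\,t_i^\da$ and applying Lemma~\ref{lem:walkcontraction} repeatedly replaces, inside each component $P_h=P_h^1P_h^2\cdots P_h^{2m+1}$ of $P(t)-\Ecal(t^\ua,t)$, every virtual stretch $P_h^l=U_i^j$ by the $j$-th $U$-walk of $\psi_{t_i^\da}(C)-\Ecal(t,t_i^\da)$ (with matching endpoints, by \ref{itm:compatible-intersection} and \ref{itm:compatible-alternate}), and in the single case $X(t)=t_i^\da$ with $P(t)$ not ending in $\Ecal(t,t_i^\da)$ appends to the last component the $I$-walk of $\psi_{t_i^\da}(C)-\Ecal(t,t_i^\da)$. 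Reading $\ell$ off the resulting walk — and using that $\Gamma=\AUT(G,\ell)$ preserves labels, so $\ell\circ\delta_t=\ell$ and the passage to representatives is harmless — reproduces the definitions of the $\alpha_h$, the trailing variable $\beta=z_{i,\mu_i+1}=z_{i,r(C(t_i^\da))}$ accounting for the $I$-walk; this gives $w(D_t)=\ell(\psi_t(C))$ and completes the induction.

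The main obstacle is the bookkeeping in that last step: one has to line up the purely combinatorial decomposition $P_h=P_h^1\cdots P_h^{2m+1}$ underlying the $\alpha_h$ with the walk actually produced by the iterated contraction, keeping track of (a) which walk component of $\psi_{t_i^\da}(C)-\Ecal(t,t_i^\da)$ fills which virtual stretch $U_i^j$, (b) the at most one $I$-walk, which always sits at the very end of $\psi_t(C)$ and is encoded by $\beta$ in $\alpha_{r(C(t))}$, and (c) the fact that $r(C(t))$ is exactly the number of walk components of $\psi_t(C)-\Ecal(t^\ua,t)$, so that the tuple lengths match; these are essentially the content of the informal discussion before the lemma and of the proof of Lemma~\ref{lem:propgra}. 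With these correspondences in place, the identity $w(D_t)=\ell(\psi_t(C))$ reduces to a routine concatenation computation.
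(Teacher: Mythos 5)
Your proposal is correct and follows essentially the same route as the paper: a structural induction over the finite labelled tree $\phi(C)$ (the paper inducts on the number of non-boring vertices in the cone $K_t$, you on the height of the subtree $D_t$, which is immaterial), with boring leaves as the base case and an inductive step that uses Lemma~\ref{lem:contractionorder} together with Lemma~\ref{lem:walkcontraction} to match the strings $\alpha_h$ of rule \eqref{eq:mcfprod} — including the trailing $\beta$ for the I-walk — against the walk components of $\psi_t(C)-\Ecal(t^\ua,t)$. Your explicit remark that $\delta_t$ is label-preserving, so passing to representatives does not change $\ell$, is a point the paper leaves implicit but is indeed needed.
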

\begin{proof}
Let $C \in \Ccal_\Tcal$. During the proof we denote for $t \in V(T)$ by $\phi_t(C)$ the cone of $\phi(C)$ rooted at $t$. We prove that whenever $C(t^\ua)$ is non-boring it holds that
\begin{equation}\label{eq:pathlabelequalsword}
\ell(\psi_t(C))=w(\phi_t(C)).
\end{equation}
We proceed by induction on the number of vertices $s \in K_t$ carrying non-boring configurations $C(s)$. 

Let $t \in V(T)$ be such that $C(s)$ is boring for every $s \in K_t$. Then $\psi_t(C)$ is the empty walk, so $\ell(\psi_t(C))=\emptyset$. Furthermore $\phi_t(C)$ consists only of the vertex $t$ labelled $A_{\rho(t),\rho(C(t))}$ and $\rho(C(t))$ is boring, so $w(\phi_t(C))=\emptyset$.

For the induction step we first set up some notation. Let $r$ be the number of walk components of $\psi_t(C)-\Ecal(t^\ua,t)$. For $h \in [r]$, let $Q_h$ denote the $h$-th walk component of $\psi_t(C)-\Ecal(t^\ua,t)$, and let $w_h = \ell(Q_h)$. 
By definition $\ell(\psi_t(C))=(w_1,\dots,w_r)$.
Analogously, for each child $t_i^\da$ of $t$, we define $r_i$ as the number of walk components of $\psi_{t_i^\da}(C)-\Ecal(t,t_i^\da)$. For $j \in [r_i]$, let $Q_i^j$ be the $j$-th walk component of $\psi_{t_i^\da}(C) - \Ecal(t, t_i^\da)$ and let $w_i^j$ be the label of $Q_i^j$. The definition of $\ell$ together with the induction hypothesis imply that
\[
w(\phi_{t_i^\da}(C)) = \ell(\psi_{t_i^\da}(C))= (w_i^1,\dots,w_i^{r_i}),
\]
in particular $r_i = r(C(t_i^\da))$. We examine the left-hand side and right-hand side of \eqref{eq:pathlabelequalsword} independently, and show that they yield the same tuple of words.

For the left-hand side first recall that for any vertex $s$ of $T$ we have that $\psi_{s}(C) = P/F(t_F)$ where $F$ is the set of edges in $K_{s}$ incident to vertices with non-boring configurations. By Lemma~\ref{lem:contractionorder} the order of edge contractions does not play a role, in particular $\psi_t(C)$ can be obtained by first contracting all such edges not incident to $t$, and then contracting the edges connecting $t$ to $t_i^\ua$ one by one.

This means that we can construct $\psi_t(C)$ from $P(t)$ by performing the following modifications for each $i \in [k(t)]$. For every virtual edge $uv \in P(t) \cap\Ecal(t,t_i^\da)$, the walk $\psi_{t_i^\da}(C)$ contains a sub-walk with the same endpoints consisting entirely of non-virtual edges. Replace every such virtual edge in $P(t)$ by the corresponding walk in $\psi_{t_i^\da}(C)$. If $X(t) = t_i^\da$, then append the sub-walk of $\psi_{t_i^\da}(C)$ after the last vertex in $\Vcal(t,t_i^\da)$ to the resulting walk. Note that equivalently, we can let $U_i^1, \dots, U_i^{\mu_i}$ be the sequence of non-trivial walk components of $P(t) \cap \Gcal(t,t_i^\da)$, replace every $U_i^j$ by the respective $Q_i^j$, and append $Q_i^{r_i}$ in case $X(t) = t_i^\da$ and $P(t)$ does not end in a virtual edge in $\Ecal(t,t_i^\da)$.

Let $P_1, \dots, P_{r(C(t))}$ be the walk components of $P(t)-\Ecal({t^\ua,t})$ and for every $h \in [r(C(t))]$ let $P_h=P_h^1 P_h^2 P_h^2 \dots P_h^{2m+1}$ be the unique decomposition into sub-walks such that $P_h^{l}$ is a possibly trivial non-virtual walk if $l$ is odd and equal to some $U_i^j$ if $l$ is even. 
By the above discussion, for $h < r(C(t))$ we thus have 
\[
\ell(Q_h)=\ell(P_h^1) \tilde{\ell}(P_h^2) \ell(P_h^3) \tilde{\ell}(P_h^4) \dots \ell(P_h^{2m+1}),
\]
where $\tilde{\ell}(P_h^l)=\ell(Q_i^j) = w_i^j$ for the unique indices $i,j$ satisfying $P_h^l=U_i^j$. For the final walk component, that is, $h = r(C(t))$, we analogously obtain 
\[
\ell(Q_h)=\ell(P_h^1) \tilde{\ell}(P_h^2) \ell(P_h^3) \tilde{\ell}(P_h^4) \dots \ell(P_h^{2m+1}) \beta,
\]
where $\tilde{\ell}(P_h^l)= w_i^j$ as above and $\beta = \epsilon$, unless $X(t)=t_i^\da$ and $P(t)$ does not end with an edge in $\Ecal(t,t_i^\da)$, in which case $\beta = \ell(Q_i^{\mu_i+1}) = w_i^{\mu_i+1}$.

We now turn to the right-hand side of \eqref{eq:pathlabelequalsword}. For every $i \in [k(t)]$, the induction hypothesis implies that $\phi_{t_i^\da}(C)$ is a derivation tree of the term 
\[
\tau_i:= A_{\rho(t_i^\da),\rho(C(t_i^\da))}\left(w_i^1,\dots,w_i^{r_i}\right).
\]
Moreover the root $t$ of $\phi_t(C)$ has label $A_{\rho(t),\rho(C(t))}$, so $\phi_t(C)$ is a derivation tree of the term obtained by application of the rule 
\[
A_{\rho(t),\rho(C(t))}(\alpha_1, \dots, \alpha_{r(C(t))}) \leftarrow \left(A_{\rho(t_i^\da),\rho(C(t_i^\da))}(z_{i,1}, \dots, z_{i,r(C(t_i^\da))})\right)_{i\in [r(C(t))]}
\]
to $(\tau_i)_{i \in [k(t)]}$. By definition of $\alpha_h$, the $h$-th entry of this term is obtained from the $w_i^j$ in the exact same way as $\ell(Q_h)$ and we conclude that $\ell(\psi_t(C))=w(\phi_t(C))$ as claimed.
\end{proof}

We are now able to prove the main result of this section by combining the previous results.
\begin{proof}[Proof of Theorem~\ref{thm:sawlangmcf}] 
Theorem~\ref{thm:bijsawsconfigs} yields that the language of self-avoiding walks of the graph 
$G$ satisfies
\[
L_{\SAW,o}= \{\ell(\psi_r(C)) : C \in \Ccal_\Tcal\}.
\]
Furthermore Lemma~\ref{lem:bijconder2} implies that the language generated by the grammar $\Gra$ is given by
\[
L_{\Gra}=\{w(\phi(C)) : C \in \Ccal_\Tcal\}.
\]
But by Lemma~\ref{lem:pathlabelequalsword} these two sets are equal and $\Gra$ is a $\lceil k/2 \rceil$-multiple-context-free grammar generating $L_{\SAW,o}$.

Finally, if the edge-labelling of $G$ is deterministic, then $\ell$ is a bijection between the set of self-avoiding walks on $G$ and $L_{\SAW,o}$. Lemma~\ref{lem:pathlabelequalsword} provides equality of the maps $w \circ \phi= \ell \circ \psi_r$, so in particular $w \circ \phi$ is also a bijection. We conclude that $w$ bijectively maps derivation trees with respect to $\Gra$ onto words in $L(\Gra)$, so $\Gra$ is unambiguous. 
\end{proof}

\subsection{Multiple context-freeness implies bounded end size} \label{sec:endsmcfls}

In this section we prove the second part of our main result, namely
\begin{thm}\label{thm:sawlangnotmcf}
Let $G$ be a simple, locally finite, connected, quasi-transitive deterministically edge-labelled graph such that $L_{\SAW,o}$ is $k$-multiple context-free for some $o \in V(G)$. Then every end of $G$ has size at most $2k$.
\end{thm}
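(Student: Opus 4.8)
To prove Theorem~\ref{thm:sawlangnotmcf} we argue by contraposition: we assume that some end $\omega$ of $G$ contains $m := 2k+1$ pairwise disjoint rays (this covers simultaneously a finite end of size larger than $2k$ and a thick end) and show that then $L_{\SAW,o}$ is not $k$-multiple context-free. The target language is
\[
M \;=\; \{\, a_1^{n_1} a_2^{n_2} \cdots a_{m}^{n_{m}} \;:\; n_1 \ge n_2 \ge \cdots \ge n_{m} \ge 0 \,\},
\]
which by Theorem~\ref{thm:mcfgpump} is $(k+1)$-multiple context-free but not $k$-multiple context-free, since $\lceil m/2\rceil = k+1$. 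Because the class of $k$-MCFLs is a full AFL — in particular closed under homomorphisms and under intersection with regular languages — it suffices to produce a regular language $\mathcal R \subseteq \Si^*$ and a homomorphism $h\colon\Si^*\to\{a_1,\dots,a_m\}^*$ with $h(L_{\SAW,o}\cap\mathcal R)=M$: then, were $L_{\SAW,o}$ a $k$-MCFL, so would $h(L_{\SAW,o}\cap\mathcal R)=M$ be, a contradiction. Throughout we use that, the labelling being deterministic, $\ell$ is a bijection between self-avoiding walks from $o$ and words of $L_{\SAW,o}$, so that prescribing a regular subset of $L_{\SAW,o}$ amounts to prescribing a regular set of self-avoiding walks.

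The first step, following Lindorfer and Woess~\cite{Lindorfer2020}, is to extract a workable combinatorial skeleton near $\omega$. One shows that $G$ contains a subgraph $H$ which is a ``strip of width $m$ about $\omega$'': it consists of $m$ pairwise disjoint rays $R_1,\dots,R_m$ belonging to $\omega$, linearly stacked so that one can move from $R_i$ to $R_j$ only by crossing the intervening rays, together with the bounded blocks between consecutive members of a nested family of finite separators of $\omega$, the whole picture being uniform (periodic) along $\omega$. For a thin $\omega$ the periodicity comes from Halin's classification of automorphisms applied to the quasi-transitive action: one obtains an automorphism $\gamma$ that is not elliptic and pushes towards $\omega$, hence hyperbolic with $\omega$ one of its two fixed ends, together with $m$ disjoint $\gamma$-periodic double rays serving as the rails of $H$; for a thick $\omega$ one instead builds $H$ by a direct compactness argument from local finiteness and the $m$ disjoint rays. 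Composing with an automorphism of $(G,\ell)$ — which only transforms $L_{\SAW,o}$ by a rational transduction and hence cannot destroy $k$-multiple context-freeness — we may assume that $o$ sits at the mouth of $H$.

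The heart of the proof is then to construct, inside $H$, an injective family $p_{\mathbf n}$ of self-avoiding walks from $o$, indexed by tuples $\mathbf n=(n_1,\dots,n_m)$ of non-negative integers, such that $p_{\mathbf n}$ is self-avoiding \emph{precisely} when $n_1 \ge n_2 \ge \cdots \ge n_m$. The walk $p_{\mathbf n}$ makes $m$ successive excursions, the $i$-th one running for essentially $n_i$ steps along $R_i$ before using a bounded connecting block to pass to $R_{i+1}$; since distinct rails are disjoint, the only way self-avoidance can fail is by an excursion overshooting the region enclosed by the previous one, that is, exactly when $n_i>n_{i-1}$. Reading the periodic structure of $H$ off the rails yields, for each $i$, a fixed non-empty ``period word'' $u_i$ such that the $i$-th excursion of $p_{\mathbf n}$ reads $u_i^{n_i}$ up to a bounded correction at its two ends. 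Taking $\mathcal R$ to be the regular language describing exactly the label words $\ell(p_{\mathbf n})$ and $h$ to collapse each $u_i$ to the letter $a_i$ and delete the bounded correction words, we obtain $h(L_{\SAW,o}\cap\mathcal R)=M$, which completes the argument.

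I expect the main obstacle to be precisely the construction and verification of the walk family $p_{\mathbf n}$ together with the accompanying regular language: one must show on the one hand that $H$ is rich enough that every monotone tuple is genuinely realized by a self-avoiding walk (so that $L_{\SAW,o}\cap\mathcal R$ is large enough to surject onto $M$), and on the other hand that $H$ is thin enough, and $\mathcal R$ chosen tightly enough, that no self-avoiding walk violating monotonicity slips into $L_{\SAW,o}\cap\mathcal R$ (so that the image is not larger than $M$); the bounded end-corrections of each excursion also have to be absorbed uniformly. Making all of this work for all of the infinitely many tuples at once, through a single regular language and a single homomorphism, is exactly where the structure theory of thin ends of quasi-transitive graphs in the form used in~\cite{Lindorfer2020} is indispensable, and it is also where one has to cope with the thick-end case and with the (harmless) dependence on the choice of root.
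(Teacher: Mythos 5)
Your overall strategy for the thin case — building a family of spiral/excursion walks along $2k+1$ rays of a periodic strip, showing self-avoidance is equivalent to monotonicity of the exponents, and then using AFL closure properties to map onto the language of Theorem~\ref{thm:mcfgpump} — is exactly the paper's route (Lemma~\ref{lem:snailwalks}). But there is a genuine gap in your treatment of thick ends. You claim that from $2k+1$ disjoint rays in a thick end one can build the required strip ``by a direct compactness argument from local finiteness''. Compactness gives you disjoint rays, but it gives you no periodicity: the whole construction needs a label-preserving automorphism $\tau$ translating the strip, because the period words $u_i$ (and hence the regular language $\mathcal R$ and the final reduction) only exist because the rails are $\tau$-invariant double rays and $\tau$ preserves $\ell$. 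Such a $\tau$ need not exist for soft reasons — the paper points out Cayley graphs of infinite torsion (Burnside-type) groups, which are quasi-transitive with a thick end but admit only elliptic automorphisms, so no $\tau$-strip of any size can be extracted without further input. This is why the paper's thick-end argument is not graph-theoretic but uses the language hypothesis itself: first the thin-end bound gives accessibility, then Theorem~\ref{thm:graphdecomp} produces an infinite part whose stabiliser acts quasi-transitively (Lemma~\ref{lem:partsqtrans}); Lemma~\ref{lem:subgraphmcf} transfers $k$-multiple context-freeness to the associated one-ended subgraph $H$; the MCFL pumping lemma then forces a non-elliptic, hence parabolic, automorphism of $H$ (Lemma~\ref{lem:existnonell}); and only then does Halin's theory (Lemma~\ref{lem:existtaustrips}(2)) give a $\tau$-strip of size $2k+1$. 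Your proposal skips all of this, and the step it replaces cannot be repaired by compactness alone.

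Two secondary points. First, your final map ``collapse each $u_i$ to $a_i$ and delete the correction words'' is not a homomorphism of $\Si^*$, since the same letters occur in different $u_i$'s and in the corrections; the paper instead composes an \emph{inverse} homomorphism $\phi$ (with $\phi(a_i)=x_iy_i^{2k+2-i}$, $\phi(b_i)=y_i$), an intersection with a regular language, and a homomorphism, and it is precisely the deterministic labelling that makes $\phi^{-1}$ pick out the intended parsings; this is fixable (full-AFL closure under rational transductions), but as written the step is not justified. Second, your ``linearly stacked rails, so failure of self-avoidance means overshooting'' picture is not a property a strip automatically has; the paper's proof of the equivalence between self-avoidance and strict monotonicity requires choosing a fundamental domain $K$, a spanning tree $T_K$, reordering the rays so that consecutive ones are pendant in the trees $T_K(i)$, and a level-counting argument — none of which follows from disjointness of the rays alone.
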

As mentioned before, the proof of this statement will mostly follow the approach of Lindorfer and Woess~\cite{Lindorfer2020}.

Recall that any graph automorphism is either elliptic, parabolic or hyperbolic, depending on whether it fixes a finite subset of vertices, a unique end or a unique pair of ends. In what follows, elliptic automorphisms are useless, so as a first step we will eliminate the possibility that all label-preserving automorphisms are elliptic.

We remark that there are numerous examples of infinite graphs admitting a transitive group action by only elliptic automorphisms. To see this, note that any non-elliptic automorphism must have infinite order because it cannot fix a finite set of vertices. Therefore some interesting examples arise from the study of the famous Burnside Problem from 1902, asking whether every finitely generated torsion group, that is a group in which every element has finite order, must be finite. While this question remained unsolved for more than 60 years, nowadays various examples of infinite torsion groups are known. Any such group acts transitively on its Cayley graph by only elliptic automorphisms.

However, if $L_{\SAW,o}(G)$ is multiple context-free, then there are always non-elliptic automorphisms. The following lemma extends \cite[Lemma 4.3]{Lindorfer2020} to multiple context-free languages; the proof is essentially the same.

\begin{lem} \label{lem:existnonell}
Let $G$ be a connected, infinite, locally finite and deterministically edge-labelled graph and let $\Gamma \leq \AUT(G,\ell)$ act quasi-transitively on $G$. If $L_{\SAW,o}$ is multiple context-free for some vertex $o$ of $G$, then $\Gamma$ contains a non-elliptic element.
\end{lem}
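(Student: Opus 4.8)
The plan is to argue by contradiction: suppose every element of $\Gamma$ is elliptic. The key structural fact to exploit is that an elliptic automorphism has bounded orbits, so under a quasi-transitive action by only elliptic elements we can still reach arbitrarily far in the graph, but only by composing many automorphisms, each of which moves a fixed base vertex a bounded amount. First I would fix a vertex $o$ and observe that since $\Gamma$ acts quasi-transitively and $G$ is infinite and connected, $G$ is unbounded; hence for every $n$ there is a self-avoiding walk starting at $o$ of length at least $n$, so $L_{\SAW,o}$ is infinite. I would then apply the pumping lemma for $k$-MCFLs (Lemma~\ref{lem:pumplem}) to obtain a word $w = x_1 y_1 x_2 y_2 \dots x_{2k} y_{2k} x_{2k+1} \in L_{\SAW,o}$ with $y_1 \cdots y_{2k} \neq \epsilon$ and all pumped words $x_1 y_1^n x_2 \dots x_{2k} y_{2k}^n x_{2k+1}$ in $L_{\SAW,o}$.

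Next I would translate this back to the graph via the bijection $\ell$ between $\Pcal_{\SAW,o}$ and $L_{\SAW,o}$ (which holds because the labelling is deterministic). For each $n$ there is a SAW $p_n$ with $\ell(p_n) = x_1 y_1^n x_2 \dots x_{2k} y_{2k}^n x_{2k+1}$, and these walks have unbounded length. The crucial point is that at least one block $y_j$ is nonempty, and reading $y_j$ twice consecutively in a SAW forces a nontrivial structural recurrence. Concretely, I would locate inside $p_n$ (for suitable $n$, say $n \ge 3$) two occurrences of the factor $y_j$ read from vertices $u$ and $u'$ respectively, so that the portion $u\, p_n\, u'$ is a self-avoiding walk whose label is a power of $y_j$ (after possibly adjusting which repeated block one uses). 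Because the labelling is deterministic and preserved by $\Gamma$, if some $\gamma \in \Gamma$ maps $u$ to $u'$ then $\gamma$ must map the whole $y_j$-labelled walk issuing from $u$ onto the one issuing from $u'$, and iterating $\gamma$ produces an infinite ray (a periodic one) along which $u, \gamma u, \gamma^2 u, \dots$ are pairwise distinct — contradicting ellipticity of $\gamma$. So the real content is to produce such a $\gamma$.

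To produce $\gamma$, I would use quasi-transitivity: there are only finitely many $\Gamma$-orbits of vertices, so among the infinitely many vertices visited along the family $\{p_n\}$ (or along a single very long $p_n$, using that the label is highly repetitive) two distinct vertices $u, u'$ lie in the same orbit, with an automorphism $\gamma$ taking $u$ to $u'$; by choosing $u,u'$ to be corresponding starting points of two copies of $y_j$ sufficiently far apart and using the determinism of $\ell$, $\gamma$ will map the first $y_j$-walk onto the second. Since $u \ne u'$, the element $\gamma$ cannot fix any finite set of vertices and is therefore non-elliptic. This is essentially the argument of \cite[Lemma 4.3]{Lindorfer2020} adapted to the weaker pumping lemma: the only adjustment is that the MCFL pumping lemma gives $2k$ pumping blocks rather than two, but one still only needs a single nonempty block to run the argument, so the adaptation is routine.

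The main obstacle I expect is bookkeeping around the determinism/label-preservation step: one must carefully ensure that the two occurrences of the repeated block $y_j$ chosen inside the pumped SAW actually start at \emph{distinct} vertices (not the same vertex reached at different times, which is impossible for a SAW, so this is automatic) and that the automorphism matching their starting vertices genuinely carries one labelled walk to the other. The determinism of $\ell$ makes this step clean — an edge is determined by its initial vertex and label, so a label-preserving automorphism fixing the initial vertex of a walk fixes the whole walk — but one has to phrase the repetition extraction precisely, choosing $n$ large enough that pigeonhole on the finitely many vertex orbits yields a repeated orbit among the starting points of the $n$ copies of a single nonempty $y_j$. Everything else (infiniteness of $L_{\SAW,o}$, the bijection $\ell$, the characterization of elliptic automorphisms via bounded orbits from Halin~\cite{MR335368}) is already available in the excerpt.
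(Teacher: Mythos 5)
Your overall strategy (infiniteness of $L_{\SAW,o}$, the pumping lemma, determinism, pigeonhole on the finitely many vertex orbits, Halin's characterisation of elliptic automorphisms) is the same as the paper's, but there is a genuine gap at the decisive step. You work with an \emph{arbitrary} nonempty block $y_j$ and, after finding $\gamma\in\Gamma$ with $\gamma u=u'$ for two distinct starting points $u\neq u'$ of copies of $y_j$ inside a single pumped SAW $p_n$, you assert that $u,\gamma u,\gamma^2u,\dots$ are pairwise distinct, and even that ``since $u\neq u'$, $\gamma$ cannot fix any finite set of vertices''. Neither claim is justified. Because the labelling is deterministic, $\AUT(G,\ell)$ acts freely, so an element is elliptic exactly when it has finite order; a finite-order (hence elliptic) $\gamma$ can perfectly well move $u$ to a different vertex $u'$, and the pigeonhole gives you no control over which $\gamma$ you obtain. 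Iterating $\gamma$ only tells you that $\gamma^l u$ is the endpoint of the unique walk labelled $y_j^{ld}$ starting at $u$, where $d$ is the number of copies of $y_j$ between $u$ and $u'$; to conclude that these endpoints are pairwise distinct you need that walk to be self-avoiding for \emph{every} $l$, but inside the fixed walk $p_n$ you only know this for powers up to roughly $n$, while the possible period of $\gamma$ is not bounded by anything you control. Passing to larger pumping parameters does not help either: if some earlier block $y_i$ with $i<j$ is nonempty, the block $y_j^{n'}$ in $p_{n'}$ starts at a different vertex, so those walks say nothing about continuations from $u$.

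This is exactly why the paper takes $m=\min\{i\in[2k]:y_i\neq\epsilon\}$, the \emph{first} nonempty block: then $x_1\cdots x_m y_m^n$ is a prefix of a pumped word and hence lies in $L_{\SAW,o}$ for every $n$, and by determinism all these walks extend one another from the fixed vertex $v_0$ reached by reading $x_1\cdots x_m$. This yields a single infinite deterministic path from $v_0$ all of whose initial segments are self-avoiding, so the vertices $v_0,v_1,v_2,\dots$ reached after successive copies of $y_m$ are pairwise distinct for all $n$ simultaneously. The pigeonhole then gives $\tau v_i=v_j$ with $i<j$, label preservation and determinism give $\tau^l v_i=v_{j+(l-1)(j-i)}$, and these are pairwise distinct, so Halin's criterion \cite{MR335368} shows $\tau$ is non-elliptic. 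Your argument becomes correct once you replace ``some nonempty $y_j$'' by the first nonempty block and add this prefix-closure step; as written, the key distinctness claim (equivalently, that your $\gamma$ has infinite order) is unsupported.
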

\begin{proof}
The graph $G$ is infinite and connected, so $L_{\SAW,o}$ is an infinite language. Thus by Lemma~\ref{lem:pumplem} the $k$-multiple context-free language $L_{\SAW,o}$ contains some word $w=x_1y_1x_2 \dots y_{2k}x_{2k+1}$ such that at least one of $y_1, \dots, y_{2k}$ is not the empty word $\epsilon$, and $x_1 y_1^n x_2 \dots y_{2k}^n x_{2k+1} \in L_{\SAW,o}$ for every $n \in \N_0$. Let $m=\min\{i \in [2k] : y_i \neq \epsilon\}$. Then for every $n \in \N_0$, the word $x_1 \dots x_m y_m^n$ is a prefix of some word in $L_{\SAW,o}$ and thus itself contained in $L_{\SAW,o}$. Let $v_0$ be the end-vertex of the unique walk $p$ on $G$ starting at $o$ and having label $\ell(p)=x_1 \dots x_m$. Then for every $n\geq 0$ there is a unique self-avoiding walk $p_n$ of length $n \abs{y_m}$ starting at $v_0$ and having label $y_m^n$. We denote by $v_n$ the endpoint of the walk $p_n$. Using the fact that $\Gamma$ acts quasi-transitively on $G$, there must be some $\tau \in \Gamma$ and some $0 \leq i < j \leq n$ such that $\tau v_i = v_j$. Since $\tau$ is label preserving, $\tau^l v_i= v_{j+(l-1)(j-i)} \neq v_i$ for every $l>0$ and~\cite[Proposition 12]{MR335368} yields that $\tau$ is non-elliptic. 
\end{proof}

A locally finite, connected graph $S$ is called a \emph{strip} if it is quasi-transitive and has precisely two ends. Note that both ends of a strip have the same finite size $k$ which we call the size of $S$. It is known that any strip $S$ of size $k$ has an automorphism $\tau$ such that the cyclic group $\langle \tau \rangle$ generated by $\tau$ acts quasi-transitively on $S$. By \cite[Theorem 9]{MR335368}, there is $n \in \N$ such that $S$ contains $k$ disjoint $\tau^n$-invariant double rays. Let us call $S$ a \emph{$\tau$-strip} for $\tau \in \AUT(S)$, if the last statement holds with $n=1$. We use the same notation if $S$ is a sub-graph of a graph $G$ invariant under $\tau \in \AUT(G)$.

The following lemma is a combination of~\cite[Lemmas 3.3 and 3.4]{Lindorfer2020} and provides the existence of $\tau$-strips in certain types of graphs. In particular, it implies that  $\tau$-strips exist whenever there is a non-elliptic automorphism.
\begin{lem} \label{lem:existtaustrips}
Let $G$ be a locally finite connected graph and let $\Gamma$ act quasi-transitively on $G$.
\begin{enumerate}
    \item If $G$ has a thin end of size $k$, then it contains a $\tau$-strip of size $k$ for some $\tau \in \Gamma$.
    \item If $\Gamma$ contains a parabolic element, then for every $k \geq 1$, the graph $G$ contains a $\tau$-strip of size $k$ for some $\tau \in \Gamma$.
\end{enumerate}
\end{lem}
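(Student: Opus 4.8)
The plan is to deduce both statements from the structural results of Halin cited in the excerpt, building the required $\tau$-strip by hand inside $G$. In both cases the strategy is: produce an automorphism $\tau \in \Gamma$ which is hyperbolic (resp.\ parabolic) with the right associated end size, use Halin's theorem to extract $m$ disjoint $\tau^n$-invariant double rays for suitable $m$ and $n$, and then pass to the power $\tau^n$ — which still lies in $\Gamma$ — so that the relevant sub-graph becomes a genuine $\tau^n$-strip. The only subtlety is that we must also arrange that the sub-graph spanned (together with enough connecting structure) is quasi-transitive with exactly two ends, so that it really is a strip in the sense defined just before the lemma.

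For part (1), suppose $G$ has a thin end $\omega$ of size $k$. Since $\Gamma$ acts quasi-transitively and $\omega$ is thin, a standard argument (essentially \cite[Theorem~7]{MR335368}) shows that the stabiliser of $\omega$ in $\Gamma$ cannot fix $\omega$ while acting with bounded orbits near $\omega$; more precisely, one first finds $\gamma \in \Gamma$ that does not fix $\omega$ but maps it to an end not separated from $\omega$ by a suitable finite cut, and then an iteration/compactness argument yields a non-elliptic $\tau \in \Gamma$ fixing $\omega$. I would instead take the cleaner route: since $G$ has a thin end, $G$ is not finite and by accessibility-type considerations $\Gamma$ contains a non-elliptic element whose fixed-end structure meets $\omega$; if that element is hyperbolic, its attracting or repelling end has the same size as $\omega$ after conjugation, and if it is parabolic we are in the situation of part (2). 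In the hyperbolic case Halin's structure theorem gives $n \in \N$ and $k$ disjoint $\tau^n$-invariant double rays $D_1,\dots,D_k$; let $S$ be the union of these double rays together with, for each $i$, a fixed finite path connecting $D_i$ to $D_{i+1}$ in $G$ and all of its $\langle\tau^n\rangle$-translates. Then $\langle \tau^n\rangle$ acts on $S$ with finitely many orbits, $S$ is connected and has precisely two ends (both of size $k$, since the $D_i$ are disjoint and there are exactly $k$ of them), so $S$ is a $\tau^n$-strip of size $k$ for $\tau^n \in \Gamma$, which is the desired conclusion with $\tau$ replaced by $\tau^n$.

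For part (2), assume $\Gamma$ contains a parabolic element $\tau_0$; then the unique end $\omega$ fixed by $\tau_0$ is thick, and by Halin's theorem there is $n_0$ such that $G$ contains infinitely many $\tau_0^{n_0}$-invariant double rays accumulating at $\omega$. Fix any $k \geq 1$. Choose $k$ of these double rays that are pairwise disjoint — this is possible because infinitely many disjoint ones exist — say $D_1,\dots,D_k$, invariant under $\tau:=\tau_0^{n_0}\in\Gamma$. As before, connect consecutive $D_i$ by a finite path in $G$ and close up under $\langle\tau\rangle$ to obtain a connected sub-graph $S$ on which $\langle\tau\rangle$ acts quasi-transitively. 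Since the $D_i$ are disjoint double rays and there are finitely many of them together with finitely many orbits of connecting paths, $S$ has exactly two ends, each of size $k$; hence $S$ is a $\tau$-strip of size $k$ for $\tau\in\Gamma$.

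The main obstacle I expect is the passage from "there is a non-elliptic automorphism / a parabolic automorphism" to "there is one whose fixed end(s) have precisely the prescribed size", and the verification that the sub-graph $S$ built from double rays plus connecting paths has \emph{exactly} two ends rather than more — one must check that the connecting paths, though finite, do not create extra ends, which follows because only finitely many $\langle\tau\rangle$-orbits of such paths are used and each orbit stays within bounded distance of the union of the $D_i$. The extraction of the non-elliptic element in part (1) is the delicate point: it uses quasi-transitivity together with Halin's classification (elliptic automorphisms have bounded orbits, cf.\ \cite[Proposition~12]{MR335368}) in the same spirit as Lemma~\ref{lem:existnonell}, finding $\tau\in\Gamma$ and $i<j$ with $\tau v_i = v_j$ along a ray converging to $\omega$; one then checks directly that the resulting $\tau$ cannot be elliptic and that, after replacing $\tau$ by a suitable power, it admits the $k$ disjoint invariant double rays needed. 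I would treat part (2) first since it is the cleaner of the two, and then handle part (1) by reduction to the hyperbolic case, invoking part (2) whenever the constructed non-elliptic element turns out to be parabolic.
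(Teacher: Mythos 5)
Your proposal does not close the two points on which the whole lemma rests; note also that the paper itself does not prove this statement but imports it from \cite[Lemmas 3.3 and 3.4]{Lindorfer2020}, and the content of those lemmas is exactly what is missing here. For part (1) you never actually produce a hyperbolic $\tau\in\Gamma$ whose fixed ends have size exactly $k$. The phrase ``its attracting or repelling end has the same size as $\omega$ after conjugation'' is not an argument: conjugation permutes ends but preserves their sizes, so if the non-elliptic element you find happens to have fixed ends of size $k'\neq k$ (perfectly possible, since a quasi-transitive graph can have thin ends of several different sizes and a hyperbolic element fixes only one pair of them), no conjugation repairs this. Likewise the pigeonhole argument in the spirit of Lemma~\ref{lem:existnonell} (translating along a ray towards $\omega$) only yields \emph{some} non-elliptic element, with no control over which ends it fixes or their size; and Halin's theorem \cite{MR335368} only gives as many disjoint invariant double rays as the size of the fixed ends, so you cannot invoke it ``with $k$'' before this control is established. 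The real work consists in taking $k$ disjoint rays of $\omega$ together with a sequence of minimal $k$-separators converging to $\omega$, using quasi-transitivity to find $\gamma\in\Gamma$ mapping one separated region properly into a deeper one, and then using Menger between consecutive translates of the $k$-separator to build $k$ disjoint $\gamma$-periodic double rays and to bound the size of the attracting end above by $k$; none of this appears in your sketch.

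There are two further gaps. In part (2) you need $k$ \emph{pairwise disjoint} double rays invariant under a single power of the parabolic element, but the statement you quote only provides infinitely many invariant double rays with no disjointness, and ``choose $k$ of these that are pairwise disjoint --- this is possible because infinitely many disjoint ones exist'' is circular: that disjoint invariant system is precisely what has to be constructed (and the power may a priori depend on $k$). Second, in both parts your verification that the subgraph $S$ (the double rays plus the $\langle\tau\rangle$-orbits of connecting paths) is a strip of size exactly $k$ only gives the lower bound: disjointness of the $D_i$ shows each end of $S$ has size at least $k$, but the upper bound is not addressed. In part (1) it would follow from the (missing) fact that the fixed ends of $\tau$ in $G$ have size $k$, because disjoint rays in one end of $S$ remain disjoint equivalent rays in $G$; in part (2) the ambient end is thick, so no bound is inherited from $G$ at all, and one must argue inside $S$ itself (for instance by exhibiting cuts of size $k$ adapted to the finitely many $\langle\tau\rangle$-orbits of connectors). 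Your remark that the connecting paths ``do not create extra ends'' concerns the number of ends of $S$, not their size, which is the harder and essential point.
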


With these existence results for $\tau$-strips in a graph $G$ in mind, we turn to the relation between strips in a graph $G$ and its language of SAWs. A combination of the previous lemma and the upcoming lemma is already sufficient to treat graphs $G$ without thick ends.

\begin{lem} \label{lem:snailwalks}
Let $G$ be a connected, infinite, deterministically edge-labelled graph on which $\Gamma=\AUT(G,\ell)$ acts quasi-transitively, let $o$ be a vertex of $G$ and let $k \in \N$. If $G$ contains a $\tau$-strip of size $2k+1$ for some $\tau \in \Gamma$, then $L_{\SAW,o}$ is not $k$-multiple context-free. 
\end{lem}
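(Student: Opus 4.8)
The plan is to assume, for contradiction, that $L_{\SAW,o}$ is $k$-multiple context-free, and to contradict Theorem~\ref{thm:mcfgpump} by exhibiting inside $L_{\SAW,o}$ a copy of the language $\{a_1^{n_1}a_2^{n_2}\cdots a_m^{n_m} \mid n_1 \geq n_2 \geq \cdots \geq n_m \geq 0\}$ for $m = 2k+1$. Since $\lceil m/2\rceil - 1 = k$, that language is not a $k$-MCFL; on the other hand the family of $k$-MCFLs is a full AFL, hence closed under intersection with regular languages and under gsm mappings (equivalently under compositions of homomorphisms, inverse homomorphisms and regular intersections), so producing this language from $L_{\SAW,o}$ by such operations yields the contradiction.

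First I would set up the geometry of the given $\tau$-strip $S$. By definition $S$ contains $m = 2k+1$ pairwise disjoint double rays $R_1, \dots, R_m$, each invariant under $\tau$; since $\langle\tau\rangle$ acts quasi-transitively on $S$ and the double rays are invariant under $\tau$ itself, a single application of $\tau$ advances along each $R_j$ by a fixed nonempty word $u_j$, and the connecting ``rungs'' between consecutive rays can be chosen $\tau$-equivariantly, so the relevant part of $S$ is periodic with period $\tau$. I would also fix a finite walk from $o$ into $S$ reaching a suitable base vertex, contributing a fixed prefix word.

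The combinatorial heart is the construction of the \emph{snail walks}: for every tuple $n_1 \geq n_2 \geq \cdots \geq n_m \geq 0$ I would build a self-avoiding walk $p(n_1,\dots,n_m)$ starting at $o$ that enters $S$, traverses $R_1$ for $n_1$ periods of $\tau$, crosses a rung to $R_2$, traverses $R_2$ for $n_2$ periods in the opposite direction, crosses to $R_3$, and so on, winding through all $m$ rays. The words read along the consecutive straight stretches are $u_1^{\pm n_1}, \dots, u_m^{\pm n_m}$, separated by fixed connector words, so that $\ell(p(n_1,\dots,n_m)) = w_0 u_1^{n_1} w_1 u_2^{n_2} w_2 \cdots u_m^{n_m} w_m$ after replacing each $u_j$ read backwards by its reversal (again a fixed nonempty word). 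The decreasing condition on the $n_j$ is exactly what allows the nested loops to avoid the already-used portions of $S$, so self-avoidance holds precisely for such tuples. Conversely — and this is the step I expect to be the main obstacle — I would show that \emph{every} self-avoiding walk from $o$ whose label lies in the regular language $R := w_0 u_1^* w_1 u_2^* w_2 \cdots u_m^* w_m$ is one of the walks $p(n_1,\dots,n_m)$, so that no parasitic walk contributes a forbidden tuple. This direction uses determinism of the labelling (a walk is determined by its label, and committing to read a power of $u_j$ forces the walk to follow $R_j$) together with a careful self-avoidance analysis ruling out non-nested tuples; carrying this out for an arbitrary $\tau$-strip rather than a model strip such as a ladder is the technically demanding point, and it is here that I would follow Lindorfer and Woess~\cite{Lindorfer2020} most closely.

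Granting both directions, $L_{\SAW,o} \cap R = \{\ell(p(n_1,\dots,n_m)) \mid n_1 \geq \cdots \geq n_m \geq 0\}$. Applying the gsm mapping (or the appropriate composition of full-AFL operations) that reads $R$, deletes the connector words $w_i$, and rewrites each block $u_j^{n_j}$ as $a_j^{n_j}$, one obtains $\{a_1^{n_1}\cdots a_m^{n_m} \mid n_1 \geq \cdots \geq n_m \geq 0\}$ as an image of the $k$-MCFL $L_{\SAW,o}$, hence itself a $k$-MCFL. As noted above this contradicts Theorem~\ref{thm:mcfgpump} for $m = 2k+1$, and the lemma follows.
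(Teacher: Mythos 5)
Your proposal follows essentially the same route as the paper's proof: spiral ("snail") walks winding through the $2k+1$ $\tau$-invariant double rays of the strip, the observation that among these walks self-avoidance is characterised by decreasing exponent tuples, intersection of $L_{\SAW,o}$ with the regular language of such labels, and the closure of $k$-MCFLs under homomorphisms, inverse homomorphisms and regular intersections to reduce to a contradiction with Theorem~\ref{thm:mcfgpump}, with determinism of the labelling doing exactly the work you indicate in the converse direction. The only cosmetic difference is that the paper's construction yields \emph{strictly} decreasing exponents and then passes to the weakly decreasing language of Theorem~\ref{thm:mcfgpump} via the padding inverse homomorphism $\phi(a_i)=x_i y_i^{2k+2-i}$, a step subsumed in your ``appropriate composition of full-AFL operations.''
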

\begin{proof}
The proof can be outlined as follows. We start by defining an infinite set $\Pcal$ of walks such that firstly, the language $\ell(\Pcal)$ is regular, and secondly, the language $\ell(\Pcal_{\SAW})$ of the subset $\Pcal_{\SAW}$ consisting of all self-avoiding walks in $\Pcal$ is not $k$-multiple context-free. 
It then follows from the closure properties of $k$-multiple context free languages that $L_{\SAW,o}$ is not $k$-multiple context-free.

The set $\Pcal$ essentially consists of spiral-shaped walks on the strip $S$, see Figure~\ref{fig:snailwalks}. For a concise definition, first recall that the strip $S$ contains $2k+1$ $\tau$-invariant rays $R_1, \dots, R_{2k+1}$ by the definition of a $\tau$-strip, and that the subgroup $\langle \tau \rangle$ of $\Gamma$ generated by $\tau$ acts quasi-transitively on $S$. Let $K$ be a set of orbit representatives of the action of $\langle \tau \rangle$ on $S$ such that the induced sub-graphs $S[K]$ and $R_i[K]$ (for every $i \in [2k+1]$) are connected. 

Let $T_K$ be a spanning tree of $S[K]$ containing all edges of the paths $R_i[K]$. Such a tree exists because $S[K]$ is connected and the rays $R_i$ are disjoint and acyclic. For $j \in [2k]$ let $T_K(j)$ be the smallest sub-tree of $T_K$ containing the paths $R_j[K], \dots, R_{2k+1}[K]$. We call a ray $R_i$ pendant in $T_K(j)$ if $T_K(j)$ contains precisely one edge connecting a vertex of $R_i$ to a vertex not in $R_i$. Clearly $T_K(j)$ contains at least two pendant rays and we may relabel the rays in a way such that for every $i \in [2k]$, the rays $R_i$ and $R_{i+1}$ are pendant in the tree $T_K(i)$.

For $i,j \in [2k+1]$ with $i \neq j$ let $W_{i,j}$ be the path connecting $R_i$ to $R_j$ in $T_K$. Furthermore let $W_{0,2}$ consist of a shortest walk connecting $o$ to some $v_0 \in \tau^n(T_K)$, followed by a walk connecting $v_0$ and $R_2$ on $\tau^n(T_K)$.  

It will be convenient to slightly abuse notation and define concatenations of walks whose endpoint and starting point do not coincide, but for which the endpoint of the first walk can be mapped onto onto the starting point by a suitable power of $\tau$. More precisely, let $P$ and $Q$ be two walks on $S$, let $u$ be the endpoint of $P$ and let $v$ be the starting point of $Q$. If $v=\tau^i(u)$, we write $PQ$ for the concatenation of $P$ and $\tau^i Q$. 

Using this notation, for each $i \in [2k]$ let us define a walk  
\[
X_i=W_{i-1,i+1} Q W_{i+1,i},
\]
where $Q$ is the path connecting the endpoint of $W_{i-1,i+1}$ to the starting point of $\tau(W_{i+1,i})$ on the ray $R_{i+1}$ if $i$ is odd and the endpoint of $\tau(W_{i-1,i+1})$ to the starting point of $W_{i+1,i}$ if $i$ is even. Note that we apply the notation defined above, so $X_i$ consists of the paths $W_{i-1,i+1}$, $Q$ and $\tau(W_{i+1,i})$. Moreover, let $X_{2k+1}=W_{2k,2k+1} Q$, where $Q$ connects the endpoint of $W_{2k,2k+1}$ with its image under $\tau$ on the ray $R_{2k+1}$. Note that $X_i$ is self-avoiding because $W_{i-1,i+1}$ and $W_{i+1,i}$ are fully contained in $S[K]$. Furthermore observe that $X_i$ is contained in $T_K(i-1)$ and meets $R_{i-1}$ and $R_i$ only in its endpoints.

Next, for every $i \in [2k+1]$ let $r_i$ be the terminal vertex of $X_i$. Note that $r_i$ is a vertex of $R_i$. Moreover for $i \leq 2k$ it lies in the same orbit as the initial vertex of $X_{i+1}$ because the rays $R_i$ and $R_{i+1}$ are pendant in the tree $T_K(i)$.
Finally let $Y_i$ be the sub-path of $R_i$ connecting $r_i$ with $\tau^2(r_i)$ if $i$ is odd and let $Y_i$ be the sub-path of $R_i$ connecting $\tau^2(r_i)$ with $r_i$ if $i$ is even.

Let $\Pcal$ be the infinite set of walks of the form
\begin{equation} \label{eq:spiralwalk}
X_1 Y_1^{n_1} X_2 Y_2^{n_2} \dots X_{2k+1} Y_{2k+1}^{n_{2k+1}},
\end{equation}
where $n_1, \dots, n_{2k+1} \in \N$. See Figure~\ref{fig:snailwalks} for an illustration of an element of $\Pcal$.

The language $L(\Pcal)$ has the form
\begin{equation}\label{eq:spiralwalkwords}
L(\Pcal)=\{x_1 y_1^{n_1} x_2 y_2^{n_2} \dots x_{2k+1} y_{2k+1}^{n_{2k+1}}: n_1, \dots, n_{2k+1} \in \N\}
\end{equation}
where the words $x_i$ and $y_i$ are the labels of the walks $X_i$ and $Y_i$, respectively. Clearly, $L(\Pcal)$ is a regular language. 

We claim that a walk of type \eqref{eq:spiralwalk} is self-avoiding if and only if $n_{i+1} < n_i$ for every $i \in [2k]$. Fix a walk $W=X_1 Y_1^{n_1} X_2 Y_2^{n_2} \dots X_{2k+1} Y_{2k+1}^{n_{2k+1}} \in \Pcal$ and denote by $\tilde{X}_i$ the sub-walk of $W$ corresponding to $X_i$ and by $\tilde{Y}_i$ the sub-walk of $W$ corresponding to the concatenated walk $Y_i^{n_i}$. In the example shown in Figure~\ref{fig:snailwalks} we have $n_2=n_1-1$. Observe that $n_2 \geq n_1$ would yield a self-intersection on $R_2$.

We say that a vertex $v \in S$ lies on level $l \in \Z$, if $v \in \tau^{2l}(K) \cup \tau^{2l+1}(K)$.  
First note that the walk $\tilde{X}_1$ does not contain vertices on level $l \geq 1$. Moreover, it follows inductively that $\tilde{X}_i$ contains only vertices on level $l_i=\sum_{j=1}^{i-1} (-1)^{j-1} n_j$ and that $\tilde{Y}_i$ starts on level $l_i$ and ends on level $l_{i+1}$.

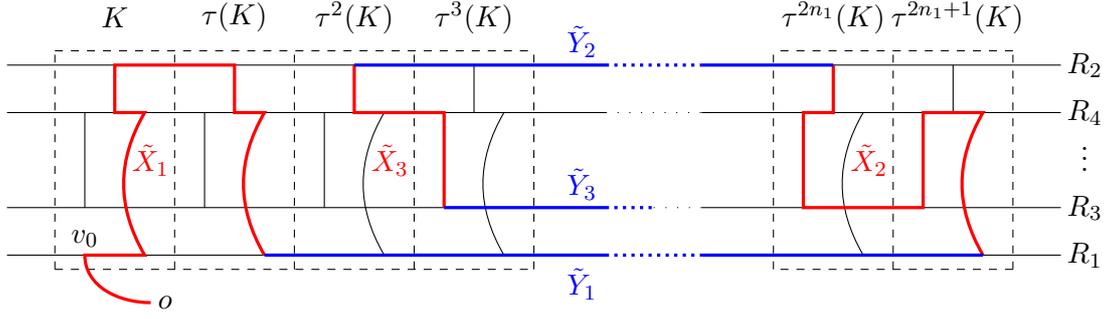
\begin{figure}
	\centering
	\begin{tikzpicture}
	\def\length{20}
	\def\width{4}
	\def\kw{2.5}
	\begin{scope}[scale=0.63]
	    \foreach \i in {0,1,\width-1,\width} {
            \draw (-1,\i) -- (4*\kw+1.5,\i);
            \draw (\length-2*\kw-1.5,\i) -- (\length+1,\i);
        }
        \node at (2.3,-1) {$o$};
        \node at (0.25*\kw,0.35) {$v_0$};
        \node[gen0] at (0.8*\kw,\width/2) {$\tilde X_1$};
        \node[gen0] at (\length-1.2*\kw,\width/2) {$\tilde X_2$};
        \node[gen0] at (2.8*\kw,\width/2) {$\tilde X_3$};
        \node[gen1] at (4*\kw+1,-0.6) {$\tilde Y_1$};
        \node[gen1] at (4*\kw+1,\width+0.5) {$\tilde Y_2$};
        \node[gen1] at (4*\kw+1,1.5) {$\tilde Y_3$};
        \node at (\length+1.5,0) {$R_1$};
        \node at (\length+1.5,\width) {$R_2$};
        \node at (\length+1.5,1) {$R_3$};
        \node at (\length+1.5,\width-1) {$R_4$};
        \node at (\length+1.5,\width/2+0.2) {$\vdots$};
        \node at (0.5*\kw,\width+1) {$K$};
        \node at (1.5*\kw,\width+1) {$\tau(K)$};
        \node at (2.5*\kw,\width+1) {$\tau^2(K)$};
        \node at (3.5*\kw,\width+1) {$\tau^3(K)$};
        \node at (\length-1.5*\kw,\width+1) {$\tau^{2n_1}(K)$};
        \node at (\length-0.5*\kw,\width+1) {$\;\tau^{2n_1+1}(K)$};
        \foreach \i in {0,...,3}{
            \draw[dashed] (\i*\kw,-0.3) -- (\i*\kw,\width+0.3);
            \draw (\i*\kw+0.5*\kw,\width) -- (\i*\kw+0.5*\kw,\width-1);
            \draw (\i*\kw+0.25*\kw,\width-1) -- (\i*\kw+0.25*\kw,1);
            \draw (\i*\kw+0.75*\kw,\width-1) to[bend right] (\i*\kw+0.75*\kw,0);
        }
        \draw[dashed] (0,-0.3)--(\kw*4,-0.3);
        \draw[dashed] (0,\width+0.3)--(\kw*4,\width+0.3);
        \draw[dashed] (\kw*4,-0.3)--(\kw*4,\width+0.3);
        \draw[dashed] (\length,-0.3)--(\length-2*\kw,-0.3);
        \draw[dashed] (\length,\width+0.3)--(\length-2*\kw,\width+0.3);
        \draw[dashed] (\length,-0.3)--(\length,\width+0.3);
        \draw[dashed] (\length-\kw,-0.3)--(\length-\kw,\width+0.3);
        \draw[dashed] (\length-2*\kw,-0.3)--(\length-2*\kw,\width+0.3);
        \draw (\length-0.5*\kw,\width) -- (\length-0.5*\kw,\width-1);
        \draw (\length-0.75*\kw,\width-1) -- (\length-0.75*\kw,1);
        \draw (\length-0.25*\kw,\width-1) to[bend right] (\length-0.25*\kw,0);
        \draw (\length-1.5*\kw,\width) -- (\length-1.5*\kw,\width-1);
        \draw (\length-1.75*\kw,\width-1) -- (\length-1.75*\kw,1);
        \draw (\length-1.25*\kw,\width-1) to[bend right] (\length-1.25*\kw,0);
        \draw[gen0,very thick] (2,-1) to[out=180,in=-90] (0.25*\kw,0) -- (0.75*\kw,0) to[bend left] (0.75*\kw,\width-1) -- (0.5*\kw,\width-1) -- (0.5*\kw,\width) -- (1.5*\kw,\width) -- (1.5*\kw,\width-1) -- (1.75*\kw,\width-1) to[bend right] (1.75*\kw,0)  (\length-0.25*\kw,0) to[bend left] (\length-0.25*\kw,\width-1) -- (\length-0.75*\kw,\width-1) -- (\length-0.75*\kw,1) -- (\length-1.75*\kw,1) -- (\length-1.75*\kw,\width-1) -- (\length-1.5*\kw,\width-1) -- (\length-1.5*\kw,\width) 
        (2.5*\kw,\width) -- (2.5*\kw,\width-1) -- (3.25*\kw,\width-1) -- (3.25*\kw,1);
        \draw[gen1,very thick] (1.75*\kw,0) -- (4*\kw+1.5,0)  (\length-2*\kw-1.5,0) -- (\length-0.25*\kw,0);
        \draw[gen1,very thick,dotted] (4*\kw+1.5,0) -- (\length-2*\kw-1.5,0); 
        \draw[gen1,very thick] (\length-1.5*\kw,\width) -- (\length-2*\kw-1.5,\width) 
        (4*\kw+1.5,\width) -- (2.5*\kw,\width);
        \draw[gen1,very thick,dotted] (4*\kw+1.5,\width) -- (\length-2*\kw-1.5,\width); 
        \draw[gen1,very thick] (3.25*\kw,1) --  (4*\kw+1.5,1);
        \draw[gen1,very thick, dotted] (4*\kw+1.5,1) -- (4*\kw+2.5,1);
        \draw[loosely dotted] (4*\kw+1.5,\width-1) -- (\length-2*\kw-1.5,\width-1);
        \draw[loosely dotted] (4*\kw+2.5,1) -- (\length-2*\kw-1.5,1);
	\end{scope}
  	\end{tikzpicture}
  	\caption[Spiral shaped walks on a strip]{Spiral shaped walks in $\Pcal$. The dashed rectangles contain the set $K$ and its respective translates under $\tau^i$.}
  	\label{fig:snailwalks}
\end{figure}

Assume that the walk $W$ is not self-avoiding. Then there is some index $i \in [2k+1]$ such that $\tilde{X}_i$ intersects either $\tilde{X}_j$ for $j > i$ or contains an interior point of $\tilde{Y}_j$ for some $j \in [2k+1]$. For $j < i-1$, the walk $\tilde{X}_i$ does not intersect $\tilde{Y}_j$ because $R_j$ does not intersect $T_K(i-1)$. For $j \in \{i-1,i\}$, the walk $\tilde{X}_i$ contains only a single vertex of $R_j$, which is an endpoint of $\tilde{Y}_j$. Therefore $j > i$, and in particular $\tilde{Y}_j$ contains a vertex on level $l_i$. Without loss of generality assume that $j$ is odd, the other case is symmetric. Since $\tilde{Y}_j$ connects levels $l_j$ to $l_{j+1} >l_j$ we conclude that $l_{j+1}\geq l_i \geq l_j$. If $i$ is odd, then
\[
0 \geq l_j-l_i= \sum_{l=i}^{j-1} (-1)^{l-1}n_l = (n_i-n_{i+1})+\dots +(n_{j-2}-n_{j-1}),
\]
so there is some index $l$ such that $n_{l-1} \leq n_l$.
Otherwise $i$ is even and an analogous argument using $0 \leq l_{j+1}-l_i$ leads to the same conclusion.

For the converse implication assume that there is $i \in [2k]$ such that $n_{i+1} \geq n_i$. We claim that the sub-walk $\tilde{X}_i \tilde{Y}_i \tilde{X}_{i+1} \tilde{Y}_{i+1}$ of $W$ is not self-avoiding. Assume without loss of generality that $i$ is odd, the other case is symmetric. Since $n_{i}-n_{i+1} \leq 0$ we know that $l_{i+2} \leq l_i < l_{i+1}$. In particular, both $\tilde X_i$ and $\tilde Y_{i+1}$ contain vertices on level $l_i$. Moreover, by definition of $X_i$ the walk $\tilde{X}_i$ contains a vertex $v$ of $R_{i+1} \cap \tau^{2l_i+1}(K)$. Finally the sub-path $\tilde{Y}_{i+1}$ of $R_{i+1}$ starts at a vertex in $\tau^{2l_{i+1}}(K)$ and ends at a vertex in $\tau^{2l_{i+2}}(K)$ and thus must contain the vertex $v$. We conclude that $W$ is not self-avoiding.

Let us now assume that the language $L_{\SAW,o}$ is $k$-multiple context-free. Recall that the class of $k$-multiple context-free languages is closed under homomorphisms, inverse homomorphisms, and intersection with regular languages. Using these properties and Theorem~\ref{thm:mcfgpump}, we derive a contradiction. First, note that the language 
\[
L(\Pcal_{\SAW})=\{x_1 y_1^{n_1} x_2 y_2^{n_2} \dots x_{2k+1} y_{2k+1}^{n_{2k+1}}: n_1 > n_2 >\dots > n_{2k+1} > 0\}
\]
is the intersection of the regular language $L(\Pcal)$ and the $k$-multiple context-free language $L_{\SAW,o}$ and thus must be $k$-multiple context-free. We define a language homomorphism 
\[
\phi: \{a_1, b_1, \dots, a_{2k+1}, b_{2k+1}\}^* \rightarrow \Si^* 
\]
by setting $\phi(a_i)=x_i y_i^{2k+2-i}$ and $\phi(b_i)= y_i$ for every $i \in [2k+1]$; we point out that $a_i$ and $b_i$ are single letters, whereas $x_i$ and $y_i$ are labels of walks and thus may consist of multiple letters. The language
\[
L_1= \{a_1 b_1^{n_1} \dots a_{2k+1} b_{2k+1}^{n_{2k+1}} : n_1 \geq n_2 \geq \dots \geq n_{2k+1} \geq 0 \} 
\]
is $k$-multiple context-free because 
\[
L_1=\phi^{-1}(L(\Pcal_{\SAW})) \cap \{a_1 b_1^{n_1} \dots a_{2k+1} b_{2k+1}^{n_{2k+1}} : n_1, \dots, n_{2k+1} \in \N_0 \}.
\]
Note that this statement strongly relies on the fact that the edge-labelling $\ell$ is deterministic: the image $\phi(w)$ of any word $w \in \{a_1 b_1^{n_1} \dots a_{2k+1} b_{2k+1}^{n_{2k+1}} : n_1, \dots, n_{2k+1} \in \N_0 \}$ is the label of a unique walk in $\Pcal$ and thus has a unique representation of the form \eqref{eq:spiralwalkwords}, which lies in $L(\Pcal_{\SAW})$ if and only if $n_1 \geq n_2 \geq \dots \geq n_{2k+1} \geq 0$.

Finally, the language $L_2=\{ c_1^{n_1} \dots c_{2k+1}^{n_{2k+1}} : n_1 \geq n_2 \geq \dots \geq n_{2k+1} \geq 0\}$ is the image of $L_1$ under the obvious homomorphism mapping $a_i$ to $\epsilon$ and $b_i$ to $c_i$ and thus must be $k$-multiple context-free, a contradiction to Theorem~\ref{thm:mcfgpump}.
\end{proof}

The property of having a $k$-multiple context-free language of SAWs is closed under taking certain sub-graphs. The following lemma extends~\cite[Lemma 4.2]{Lindorfer2020} to MCFLs; the proof works exactly the same and is thus omitted.
\begin{lem}\label{lem:subgraphmcf}
Let $H$ be a sub-graph of $G$ which is invariant under a subgroup $\Gamma$ of $\AUT(G,\ell)$ acting quasi-transitively on $H$. If $L_{\SAW,o}(G)$ is $k$-multiple context-free, then there is a vertex $o' \in V(H)$ such that $L_{\SAW,o'}(H)$ is also $k$-multiple context-free.
\end{lem}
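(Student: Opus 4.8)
The plan is to recover $L_{\SAW,o'}(H)$, for a suitably chosen $o' \in V(H)$, from $L_{\SAW,o}(G)$ by applying the closure operations available for $k$-MCFLs: intersection with regular languages, homomorphisms, and inverse homomorphisms. First I would choose $o'$ carefully. Fix any $o'' \in V(H)$, take a shortest $o$--$o''$-path $P$ in $G$ (it exists since $G$ is connected, and it is a SAW), let $o'$ be the first vertex of $P$ lying in $V(H)$, and let $q$ be the initial segment of $P$ from $o$ to $o'$. Then $q$ is a SAW in $G$ whose only vertex in $V(H)$ is its terminal vertex $o'$. Write $u = \ell(q)$; if $o \in V(H)$ we simply take $o' = o$, $q$ trivial and $u = \epsilon$. (If $H$ is finite then $L_{\SAW,o'}(H)$ is finite, hence regular, for any $o'$; the argument below covers this case anyway.)

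The crucial point is that, because the edge-labelling of $G$ is deterministic, the set $L_H$ of labels of \emph{all} walks in $H$ starting at $o'$ is a \emph{regular} language. Indeed, a word over $\Si$ labels at most one walk in $H$ from $o'$, and whether it labels one can be decided by the finite automaton whose states are the (finitely many, by quasi-transitivity) $\Gamma$-orbits on $V(H)$ together with a sink, with start state the orbit of $o'$, all non-sink states accepting, and transition $[v] \xrightarrow{\,s\,} [w]$ whenever $H$ contains an edge from $v$ to $w$ labelled $s$. Such an edge is unique if it exists (determinism), and its endpoint orbit $[w]$ depends only on $[v]$ and $s$ because $H$ is $\Gamma$-invariant and $\Gamma$ preserves $\ell$; so the transition map is well defined and recognises $L_H$.

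Next I would establish the identity $u \cdot L_{\SAW,o'}(H) = L_{\SAW,o}(G) \cap (u \cdot L_H)$. For ``$\subseteq$'': a SAW $p$ in $H$ from $o'$ shares with $q$ only the vertex $o'$ (the interior of $q$ avoids $V(H) \supseteq V(p)$), so $qp$ is a SAW of length $\ge 1$ in $G$ from $o$ with label $u\,\ell(p)$, and $\ell(p) \in L_H$. For ``$\supseteq$'': if $u w' \in L_{\SAW,o}(G)$ with $w' \in L_H$, then $u w'$ labels a unique walk from $o$, namely $q$ followed by the unique walk $p$ from $o'$ labelled $w'$; tracing $w'$ in $H$ from $o'$ also produces a walk labelled $w'$, which by uniqueness of walks from a label equals $p$, so $p$ lies entirely in $H$; moreover $p$ is self-avoiding, being a sub-walk of the SAW $qp$. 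Hence $p \in \Pcal_{\SAW,o'}(H)$ and $w' \in L_{\SAW,o'}(H)$. Since $u L_H$ is regular, the right-hand side is $k$-multiple context-free; it remains to strip the fixed prefix $u$. If $u = \epsilon$ we are done; otherwise, with a fresh letter $d$ and the homomorphisms $g$ ($g(d) = u$, $g(a) = a$) and $g'$ ($g'(d) = \epsilon$, $g'(a) = a$) on $(\Si \cup \{d\})^*$, one checks that $L_{\SAW,o'}(H) = g'\bigl(g^{-1}(u L_{\SAW,o'}(H)) \cap d\Si^*\bigr)$, which is $k$-multiple context-free by the closure properties.

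The main obstacle is the ``$\supseteq$'' inclusion above: one must rule out that a self-avoiding walk in $G$ from $o$ with label in $u L_H$ leaves $H$ after its prefix $q$. This relies essentially on determinism of the labelling — a walk is determined by its initial vertex and its label — without which $H$ would at least have to be assumed induced, and even then nothing would prevent a boundary edge from carrying a label that also occurs inside $H$. Verifying regularity of $L_H$, which rests on the $\Gamma$-equivariance of the edge set of $H$, is the other step needing care; the final prefix-stripping is routine.
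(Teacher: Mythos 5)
Your route is essentially the intended one: the paper does not spell out a proof but defers to \cite[Lemma 4.2]{Lindorfer2020}, and that argument is exactly what you reconstruct — choose $o'$ as the first vertex of a geodesic from $o$ that lies in $V(H)$, use the finitely many $\Gamma$-orbits on $V(H)$, the $\Gamma$-invariance of $H$ and the determinism of $\ell$ to see that the labels of walks in $H$ from $o'$ form a regular language, intersect with $L_{\SAW,o}(G)$, and strip the prefix $u$ via the closure of $k$-MCFLs under homomorphisms, inverse homomorphisms and intersection with regular languages; your verification that the orbit automaton is well defined and that determinism forces the continuation after $q$ to stay inside $H$ are precisely the two points the cited proof rests on.

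One small correction: as you define it, $L_H$ contains $\epsilon$ (the trivial walk at $o'$), and when $o \notin V(H)$ the walk $q$ is itself a SAW of length at least $1$, so $u = u\cdot\epsilon$ lies in $L_{\SAW,o}(G) \cap (u\cdot L_H)$ but not in $u\cdot L_{\SAW,o'}(H)$; hence your displayed identity is off by this one word. The fix is immediate: replace $L_H$ by $L_H \setminus \{\epsilon\}$ (still regular), or equivalently intersect the final language with $\Si^*\setminus\{\epsilon\}$ to discard the spurious empty word. With that adjustment the proof is complete.
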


Knowing that $k$-multiple context-freeness of the language of self-avoiding walks forbids $\tau$-strips of size at least $2k+1$, we are able to prove the main result of this section. Note that the case of thin ends is already taken care of, so we mainly need to deal with thick ends in the proof.

\begin{proof}[Proof of Theorem~\ref{thm:sawlangnotmcf}]
By Lemma~\ref{lem:snailwalks} there is no $\tau \in \Gamma=\AUT(G,\ell)$ such that the graph $G$ contains a $\tau$-strip of size $2k+1$. Lemma~\ref{lem:existtaustrips} yields that all thin ends of $G$ have size at most $2k$, thus $G$ is accessible.

Assume for a contradiction that $G$ contains a thick end. By Theorem~\ref{thm:graphdecomp} there is a tree decomposition $(T,\Vcal)$ of $G$ efficiently distinguishing all ends of $G$ such that there are only finitely many $\Gamma$-orbits on $E(T)$. We have seen in Section~\ref{sec:treedecomp} that $G$ having a thick end implies that there is a vertex $t$ of $T$ such that the part $\Vcal(t)$ is infinite. By Lemma~\ref{lem:partsqtrans} the set-wise stabiliser $\Gamma_{\Vcal(t)} \leq \Gamma$ of $\Vcal(t)$ acts quasi-transitively on this part. 

Let $H$ be the sub-graph of $G$ obtained from the induced sub-graph $G[\Vcal(t)]$ in the following way. For every edge $e$ of $T$ incident to $t$ add for every pair of vertices in the adhesion set $\Vcal(e)$ all shortest walks connecting these vertices. Then $H$ is connected and $\Gamma_{\Vcal(t)}$ acts quasi-transitively on $H$ because it acts with finitely many orbits on the edges of $T$ and thus on the adhesion sets contained in $\Vcal(t)$. By Lemma~\ref{lem:subgraphmcf} there exists a vertex $o'$ of $H$ such that the language $L_{\SAW,o'}(H)$ is $k$-multiple context-free and by Lemma~\ref{lem:existnonell} the subgroup $\Gamma_{\Vcal(t)}$ contains a non-elliptic graph automorphism $\gamma$. As $H$ has only a single (thick) end, $\gamma$ fixes this end and is parabolic. By Lemma~\ref{lem:existtaustrips}, the graph $H$ contains a $\tau$-strip $S$ of size $2k+1$ for some $\tau \in \Gamma_{\Vcal(t)}$. But $S$ is also a $\tau$-strip in the original graph $G$, contradicting Lemma~\ref{lem:snailwalks}. We conclude that all ends of $G$ are thin.
\end{proof}

\bibliographystyle{abbrv}
\bibliography{latex}

\end{document}